	\numberwithin{equation}{section}
	\newtheoremstyle{Mytheorem}%
	{1em}{1em}%
	{\slshape}{}%
	{\bfseries}{.}%
	{ }{}
	\newtheoremstyle{Mydefinition}%
	{1em}{1em}%
	{}{}%
	{\bfseries}{.}%
	{ }{}
	\theoremstyle{Mydefinition}
	\newtheorem{statement}{Statement}[section]
	\newtheorem{definition}[statement]{Definition}
	\newtheorem{remark}[statement]{Remark}
	\newtheorem{example}[statement]{Example}
	\newtheorem*{comment*}{Comment}
	\theoremstyle{Mytheorem}
	\newtheorem{theorem}[statement]{Theorem}
	\newtheorem{corollary}[statement]{Corollary}
	\newtheorem{proposition}[statement]{Proposition}
	\newtheorem{lemma}[statement]{Lemma}
	\newcommand{\f}[2]{\frac{#1}{#2}}
	\newcommand{\G}{GL_2^{+}(\mathbb{Q})}
	\newcommand{\pa}[1]{\frac{\partial}{\partial{#1}}}
	\newcommand{\prt}[2]{\frac{\partial{#1}}{\partial{#2}}}
	\newcommand{\pr}{\operatorname{prim}}
	\newcommand{\nc}{\newcommand}
	\newcommand{\be}{\begin{eqnarray*}}
	\newcommand{\ee}{\end{eqnarray*}}
	\newcommand{\bea}{\begin{eqnarray}}
	\newcommand{\eea}{\end{eqnarray}}
	\newcommand{\bs}{\begin{split}}
	\newcommand{\es}{\end{split}}
	\newcommand{\bal}{\begin{align}}
	\newcommand{\eal}{\end{align}}
	\nc{\bei}{\begin{itemize}}
	\nc{\eei}{\end{itemize}}
	\nc{\bee}{\begin{enumerate}}
	\nc{\eee}{\end{enumerate}}
	\nc{\bet}{\begin{thm}}
	\nc{\eet}{\end{thm}}
	\nc{\bed}{\begin{defn}}
	\nc{\eed}{\end{defn}}
	\nc{\bel}{\begin{lem}}
	\nc{\eel}{\end{lem}}
	\nc{\bep}{\begin{prop}}
	\nc{\eep}{\end{prop}}
	\nc{\bec}{\begin{corollary}}
	\nc{\eec}{\end{corollary}}
	\nc{\ber}{\begin{rem}}
	\nc{\eer}{\end{rem}}
	\nc{\beex}{\begin{example}}
	\nc{\eeex}{\end{example}}
	\nc{\bpm}{\begin{pmatrix}}
	\nc{\epm}{\end{pmatrix}}
	\nc{\bspm}{\left(\begin{smallmatrix}}
	\nc{\espm}{\end{smallmatrix}\right)}
	\newcommand{\cA}{\mathcal{A}}
	\newcommand{\cC}{\mathcal{C}}
	\newcommand{\cK}{\mathcal{K}}
	\newcommand{\cO}{\mathcal{O}}
	\newcommand{\cP}{\mathcal{P}}
	\newcommand{\bA}{\mathbb{A}}
	\newcommand{\bC}{\mathbb{C}}
	\newcommand{\bH}{\mathbb{H}}
	\newcommand{\bZ}{\mathbb{Z}}
	\newcommand{\BP}{\mathbf{P}}
	\nc{\frf}{\mathfrak{f}}
	\nc{\frs}{\mathfrak{s}}  
	\nc{\frt}{\mathfrak{t}} 
	\nc{\fru}{\mathfrak{u}}
	\nc{\lsl}{\mathfrak{sl}}
	\nc{\lgl}{\mathfrak{gl}}
	\nc{\upsi}{\underline{\psi}}
	\nc{\uchi}{\underline{\chi}}
	\newcommand{\lra}{\longrightarrow}    
	\nc{\surjto}{\twoheadrightarrow}
	\nc{\ts}{\times}
	\nc{\ds}{\displaystyle}
	\nc{\nd}{\noindent}  
	\nc{\ud}{\underline}
	\nc{\ov}{\overline}
	\nc{\maplra}[1]{\buildrel #1 \over \lra}
	\nc{\mapto}[1]{\buildrel #1 \over \to}
	\nc{\setb}[1]{\{  #1\}}
	\nc{\cHom}{\mathcal{H}om}
	\def\a{\alpha}
	\def\b{\beta}
	\def\d{\delta} 
	\def\f{\phi} 
	\def\g{\gamma} \def\G{\Gamma}
	\def\l{\lambda} 
	\def\m{\mu}
	\def\n{\nu}
	\def\r{\rho}
	\def\C{\mathbb{C}}
	\def\Z{\mathbb{Z}}
	\def\rd{\partial}
	\def\Ker{\hbox{Ker}\;}
	\def\Im{\hbox{Im}\;}
	\def\wt{\hbox{\it wt}}
	\def\ch{\hbox{\it ch}}
	\newcommand{\downtriangleBase}{%
		\begin{tikzpicture}%
			\draw[line cap=round,-] (0,1.6ex) -- (1.7ex,1.6ex);%
			\draw[line cap=round,-] (0,1.6ex) -- (0.85ex,0);%
			\draw[line cap=round,-] (0.85ex,0) -- (1.7ex,1.6ex);%
		\end{tikzpicture}%
	}
	\DeclareMathOperator{\downtriangle}{\downtriangleBase}
\begin{document}

\title[Algorithms for Frobenius manifolds and higher residue pairings]{Smooth projective Calabi-Yau complete intersections and algorithms for their Frobenius manifolds and higher residue pairings}
{}

\author{Younggi Lee}
\email{yglee.math@gmail.com}
\address{Department of Mathematics, POSTECH (Pohang University of Science and Technology), San 31, Hyoja-Dong, Nam-Gu, Pohang, Gyeongbuk, 790-784, South Korea. }

\author{Jeehoon Park}
\email{jeehoonpark@postech.ac.kr}
\address{Department of Mathematics, POSTECH (Pohang University of Science and Technology), San 31, Hyoja-Dong, Nam-Gu, Pohang, Gyeongbuk, 790-784, South Korea. }

\author{Jaehyun Yim}
\email{yimjaehyun@postech.ac.kr}
\address{Department of Mathematics, POSTECH (Pohang University of Science and Technology), San 31, Hyoja-Dong, Nam-Gu, Pohang, Gyeongbuk, 790-784, South Korea. }

\subjclass[2020]{14J32 (primary), 81T70,  53D45, 14J33, 14J81, 14F25 (secondary). }
\keywords{dGBV algebras, $F$-manifolds, Frobenius manifolds, weak primitive forms, higher residue pairings, projective smooth complete intersections.}

\begin{abstract}
	The goal of this article is to provide an explicit algorithmic construction of formal $F$-manifold structures, formal Frobenius manifold structures, and higher residue pairings on the primitive middle-dimensional cohomology $\bH$ of a smooth projective Calabi-Yau complete intersection variety $X$ defined by homogeneous polynomials $G_1(\ud x), \dots, G_k(\ud x)$. Our main method is to analyze a certain dGBV (differential Gerstenhaber-Batalin-Vilkovisky) algebra $\cA$ obtained from the twisted de Rham complex which computes $\bH$. More explicitly, we introduce a notion of \textit{a weak primitive form} associated to a solution of the Maurer-Cartan equation of $\cA$ and the Gauss-Manin connection, which is a weakened version of Saito's primitive form (\cite{Saito}). In addition, we provide an explicit algorithm for a weak primitive form based on the Gr\"obner basis in order to achieve our goal. Our approach through the weak primitive form can be viewed as a unifying link (based on Witten's gauged linear sigma model, \cite{W93}) between the Barannikov-Kontsevich's approach to Frobenius manifolds via dGBV algebras (non-linear topological sigma model, \cite{BK}) and Saito's approach to Frobenius manifolds via primitive forms and higher residue pairings (Landau-Ginzburg model, \cite{ST}).
\end{abstract}

\maketitle
\tableofcontents


\section{Introduction}

Let $M$ be a complex manifold. A Frobenius manifold structure on $M$ is a commutative algebra structure on the holomorphic tangent bundle $\mathcal{T}_M$ with a metric satisfying certain compatibility conditions: see Definition \ref{Def_Frob}. Such a structure was first axiomatized by Dubrovin in \cite{Dubrovin} and its first example was given by K. Saito (the universal unfolding of an isolated hypersurface singularity) in \cite{Saito} and \cite{MSaito}. It also plays an important role in formulating the mirror symmetry conjecture: for example, see \cite{BK} and \cite{CS99}. We refer to \cite{Ma03}, \cite{Maninbook}, and \cite{Her02} for various constructions of Frobenius manifolds.
There is a slight weakening: an $F$-manifold, which appears in \cite{HM} and \cite{Ma05}, is essentially a Frobenius manifold structure without a metric. We refer to Definition \ref{Def_F} and Proposition \ref{Coor_F} for its precise definition. \par

One can also think of formal Frobenius manifold structures and formal $F$-manifold structures. In this paper, we will provide a new construction of formal Frobenius manifolds and formal $F$-manifolds on primitive middle-dimensional cohomology space $\bH$ of a smooth projective Calabi-Yau complete intersection. Our construction is explicit and algorithmic so that it can be implemented in a computer program.\par

On the $B$-side of the mirror symmetry conjecture, there are two well-known constructions of Frobenius manifolds: the work \cite{BK} of Barannikov-Kontsevich and the work \cite{Saito}, \cite{MSaito} of K. Saito and M. Saito. Barannikov-Kontsevich used the algebra $\cP$ of holomorphic polyvector fields on a compact Calabi-Yau manifold $X$ with valued in anti-holomorphic forms in order to construct Frobenius manifold structures on the cohomology group $H^\bullet(X, \bC)$. 
They equipped $\cP$ with structure of the dGBV (differential Gerstenhaber-Batalin-Vilkovisky) algebra, whose Maurer-Cartan functor governs generalized deformations of complex structures on $X$.
Then they proved the formality of the Maurer-Cartan functor attached to $\cP$, which enables them to construct Frobenius manifold structures on $H^\bullet(X, \bC)$ with the help of $\partial\bar{\partial}$-lemma, and certain period integrals on $X$.\par

Saito's theory deals with the universal unfolding of an isolated singularity of a holomorphic function; see \cite{ST} for a nice survey. When a holomorphic function $f$, defined on a neighborhood of the origin $0$ in $\bC^{n+1}$ with $f(0)=0$, has an isolated critical point at $0$, then, under certain assumptions, Saito and his collaborators constructed Frobenius manifold structures on a small open subset of $\cO_{\bC^{n+1}, 0}/(\partial{f}/\partial{x_0},\dots, \partial{f}/\partial{x_n}) \simeq \bC^{\mu}$ where $\mu$ is an integer and $\cO_{\bC^{n+1}, 0}$ is the germ of holomorphic functions on $\bC^{n+1}$ around $0$. Their main tools are higher residue pairings $\operatorname{\cK}$ (Theorem 5.1, \cite{ST}) and primitive forms associated to $\operatorname{\cK}$ (Definition 6.1, \cite{ST}), which are realized on a certain quantization twisted de Rham complex (section 4.1, \cite{ST}) involving $d + df$.

Our key idea to construct Frobenius manifolds and $F$-manifolds lies in the unification based on Witten's gauged linear sigma model (see section 5.4, \cite{W93} for detailed physical explanations) of the above two different constructions.
If the projective smooth Calabi-Yau complete intersection variety $X_{\ud G} \subseteq \BP^n$ is defined by homogeneous polynomial equations $G_1(\ud x), \dots, G_k(\ud x)$, where $\ud x=(x_0 ,\dots, x_n)$ is the homogeneous coordinate of the projective $n$-space $\BP^n$, we will use a function $S(\underline{y},\underline{x}):=\sum_{i=1}^k y_i G_i(\ud x)$ with a new set of variables $y_1, \dots, y_k$. This function $S(\underline{y},\underline{x})$ will play a similar role as the defining equation of a hypersurface of Saito's theory. But $S(\underline{y},\underline{x})$ has a non-isolated singularity locus differently from Saito. In fact, the weight zero part of the singularity locus of $S(\underline{y},\underline{x})$ will be exactly the smooth projective complete intersection $X_{\underline{G}}$.  
As far as the authors know, the superpotential $S(\ud y, \ud x)$ was first introduced by B. Dwork in his proof of rationality of zeta functions of algebraic varieties over a finite field; the auxiliary variables $y_1, \dots, y_k$ were introduced to count the number of rational points of algebraic varieties over finite fields by using the exponential sum (see (16), \cite{Dw60}).\footnote{If one is familiar with the Griffiths-Dwork method (for calculating the Picard-Fuchs ideal), section 5.3, \cite{CS99}, our use of the function $S$ can also be viewed as a generalization of the G-D method to the algorithmic construction of Frobenius manifolds.}  But later E. Witten explained its physical meaning by looking at phase transitions which occur as parameters are varied in supersymmetric gauge theories, section 5.4, \cite{W93}.\footnote{The introduction of $S(\ud y, \ud x)$ is also known as ``the Cayley trick" to mathematicians; see subsection \ref{sec2.4}.} \par

The two facts [(1) the primitive middle-dimensional cohomology $\bH=H^{n-k}_\mathrm{pirm}(X,\mathbb{C})$ is the cohomology of the twisted de Rham complex involving $d + dS$  (see Theorem \ref{Phi_isom} and Remark \ref{cqi}) 
and (2) we can modify the multiplication structure of the twisted de Rham complex to obtain the dGBV structure (see Proposition \ref{dgbvp})]
link our study of formal Frobenius manifolds and higher residue pairings on $\bH$ to those in \cite{BK}, \cite{ST}. In order to deal with non-isolated singularities of $S(\ud y, \ud x)$, we introduce the charge grading $\textit{ch}$ (see subsection \ref{grading_subsec}) on the dGBV algebra; for more detailed comparisons with \cite{BK}, \cite{ST}, we refer to subsection \ref{subcomparison}.
 Our key tools are dGBV algebras similar to \cite{BK} and \textit{weak primitive forms} (Definition \ref{Def_WP})\footnote{In the table \ref{Comparison_Table}, we summarize how this simple notion of weak primitive forms unifies the approaches of Saito-Takahashi \cite{ST}, Barannikov-Kontsevich \cite{BK}, and ours. There is also a categorical unification of both approaches, as explained in the appendix of \cite{ST} but our unification is more explicit and down-to-earth. } which are weakened version of primitive forms in \cite{ST}.
More explicitly, our procedure can be summarized as follows:
\begin{enumerate}[(1)]
	\item Construct a dGBV algebra which computes the primitive middle-dimensional cohomology $\mathbb{H}$ of $X_{\underline{G}}$. (This construction is motivated by the ($0+0$)-dimensional QFT (Quantum Field Theory) viewpoint for the cohomology of $X_{\ud G}$, which was explained in the introduction of \cite{PP16} and Witten's work \cite{W93}.)
	\item Consider the Maurer-Cartan deformation problem associated to the shifted DGLA (differential graded Lie algebra) coming from the dGBV algebra. In particular, find a solution $\Gamma$ of the corresponding Maurer-Cartan equation.
	\item Consider the Gauss-Manin connection associated to the solution $\Gamma$ and provide an algorithm to compute the connection matrix with respect to some particular basis (the concept of \textit{weak primitive forms}).
	\item If we choose $\Gamma$ and a particular basis properly, then the connection matrix will give rise to an associative and commutative structure on the space $T_\mathbb{H}$ of the formal tangent vector fields (a formal $F$-manifold structure on $\mathbb{H}$) and a metric (a formal Frobenius manifold structure on $\mathbb{H}$).
\end{enumerate}

The theory of primitive forms and higher residue pairings, which was developed by K. Saito, provides a Frobenius manifold structure (an F-manifold structure and the metric), by taking flat coordinates with respect to a metric and takes $\hbar\to0$ (so called, a classical limit). One may think of a concept which provides an $F$-manifold in a similar manner; we introduce a notion of ``weak primitive forms'' (see Definition \ref{Def_WP}), which provides an $F$-manifold structure when one takes flat coordinates, and $\hbar\to0$.  Note that the higher residue pairing with $\hbar\to 0$ essentially corresponds to the metric. We summarize this in Table \ref{Table1}.

\begin{table}[ht]
	\caption{Weak primitive forms}
	\begin{tabular}{|c|c|c|}
	\hline
	&&\\[-1.5ex]
	Classical Complex $(\hbar\to0)$ & & Quantization complex ($\hbar$-perturbation)\\[1ex]
	\hline
	\hline
	&&\\[-1.5ex]
	Frobenius manifolds & \multirow{3}{*}{$\xleftarrow[\hbar\to0]{\begin{array}{c}\textrm{\scriptsize Take}\\\textrm{\scriptsize flat coordinates}\end{array}}$} &$\begin{array}{c}\textrm{Primitive forms and}\\\textrm{higher residue pairings}\end{array}$\\[2.5ex]
	\cline{1-1}\cline{3-3}
	&&\\[-1ex]
	$F$-manifolds & & Weak primitive forms\\[2ex]
	\hline
	\end{tabular}
	\label{Table1}
\end{table}

For any dGBV algebra and its Maurer-Cartan solution, we now define the notion of \textit{weak primitive forms} associated to them. The adjective ``weak'' means that we drop some conditions from the definition of primitive form; we take (S2) of Proposition 7.3, \cite{ST}, which is implied by (P1), (P2), and (P4) of Definition 6.1 (the original definition of primitive form), \cite{ST}, as the main ingredient of our definition.
Let $(\mathcal{A},\cdot,\Delta,\ell_2^\Delta(\cdot,\cdot),Q)$ be a dGBV algebra (Definition \ref{bvd}), and $\{\ud t= t^\alpha\}$ be a coordinate of the tangent space for the Maurer-Cartan deformation functor associated to the shifted DGLA $(\mathcal{A},\Delta+Q,\ell_2^{\Delta+Q}(\cdot,\cdot))$. Let $\Gamma\in\mathcal{A}[\![\underline{t}]\!]$ be a Maurer-Cartan solution of $(\mathcal{A},\Delta+Q,\ell_2^{\Delta+Q}(\cdot,\cdot))$:
$$
(\Delta+Q) (\G)+\frac{1}{2}\ell_2^{\Delta+Q}(\G,\G)=0,
$$
where $\Delta+Q$ and $\ell_2^{\Delta+Q}(\cdot, \cdot)$ are assumed to be $\ud t$-linear.
Then quintuple $(\mathcal{A}[\![\underline{t}]\!],\cdot,\Delta+Q+\ell_2^\Delta(\Gamma,\cdot),\ell_2^\Delta(\cdot,\cdot),Q+\ell_2^\Delta(\Gamma,\cdot))$ is again a dGBV algebra.\par
 
One can consider the following quantization dGBV algebra:
\[
	\big(\mathcal{A}[\![\underline{t}]\!](\!(\hbar)\!),\cdot,\hbar\Delta+Q+\ell_2^\Delta(\Gamma,\cdot),\tfrac{1}{\hbar}\ell_2^{\hbar\Delta}(\cdot,\cdot),Q+\ell_2^\Delta(\Gamma,\cdot)\big)
\]
with a formal parameter $\hbar$. Note that $\tfrac{1}{\hbar}\ell_2^{\hbar\Delta}(\cdot,\cdot)=\ell_2^\Delta(\cdot,\cdot)=\tfrac{1}{\hbar}\ell_2^{\hbar\Delta+Q+\ell_2^\Delta(\Gamma,\cdot)}(\cdot,\cdot)$. We define the Gauss-Manin connection as follows (see Definition \ref{GMC} for more details):
\[
	\hbar\downtriangle_\alpha^{\frac{\Gamma}{\hbar}}:=\hbar\cdot e^{-\tfrac{\Gamma}{\hbar}}\frac{\partial}{\partial t^\alpha}e^{\tfrac{\Gamma}{\hbar}}:H^r\big(\mathcal{A}[\![\underline{t}]\!](\!(\hbar)\!),\hbar\Delta+Q+\ell_2^\Delta(\Gamma,\cdot)\big)\to H^r\big(\mathcal{A}[\![\underline{t}]\!](\!(\hbar)\!),\hbar\Delta+Q+\ell_2^\Delta(\Gamma,\cdot)\big).
\]

\begin{definition}[weak primitive forms]\label{Def_WP}
	An element $\boldsymbol\zeta\in\mathcal{A}^0[\![\underline{t}]\!][\![\hbar]\!]$ is called a weak primitive form if
	\begin{enumerate}[(1)]
		\item the set $\{\hbar\downtriangle_\alpha^{\frac{\Gamma}{\hbar}}\boldsymbol\zeta\}$ is a $\mathbb{C}[\![\underline{t}]\!][\![\hbar]\!]$-basis of $H^0\big(\mathcal{A}[\![\underline{t}]\!][\![\hbar]\!],\hbar\Delta+Q+\ell_2^\Delta(\Gamma,\cdot)\big)$. In particular, there are unique $\mathbf{A}_{\alpha\beta}{}^\rho\in\mathbb{C}[\![\underline{t}]\!][\![\hbar]\!]$ such that
		\begin{equation}\label{WPF}
			\hbar\downtriangle_\beta^{\frac{\Gamma}{\hbar}}\hbar\downtriangle_\alpha^{\frac{\Gamma}{\hbar}}\boldsymbol\zeta=\sum_{\rho}\mathbf{A}_{\alpha\beta}{}^\rho\cdot\hbar\downtriangle_\rho^{\frac{\Gamma}{\hbar}}\boldsymbol\zeta+\big(\hbar\Delta+Q+\ell_2^\Delta(\Gamma,\cdot)\big)(\mathbf{\Lambda}_{\alpha\beta}),
		\end{equation}
		\item\label{cond2} the coefficient $\mathbf{A}_{\alpha\beta}{}^\rho$ is of the form $\mathbf{A}_{\alpha\beta}{}^\rho={}^0A_{\alpha\beta}{}^\rho+{}^1A_{\alpha\beta}{}^\rho\hbar$.
	\end{enumerate}
\end{definition}
From the definition, we have
                \[
				\textrm{(Commutativity)}\quad\mathbf{A}_{\alpha\beta}{}^\rho=\mathbf{A}_{\beta\alpha}{}^\rho.
	        \]
Moreover, if the coefficient ${{}^1A}_{\alpha\beta}{}^\rho=0$, then the following conditions hold:
		\[
			\begin{aligned}
				&\textrm{(Associativity)}&\sum_\rho {{}^0A}_{\alpha\beta}{}^\rho{{}^0A}_{\rho\gamma}{}^\delta&=\sum_\rho{{}^0A}_{\beta\gamma}{}^\rho{{}^0A}_{\rho\alpha}{}^\delta,\\
				&\textrm{(Potential)}\quad&\frac{\partial}{\partial t^\alpha}{{}^0A}_{\gamma\beta}{}^\rho&=\frac{\partial}{\partial t^\gamma}{{}^0A}_{\alpha\beta}{}^\rho.
			\end{aligned}
		\]
The reason we put the restrictive condition \ref{cond2} $\mathbf{A}_{\alpha\beta}{}^\rho={}^0A_{\alpha\beta}{}^\rho+{}^1A_{\alpha\beta}{}^\rho\hbar$ in the above definition is its relevance to formal $F$-manifolds; see subsection \ref{sec4.1} how weak primitive forms provides formal $F$-manifolds. Moreover, if one compares \ref{cond2} with equality (S2) in Proposition 7.3, \cite{ST}, then one can check that a weak primitive form equipped with a compatible higher residue pairing becomes a primitive form in the sense of Saito-Takahashi \cite{ST}. 
\par

%


There are two main results and two auxiliary results in the current article:
\begin{enumerate}[\bfseries (1)]
	\item An explicit algorithmic construction of formal $F$-manifolds on $\mathbb{H}$; see section \ref{Sec_F}.
	\item An explicit algorithmic construction of weak primitive forms related to $\mathbb{H}$; see section \ref{Sec_WP}.
	\item\label{sub_result} An explicit algorithmic construction of Frobenius manifolds on $\mathbb{H}$; see subsection \ref{sec5.2}.
	\item\label{sub_result2} An explicit algorithmic construction of the \textit{modified higher residue pairings}\footnote{The \textit{modified higher residue pairing} is essentially the same as the higher residue pairing but there is a slight technical difference; see Definition \ref{mhrp} and the last paragraph of subsection \ref{subcomparison} for details.} related to $\mathbb{H}$; see subsection \ref{sec5.3}.
\end{enumerate}
The reason why we call \ref{sub_result} and \ref{sub_result2} auxiliary results is that their constructions are more or less technically trivial. 
All these algorithms will be deduced by solving the equation \eqref{WPF} for weak primitive forms. For different choices of $\Gamma,\boldsymbol\zeta,\mathbf{A}_{\alpha\beta}{}^\rho$, we will solve the differential equation \eqref{WPF} algorithmically and connect them to the concepts of F-manifolds, Frobenius manifolds, and modified higher residue pairings. We record such different choices in the table \ref{Comparison_Table}. The equation \eqref{WPF} is subtle to solve; see Remarks \ref{nontrivialE} and Appendix \ref{sec6.2}. In fact, the key technical obstacle for the current project was to solve algorithmically its special case \eqref{F-QM2}, which is also a main differential equation of \cite{BK} appeared in the proof of the lemma 7.1, \cite{BK}. \par


We need to mention the work \cite{LLS} of Li-Li-Saito, the book of Hertling \cite{Her02}, and Sabbah's work \cite{Sab}. There is a similarity between ours and \cite{LLS} in that both use dGBV algebras (based on the polyvector fields) to deal with higher residue pairings and to unify the methods of \cite{BK} and \cite{ST}. But there is a notable difference: the main set-up of \cite{LLS} consists of a Stein domain $M \subset \bC^m$ and a holomorphic function $f: M \to \bC$ with finite critical points (section 1.2, \cite{LLS}), which corresponds to the affine space $\bC^{n+k+1}$ itself and the polynomial function $S:\bC^{n+k+1} \to \bC$, which does not have finite critical points, in our set-up.\footnote{The physical background of our theory is \cite{W93}, which is different from the physical background of \cite{LLS}, the BCOV (Bershadsky, Cecotti, Ooguri, and Vafa) theory.} Thus we can not apply \cite{LLS} to study higher residue pairings and primitive forms relevant to $\bH=H_\mathrm{prim}^{n-k}(X,\mathbb{C})$.
This phenomenon regarding non-isolated singularities limits us to obtain results only on ``weak" primitive forms in our set-up. Our method, a systematic algorithmic approach to the differential equation \eqref{WPF}, which naturally appears in the study of primitive forms, is not refined enough to obtain results on primitive forms.
In \cite{LLS}, \textit{a good opposite filtration} plays a crucial role to produce analytic primitive forms; the Hodge filtration on $\bH$ seems to be related to good opposite filtrations (see subsection \ref{sec6.1} for some evidence) but we are not sure how those are related precisely. The book \cite{Her02} on Frobenius manifolds concentrates on the case of isolated singularities and \cite{Sab} deals with tame functions with isolated singularities, while we work with the case of non-isolated singularities. \par
\vspace{1em}


Now we explain the contents of the paper briefly. Section \ref{sec2} is devoted to explain how to construct the dGBV algebra which computes the primitive middle-dimensional cohomology $\bH$ of $X_{\ud G}$ and to study its properties.
The materials from subsection \ref{sec2.1} to subsection \ref{sec2.7} are not limited to the Calabi-Yau case. But from subsection \ref{sec2.8} throughout the article, we will assume that $X_{\ud G}$ is Calabi-Yau.
In subsection \ref{sec2.1}, the underlying cochain complex $(\cA, K_S=Q_S + \Delta)$ of the dGBV algebra is constructed (see \eqref{dgbv}). In subsection \ref{grading_subsec}, the grading structure of $(\cA, K_S)$ is explained. In subsection \ref{sec2.3}, the grading structure of the relevant twisted de Rham complex is explained.
Then, in subsection \ref{sec2.4}, we explain how $(\cA, K_S)$ compares with the twisted de Rham complex and consequently construct an explicit isomorphism between its 0-th cohomology and the primitive middle-dimensional cohomology (Theorem \ref{dgbvm}) via the Cayley trick. In subsection \ref{sec2.5}, we make the dGBV algebra $(\cA, \cdot, K_S, \ell^{K_S}, Q_S)$ by introducing the $\ell_2$-descendant $\ell_2^{K_S}$ (Definition \ref{bvd} and Proposition \ref{dgbvp}). In subsection \ref{sec2.6}, we compare the cohomologies of $(\cA, K_S)$ and $(\cA, Q_S)$ and derive the weak $Q_S\Delta$-lemma (Lemma \ref{Q-D_Lemma}) which plays a crucial role in our algorithm. In subsection \ref{sec2.7}, a formal parameter $\hbar$ is introduced to construct the quantization dGBV algebra. In subsection \ref{sec2.8}, we explain the deformation problem attached to $(\cA, \cdot, K_S, \ell^{K_S}, Q_S)$ and the corresponding Gauss-Manin connection on the deformation space.

Section \ref{Sec_F} is devoted to constructing the algorithm for formal $F$-manifolds on $\bH$. In subsection \ref{sec3.1}, we briefly recall the notion of formal $F$-manifolds. In subsection \ref{sec3.2}, we explain how the connection matrix for the Gauss-Manin connection leads to the formal $F$-manifolds. We provide an algorithm in subsection \ref{Subsec_Alg} and its proof in subsection \ref{sec3.4}.

Section \ref{Sec_WP} is devoted to constructing the algorithm for weak primitive forms when $\Gamma$ is of the form $L=\sum_{\a}t^\a u_\a$ (linear form). In subsection \ref{sec4.1}, we briefly explain how the weak primitive form is related to Saito's primitive form and gives rise to formal $F$-manifolds. We provide an algorithm in subsection \ref{Subsec_Alg_prim} and its proof in subsection \ref{sec4.3}.

Section \ref{Sec_Frob} is about the algorithms for the formal Frobenius manifold and the modified higher residue pairing on $\bH$. In subsection \ref{sec5.1}, we briefly recall the notion of formal Frobenius manifolds. We give an algorithm for Frobenius manifolds in subsection \ref{sec5.2}. In subsection \ref{sec5.3}, we explain how to produce an algorithm of the modified higher residue pairing which is essentially the same as the original higher residue pairing of Theorem 5.1, \cite{ST}; we point out a subtle difference between them and explain what is missing for constructing an algorithm of the original higher residue pairing.

Finally, in the appendix, we provide comparisons of our work with \cite{BK} and \cite{ST} (subsection \ref{subcomparison}). Then we give some remark on the Riemann-Hilbert-Birkhoff problem (subsection \ref{sec6.1}) and how to solve the equation \ref{WPF} algorithmically (subsection \ref{sec6.2}), regardless of its connection to $F$-manifolds and Frobenius manifolds.

\subsection{Acknowledgement}
The problem of constructing an algorithm for formal $F$-manifolds arose from the co-work (\cite{PP16}) and conversation of the second named author with Jae-Suk Park, to whom we would like to thank deeply for sharing his ideas and insights in \cite{Pa10}. Jeehoon Park was partially supported by Samsung Science \& Technology Foundation (SSTF-BA1502)
and BRL (Basic Research Lab) through the NRF (National Research Foundation) of South Korea (NRF-2018R1A4A1023590). We appreciate Cheol-hyun Cho for providing valuable comments on the first draft. Jeehoon Park would also like to thank KIAS (Korea Institute for Advance Study) for its support and hospitality during his stay on sabbatical leave, where the part of the paper was written.


\section{The dGBV Algebra and Quantization} \label{sec2}
\subsection{\texorpdfstring{Cochain complex $\mathcal{A}$}{Cochain complex AX}}\label{sec2.1}
Let $\mathbf{P}^n$ be a $n$-dimensional projective space over $\mathbb{C}$ for $n\geq 1$. Denote by $\mathbb{C}[\underline{x}]$ the usual homogeneous coordinate ring of $\mathbf{P}^n$ with $\underline{x} = (x_0, x_1, \dots, x_n)$. For $n\geq k \geq 1$, let $G_1(\underline{x}), \dots, G_k(\underline{x})$ be homogeneous polynomials of degree $d_1, \dots, d_k$ respectively. We consider a smooth projective variety $X_{\underline{G}}$ embedded in $\mathbf{P}^n$ defined by $G_1(\underline{x}), \dots, G_k(\underline{x})$, which satisfies the complete intersection property, i.e. $\dim X_{\underline{G}} = n-k$.\par
Let $X=X_{\underline{G}}(\mathbb{C})$ be the complex analytic manifold associated to $X_{\underline{G}}$ and consider the Dwork potential
\begin{equation}\label{DP}
	S(\underline{y},\underline{x}) := \sum_{\ell=1}^k y_\ell \cdot G_\ell(\underline{x}),
\end{equation}
where we introduce the formal variables $y_1,\dots,y_k$ corresponding to $G_1,\dots,G_k$. Let $N=n+k+1$, $q_1=y_1,\dots,q_k=y_k,q_{k+1}=x_0,\dots,q_N=x_n$ and define the $\mathbb{Z}$-graded super-commutative algebra $\mathcal{A}$ and differentials $\Delta$, $Q_S$ and $K_S$ as follows:
\begin{equation}\label{dgbv}
	\begin{aligned}
	\mathcal{A}&:= {\mathbb{C}}[\underline{q}][\underline{\eta}]={\mathbb{C}}[q_{1},q_2,\dots,q_N][\eta_{1},\eta_2,\dots,\eta_N],\\
	\Delta&:=\sum_{i=1}^N\pa{q_i}\pa{\eta_i}:\mathcal{A}\to \mathcal{A},\\
	Q_S&:=\sum_{i=1}^N \prt{S(\underline{q})}{q_i} \pa{\eta_i}: \mathcal{A} \to \mathcal{A},\\
	K_S&:=Q_S+\Delta.
	\end{aligned}
\end{equation}
where $\eta_i$'s are other variables (of cohomological degree $-1 $) corresponding to $q_i$'s.
The additive cohomological $\bZ$-grading of $\mathcal{A}$ is given 
by the rules ($|f|=m$ means $f \in \mathcal{A}^m$)
\[
	|q_i|=0, |\eta_i|=-1, \quad i=1, \dots, N.
\]
Then we have the following cochain complex\footnote{Note that the super-commutativity means that $a\cdot b=(-1)^{|a| |b|} b \cdot a$ for homogeneous elements $a, b$. Hence we see that $\eta_i \cdot \eta_j = - \eta_j \cdot \eta_i$, which implies that $\eta_i^2 =0$ and $\mathcal{A}^{-N-1}=\mathcal{A}^{-N-2}=\cdots=0$. }
\[
	0 \to \mathcal{A}^{-N} \maplra{K_S} \mathcal{A}^{-N+1} \maplra{K_S} \cdots \maplra{K_S} \mathcal{A}^0 \to 0.
\]
Let $A:=\cA^0=\bC[q_1, \dots, q_N]$.
We define the $\ell_2$-descendant of $K_S$ with respect to the product $\cdot$ as follows:
\[
	\ell_2^{K_S}(a,b):= K_S(ab)-K_S(a)b -(-1)^{|a|} a K_S(b), \quad a,b \in \mathcal{A}.
\]\par

\subsection{\texorpdfstring{Gradings on $\mathcal{A}$}{Grading on AX}}\label{grading_subsec}
We are going to introduce another two $\bZ$-gradings on $\mathcal{A}$ defined in \eqref{dgbv}: charge and weight.
For each  $q_\m$, assign a non-zero integer $\ch(q_\m)$ called
charge of $q_\m$ as follows:
\[
	\ch(q_i)=\ch(y_i)=-d_i, \quad \text{for } i=1, \dots, k, \quad \ch(q_i)=\ch(x_{i-k-1}) = 1, \quad \text{for }  i=k+1, \dots, N. 
\]
Also  assign $ \ch(\eta_\m):=-\ch(q_\m)$. Define the background charge $c_X$ by
\[
	c_X :=- \sum_{\m}\ch(q_\m) = \sum_{i=1}^k d_i - (n+1).
\]
Associated with $\ch(q_\m)_{\m=1,\dots, N}$, we define the Euler vector
\[
	E_{\ch} = \sum_{\m=1}^{N} \ch(q_\m)q_\m\frac{\rd}{\rd q_\m}+\sum_{\m=1}^{N} \ch(\eta_\m)\eta_\m\frac{\rd}{\rd \eta_\m}.
\]
For any monomial $M \in \mathcal{A}$, we have $E_{\ch}(M) =\ch(M)M$. Hence we have the charge decomposition of $\mathcal{A}$:
\[
	\mathcal{A} =\bigoplus_{\l \in \Z} \mathcal{A}_\l,
\]
where $\mathcal{A}_\l$ is the ${\C}$-subspace of $\mathcal{A}$ generated by the monomials of charge $\l$.
We also introduce the notion of weight such that $\wt(y_\ell)=1$ for all $ \ell =1,\dots, k$ and $\wt(x_i) = 0$ for $i=0,1,\dots, n$.
Define $\wt(\eta_\m) = 1 -\wt(q_\m)$ for all $\m=1,\dots, N$.
Associated with the weight, we define the Euler vector
\[
	E_{\wt} = \sum_{\m=1}^{N} \wt(q_\m)q_\m\frac{\rd}{\rd q_\m}+\sum_{\m=1}^{N} \wt(\eta_\m)\eta_\m\frac{\rd}{\rd \eta_\m} 
 =\sum_{\ell=1}^k y_\ell \frac{\rd}{\rd y_\ell}+ \sum_{\ell=k+1}^N \eta_\ell \frac{\rd}{\rd \eta_\ell}.
\]
For any monomial $M \in \mathcal{A}$, we have $E_{\wt}(M) =\wt(M)M$. Hence we have the weight decomposition of $\mathcal{A}$:
\[
	\mathcal{A} =\bigoplus_{w \geq 0} \mathcal{A}_{(w)},
\]
where $\mathcal{A}_{(w)}$ is the ${\C}$-subspace of $\mathcal{A}$ generated by the monomials of weight $w$.
Using the weight, we consider the increasing filtration $F^{\bullet}_{\wt}\mathcal{A}$ defined by
\begin{equation}\label{wf}
	F^{j}_{\wt}\mathcal{A} =\bigoplus_{0\leq w\leq j}\mathcal{A}_{(w)}.
\end{equation}

\begin{proposition}
	The weight filtration induces a filtered  cochain  complex $(F^\bullet_{\wt}\mathcal{A}, K_S)$.
\end{proposition}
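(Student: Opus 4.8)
The plan is to verify directly that the differential $K_S = Q_S + \Delta$ respects the increasing filtration $F^\bullet_{\wt}\mathcal{A}$, i.e. that $K_S(F^j_{\wt}\mathcal{A}) \subseteq F^j_{\wt}\mathcal{A}$ for every $j \geq 0$. Since $F^j_{\wt}\mathcal{A} = \bigoplus_{0 \leq w \leq j}\mathcal{A}_{(w)}$ is the span of monomials of weight at most $j$, it suffices to understand how each of the two summands $Q_S$ and $\Delta$ acts on the weight grading. I would first record that both $\partial/\partial q_i$ and $\partial/\partial \eta_i$ change weight in a controlled way: by the definitions $\wt(\eta_\mu) = 1 - \wt(q_\mu)$, so $\wt(\partial/\partial q_\mu) = -\wt(q_\mu)$ and $\wt(\partial/\partial \eta_\mu) = -\wt(\eta_\mu) = \wt(q_\mu) - 1$. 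Hence the operator $\tfrac{\partial}{\partial q_i}\tfrac{\partial}{\partial \eta_i}$ has weight $-\wt(q_i) + \wt(q_i) - 1 = -1$ for each $i$, so $\Delta = \sum_i \tfrac{\partial}{\partial q_i}\tfrac{\partial}{\partial \eta_i}$ is homogeneous of weight $-1$; in particular $\Delta(\mathcal{A}_{(w)}) \subseteq \mathcal{A}_{(w-1)} \subseteq F^j_{\wt}\mathcal{A}$ whenever $w \leq j$.

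For $Q_S = \sum_i \tfrac{\partial S}{\partial q_i}\tfrac{\partial}{\partial \eta_i}$ the key observation is that $S(\underline{y},\underline{x}) = \sum_{\ell=1}^k y_\ell G_\ell(\underline{x})$ has weight exactly $1$, since each $G_\ell$ involves only the $x$-variables (weight $0$) and each $y_\ell$ has weight $1$. Therefore $\tfrac{\partial S}{\partial q_i}$ has weight $1 - \wt(q_i)$. Multiplication by this polynomial raises weight by $1 - \wt(q_i)$, and then $\tfrac{\partial}{\partial \eta_i}$ lowers weight by $\wt(\eta_i) = 1 - \wt(q_i)$; the net change is $(1 - \wt(q_i)) - (1 - \wt(q_i)) = 0$. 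Thus $Q_S$ is homogeneous of weight $0$, so $Q_S(\mathcal{A}_{(w)}) \subseteq \mathcal{A}_{(w)} \subseteq F^j_{\wt}\mathcal{A}$ for $w \leq j$. Combining the two, $K_S = Q_S + \Delta$ maps $F^j_{\wt}\mathcal{A}$ into $F^j_{\wt}\mathcal{A}$, which is exactly the statement that $(F^\bullet_{\wt}\mathcal{A}, K_S)$ is a filtered cochain complex (that $K_S^2 = 0$ is already part of the dGBV structure established earlier).

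I do not expect any serious obstacle here; the only mild subtlety is bookkeeping the weights of the odd derivations $\partial/\partial \eta_i$ correctly through the relation $\wt(\eta_\mu) = 1 - \wt(q_\mu)$, and noting that because $y_\ell$ appears linearly in $S$ while $x$-variables have weight zero, $S$ is genuinely weight-homogeneous of weight $1$ — this is what makes $Q_S$ preserve (rather than shift) the weight. An equivalent and perhaps cleaner way to phrase the whole argument, which I would mention as a remark, is in terms of the Euler vector field $E_{\wt}$: one checks the commutator identities $[E_{\wt}, \Delta] = -\Delta$ and $[E_{\wt}, Q_S] = 0$ (the latter because $E_{\wt}(S) = S$), from which the homogeneity of $\Delta$ and $Q_S$, and hence the preservation of $F^\bullet_{\wt}\mathcal{A}$ by $K_S$, follow immediately.
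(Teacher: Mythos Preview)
Your proof is correct and follows exactly the same approach as the paper's: the paper's proof simply states that $Q_S$ preserves $\wt$ and $\Delta$ decreases $\wt$ by $1$, and you have supplied the explicit weight computations justifying those two facts.
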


\begin{proof}
	Note that $K_S=Q_S+\Delta$. Since $Q_S$ preserves $\wt$ and $\Delta$ decreases $\wt$ by 1, the result follows.%
\end{proof}

Note that $\mathcal{A} =A [\ud \eta]$, where $\eta_\m \eta_\n= -\eta_\n \eta_\m$, and
\[
	|q_\m|+|\eta_\m|=-1,\quad\ch(q_\m)+\ch(\eta_\m)=0,\quad\wt(q_\m)+\wt(\eta_\m)=1.
\]
We then have decomposition
\[
	\mathcal{A} = \bigoplus_{-N\leq j \leq 0}\bigoplus_{\l \in \Z}\bigoplus_{w \geq 0}\mathcal{A}^j_{\l, (w)},
\]
where $u \in \mathcal{A}^j_{\l, (w)}$ if and only if $ E_{\ch} (u) = \l \cdot u$, $ E_{\wt} (u) = w \cdot u$, and $|u|=j$.
Note that $S(\ud{q})\in \mathcal{A}^0_{0,(1)}$ \big(the Dwork potential $S(\ud{q})$ was defined in \eqref{DP}\big).

\subsection{The de Rham complex and gradings}\label{sec2.3}
Now we consider an algebraic twisted de Rham complex. For any commutative $\bC$-algebra C, the de Rham complex $(\Omega^\bullet_C, d)$ is defined and for any element $S\in C$, the twisted de Rham complex $(\Omega^\bullet_C, d+ dS \wedge)$ is defined. In particular, we will consider the following de Rham complexes:
\[
	(\Omega_A^\bullet,dS\wedge+d),~(\Omega_{A[S^{-1}]}^\bullet,d),~(\Omega_{A[S^{-1}]_{0,(0)}}^\bullet,d).
\]
We also define the charge and the weight on the de Rham complex by
\[
	\begin{cases}
		\ch(q_i)=\ch(dq_i), &i=1,2,\dots,N\\
		\wt(q_i)=\wt(dq_i). &i=1,2,\dots,N\\
	\end{cases}
\]
We have the weight and the charge decomposition of $\Omega$ such that
\[
	\Omega = \bigoplus_{-N\leq j \leq 0}\bigoplus_{\l \in \Z}\bigoplus_{w \geq 0}\Omega^j_{\l, (w)}.
\]

\subsection{Main theorem on cohomologies} \label{sec2.4}
We will find an explicit relationship between the cochain complex $(\mathcal{A},K_S)$ and the primitive middle dimensional cohomology $H_{\mathrm{prim}}^{n-k}(X_{\underline{G}},\mathbb{C})$.\par
If we are interested in the cohomology of the smooth projective complete intersection variety $X$ of dimension $n-k$, then the primitive middle dimensional cohomology $H_{\mathrm{prim}}^{n-k}(X,\mathbb{C})$ is the most interesting piece because the other degree cohomologies and non-primitive pieces can be easily described in terms of the cohomology of the projective space $\mathbf{P}^n$ due to the weak Lefschetz theorem and Poincare duality. For the computation of $H_{\mathrm{prim}}^{n-k}(X,\mathbb{C})$, the Gysin sequence and ``the Cayley trick'' play important roles. There is a long exact sequence, called the Gysin sequence:
\[
	\begin{tikzcd}
		\cdots\to H^{n+k-1}(\mathbf{P}^n,\mathbb{C})\to H^{n+k-1}(\mathbf{P}^n\setminus X,\mathbb{C})\arrow[r,"\mathrm{Res}_X"] & H^{n-k}(X,\mathbb{C})\to H^{n+k}(\mathbf{P}^n,\mathbb{C})\to\cdots,
	\end{tikzcd}
\]
where $\mathrm{Res}_X$ is the residue map (see p.96 of \cite{Dim95}). This sequence gives rise to an isomorphism
\[
	\mathrm{Res}_X:H^{n+k-1}(\mathbf{P}^n\setminus X,\mathbb{C})\stackrel{\sim}{\to}H_{\mathrm{prim}}^{n-k}(X,\mathbb{C}).
\]
The Cayley trick is about translating a computation of the cohomology of the complement of a complete intersection into a computation of the cohomology of the complement of a hypersurface in a bigger space. Let $\mathcal{E}:=\mathcal{O}_{\mathbf{P}^n}(d_1)\oplus\cdots\oplus\mathcal{O}_{\mathbf{P}^n}(d_k)$ be the locally free sheaf of $\mathcal{O}_{\mathbf{P}^n}$-modules with rank $k$. Let $\mathbf{P}$ be the projective bundle associated to $\mathcal{E}$ with fiber $\mathbf{P}^{k-1}$ over $\mathbf{P}^n$. Then $\mathbf{P}(\mathcal{E})$ is the smooth projective toric variety whose homogeneous coordinate ring is given by $A$. Then $S$ defines a hypersurface $X_S$ in $\mathbf{P}(\mathcal{E})$. The natural projection map $\mathbf{P}(\mathcal{E})\to\mathbf{P}^n$ induces a morphism $\mathrm{pr}_1:\mathbf{P}(\mathcal{E})\setminus X_S \to\mathbf{P}^n\setminus X$ which can be checked to be a homotopy equivalence. One can check that
\[
	s(\underline{x})=\left(\underline{x},G_1(\underline{x})^{e_1-1}\overline{G_1(\underline{x})}^{e_1},\dots,G_k(\underline{x})^{e_k-1}\overline{G_k(\underline{x})}^{e_k}\right),\quad e_i=\tfrac{\mathrm{lcm}(d_1,\dots,d_k)}{d_i}
\]
is a well-defined section of $\mathrm{pr}_1$. Hence there exists an isomorphism
\[
	s^{*}:H^{n+k-1}(\mathbf{P}(\mathcal{E})\setminus X_S,\mathbb{C})\stackrel{\sim}{\to} H^{n+k-1}(\mathbf{P}^n\setminus X,\mathbb{C}).
\]
		
\begin{definition} 
We define a sequence of maps as follows:
	\begin{enumerate}[(i)]
		\item Let $\mu:(\mathcal{A}_{c_X}^\bullet,Q_S+\Delta)\to\left(\left(\Omega_A^\bullet\right)_0[-N],dS+d\right)$ be the cochain map defined by
		\[
			\mu(\underline{q}^{\underline{u}}\eta_{i_1}\cdots\eta_{i_\ell})=(-1)^{i_1+\cdots+i_\ell+\ell+1}\underline{q}^{\underline{u}}dq_1\cdots\widehat{dq_{i_1}}\cdots\widehat{dq_{i_\ell}}\cdots dq_N
		\]
		where $\underline{q}^{\underline{u}}= q_1^{u_1} \cdots q_N^{u_N}$.
		\item Let $\rho:\left(\left(\Omega_A^\bullet\right)_0,dS+d\right)\to\left(\left(\Omega_{A[S^{-1}]}^\bullet\right)_{0,(0)},d\right)$ be the cochain map\footnote{We found this formula by trial and error. A simple computation confirms that it is indeed a cochain map.} defined by
		\[
			\rho(\underline{x}^{\underline{u}}\underline{y}^{\underline{v}}d \underline{x}_{\underline{\alpha}}\wedge d\underline{y}_{\underline{\beta}})=(-1)^{|\underline{v}|+|\underline{\beta}|-1}(|\underline{v}|+|\underline{\beta}|-1)!\frac{\underline{x}^{\underline{u}}\underline{y}^{\underline{v}}}{S^{|\underline{v}|}}d \underline{x}_{\underline{\alpha}}\wedge\frac{d \underline{y}_{\underline{\beta}}}{S^{|\underline{\beta}|}},
		\]
		where $\underline{x}^{\underline{u}} = x_0^{u_0} \cdots x_n^{u_n}$, $\underline{y}^{\underline{v}}=y_1^{v_1}\cdots y_k^{v_k}$, $d \underline{x}_{\underline{\alpha}}=dx_{\a_1}\wedge \cdots \wedge dx_{\a_i}$, and $d\underline{y}_{\underline{\beta}}=dy_{\b_1} \wedge \cdots \wedge dy_{\b_j}$ with  $|\ud v| =v_{1} + \cdots + v_{k}$ and $|\underline{\beta}|=j$.
		\item Let $\theta_{\ch}$ be the contraction operator with the vector field $\sum_{i=1}^N\ch(q_i)q_i\frac{\partial}{\partial q_i}$, and $\theta_{\wt}$ be the contraction operator with the vector field $\sum_{i=1}^N\wt(q_i)q_i\frac{\partial}{\partial q_i}$.
	\end{enumerate}
\end{definition}
Therefore, we have the following maps:
\[
	\begin{tikzcd}
		\mathcal{A}_{c_X}^0 \arrow[r,"\mu"] & \left(\Omega^N_{A}\right)_0 \arrow[r,"\rho"] & \left(\Omega^N_{A[S^{-1}]}\right)_{0,(0)} \arrow[r,"\theta_{wt}\circ\theta_{ch}"] & \Omega_{B}^{N-2} \arrow[r,"\textrm{quotient}"] & \Omega_B^{N-2}/d(\Omega_B^{N-3})
	\end{tikzcd}
\]
and
\[
	\begin{tikzcd}
		\Omega_B^{N-2}/d(\Omega_B^{N-3})=H_{\mathrm{dR}}^{N-2}(\mathbf{P}(\mathcal{E})\setminus X_S)\arrow[r,"\sim","s^*"'] & H_{\mathrm{dR}}^{N-2}(\mathbf{P}^n\setminus X_{\underline{G}})\arrow[r,"\sim","\mathrm{Res}_{\underline{G}}"'] & H_{\mathrm{prim}}^{n-k}(X_{\underline{G}},\mathbb{C})
	\end{tikzcd}
\]
where $B:=\mathbb{C}[\underline{q},S^{-1}]_{0,(0)}$ and $\mathbf{P}(\mathcal{E})\setminus X_S$ is a smooth affine variety whose coordinate ring is given by $B$. Let $\varphi:\mathcal{A}_{c_X}^0\to H_{\mathrm{prim}}^{n-k}(X_{\underline{G}},\mathbb{C})$ be the composition of the above $\mathbb{C}$-linear maps.

\begin{theorem}\label{dgbvm}
	The map $\varphi$ is surjective and $\mathrm{ker}\varphi\simeq(Q_S+\Delta)(\mathcal{A}^{-1}_{c_X})$. Thus we have an induced isomorphism $\varphi$ (using the same notation) of finite dimensional $\mathbb{C}$-vector spaces
	\[
		\begin{tikzcd}
			\mathcal{A}_{c_X}^0/(Q_S+\Delta)(\mathcal{A}^{-1}_{c_X})\arrow[r,"\sim","\varphi"'] & H_{\mathrm{prim}}^{n-k}(X_{\underline{G}},\mathbb{C}).
		\end{tikzcd}
	\]
\end{theorem}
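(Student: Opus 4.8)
The plan is to establish that the composite $\mathbb{C}$-linear map $\varphi = \mathrm{Res}_{\underline{G}} \circ s^* \circ (\text{quotient}) \circ (\theta_{\wt}\circ\theta_{\ch}) \circ \rho \circ \mu$ is surjective with kernel exactly $(Q_S+\Delta)(\mathcal{A}^{-1}_{c_X})$, by analyzing each arrow in the defining composition. First I would observe that $\mathrm{Res}_{\underline{G}}$ and $s^*$ are already known isomorphisms (from the Gysin sequence and the Cayley-trick homotopy equivalence discussed just above the statement), so it suffices to understand the three algebraic arrows $\mu$, $\rho$, and $\theta_{\wt}\circ\theta_{\ch}$ together with the final passage to $\Omega_B^{N-2}/d(\Omega_B^{N-3}) = H^{N-2}_{\mathrm{dR}}(\mathbf{P}(\mathcal{E})\setminus X_S)$. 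Since each of $\mu$, $\rho$, and the contraction operators is declared to be a cochain map, $\varphi$ is well-defined on cohomology, and the content is to show the induced map on $H^0(\mathcal{A}_{c_X}, K_S)$ is bijective onto $H^{N-2}_{\mathrm{dR}}(\mathbf{P}(\mathcal{E})\setminus X_S)$ in charge $0$, weight $0$.

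The key intermediate step is to identify $H^0(\mathcal{A}_{c_X}^\bullet, Q_S+\Delta)$ with the de Rham cohomology of the affine variety $\mathbf{P}(\mathcal{E})\setminus X_S$ via the twisted de Rham complex $(\Omega^\bullet_A, dS+d)$. For this I would argue in two stages. Stage one: $\mu$ identifies $(\mathcal{A}_{c_X}^\bullet, Q_S+\Delta)$ with the charge-zero part of the Koszul-type twisted complex $((\Omega_A^\bullet)_0[-N], dS+d)$; this is essentially a formal identification of the contraction-with-$dq_i$ model against the wedge-with-$dq_i$ model, the sign $(-1)^{i_1+\cdots+i_\ell+\ell+1}$ being exactly what makes $Q_S+\Delta$ correspond to $dS+d$. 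Stage two: $\rho$ passes from the twisted complex on $A$ to the ordinary de Rham complex on the localization $A[S^{-1}]$ in the appropriate graded piece; the factorials and powers of $S$ in the formula for $\rho$ are the standard device (as in Griffiths–Dwork, or Dimca's treatment of the complement of a hypersurface) converting $dS+d$-cohomology into $d$-cohomology of the complement, via the identity that multiplication by the "Euler" relation lets one replace $dS\wedge\omega$ by $d$ of something with an extra power of $S^{-1}$. Then the contraction operators $\theta_{\wt}\circ\theta_{\ch}$ pick out, inside $(\Omega^N_{A[S^{-1}]})_{0,(0)}$, a representative of the corresponding class in $\Omega^{N-2}_B$ where $B = \mathbb{C}[\underline{q},S^{-1}]_{0,(0)}$ is the coordinate ring of the toric variety complement, and the quotient map lands in $H^{N-2}_{\mathrm{dR}}$. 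I would verify that at the level of cohomology the composite $\theta_{\wt}\circ\theta_{\ch}$ followed by the quotient is an isomorphism onto $H^{N-2}_{\mathrm{dR}}(\mathbf{P}(\mathcal{E})\setminus X_S)$ — this amounts to the observation that on the smooth toric variety $\mathbf{P}(\mathcal{E})$ of dimension $N-2$, taking the two contractions with the two Euler vector fields is the standard way to descend top-degree forms on the affine cone (with $S^{-1}$) to the bona fide de Rham complex of the quasi-projective variety.

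I expect the main obstacle to be the cohomological exactness of the arrow $\rho$: proving not merely that $\rho$ is a cochain map (which the authors concede was found "by trial and error"), but that it induces an isomorphism on the relevant charge/weight-graded cohomology, so that no classes are lost or gained when passing from the twisted complex $(\Omega^\bullet_A, dS+d)$ to $(\Omega^\bullet_{A[S^{-1}]}, d)$. The clean way to handle this is to set up a spectral-sequence or filtration argument using the weight filtration $F^\bullet_{\wt}\mathcal{A}$ introduced in \eqref{wf}: on the associated graded, $dS+d$ degenerates (since $\Delta$ strictly decreases weight), one computes the cohomology of each graded piece as a Koszul homology of the Jacobian ideal of $S$ (using that $X_{\underline{G}}$ is a smooth complete intersection, hence $S$ defines a smooth — though non-isolated — singularity locus in the relevant sense), and then one checks the spectral sequence degenerates at the expected page. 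The finite-dimensionality of $H^{n-k}_{\mathrm{prim}}(X_{\underline{G}},\mathbb{C})$ then follows a posteriori, and the surjectivity plus the exact identification of the kernel as $(Q_S+\Delta)(\mathcal{A}^{-1}_{c_X})$ drop out from the composite being a composition of isomorphisms on cohomology with a surjection at the chain level whose kernel is a coboundary. One should double-check the charge bookkeeping throughout: the background charge $c_X = \sum d_i - (n+1) = 0$ in the Calabi-Yau case is what pins down the correct graded piece $\mathcal{A}_{c_X}$, and this vanishing should be used to see that the output genuinely lands in weight $0$, charge $0$ de Rham cohomology.
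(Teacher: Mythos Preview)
Your outline is reasonable and in fact considerably more detailed than what the paper itself provides. The paper gives \emph{no proof} of this theorem: immediately after the statement it simply remarks that ``the above explicit description of the map $\varphi$ in terms of series of cochain maps is our explicit reformulation of the results known to experts (see Theorem~1 in \cite{Dim95} and see \cite{Gr69}, \cite{Ko91}, and \cite{T}).'' In other words, the paper's contribution is the explicit chain-level construction of $\mu$, $\rho$, and $\theta_{\wt}\circ\theta_{\ch}$; the fact that the induced map on cohomology is an isomorphism is attributed to the literature (Dimca, Griffiths, Konno, Terasoma), not proved afresh. Your proposal, by contrast, sketches an honest proof via spectral sequences on the weight filtration and Koszul exactness from the smoothness hypothesis, which is exactly what those references do in various guises.

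One correction: your final paragraph invokes the Calabi--Yau condition $c_X=0$, but this theorem is stated and needed for \emph{arbitrary} $c_X$ (the Calabi--Yau hypothesis only enters the paper from subsection~\ref{sec2.8} onward). The map $\mu$ shifts charge by $\sum_i \ch(q_i) = -c_X$ regardless, so charge-$c_X$ elements of $\mathcal{A}$ land in the charge-$0$ piece of $\Omega_A^\bullet$ automatically; nothing about $c_X=0$ is used here. This is a minor slip and does not affect the structure of your argument.
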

Moreover, $\varphi$ sends the increasing filtration in \eqref{wf} to the increasing Hodge filtration of $H^{n-k}_{\pr}(X,\bC)$. The above explicit description of the map $\varphi$ in terms of series of cochain maps is our explicit reformulation\footnote{We could not find any reference which contains explicit descriptions of cochain maps realizing the isomorphism between $H_{\mathrm{prim}}^{n-k}(X_{\underline{G}},\mathbb{C})$ and $\mathcal{A}_{c_X}^0/(Q_S+\Delta)(\mathcal{A}^{-1}_{c_X})$. } of the results known to experts (see Theorem 1 in \cite{Dim95} and see \cite{Gr69}, \cite{Ko91}, and \cite{T}). This reformulation provides us an algebraic BV formalism for cohomologies and period integrals of smooth projective complete intersection varieties. This BV formalism will enable us to construct explicit algorithms for formal $F$-manifolds, weak primitive forms, formal Frobenius manifolds, and the modified higher residue pairings.

\subsection{\texorpdfstring{The dGBV algebra associated to $X$}{The dGBV algebra associated to X}}\label{sec2.5}
We explain that the quintuple $(\mathcal{A},\cdot,K_S,\ell_2^{K_S},Q_S)$ is a dGBV algebra. We briefly review the definitions of G-algebras, BV-algebras, GBV-algebras, and dGBV-algebras.

\begin{definition}\label{bvd}
	Let $k$ be a field. Let $(\cC,\cdot)$ be a unital $\Z$-graded super-commutative and associative $k$-algebra.
	Let $[\cdot, \cdot]: \cC \otimes \cC \to \cC$ be a bilinear map of degree 1.
	\begin{enumerate}[(a)]
		\item $(\cC, \cdot, [\cdot, \cdot])$ is called a G-algebra (Gerstenhaber algebra) over $k$ if
		\begin{align*}
			\quad [a, b] &= (-1)^{|a||b|}[b,a],\\
			[a, [b,c]]&= (-1)^{|a|+1}[[a,b],c]+(-1)^{(|a|+1)(|b|+1)} [b, [a,c]],\\
			\quad [a,b \cdot c]&= [a, b] \cdot c +(-1)^{(|a|+1)\cdot |b|} b \cdot [a,c],
		\end{align*}
		for any homogeneous elements $a, b, c \in \cC.$
		\item $(\cC,\cdot, K,\ell_2^{K})$ is called a GBV(Gerstenhaber-Batalin-Vilkovisky)-algebra\footnote{$(\cC,\cdot, K)$ is called a BV algebra, if $(\cC,\cdot, K,\ell_2^{K})$ is a GBV algebra} over $k$ where
		\begin{align}\label{elltwo}
			\ell_2^{K}(a,b):= K(a \cdot b)-K(a)\cdot b -(-1)^{|a|} a\cdot K(b), \quad a,b \in \cC,
		\end{align}
		if $(\cC, K, \ell_2^{K})$ is a shifted DGLA(differential graded Lie algebra) and $(\cC, \cdot, \ell_2^{K})$ is a G-algebra,
		\item $(\cC, \cdot, K, \ell_2^{K}, Q)$, where $Q:\cC \to \cC$ is a linear map of degree 1, is called a dGBV(differential Gerstenhaber-Batalin-Vilkovisky) algebra if $(\cC, \cdot, K,  \ell_2^{K}(\cdot, \cdot))$ is a GBV algebra
		and $(\cC,\cdot,Q)$ is a cdga(commutative differential graded algebra), i.e.
		\[
			Q^2=0, \quad Q(a \cdot b) = Q(a) \cdot b + (-1)^{|a|} a \cdot Q(b), \quad a,b \in \cC.
		\]
	 \end{enumerate}
\end{definition}

We also introduce the $\mathbb{C}$-linear map
\[
	\Delta:= K-Q=\sum_{i=1}^N  \pa{q_i} \pa{\eta_i}:\mathcal{A} \to \mathcal{A}.
\]
$\Delta$ is also a differential of degree 1 ($Q$ and $K$ also have degree 1), i.e. $\Delta^2=0$. Therefore we have
\[
	\Delta Q +Q \Delta=0.
\]
Then a straightforward (lengthy) computation shows the following proposition:
\begin{proposition}\label{dgbvp}
	The quintuple $(\mathcal{A}, \cdot, K_S, \ell_2^{K_S}, Q_S)$ defined in \eqref{dgbv} is a dGBV algebra over $\bC$ and so is $(\mathcal{A}, \cdot, \Delta, \ell_2^{\Delta}, Q_S)$.
\end{proposition}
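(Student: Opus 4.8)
The plan is to verify directly that the quintuple $(\mathcal{A}, \cdot, K_S, \ell_2^{K_S}, Q_S)$ satisfies all the axioms in Definition \ref{bvd}, reducing everything to the concrete differential operators defined in \eqref{dgbv}. The key structural observation is that $\Delta = \sum_i \partial_{q_i}\partial_{\eta_i}$ is a second-order differential operator (in the graded sense) that annihilates constants and the variables $q_i, \eta_i$, while $Q_S = \sum_i (\partial S/\partial q_i)\partial_{\eta_i}$ is a first-order operator (a graded derivation). Because $Q_S$ is a derivation, its $\ell_2$-descendant vanishes: $\ell_2^{Q_S}(a,b) = 0$. Hence $\ell_2^{K_S} = \ell_2^{\Delta}$, and everything about the bracket depends only on $\Delta$. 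This immediately shows it suffices to prove the statement for $\Delta$ in place of $K_S$ at the level of the bracket, and then separately check that $K_S$ is itself a differential, which follows from $K_S^2 = (Q_S+\Delta)^2 = Q_S^2 + (Q_S\Delta + \Delta Q_S) + \Delta^2$; here $Q_S^2 = 0$ and $\Delta^2 = 0$ are elementary, and the cross term $Q_S\Delta + \Delta Q_S = 0$ is verified by a short computation using $\partial^2 S/\partial q_i\partial q_j = \partial^2 S/\partial q_j\partial q_i$ together with the sign rules for the odd variables.

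First I would establish that $(\mathcal{A}, \cdot, Q_S)$ is a cdga: $Q_S$ is visibly a graded derivation of degree $1$ (it is a sum of terms $(\partial S/\partial q_i)\,\partial_{\eta_i}$, each of which is a derivation since $\partial_{\eta_i}$ is an odd derivation and $\partial S/\partial q_i \in \mathcal{A}^0$), and $Q_S^2 = 0$ follows from $\partial_{\eta_i}\partial_{\eta_j} = -\partial_{\eta_j}\partial_{\eta_i}$ and the symmetry of second partials of $S$ in the $q$-variables, exactly as for the Koszul differential. Next I would verify the GBV axioms for $(\mathcal{A}, \cdot, \Delta, \ell_2^{\Delta})$. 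The standard fact here is that for a super-commutative algebra a degree-one operator $\Delta$ with $\Delta^2 = 0$ that is a "differential operator of order $\le 2$" makes $(\mathcal{A},\cdot,\Delta)$ a BV algebra, and the associated bracket $\ell_2^{\Delta}$ automatically satisfies the Gerstenhaber (odd Poisson) axioms — graded antisymmetry, graded Jacobi, and the Leibniz rule $\ell_2^{\Delta}(a, bc) = \ell_2^{\Delta}(a,b)c + (-1)^{(|a|+1)|b|}b\,\ell_2^{\Delta}(a,c)$. One can either cite this as classical (Koszul, Getzler) or unwind it by the explicit formula $\ell_2^{\Delta}(a,b) = \sum_i \big( \partial_{q_i}(a)\,\partial_{\eta_i}(b) + (-1)^{|a|}\partial_{\eta_i}(a)\,\partial_{q_i}(b)\big)$ obtained by expanding $\Delta(ab)$ via the graded Leibniz rule for each $\partial_{q_i}$ and $\partial_{\eta_i}$; from this closed form the Gerstenhaber identities and the fact that $\ell_2^{\Delta}$ is a shifted Lie bracket follow by direct (if tedious) computation, mirroring the canonical odd symplectic bracket on $\mathbb{C}[\underline q][\underline \eta]$.

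Having $\ell_2^{\Delta} = \ell_2^{K_S}$ and knowing $(\mathcal{A},\cdot,\ell_2^{\Delta})$ is a G-algebra, it remains to note that $(\mathcal{A}, K_S, \ell_2^{K_S})$ is a shifted DGLA: $K_S$ is a differential (shown above), and the compatibility $K_S\,\ell_2^{K_S}(a,b) = \ell_2^{K_S}(K_S a, b) + (-1)^{|a|+1}\ell_2^{K_S}(a, K_S b)$ is exactly the statement that $K_S$ is a derivation of the bracket, which is equivalent to $K_S^2$ being a derivation — true since $K_S^2 = 0$. (Equivalently: $\ell_2^{K_S}$ is the $\ell_2$-descendant of the square-zero operator $K_S$, and the descendant of a square-zero second-order operator always gives a DGLA; this is the descendant-equation formalism.) The same argument with $K_S$ replaced by $\Delta$ gives that $(\mathcal{A},\cdot,\Delta,\ell_2^{\Delta},Q_S)$ is a dGBV algebra, using additionally that $Q_S\Delta + \Delta Q_S = 0$ so that $Q_S$ and $\Delta$ are compatible differentials. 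I expect the main obstacle to be purely bookkeeping: keeping the Koszul signs straight when expanding $\Delta(abc)$ and $\Delta(\ell_2^{\Delta}(a,b)\cdot c)$ to check the graded Jacobi identity and the Leibniz rule for the bracket — there is no conceptual difficulty, since $\mathcal{A}$ is just a polynomial algebra in even variables $\underline q$ tensored with an exterior algebra in odd variables $\underline\eta$, and $\Delta$ is the standard odd Laplacian, but the sign verifications are lengthy, which is why the paper phrases it as "a straightforward (lengthy) computation."
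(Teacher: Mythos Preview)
Your proposal is correct and follows exactly the approach the paper has in mind: the paper does not give a detailed proof but simply states that ``a straightforward (lengthy) computation'' establishes the result, and your outline is precisely such a computation --- verifying $Q_S$ is a derivation (so $\ell_2^{K_S}=\ell_2^{\Delta}$), checking $K_S^2=0$ via $Q_S\Delta+\Delta Q_S=0$, and confirming the Gerstenhaber axioms for the odd Poisson bracket $\ell_2^{\Delta}$ on the free super-commutative algebra $\mathbb{C}[\underline q][\underline\eta]$.
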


\subsection{\texorpdfstring{The 0-th cohomology of ($\mathcal{A},K_S$) and the Jacobian ideal of $S$}{The 0-th cohomology of (AX,KX) and the Jacobian ideal of S}} \label{sec2.6}
Associated with $E_{\ch}$, we define
\[
	R:= \sum_{i=1}^N \ch(q_i) q_i \eta_i \in \mathcal{A}^{-1}_{0}.
\]
Note that, for any $f\in \mathcal{A}$,
\[
	\delta_Rf:=\ell^{K_S}_2(R,f)=\sum_{i=1}^N\textit{ch}(q_i)\left(q_i\frac{\partial}{\partial q_i}-\eta_i\frac{\partial}{\partial\eta_i}\right)f.
\]
Hence for any $f \in \mathcal{A}_\l$, we have $\d_R f = \l f$. Note also that $Q_S R =0$, while
\[
	K_S R =- c_X, \qquad c_X:=\sum_{i=1}^N-\ch(q_i)=\sum_{i=1}^k d_i -n-1=\hbox{the background charge}.
\]
Note that $c_X=0$ means that $X$ is a Calabi-Yau variety. Since $K_S$ preserves the charge grading, we have the following proposition.

\begin{proposition}
	For each charge $\l \in \Z$, we have a cochain subcomplex $(\mathcal{A}_\l, K_S)$.
\end{proposition}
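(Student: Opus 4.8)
The plan is to verify the only nontrivial point: that the differential $K_S$ maps the charge-$\lambda$ subspace $\mathcal{A}_\lambda$ into itself, so that $(\mathcal{A}_\lambda, K_S)$ is a subcomplex of $(\mathcal{A}, K_S)$. Once $K_S$ preserves $\mathcal{A}_\lambda$, the identity $K_S^2 = 0$ (which holds on all of $\mathcal{A}$, since $K_S$ is a differential in the dGBV algebra of Proposition \ref{dgbvp}) restricts to give $K_S^2 = 0$ on $\mathcal{A}_\lambda$, and the inclusion $(\mathcal{A}_\lambda, K_S) \hookrightarrow (\mathcal{A}, K_S)$ is a cochain map.

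First I would recall that $\mathcal{A} = \bigoplus_{\lambda \in \mathbb{Z}} \mathcal{A}_\lambda$ is the eigenspace decomposition for the Euler vector field $E_{\ch}$, i.e. $u \in \mathcal{A}_\lambda$ if and only if $E_{\ch}(u) = \lambda u$, and $E_{\ch}(M) = \ch(M) M$ for each monomial $M$. Hence it suffices to show that $K_S$ commutes with $E_{\ch}$ as operators on $\mathcal{A}$: if $[E_{\ch}, K_S] = 0$, then for $u \in \mathcal{A}_\lambda$ we get $E_{\ch}(K_S u) = K_S(E_{\ch} u) = \lambda K_S u$, so $K_S u \in \mathcal{A}_\lambda$.

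To check $[E_{\ch}, K_S] = 0$, I would treat $\Delta$ and $Q_S$ separately since $K_S = Q_S + \Delta$. For $\Delta = \sum_i \tfrac{\partial}{\partial q_i}\tfrac{\partial}{\partial \eta_i}$: applying $\tfrac{\partial}{\partial q_i}$ lowers the $q$-charge contribution by $\ch(q_i)$ and applying $\tfrac{\partial}{\partial \eta_i}$ lowers the $\eta$-charge contribution by $\ch(\eta_i) = -\ch(q_i)$, so the net change in charge is $0$; equivalently $\ch$ of each term $\tfrac{\partial}{\partial q_i}\tfrac{\partial}{\partial \eta_i}$ is $-\ch(q_i) - \ch(\eta_i) = 0$, so $\Delta$ is charge-homogeneous of degree $0$ and commutes with $E_{\ch}$. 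For $Q_S = \sum_i \tfrac{\partial S}{\partial q_i}\tfrac{\partial}{\partial \eta_i}$: since $S = S(\underline{q}) \in \mathcal{A}^0_{0,(1)}$ has charge $0$, the derivative $\tfrac{\partial S}{\partial q_i}$ has charge $-\ch(q_i)$, and multiplying by it raises charge by $-\ch(q_i)$, while $\tfrac{\partial}{\partial \eta_i}$ lowers charge by $\ch(\eta_i) = -\ch(q_i)$; so each term, hence $Q_S$, has net charge $-\ch(q_i) - (-\ch(q_i)) = 0$ — wait, more carefully: $\tfrac{\partial}{\partial \eta_i}$ changes charge by $-\ch(\eta_i) = \ch(q_i)$, and multiplication by $\tfrac{\partial S}{\partial q_i}$ changes charge by $\ch(q_i)^{-1}$... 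I would phrase it cleanly via the derivation property: $E_{\ch}$ is a derivation, $E_{\ch}(S) = 0 \cdot S$ since $S$ has charge $0$, and one computes $[E_{\ch}, \tfrac{\partial}{\partial \eta_i}] = -\ch(\eta_i)\tfrac{\partial}{\partial \eta_i} = \ch(q_i)\tfrac{\partial}{\partial \eta_i}$ while $[E_{\ch}, \tfrac{\partial S}{\partial q_i} \,\cdot\,] $ acts as multiplication by $E_{\ch}(\tfrac{\partial S}{\partial q_i}) = -\ch(q_i)\tfrac{\partial S}{\partial q_i}$; summing the two contributions in $[E_{\ch}, \tfrac{\partial S}{\partial q_i}\tfrac{\partial}{\partial \eta_i}]$ gives $(-\ch(q_i) + \ch(q_i))\tfrac{\partial S}{\partial q_i}\tfrac{\partial}{\partial \eta_i} = 0$. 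Therefore $[E_{\ch}, Q_S] = 0$ as well, and $[E_{\ch}, K_S] = 0$.

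There is no real obstacle here; the statement is an immediate bookkeeping consequence of the charge assignments $\ch(q_\mu) + \ch(\eta_\mu) = 0$ together with $S$ being charge-homogeneous of charge $0$. The only point requiring a moment's care is keeping track of signs/conventions in how $E_{\ch}$ interacts with the odd variables $\eta_i$ and their derivations, but since $\ch(\eta_\mu) = -\ch(q_\mu)$ is built into the definition precisely so that the weight- and charge-gradings behave this way, the cancellation is forced. I would therefore present the proof as: ``$K_S$ commutes with $E_{\ch}$ because $Q_S$ and $\Delta$ are each charge-homogeneous of charge $0$ (as $S$ has charge $0$ and $\ch(q_\mu) + \ch(\eta_\mu) = 0$); hence $K_S$ preserves each charge eigenspace $\mathcal{A}_\lambda$, and $K_S^2 = 0$ restricts from $\mathcal{A}$.''
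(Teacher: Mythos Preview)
Your argument is correct and matches the paper's approach: the paper states this proposition without proof, treating it as an immediate consequence of the fact that $K_S$ preserves the charge grading (which in turn follows from $\ch(S)=0$ and $\ch(q_\mu)+\ch(\eta_\mu)=0$). Your verification that $[E_{\ch},Q_S]=[E_{\ch},\Delta]=0$ simply spells out this implicit reasoning; aside from the informal ``wait, more carefully'' aside (which you should excise in a final write-up), the content is exactly what is needed.
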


It follows that the cohomology $H=H(\mathcal{A},K_S)$ of the cochain complex $(\mathcal{A}, K_S)$ has a charge decomposition
\[
	H =\bigoplus_{\l \in \Z} H_\l, \qquad H_\l := H(\mathcal{A}_\l,  K_S).
\]
\begin{proposition} \label{chc}
	The cohomology $H =H(\mathcal{A}, K_S)$ is concentrated in charge $c_X$, i.e., $H = H_{c_X}$. In particular, we have an isomorphism
	\[
		\mathcal{A}_{c_X}^0/(Q_S+\Delta)(\mathcal{A}_{c_X}^{-1})\simeq\mathcal{A}^0/(Q_S+\Delta)(\mathcal{A}^{-1}).
	\]
\end{proposition}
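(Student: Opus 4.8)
The plan is to show that the cohomology $H(\mathcal{A},K_S)$ vanishes in every charge $\lambda \neq c_X$, after which the claimed isomorphism is automatic (the direct sum decomposition $H = \bigoplus_\lambda H_\lambda$ collapses to the single summand $H_{c_X}$, and since $H^0 = \mathcal{A}^0/K_S(\mathcal{A}^{-1})$ is concentrated in charge $c_X$ we get $\mathcal{A}^0_{c_X}/K_S(\mathcal{A}^{-1}_{c_X}) \xrightarrow{\sim} \mathcal{A}^0/K_S(\mathcal{A}^{-1})$). The key tool is the element $R = \sum_i \mathrm{ch}(q_i) q_i \eta_i \in \mathcal{A}^{-1}_0$ and the identity $\delta_R f = \ell_2^{K_S}(R,f) = \lambda f$ for $f \in \mathcal{A}_\lambda$, together with $K_S R = -c_X$ and $Q_S R = 0$. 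The standard "Cartan homotopy" trick applies: for $f \in \mathcal{A}_\lambda$ a $K_S$-cocycle, expand
\[
	\ell_2^{K_S}(R,f) = K_S(R\cdot f) - K_S(R)\cdot f - (-1)^{|R|} R\cdot K_S(f) = K_S(R\cdot f) + c_X\cdot f,
\]
using $K_S R = -c_X$, $|R| = -1$, and $K_S f = 0$. Since the left side equals $\lambda f$, we obtain $(\lambda - c_X) f = K_S(R \cdot f)$, so $f$ is a coboundary whenever $\lambda \neq c_X$. This proves $H_\lambda = 0$ for $\lambda \neq c_X$.

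First I would record that $K_S$ preserves the charge grading (already established in the preceding proposition, giving the subcomplexes $(\mathcal{A}_\lambda, K_S)$) so that the decomposition $H = \bigoplus_\lambda H_\lambda$ makes sense. Next I would carefully verify the three properties of $R$: that $R \in \mathcal{A}^{-1}$ and has charge $0$ (immediate from $\mathrm{ch}(q_i) + \mathrm{ch}(\eta_i) = 0$), that $Q_S R = 0$ (because $Q_S$ only differentiates in the $\eta_i$ directions and $\partial R/\partial \eta_i = \mathrm{ch}(q_i) q_i$, so $Q_S R = \sum_i (\partial S/\partial q_i)\,\mathrm{ch}(q_i) q_i$ which is the Euler-type contraction of $S$; one checks this is zero using that $S = \sum_\ell y_\ell G_\ell$ is charge-homogeneous of charge $0$, i.e. $\sum_i \mathrm{ch}(q_i) q_i \,\partial S/\partial q_i = \mathrm{ch}(S)\cdot S = 0$), and that $K_S R = \Delta R = \sum_i \partial/\partial q_i(\mathrm{ch}(q_i) q_i) = \sum_i \mathrm{ch}(q_i) = -c_X$. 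Then I would run the homotopy computation above on a cocycle $f \in \mathcal{A}_\lambda$ with $\lambda \neq c_X$, dividing by the nonzero scalar $\lambda - c_X$ to exhibit $f$ as $K_S\big(\tfrac{1}{\lambda-c_X} R\cdot f\big)$.

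The only real subtlety — and the step I would be most careful about — is the sign and the vanishing of $Q_S R$, i.e. confirming $\sum_i \mathrm{ch}(q_i) q_i\,\partial S/\partial q_i = 0$; this is precisely the statement that $S$ lies in $\mathcal{A}^0_{0,(1)}$ (charge $0$), which was noted earlier in the excerpt, so it is available. Everything else is bookkeeping: the Leibniz/descendant identity defining $\ell_2^{K_S}$, the degree of $R$, and the additivity of charge. Finally, for the "in particular" clause, I would note that $\mathcal{A}^0 = A$ decomposes over charge as $\bigoplus_\lambda A_\lambda$, that $K_S(\mathcal{A}^{-1})$ likewise decomposes, and that $H^0 = H^0_{c_X}$ forces $A_\lambda = K_S(\mathcal{A}^{-1}_\lambda)$ for $\lambda \neq c_X$; hence the inclusion $\mathcal{A}^0_{c_X}/K_S(\mathcal{A}^{-1}_{c_X}) \hookrightarrow \mathcal{A}^0/K_S(\mathcal{A}^{-1})$ is also surjective, proving the isomorphism.
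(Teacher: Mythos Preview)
Your proposal is correct and follows essentially the same route as the paper: both exploit the element $R=\sum_i \ch(q_i)q_i\eta_i$ and the $\ell_2^{K_S}$-descendant identity to obtain $(\lambda-c_X)f = K_S(R\cdot f)$ (the paper writes $K_S(f\cdot R)$, which differs only by a harmless sign) for any cocycle $f\in\mathcal{A}_\lambda$, forcing $H_\lambda=0$ when $\lambda\neq c_X$. Your additional verifications of $Q_S R=0$ and $\Delta R=-c_X$ are exactly the facts the paper records just before the proposition, and your deduction of the ``in particular'' isomorphism from the charge decomposition is the intended reading.
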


\begin{proof}
	From
	\begin{align*}
		K_S(f\cdot R) &=\ell_2^{K_S}(R, f) + K_SR\cdot f + K_Sf\cdot R\\
		&=\delta_R f - c_X f + K_Sf\cdot R\\
		&=(\lambda-c_X)f+K_Sf\cdot R~(\text{if}~f\in\mathcal{A}_\lambda),
	\end{align*}
	we see that any $f \in \mathcal{A}_\l\cap \Ker K_S$  belongs to $\Im K_S$ unless $\l=c_X$, since $ (\l -c_X)f= K_S(f\cdot R)$. 
\end{proof}

\begin{lemma}[Weak $Q_S\Delta$-lemma]\label{Q-D_Lemma}
	For $f\in\mathcal{A}_{c_X}^{-1}$, assume that $Q_S(f)=0$. Then $\Delta(f)=-Q_S(\Delta(v))$ for some $v\in\mathcal{A}_{c_X}^{-2}$.
\end{lemma}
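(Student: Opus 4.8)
The plan is to exploit the charge operator $R$ together with the identity $K_S R = -c_X$, which in the Calabi-Yau case ($c_X = 0$) degenerates so that $R$ becomes a $K_S$-cocycle of degree $-1$; this extra structure is what forces the otherwise-false $Q_S\Delta$-lemma to hold in the restricted form stated. First I would take $f \in \mathcal{A}_{c_X}^{-1}$ with $Q_S(f)=0$. Since $\mathcal{A}$ is a polynomial algebra in the $q_i$ and $\eta_i$, the operators $Q_S$ and $\Delta$ are very explicit, and I would try to produce $v$ by a direct homotopy-type construction rather than by an abstract spectral-sequence argument. The key observation is that on elements of charge $c_X$ the action of the charge Euler operator $\delta_R = \ell_2^{K_S}(R,\cdot)$ is multiplication by $c_X$, which is $0$; so $\delta_R f = 0$, and more generally $\delta_R$ annihilates everything in $\mathcal{A}_{c_X}$. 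Expanding $\delta_R = \sum_i \mathit{ch}(q_i)(q_i\partial_{q_i} - \eta_i\partial_{\eta_i})$ gives a relation between the "$q$-degree" and "$\eta$-degree" weighted by charges that constrains the monomials appearing in $f$.

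The main step is to use the graded commutation relations among $Q_S$, $\Delta$, and the operator $R\cdot(-)$ (multiplication by $R$). From $K_S(f\cdot R) = \ell_2^{K_S}(R,f) + (K_S R)\cdot f + (K_S f)\cdot R$ and $c_X = 0$, $Q_S f = 0$, $Q_S R = 0$, one computes $K_S(f\cdot R) = \Delta(f\cdot R) + (\Delta f)\cdot R$ on one hand; on the other hand $K_S(f\cdot R) = \delta_R f = 0$ for $f$ of charge $c_X=0$... wait — I must be careful, because $f\cdot R$ has charge $c_X$ as well but degree $-2$, so $K_S(f\cdot R)$ lives in degree $-1$. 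The honest approach is: since $H(\mathcal{A},K_S)$ is concentrated in charge $c_X$ and degree $0$ (Theorem~\ref{dgbvm} and Proposition~\ref{chc} identify $H^0$ with $H^{n-k}_{\mathrm{prim}}$, and the complex is a Koszul-type complex whose higher cohomology I would verify vanishes in degrees $<0$ in charge $c_X$), any $K_S$-cocycle in $\mathcal{A}_{c_X}^{-1}$ is a $K_S$-coboundary. Apply this to the cocycle condition: I would first show $K_S(\Delta f) = 0$ using $Q_S\Delta + \Delta Q_S = 0$, $Q_S f = 0$, and $\Delta^2 = 0$, so $\Delta f \in \mathcal{A}_{c_X}^0 \cap \ker K_S$; but that only gives $\Delta f = K_S(w)$ for some $w \in \mathcal{A}_{c_X}^{-1}$, which is weaker than wanted. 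To upgrade it, I would instead work one degree down: $f$ itself, being a $Q_S$-cocycle in degree $-1$, need not be a $Q_S$-coboundary, but $\Delta f$ is $K_S$-exact and I would track the weight filtration $F^\bullet_{\wt}$, on which $\Delta$ drops weight by one and $Q_S$ preserves it, running the argument on the associated graded to separate the $Q_S$ and $\Delta$ contributions.

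Concretely, I expect the cleanest route is: write $\Delta f = K_S v = Q_S v + \Delta v$ for some $v \in \mathcal{A}_{c_X}^{-1}$ (possible by vanishing of $H^0_{c_X}$ modulo the image — more precisely by exactness of $(\mathcal{A}_{c_X}, K_S)$ in degree $-1$... here I would need the auxiliary fact that $H^{-1}(\mathcal{A}, K_S) = 0$, which should follow from the Koszul resolution structure since $G_1,\dots,G_k$ together with the relations from $\partial S/\partial y_\ell = G_\ell$ form a regular-sequence-like configuration away from the singular locus). Then apply $\Delta$: $\Delta^2 f = 0 = \Delta Q_S v + \Delta^2 v = \Delta Q_S v = -Q_S \Delta v$, giving $Q_S(\Delta v) = 0$ — not yet the claim. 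Apply $Q_S$ to $\Delta f = Q_S v + \Delta v$: $Q_S \Delta f = Q_S \Delta v$, and $Q_S \Delta f = -\Delta Q_S f = 0$, so $Q_S \Delta v = 0$ as well; and $\Delta f - \Delta v = Q_S v$, i.e. $\Delta(f - v) = Q_S v$. Now I would feed $f - v$ (still a $Q_S$-something, of charge $c_X$, degree $-1$) back into the same machine, or better, observe $Q_S v$ is $Q_S$-exact and $\Delta$-closed of degree $0$, hence by the degree-$0$ statement equal to $Q_S$ of something... The hard part — and the real obstacle — is precisely this bookkeeping: squeezing out the exact form $\Delta f = -Q_S(\Delta v)$ rather than the naive $\Delta f = Q_S(\text{something}) + \Delta(\text{something})$. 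I anticipate this is handled by choosing $v$ in the \emph{weight-homogeneous} piece and using that $\Delta$ strictly lowers weight so the "$\Delta v$" correction term can be absorbed inductively down the finite weight filtration $F^0_{\wt} \subseteq F^1_{\wt} \subseteq \cdots$, with the base case being weight $0$ where $\Delta$ vanishes. So the proof is: reduce to weight-homogeneous $f$; solve $\Delta f = K_S v$; split by weight; and induct, using $H^{-1}(\mathcal{A}_{c_X}, K_S) = 0$ and $Q_S\Delta + \Delta Q_S = 0$ at each stage. The single delicate point I would flag is the vanishing $H^{-1}(\mathcal{A}_{c_X}, K_S)=0$; if that is not literally available I would instead derive the lemma directly from the $\partial\bar\partial$-type statement hidden in the isomorphism $\varphi$ of Theorem~\ref{dgbvm}.
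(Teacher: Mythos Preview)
Your approach has a genuine gap: you never locate the one structural fact that makes the lemma fall out in two lines, and the alternative routes you sketch do not close.

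The paper's proof works entirely with $Q_S$-cohomology in degree $-1$, not with $K_S$. The key input (cited from \cite{PP16}, Proposition 4.14) is that multiplication by $R=\sum_i \ch(q_i)q_i\eta_i$ gives an isomorphism $H^0(\mathcal{A}_{c_X},Q_S)\xrightarrow{\sim}H^{-1}(\mathcal{A}_{c_X},Q_S)$. Hence if $\{u_\alpha\}$ is a basis of $H^0(\mathcal{A}_{c_X},Q_S)$, then $\{Ru_\alpha\}$ is a basis of $H^{-1}(\mathcal{A}_{c_X},Q_S)$, and any $Q_S$-closed $f\in\mathcal{A}_{c_X}^{-1}$ can be written as $f=\sum_\rho a^\rho Ru_\rho + Q_S(v)$ with $v\in\mathcal{A}_{c_X}^{-2}$. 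Now a direct computation gives $\Delta(Ru_\rho)= -c_X u_\rho + E_{\ch}(u_\rho)=(-c_X+c_X)u_\rho=0$, so $\Delta f=\Delta Q_S(v)=-Q_S(\Delta v)$. That is the entire argument.

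Your proposal circles around $R$ but never uses it in this way. You compute $K_S(f\cdot R)$, which is not the relevant identity; the point is the description of $H^{-1}$ for $Q_S$, together with $\Delta(Ru_\rho)=0$. Your fallback plan via $K_S$-exactness is both unnecessary and insufficient: the equation $\Delta f = K_S v$ is \emph{trivially} solved by $v=f$ (since $Q_S f=0$ gives $K_S f=\Delta f$), so no cohomology vanishing is needed there --- but plugging $v=f$ into your subsequent manipulations yields only tautologies, and for a general $v$ the chain of identities you write never isolates the specific form $-Q_S(\Delta v)$. The weight-filtration induction you propose is plausible as a strategy but you do not supply an inductive step; the base case (weight $0$ implies $\Delta f=0$) is fine, but nothing in your write-up shows how knowing the lemma in weight $<w$ lets you conclude it in weight $w$. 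Finally, note that $H^{-1}(\mathcal{A}_{c_X},Q_S)$ is \emph{not} zero (it is isomorphic to $H^0$, which is $\bH$), so the ``exactness in degree $-1$'' you hope for does not hold for $Q_S$, and you would need a separate argument for $K_S$ that you do not give.
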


\begin{proof}
	Let $\{u_\alpha:\alpha\in I\}$ be a $\mathbb{C}$-basis of $\mathcal{A}_{c_X}^0/Q_S(\mathcal{A}_{c_X}^{-1})$. Because there is an isomorphism $H^0(\mathcal{A}_{c_X},Q_S)\simeq H^{-1}(\mathcal{A}_{c_X},Q_S)$ given by $u\mapsto Ru$ where $R=\sum_i\ch(q_i)q_i\eta_i$ (see \cite{PP16}, Proposition 4.14), $\{Ru_\alpha:\alpha\in I\}$ is a $\mathbb{C}$-basis of $H^{-1}(\mathcal{A}_{c_X},Q_S)$. Since $Q_S(f)=0$, $f$ can be written as
	\[
		f=\sum_{\rho\in I}a^\rho\cdot Ru_\rho+Q_S(v)
	\]
	for some $v\in\mathcal{A}_{c_X}^{-2}$. A direct computation shows that
	\[
		\Delta(Ru_\rho)=0.
	\]
	Thus, we have $\Delta(f)=\Delta(Q_S(v))=-Q_S(\Delta(v))$.
\end{proof}

\begin{lemma}\label{bfQ_lemma}
	We have an isomorphism
	\[
		\begin{tikzcd}
			\mathcal{A}_{c_X}^0/(Q_S+\Delta)(\mathcal{A}_{c_X}^{-1}) & \mathcal{A}_{c_X}^0/Q_S(\mathcal{A}_{c_X}^{-1})\arrow[l,"\sim"']
		\end{tikzcd}
	\]
	as $\mathbb{C}$-vector spaces.
\end{lemma}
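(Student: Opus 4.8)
The plan is to exhibit the natural surjection $\mathcal{A}_{c_X}^0/Q_S(\mathcal{A}_{c_X}^{-1}) \to \mathcal{A}_{c_X}^0/(Q_S+\Delta)(\mathcal{A}_{c_X}^{-1})$ and prove it is injective, using the weak $Q_S\Delta$-lemma (Lemma \ref{Q-D_Lemma}) together with the filtration by weight. First I would observe that $Q_S(\mathcal{A}_{c_X}^{-1}) \subseteq (Q_S+\Delta)(\mathcal{A}_{c_X}^{-1})$ is \emph{not} obvious, because $(Q_S+\Delta)$ applied to an element of $\mathcal{A}_{c_X}^{-1}$ produces a mixed-weight element, so I should instead set up the map in the other direction, or rather analyze both quotients as graded/filtered objects. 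A cleaner route: show directly that the inclusion $\ker(Q_S|_{\mathcal{A}^0_{c_X}}) \hookrightarrow \ker((Q_S+\Delta)|_{\mathcal{A}^0_{c_X}})$ — wait, these kernels in degree $0$ coincide since $\Delta$ and $Q_S$ both land in $\mathcal{A}^1_{c_X}=0$; every element of $\mathcal{A}^0$ is a cocycle for both differentials. So the real content is comparing the images $Q_S(\mathcal{A}_{c_X}^{-1})$ and $(Q_S+\Delta)(\mathcal{A}_{c_X}^{-1})$ inside $\mathcal{A}^0_{c_X}$.

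The key step is the following claim: $(Q_S+\Delta)(\mathcal{A}_{c_X}^{-1}) = Q_S(\mathcal{A}_{c_X}^{-1})$ as subspaces of $\mathcal{A}^0_{c_X}$. One inclusion requires showing that for every $g \in \mathcal{A}_{c_X}^{-1}$ there is $g' \in \mathcal{A}_{c_X}^{-1}$ with $(Q_S+\Delta)(g) = Q_S(g')$; the reverse requires, for every $g$, finding $g''$ with $Q_S(g) = (Q_S+\Delta)(g'')$. I would attack both by a weight-filtration induction. Write $g = \sum_{w\geq 0} g_{(w)}$ with $g_{(w)} \in \mathcal{A}^{-1}_{c_X,(w)}$, a finite sum. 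Since $Q_S$ preserves weight and $\Delta$ lowers it by $1$, one can successively correct: given a target $Q_S(g)$, attempt $g'' = g + (\text{correction})$ chosen so that $\Delta$ of the correction kills the unwanted top-weight discrepancy. The obstruction to each correction step is precisely an expression of the form $\Delta(\text{cocycle for }Q_S)$, and this is exactly where Lemma \ref{Q-D_Lemma} enters: it says such a $\Delta$ of a $Q_S$-cocycle (in degree $-1$, charge $c_X$) is $-Q_S(\Delta(v))$, i.e.\ lies in the image of $Q_S$ applied to something one degree lower, which feeds the induction. One must also use Proposition \ref{chc} to reduce from $\mathcal{A}^\bullet$ to $\mathcal{A}^\bullet_{c_X}$, and the isomorphism $H^0(\mathcal{A}_{c_X},Q_S)\simeq H^{-1}(\mathcal{A}_{c_X},Q_S)$ via $u\mapsto Ru$ already invoked in the proof of Lemma \ref{Q-D_Lemma}.

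Concretely I expect the cleanest formulation to go as follows. Define $\psi : \mathcal{A}_{c_X}^0/Q_S(\mathcal{A}_{c_X}^{-1}) \to \mathcal{A}_{c_X}^0/(Q_S+\Delta)(\mathcal{A}_{c_X}^{-1})$ by the identity on $\mathcal{A}_{c_X}^0$; this is well-defined once we know $Q_S(\mathcal{A}_{c_X}^{-1}) \subseteq (Q_S+\Delta)(\mathcal{A}_{c_X}^{-1})$, which is the inclusion ``$Q_S(g) = (Q_S+\Delta)(g'')$'' above. For surjectivity of $\psi$, note any class in the target has a representative in $\mathcal{A}^0_{c_X}$, so $\psi$ is onto tautologically. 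For injectivity, suppose $\bar a = 0$ in the target, i.e.\ $a = (Q_S+\Delta)(g)$ for some $g\in\mathcal{A}_{c_X}^{-1}$; I must show $a \in Q_S(\mathcal{A}_{c_X}^{-1})$, which is the other inclusion. Both inclusions are handled uniformly by the descending induction on the top weight of the auxiliary element, with Lemma \ref{Q-D_Lemma} supplying the inductive step; the base case is weight $0$, where $\Delta$ vanishes on $\mathcal{A}_{(0)}$ trivially after the contraction bookkeeping, so $Q_S+\Delta = Q_S$ there. The main obstacle I anticipate is making the weight-induction bookkeeping precise — in particular verifying that the ``correction'' produced by Lemma \ref{Q-D_Lemma} at each stage can itself be chosen in the correct charge $c_X$ and strictly lower weight, so that the induction terminates; this is a finiteness/degree-tracking argument rather than a deep one, but it is the step where one could easily make a sign or grading error.
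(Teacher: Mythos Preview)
Your central claim---that $(Q_S+\Delta)(\mathcal{A}_{c_X}^{-1}) = Q_S(\mathcal{A}_{c_X}^{-1})$ as subspaces of $\mathcal{A}^0_{c_X}$---is false. In the Calabi-Yau case $c_X=0$, take $g = x_0\eta_{k+1} \in \mathcal{A}^{-1}_0$; then $\Delta(g)=1$, so $K_S(g)=Q_S(g)+1$. If $K_S(g)$ lay in $Q_S(\mathcal{A}^{-1}_0)$ we would get $1\in Q_S(\mathcal{A}^{-1}_0)$, i.e.\ $1$ in the Jacobian ideal, contradicting the fact that $u_{\min}=1$ is a nonzero class in the $Q_S$-quotient. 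Hence $K_S(\mathcal{A}^{-1}_0)\not\subseteq Q_S(\mathcal{A}^{-1}_0)$. The reverse inclusion also fails (same $g$, reducing to $1\notin K_S(\mathcal{A}^{-1}_0)$), though establishing that already amounts to the paper's linear-independence step. Either way, the identity on $\mathcal{A}^0_{c_X}$ does not descend to a map between the two quotients, so your $\psi$ is not well-defined.

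The place your weight induction breaks is the invocation of Lemma~\ref{Q-D_Lemma}: that lemma controls $\Delta(f)$ only under the hypothesis $Q_S(f)=0$, but the obstruction you face at each step is $\Delta(g)$ for a \emph{general} $g$, and weight bookkeeping alone does not force $Q_S$-closedness. The paper's proof is organized differently and never attempts to compare the two images. It fixes a basis $\{e_\alpha\}$ of the $Q_S$-quotient and shows directly that the same elements are (i) linearly independent in the $K_S$-quotient---here the top-weight component $\eta_{(r+1)}$ of a hypothetical $K_S$-primitive \emph{is} automatically $Q_S$-closed, so Lemma~\ref{Q-D_Lemma} applies legitimately---and (ii) spanning, via a terminating iteration exploiting that $Q_S$ raises polynomial degree while $\Delta$ lowers it. The resulting isomorphism $\mathbf{q}$ is defined on basis elements by $e_\alpha\mapsto e_\alpha$ and is \emph{not} induced by the identity on cochains; the paper remarks explicitly afterward that $\mathbf{q}$ lives only at the level of cohomology.
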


\begin{proof}
	Let $\dim \mathcal{A}^0_{c_X}/Q_S(\mathcal{A}_{c_X}^{-1})=\mu$. We can choose $e_1,\dots,e_\mu\in \mathcal{A}^0_{c_X}$ such that $\{e_\alpha+\mathrm{Im}(Q_S):\alpha\in I\}$ is $\mathbb{C}$-basis of $\mathcal{A}^0_{c_X}/Q_S(\mathcal{A}_{c_X}^{-1})$.\par
	Suppose that
	\[
		\sum_{\alpha=1}^\mu a_\alpha\cdot e_\alpha\equiv0\mod Q_S+\Delta.
	\]
	We want to show $a_\alpha=0$ for all $\alpha$, i.e., $\{e_\alpha+\mathrm{Im}(Q_S+\Delta):\alpha\in I\}$ is linearly independent in $\mathcal{A}^0_{c_X}/(Q_S+\Delta)(\mathcal{A}_{c_X}^{-1})$.\par
	Let $R_{(i)}=\sum_{\wt(e_\alpha)=i}a_\alpha\cdot e_\alpha$. Therefore
	\[
		\sum_{\alpha=1}^\mu a_\alpha\cdot e_\alpha=R_{(0)}+R_{(1)}+\cdots+R_{(n-k)}=(Q_S+\Delta)(\eta_{(0)}+\eta_{(1)}+\cdots+\eta_{(n-k+1)}),
	\]
	where $\wt(\eta_{(i)})=i$. Note that $\wt(\Delta)=-1$ and $Q_S(\eta_{n-k+1})=0$.\par
	Assuming that $R_{(i)}\neq0$ for some $i$, we can choose the largest $r$ such that $R_{(r)}\neq0$. Then
	\[
		R_{(r)}=Q_S(\eta_{(r)})+\Delta(\eta_{(r+1)}).
	\]
	But $Q_S(\eta_{r+1})=R_{(r+1)}=0$. The weak $Q_S\Delta$-lemma \ref{Q-D_Lemma} says that $\Delta(\eta_{(r+1)})\in\mathrm{Im}(Q_S)$. Therefore,
	\[
		R_{(r)}\in\mathrm{Im}(Q_S),~\textrm{i.e.,}~R_{(r)}\equiv0\mod Q_S.
	\]
	Since $\{e_\alpha+\mathrm{Im}(Q_S):\alpha\in I\}$ is $\mathbb{C}$-basis of $\mathcal{A}^0_{c_X}/Q_S(\mathcal{A}_{c_X}^{-1})$, $a_\alpha=0$ for all $\a$ with $\wt(e_\alpha)=r$. It means that $R_{(r)}=0$, which contradicts to the assumption that $R_{(i)}\neq0$ for some $i$. Thus $R_{(i)}=0$ for all $i$. Therefore $a_\alpha=0$ for all $\alpha\in I$.\par
	Now we verify that $\{e_\alpha+\mathrm{Im}(Q_S+\Delta)\}$ spans $\mathcal{A}^0_{c_X}/(Q_S+\Delta)(\mathcal{A}_{c_X}^{-1})$. For that we write $x$ as
	\begin{align*}
		x&=\sum_{\alpha\in I}n_{\alpha,0}\cdot e_\alpha+Q_S(u_0),\quad n_{\alpha,0}\in\mathbb{C},~u_0\in\mathcal{A}_{c_X}^{-1},\\
		&=\sum_{\alpha\in I}n_{\alpha,0}\cdot e_\alpha-\Delta(u_0)+K_S(u_0).
	\end{align*}
	Then we can write $-\Delta(u_0)$ as
	\begin{align*}
		-\Delta(u_0)&=\sum_{\alpha\in I}n_{\alpha,1}\cdot e_\alpha+Q_S(u_1),\quad n_{\alpha,1}\in\mathbb{C},~u_1\in\mathcal{A}_{c_X}^{-1},\\
		&=\sum_{\alpha\in I}n_{\alpha,1}\cdot e_\alpha-\Delta(u_1)+K_S(u_1).
	\end{align*}
	We continue this process:
	\[
		-\Delta(u_k)=\sum_{\alpha\in I}n_{\alpha,k+1}\cdot e_\alpha+Q_S(u_{k+1}).
	\]
	Since the differential $Q_S$ increases the polynomial degree at least by $\min\{d_i:i=1,\dots,k\}$, but $\Delta$ decreases the polynomial degree by 2, this process stops after finite steps, i.e., $\Delta(u_k)=0$ for some $k$. This implies that
	\[
		x=\sum_{\alpha\in I}n_\alpha\cdot e_\alpha+K_S(u)
	\]
	for some $n_\alpha\in\mathbb{C}$ and $u\in\mathcal{A}_{c_X}^{-1}$.
\end{proof}

Denote the isomorphism in Lemma \ref{bfQ_lemma} by $\mathbf{q}:\mathcal{A}_{c_X}^0/Q_S(\mathcal{A}_{c_X}^{-1})\stackrel{\sim}{\to}\mathcal{A}_{c_X}^0/(Q_S+\Delta)(\mathcal{A}_{c_X}^{-1})$.
\begin{theorem}\label{Phi_isom}
	There is an isomorphism
	\[
	 	\Phi:=\varphi\circ\mathbf{q}: H^0(\mathcal{A}_{c_X},Q_S) \mapto{\sim} H^{n-k}_{\pr}(X,\bC).
	\]
\end{theorem}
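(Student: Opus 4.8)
The statement $\Phi := \varphi \circ \mathbf{q}$ is an isomorphism is essentially an immediate consequence of the two results it is built from, so the plan is simply to assemble them. First I would recall that Lemma \ref{bfQ_lemma} (whose isomorphism is named $\mathbf{q}$ in the line preceding the theorem) gives a $\mathbb{C}$-linear isomorphism
\[
\mathbf{q}:\mathcal{A}_{c_X}^0/Q_S(\mathcal{A}_{c_X}^{-1})\xrightarrow{\ \sim\ }\mathcal{A}_{c_X}^0/(Q_S+\Delta)(\mathcal{A}_{c_X}^{-1}),
\]
and that by definition $\mathcal{A}_{c_X}^0/Q_S(\mathcal{A}_{c_X}^{-1})=H^0(\mathcal{A}_{c_X},Q_S)$ since $\mathcal{A}_{c_X}$ is concentrated in nonpositive cohomological degrees, so the degree-$0$ piece of the $Q_S$-cohomology is just the cokernel of $Q_S:\mathcal{A}_{c_X}^{-1}\to\mathcal{A}_{c_X}^0$. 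Next I would recall that Theorem \ref{dgbvm} provides the isomorphism
\[
\varphi:\mathcal{A}_{c_X}^0/(Q_S+\Delta)(\mathcal{A}_{c_X}^{-1})\xrightarrow{\ \sim\ }H_{\mathrm{prim}}^{n-k}(X_{\underline G},\mathbb{C}),
\]
constructed there as the composite $\mathrm{Res}_{\underline G}\circ s^*\circ(\text{quotient})\circ(\theta_{\wt}\circ\theta_{\ch})\circ\rho\circ\mu$. The composite $\varphi\circ\mathbf{q}$ of two $\mathbb{C}$-linear isomorphisms is again a $\mathbb{C}$-linear isomorphism, and its source is $H^0(\mathcal{A}_{c_X},Q_S)$ while its target is $H^{n-k}_{\pr}(X,\bC)$; this gives exactly the claimed $\Phi$.

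The one point that deserves a sentence of care is the identification of the source of $\mathbf{q}$ with $H^0(\mathcal{A}_{c_X},Q_S)$: one must note that $(\mathcal{A}_{c_X},Q_S)$ is a cochain complex supported in degrees $-N$ through $0$, so that in top degree there is no differential out, and hence $H^0(\mathcal{A}_{c_X},Q_S)=\coker\bigl(Q_S:\mathcal{A}_{c_X}^{-1}\to\mathcal{A}_{c_X}^0\bigr)=\mathcal{A}_{c_X}^0/Q_S(\mathcal{A}_{c_X}^{-1})$. Likewise, by Proposition \ref{chc} one may harmlessly pass between $\mathcal{A}_{c_X}^0/(Q_S+\Delta)(\mathcal{A}_{c_X}^{-1})$ and $\mathcal{A}^0/(Q_S+\Delta)(\mathcal{A}^{-1})$, since $H(\mathcal{A},K_S)$ is concentrated in charge $c_X$; but this is not even needed for the bare statement, only for interpreting $\varphi$ as computing the full $K_S$-cohomology.

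There is no real obstacle here: the theorem is a packaging statement, and all the genuine work — the construction and bijectivity of $\varphi$ (Theorem \ref{dgbvm}, resting on the Gysin sequence, the Cayley trick, and the cochain maps $\mu,\rho,\theta_{\ch},\theta_{\wt}$) and the construction and bijectivity of $\mathbf{q}$ (Lemma \ref{bfQ_lemma}, resting on the weak $Q_S\Delta$-lemma \ref{Q-D_Lemma} and the termination argument using that $Q_S$ raises polynomial degree while $\Delta$ lowers it by $2$) — has already been done. So the proof is one line: \emph{$\Phi$ is a composition of the $\mathbb{C}$-linear isomorphisms $\mathbf{q}$ of Lemma \ref{bfQ_lemma} and $\varphi$ of Theorem \ref{dgbvm}, hence an isomorphism, after identifying $H^0(\mathcal{A}_{c_X},Q_S)$ with $\mathcal{A}_{c_X}^0/Q_S(\mathcal{A}_{c_X}^{-1})$}. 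If anything, the only thing worth double-checking is that the two named maps genuinely have matching middle term $\mathcal{A}_{c_X}^0/(Q_S+\Delta)(\mathcal{A}_{c_X}^{-1})$ — which they do, verbatim — so that the composition is literally well-defined with no implicit identification.
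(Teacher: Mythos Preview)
Your proposal is correct and matches the paper's approach: the theorem is stated without a separate proof precisely because it is the composition of the isomorphisms $\mathbf{q}$ (Lemma \ref{bfQ_lemma}) and $\varphi$ (Theorem \ref{dgbvm}), exactly as you describe. Your observation that $H^0(\mathcal{A}_{c_X},Q_S)=\mathcal{A}_{c_X}^0/Q_S(\mathcal{A}_{c_X}^{-1})$ because the complex is concentrated in nonpositive degrees is the only identification needed, and you have it right.
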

Note that $\varphi$ is defined in the level of cochains but $\mathbf{q}$ is defined in the level of cohomologies.
\begin{remark}\label{cqi}
It is well-known that there is an isomorphism between $H^0(\mathcal{A}_{c_X},Q_S)$ and $H^{n-k}_{\pr}(X,\bC)$ (for example, see Theorem 1 and relevant references in \cite{Dim95}). But we could not find a suitable literature which realizes that isomorphism explicitly; we provide them here, i.e., the maps $\varphi$ and $\mathbf{q}$. In fact, the middle bridge $\mathcal{A}_{c_X}^0/(Q_S+\Delta)(\mathcal{A}_{c_X}^{-1})$, the $0$-th cohomology of the dGBV algebra, will play a crucial role in our theory.
\end{remark}

\subsection{A quantization dGBV algebra} \label{sec2.7}
Proposition \ref{chc} and Lemma \ref{bfQ_lemma} suggest the following quantization complex\footnote{The quantization complex $\left(\mathcal{A}_{c_X}^\bullet(\!(\hbar)\!),Q_S+\hbar\Delta\right)$ can be viewed as a perturbative expansion of $(\mathcal{A}_{c_X},Q_S)$} $\left(\mathcal{A}_{c_X}^\bullet(\!(\hbar)\!),Q_S+\hbar\Delta\right)$ with the formal parameter $\hbar$ satisfying $\ch(\hbar)=1$, $\wt(\hbar)=1$, and $|\hbar|=0$: If $\hbar=0$, then we get the commutative differential graded algebra $(\mathcal{A}_{c_X}^\bullet,Q_S)$, and if $\hbar=1$, then we get the quantum complex $(\mathcal{A}_{c_X}^\bullet,Q_S+\Delta)$ whose 0-th cohomology is canonically isomorphic to $H_{dR}^{N-2}(\mathbf{P}^n\setminus X_{\underline{G}})\simeq H_{\mathrm{prim}}^{n-k}(X_{\underline{G}},\mathbb{C})$. Note that $\wt(Q_S)=1$ and $\wt(\Delta)=1$ imply that $\wt(Q_S+\hbar\Delta)=0$, and $\ch(Q_S)=\ch(\Delta)=0$ also implies that $\ch(Q_S+\hbar\Delta)=0$.\par
The complex $\left(\mathcal{A}_{c_X}(\!(\hbar)\!),Q_S+\hbar\Delta\right)$ is $\mathbb{Z}$-filtered:
\[
	\left(\mathcal{A}_{c_X}^\bullet (\!(\hbar)\!)^{(m)},Q_S+\hbar\Delta\right):=\left(\mathcal{A}_{c_X}^\bullet [\![\hbar]\!]\hbar^{-m},Q_S+\hbar\Delta\right)
\]
for $m\in\mathbb{Z}$. We define a cochain map $r^{(0)}$
\[
	\begin{tikzcd}
		\left(\mathcal{A}_{c_X}^\bullet[\![\hbar]\!],Q_S+\hbar\Delta\right)\arrow[r,"r^{(0)}"]& (\mathcal{A}_{c_X}^\bullet,Q_S)
	\end{tikzcd}
\]
by taking the quotient\footnote{Sending $\hbar=0$ is the classical limit of the quantization complex} by $\mathcal{A}_{c_X}^\bullet (\!(\hbar)\!)^{(-1)}$:
\[
	\begin{tikzcd}
		0 \arrow[r] & \mathcal{A}^\bullet_{c_X}(\!(\hbar)\!)^{(-1)}\arrow[r,hookrightarrow] & \mathcal{A}_{c_X}^\bullet (\!(\hbar)\!)^{(0)}\arrow[r,"r^{(0)}"] & \mathcal{A}_{c_X}^\bullet\arrow[r] &0.
	\end{tikzcd}
\]

\begin{definition}
	Let us define $\mathbb{H}_S$ and $\mathbb{H}_S^{(m)}$ as follows:
	\begin{align*}
		\mathbb{H}_S &:=\frac{\mathcal{A}_{c_X}^0(\!(\hbar)\!)}{(Q_S+\hbar\Delta)(\mathcal{A}_{c_X}^{-1}(\!(\hbar)\!))},\\
		\mathbb{H}_S^{(m)}&:=\frac{\mathcal{A}^0_{c_X}(\!(\hbar)\!)^{(m)}}{(Q_S+\hbar\Delta)(\mathcal{A}_{c_X}^{-1}(\!(\hbar)\!)^{(m)})}=\frac{\mathcal{A}^0_{c_X}[\![\hbar]\!]\hbar^{-m}}{(Q_S+\hbar\Delta)(\mathcal{A}_{c_X}^{-1}[\![\hbar]\!]\hbar^{-m})}
	\end{align*}
	for $m\in\mathbb{Z}$.
\end{definition}
We also have an exact sequence:
\[
	\begin{tikzcd}
		0 \arrow[r] & \mathbb{H}_S^{(-1)} \arrow[r] &\mathbb{H}_S^{(0)}\arrow[r,"r^{(0)}"] &\mathcal{A}_{c_X}^0/Q_S(\mathcal{A}_{c_X}^{-1})\arrow[r] &0.
	\end{tikzcd}
\]

\begin{proposition}
	The triple $\left(\mathcal{A}_{c_X}^\bullet (\!(\hbar)\!),Q_S+\hbar\Delta,\ell_2^\Delta(~,~)\right)$ is a shifted differential graded Lie algebra. Note that $\ell_2^\Delta=\frac{1}{\hbar}\ell_2^{Q_S+\hbar\Delta}$.
\end{proposition}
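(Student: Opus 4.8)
The statement is that $\left(\mathcal{A}_{c_X}^\bullet(\!(\hbar)\!),Q_S+\hbar\Delta,\ell_2^\Delta\right)$ is a shifted differential graded Lie algebra, together with the identity $\ell_2^\Delta=\tfrac1\hbar\ell_2^{Q_S+\hbar\Delta}$. The plan is to deduce everything from Proposition \ref{dgbvp}, which already gives us that $(\mathcal{A},\cdot,\Delta,\ell_2^\Delta,Q_S)$ is a dGBV algebra over $\mathbb{C}$; in particular $(\mathcal{A},\Delta,\ell_2^\Delta)$ is a shifted DGLA. The work is then purely bookkeeping: base-changing along $\mathbb{C}\hookrightarrow\mathbb{C}(\!(\hbar)\!)$, restricting to the charge-$c_X$ summand, and checking that the differential $Q_S+\hbar\Delta$ has the same Lie bracket descendant as $\Delta$ alone.

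\emph{Step 1: the bracket identity.} First I would verify $\ell_2^\Delta=\tfrac1\hbar\ell_2^{Q_S+\hbar\Delta}$ directly from the defining formula \eqref{elltwo}. Since $Q_S$ is a first-order differential operator (a derivation of $\cdot$), its $\ell_2$-descendant $\ell_2^{Q_S}$ vanishes identically by the Leibniz rule $Q_S(a\cdot b)=Q_S(a)\cdot b+(-1)^{|a|}a\cdot Q_S(b)$. Hence, because $\ell_2^{(-)}$ is linear in the operator, $\ell_2^{Q_S+\hbar\Delta}(a,b)=\ell_2^{Q_S}(a,b)+\hbar\,\ell_2^{\Delta}(a,b)=\hbar\,\ell_2^\Delta(a,b)$, which is exactly the claimed identity. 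This step is routine and is the conceptual crux: it says the bracket is independent of the $Q_S$-perturbation and only picks up a harmless scaling by $\hbar$.

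\emph{Step 2: closure on the charge-$c_X$ part with $\hbar$-coefficients.} Next I would note that $Q_S+\hbar\Delta$ squares to zero: $(Q_S+\hbar\Delta)^2=Q_S^2+\hbar(Q_S\Delta+\Delta Q_S)+\hbar^2\Delta^2=0$, using $Q_S^2=\Delta^2=0$ and the anticommutation $Q_S\Delta+\Delta Q_S=0$ recorded just before Proposition \ref{dgbvp}. Since $Q_S$ and $\Delta$ each preserve the charge grading (as already observed, $\ch(Q_S)=\ch(\Delta)=0$) and $\ch(\hbar)=1$, the operator $Q_S+\hbar\Delta$ preserves the total charge; restricting to total charge $c_X$ and Laurent-expanding in $\hbar$ gives a well-defined cochain complex $\left(\mathcal{A}_{c_X}^\bullet(\!(\hbar)\!),Q_S+\hbar\Delta\right)$ — this is what subsection \ref{sec2.7} has already set up. Likewise $\ell_2^\Delta$ has charge $0$ and cohomological degree $1$, so it restricts to a degree-$1$ bilinear operation on $\mathcal{A}_{c_X}^\bullet(\!(\hbar)\!)$.

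\emph{Step 3: the shifted DGLA axioms.} Finally I would invoke that the graded (anti)symmetry and graded Jacobi identity for $\ell_2^\Delta$, and the compatibility $(Q_S+\hbar\Delta)\ell_2^\Delta(a,b)=\ell_2^\Delta((Q_S+\hbar\Delta)a,b)\pm\ell_2^\Delta(a,(Q_S+\hbar\Delta)b)$, all hold. The first two are inherited verbatim from the shifted DGLA structure on $(\mathcal{A},\Delta,\ell_2^\Delta)$ of Proposition \ref{dgbvp}, since these identities are $\mathbb{C}(\!(\hbar)\!)$-bilinear and do not involve the differential. For the Leibniz compatibility: for the $\hbar\Delta$ part it is again Proposition \ref{dgbvp}, and for the $Q_S$ part it amounts to $Q_S$ being a derivation of the bracket $\ell_2^\Delta$, equivalently $\ell_2^{Q_S,\Delta}:=Q_S\ell_2^\Delta(a,b)-\ell_2^\Delta(Q_S a,b)-(-1)^{|a|+1}\ell_2^\Delta(a,Q_S b)=0$; this is precisely the statement that $(\mathcal{A},\cdot,Q_S)$ being a cdga is compatible with the BV bracket, and in the polynomial model it follows from the fact that $Q_S$ is a first-order operator with coefficients $\partial S/\partial q_i$ while $\ell_2^\Delta$ is built from the second-order operator $\Delta$ — a short explicit check of the mixed partials. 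I expect this last compatibility to be the only genuinely computational point; everything else is base change and grading restriction. One could alternatively bypass it entirely by observing that $(\mathcal{A},\cdot,Q_S+\hbar\Delta,\ell_2^{Q_S+\hbar\Delta},Q_S)$ is itself a dGBV algebra over $\mathbb{C}(\!(\hbar)\!)$ (same lengthy computation as Proposition \ref{dgbvp}, with $\Delta$ replaced by $Q_S+\hbar\Delta$ — legitimate since $Q_S+\hbar\Delta$ is still $Q_S$ plus a second-order operator), whence its defining shifted DGLA restricts to the charge-$c_X$ part as claimed.
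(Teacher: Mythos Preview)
The paper does not supply a proof for this proposition; it is stated as a routine consequence of Proposition~\ref{dgbvp} and left to the reader. Your argument correctly fills in exactly those details (the identity $\ell_2^{Q_S+\hbar\Delta}=\hbar\ell_2^\Delta$ from the derivation property of $Q_S$, the vanishing of $(Q_S+\hbar\Delta)^2$, and the inheritance of the shifted DGLA axioms from Proposition~\ref{dgbvp}), so your approach is precisely what the paper intends.

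One genuine slip to flag: in Step~2 you assert that since $\ell_2^\Delta$ ``has charge $0$'' it restricts to a bilinear operation on $\mathcal{A}_{c_X}^\bullet(\!(\hbar)\!)$. That inference is wrong. Charge~$0$ for a bilinear operator means $\ell_2^\Delta(\mathcal{A}_\lambda,\mathcal{A}_\mu)\subset\mathcal{A}_{\lambda+\mu}$, so $\ell_2^\Delta$ sends $\mathcal{A}_{c_X}\times\mathcal{A}_{c_X}$ into $\mathcal{A}_{2c_X}$, not $\mathcal{A}_{c_X}$. The bracket closes on $\mathcal{A}_{c_X}$ only when $c_X=0$, i.e.\ in the Calabi--Yau case---which is the only case the paper actually uses (cf.\ the very next proposition and subsection~\ref{sec2.8}). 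So either read the statement with the standing hypothesis $c_X=0$, or regard the triple as a shifted DGLA on the full $\mathcal{A}^\bullet(\!(\hbar)\!)$; your argument then goes through verbatim. Similarly, your ``total charge'' remark for $Q_S+\hbar\Delta$ is muddled (with $\ch(\hbar)=1$ the operator is not charge-homogeneous); the correct and simpler justification is that $Q_S$ and $\Delta$ each separately preserve $\mathcal{A}_{c_X}$, so their $\mathbb{C}(\!(\hbar)\!)$-linear combination preserves $\mathcal{A}_{c_X}\otimes\mathbb{C}(\!(\hbar)\!)$.
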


\begin{proposition}
	If the background charge $c_X$ is 0, i.e., $X_{\underline{G}}$ is Calabi-Yau, the quintuple $\left(\mathcal{A}_0^\bullet (\!(\hbar)\!),\cdot,Q_S+\hbar\Delta,\ell_2^\Delta,Q_S\right)$ is a dGBV algebra.
\end{proposition}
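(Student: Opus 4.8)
The plan is to obtain the statement from Proposition \ref{dgbvp} and the preceding Proposition by restriction to a subalgebra followed by a base change, the Calabi--Yau hypothesis entering only to make the relevant subspace a subalgebra. First I would observe that, because the charge $\ch$ is additive on products and $\ch(1)=0$, the charge-zero summand $\mathcal{A}_0^\bullet\subseteq\mathcal{A}^\bullet$ is a $\mathbb{Z}$-graded super-commutative associative unital $\mathbb{C}$-subalgebra, and because $\ch(Q_S)=\ch(\Delta)=0$ --- hence also $\ch(\ell_2^\Delta)=0$ --- the operators $Q_S$, $\Delta$, $\ell_2^\Delta$ each preserve $\mathcal{A}_0^\bullet$. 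This is exactly the point at which $c_X=0$ is used: for $c_X\neq0$ the cohomologically relevant complex $\mathcal{A}_{c_X}^\bullet(\!(\hbar)\!)$ is not closed under the product, whereas when $c_X=0$ it coincides with the subalgebra $\mathcal{A}_0^\bullet(\!(\hbar)\!)$. Hence every identity from Definition \ref{bvd} that holds in $(\mathcal{A},\cdot,\Delta,\ell_2^\Delta,Q_S)$ --- a dGBV algebra by Proposition \ref{dgbvp} --- holds in its restriction $(\mathcal{A}_0,\cdot,\Delta,\ell_2^\Delta,Q_S)$; and since $\hbar$ is central, even and of cohomological degree $0$, extending scalars along $\mathbb{C}\hookrightarrow\mathbb{C}(\!(\hbar)\!)$ yields a dGBV algebra $(\mathcal{A}_0^\bullet(\!(\hbar)\!),\cdot,\Delta,\ell_2^\Delta,Q_S)$ over $\mathbb{C}(\!(\hbar)\!)$.

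Next I would twist the differential from $\Delta$ to $K:=Q_S+\hbar\Delta$ and check that the remaining axioms survive. One has $K^2=Q_S^2+\hbar(Q_S\Delta+\Delta Q_S)+\hbar^2\Delta^2=0$, using $Q_S^2=\Delta^2=0$ and the anticommutation $Q_S\Delta+\Delta Q_S=0$ recorded in subsection \ref{sec2.5}. Since $(\mathcal{A},\cdot,Q_S)$ is a cdga, $Q_S$ is a derivation of the product, so its $\ell_2$-descendant vanishes and therefore the $\ell_2$-descendant of $K$ is $\ell_2^{K}=\ell_2^{Q_S}+\hbar\,\ell_2^\Delta=\hbar\,\ell_2^\Delta$; thus the bracket $\ell_2^\Delta$ appearing in the quintuple equals $\frac{1}{\hbar}\ell_2^{K}$, i.e.\ it differs from $\ell_2^{K}$ only by the central unit $\hbar\in\mathbb{C}(\!(\hbar)\!)^{\times}$, which leaves all of the $G$-algebra and shifted-DGLA axioms invariant. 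Consequently $(\mathcal{A}_0^\bullet(\!(\hbar)\!),\cdot,\ell_2^\Delta)$ is still a $G$-algebra, $(\mathcal{A}_0^\bullet(\!(\hbar)\!),\cdot,Q_S)$ is still a cdga, and $(\mathcal{A}_0^\bullet(\!(\hbar)\!),K,\ell_2^\Delta)$ is a shifted DGLA --- this last being precisely the content of the preceding Proposition, which I would simply invoke. Assembling these facts verifies every condition of Definition \ref{bvd}(c) for $(\mathcal{A}_0^\bullet(\!(\hbar)\!),\cdot,Q_S+\hbar\Delta,\ell_2^\Delta,Q_S)$, and the proof is complete.

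I do not anticipate a genuine obstacle: all nontrivial inputs --- the dGBV structure on $(\mathcal{A},\cdot,\Delta,\ell_2^\Delta,Q_S)$ and the shifted-DGLA structure on the quantization complex --- are already in hand. The one point that needs care is purely a matter of normalization: the quintuple records the bracket $\ell_2^\Delta$, which is $\frac{1}{\hbar}$ times the $\ell_2$-descendant of the differential $Q_S+\hbar\Delta$, so one should read Definition \ref{bvd} with $\ell_2^{K}$ replaced by $\frac{1}{\hbar}\ell_2^{K}=\ell_2^\Delta$ --- the convention the paper already adopts for every quantization dGBV algebra --- rather than insisting on the literal equality $\ell_2^\Delta=\ell_2^{Q_S+\hbar\Delta}$, which fails.
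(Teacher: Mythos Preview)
Your proposal is correct and is essentially the argument the paper leaves implicit: the proposition is stated without proof, and your route---restricting the dGBV structure of Proposition \ref{dgbvp} to the charge-zero subalgebra (which is where $c_X=0$ enters), base-changing to $\mathbb{C}(\!(\hbar)\!)$, and then observing that $\ell_2^{Q_S+\hbar\Delta}=\hbar\,\ell_2^\Delta$ so that the rescaled bracket $\ell_2^\Delta$ still satisfies all the required identities---is exactly the intended justification. Your explicit flagging of the normalization issue (that $\ell_2^\Delta=\frac{1}{\hbar}\ell_2^{Q_S+\hbar\Delta}$ rather than $\ell_2^{Q_S+\hbar\Delta}$ itself) is apt and matches the convention the paper uses throughout for quantization dGBV algebras.
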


Let $\mathbb{H}:=H_{\mathrm{prim}}^{n-k}(X_{\underline{G}},\mathbb{C})$. From the previous discussion, we have $\mathbb{C}$-linear isomorphisms:
\[
	\begin{tikzcd}
		r^{(0)}\left(\mathbb{H}_S^{(0)}\right)\arrow[r,"\sim","\mathbf{q}"'] &\displaystyle\frac{\mathcal{A}_0^0}{(Q_S+\hbar\Delta)(\mathcal{A}_0^{-1})}\arrow[r,"\sim","\varphi"'] & \mathbb{H}.
	\end{tikzcd}
\]

\subsection{Deformation of dGBV algebra and the Gauss-Manin connection} \label{sec2.8}
From now on throughout the paper, 
\[
	\textrm{we will assume that $X_{\ud G}$ is Calabi-Yau,}
\]
 i.e., the background charge $c_X=0$.
\begin{definition}\label{T_H}
	Let $\mu:=\dim_{\mathbb{C}}\mathbb{H}=|I|$. Then,
	\[
		T_{\mathbb{H}}\stackrel{\mathrm{def}}{:=}\textrm{The space of formal tangent vector fields on }\mathbb{H}.
	\]
\end{definition}

Let us consider the deformation of dGBV algebra. 
If we let $\{t^\alpha:\alpha\in I\}$ to be the coordinate of the affine manifold $\mathbb{H}$, then $T_\mathbb{H}\simeq\mathbb{H}[\![\underline{t}]\!]$ in which we identify $\{\partial/\partial t^\alpha:\alpha\in I\}$ as the basis of $\mathbb{H}$, thus we have
\[
	\begin{tikzcd}
		T_{\mathbb{H}}\simeq\mathbb{H}[\![\underline{t}]\!] & \arrow[l,"\Phi","\sim"'] \left(\mathcal{A}_0^0/Q_S(\mathcal{A}_0^{-1})\right)[\![\underline{t}]\!]
	\end{tikzcd}
\]
where $\Phi$ in Theorem \ref{Phi_isom} is assumed to be $\mathbb{C}[\![\underline{t}]\!]$-linear. Note that $Q_S(\mathcal{A}_0^{-1})$ is the Jacobian ideal of the commutative ring $\mathcal{A}_0^0=\mathbb{C}[\underline{q}]_0$ generated by $\frac{\partial S}{\partial q_1},\dots\frac{\partial S}{\partial q_N}$. It suggests us to consider the following complex: 
\[
	\big(\mathcal{A}_0^\bullet[\![\underline{t}]\!](\!(\hbar)\!),Q_S+\hbar\Delta\big).
\]
where $Q_S$ and $\Delta$ are assumed to be $\mathbb{C}[\![\underline{t}]\!]$-linear. Clearly, we have the following proposition:
\begin{proposition}
	The quintuple $\left(\mathcal{A}_0^\bullet [\![\underline{t}]\!](\!(\hbar)\!),\cdot,Q_S+\hbar\Delta,\ell_2^\Delta,Q_S\right)$ is a dGBV algebra.
\end{proposition}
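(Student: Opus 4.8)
The statement to be proved is essentially a bookkeeping verification: we must check that attaching the formal variables $\underline{t}$ to the already-established dGBV algebra $\left(\mathcal{A}_0^\bullet(\!(\hbar)\!),\cdot,Q_S+\hbar\Delta,\ell_2^\Delta,Q_S\right)$ preserves all the dGBV axioms. The plan is to reduce everything to the Calabi-Yau dGBV structure established in the previous proposition by a base-change argument along the flat ring extension $\mathbb{C}(\!(\hbar)\!)\hookrightarrow\mathbb{C}[\![\underline{t}]\!](\!(\hbar)\!)$, together with the observation that all the structure maps are by hypothesis $\mathbb{C}[\![\underline{t}]\!]$-linear.

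\textbf{Step 1: Identify the algebra.} First I would note that $\mathcal{A}_0^\bullet[\![\underline{t}]\!](\!(\hbar)\!)\cong\mathcal{A}_0^\bullet(\!(\hbar)\!)\hat{\otimes}_{\mathbb{C}(\!(\hbar)\!)}\mathbb{C}[\![\underline{t}]\!](\!(\hbar)\!)$ as $\mathbb{Z}$-graded super-commutative associative $\mathbb{C}[\![\underline{t}]\!](\!(\hbar)\!)$-algebras, where the $\underline t$ carry cohomological degree $0$ (and one should record their charge and weight, e.g. $\ch(t^\alpha)$ and $\wt(t^\alpha)$ chosen compatibly, though these gradings are not needed for the dGBV axioms per se). Since $Q_S$, $\Delta$ and hence $K_S=Q_S+\hbar\Delta$ are declared $\mathbb{C}[\![\underline{t}]\!]$-linear (and $\mathbb{C}(\!(\hbar)\!)$-linear in the evident way), they are simply the scalar extensions of the corresponding operators on $\mathcal{A}_0^\bullet(\!(\hbar)\!)$. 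The product $\cdot$ is likewise the $\mathbb{C}[\![\underline{t}]\!](\!(\hbar)\!)$-bilinear extension of the original product.

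\textbf{Step 2: Transport the axioms.} Each defining identity of a dGBV algebra — $Q_S^2=0$ and the Leibniz rule for $Q_S$; $\Delta^2=0$ and the resulting $Q_S\Delta+\Delta Q_S=0$; the fact that $(\mathcal{A}_0,\cdot,K_S,\ell_2^{K_S})$ is a GBV algebra, i.e. $(\mathcal{A}_0,K_S,\ell_2^{K_S})$ is a shifted DGLA and $(\mathcal{A}_0,\cdot,\ell_2^{K_S})$ is a Gerstenhaber algebra (the Jacobi identity, graded antisymmetry, and the Poisson/Leibniz compatibility of $\ell_2^{K_S}$ with $\cdot$) — is a multilinear polynomial identity in the structure maps. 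All of these hold on $\mathcal{A}_0^\bullet(\!(\hbar)\!)$ by the proposition immediately above. Because the extension to $\mathbb{C}[\![\underline{t}]\!](\!(\hbar)\!)$-coefficients is obtained by extending scalars along a commutative ring, and every structure map is the scalar extension of the original, each such identity continues to hold after base change: one evaluates on elements, writes them as (possibly infinite, but the operators are continuous for the $\underline t$-adic topology) $\mathbb{C}[\![\underline{t}]\!](\!(\hbar)\!)$-linear combinations of elements of $\mathcal{A}_0^\bullet(\!(\hbar)\!)$, and reduces to the known identities using $\mathbb{C}[\![\underline{t}]\!]$-linearity. Finally one checks $\ell_2^\Delta=\tfrac1\hbar\ell_2^{Q_S+\hbar\Delta}$ survives: since $\ell_2^{K_S}(a,b)=K_S(ab)-K_S(a)b-(-1)^{|a|}aK_S(b)$ and $Q_S$ already satisfies the graded Leibniz rule, $\ell_2^{Q_S+\hbar\Delta}=\hbar\ell_2^\Delta$ identically, exactly as before.

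\textbf{Main obstacle.} There is no serious obstacle: the content has already been done in the $c_X=0$ proposition, and this is a routine passage to power-series coefficients. The only point requiring a word of care is the convergence/continuity issue — elements of $\mathcal{A}_0^\bullet[\![\underline{t}]\!](\!(\hbar)\!)$ are infinite series in $\underline t$, so one should remark that $Q_S$, $\Delta$, and $\ell_2^\Delta$ act coefficientwise (are continuous for the $\underline t$-adic topology), whence verifying a multilinear identity on all of $\mathcal{A}_0^\bullet[\![\underline{t}]\!](\!(\hbar)\!)$ reduces to verifying it on the dense subalgebra $\mathcal{A}_0^\bullet(\!(\hbar)\!)[\underline t]$, where it is immediate from $\mathbb{C}[\underline t]$-linearity and the already-established case. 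This is exactly why the authors label the proposition ``Clearly'': once one has set up the base-change formalism, nothing new happens.
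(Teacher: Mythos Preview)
Your proposal is correct and matches the paper's approach: the paper gives no proof at all, merely prefacing the proposition with ``Clearly, we have the following proposition,'' and your argument spells out precisely the routine base-change reasoning that justifies this word.
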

Now we deform differentials $Q_S+\hbar\Delta$ and $Q_S$ in order to make a deformed dGBV algebra. Choose an arbitrary $\Gamma\in \cA^0_0\otimes\mathbb{C}[\![\underline{t}]\!]$, then $\Gamma$ is a solution to the Maurer-Cartan equation for the DGLA $\left(\mathcal{A}_{0}^\bullet [\![\underline{t}]\!](\!(\hbar)\!),Q_S+\hbar\Delta,\ell_2^\Delta(~,~)\right)$, i.e.,
\[
	(Q_S+\hbar\Delta)(e^\Gamma-1):=(Q_S+\hbar\Delta)(\Gamma)+\frac{1}{2}\ell_2^{Q_S+\hbar\Delta}(\Gamma,\Gamma)=0.
\]
Define
\[
	Q_{S+\Gamma}:=\ell_2^{\Delta}(S+\Gamma,\,\cdot\,)=\sum_{i=1}^N\frac{\partial(S+\Gamma)}{\partial q_i}\frac{\partial}{\partial\eta_i}.
\]
Then, the operator $Q_{S+\Gamma}+\hbar\Delta$ is a $\mathbb{C}[\![\underline{t}]\!](\!(\hbar)\!)$-linear map on $\mathcal{A}^\bullet [\![\underline{t}]\!](\!(\hbar)\!)$ of degree 1, charge 0, weight 0, and satisfies $(Q_{S+\Gamma}+\hbar\Delta)^2=0$ (see Lemma 3.3, \cite{PP16}). In fact, a direct computation (the deformation formalism) implies the following proposition:
\begin{proposition}
	The quintuple $\left(\mathcal{A}_0^\bullet [\![\underline{t}]\!](\!(\hbar)\!),\cdot,Q_{S+\Gamma}+\hbar\Delta,\ell_2^\Delta,Q_{S+\Gamma}\right)$ is a dGBV algebra.
\end{proposition}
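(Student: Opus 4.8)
The plan is to verify the three defining properties of a dGBV algebra (Definition \ref{bvd}(c)) for the quintuple $\left(\mathcal{A}_0^\bullet [\![\underline{t}]\!](\!(\hbar)\!),\cdot,Q_{S+\Gamma}+\hbar\Delta,\ell_2^\Delta,Q_{S+\Gamma}\right)$, leveraging the fact (already stated in the excerpt) that $\left(\mathcal{A}_0^\bullet [\![\underline{t}]\!](\!(\hbar)\!),\cdot,Q_S+\hbar\Delta,\ell_2^\Delta,Q_S\right)$ is a dGBV algebra and that $\Gamma$ solves the Maurer-Cartan equation. First I would record the structural observation underlying everything: since $S+\Gamma \in \mathcal{A}^0_0[\![\underline{t}]\!]$, the operator $Q_{S+\Gamma} = \ell_2^\Delta(S+\Gamma,\,\cdot\,) = \sum_i \frac{\partial(S+\Gamma)}{\partial q_i}\frac{\partial}{\partial\eta_i}$ is a derivation of degree $1$ of the graded-commutative algebra $\mathcal{A}$ (being a $\mathbb{C}[\![\underline{t}]\!](\!(\hbar)\!)$-linear combination of the odd derivations $\partial/\partial\eta_i$), exactly as $Q_S$ is. This immediately gives the cdga axiom $Q_{S+\Gamma}(a\cdot b) = Q_{S+\Gamma}(a)\cdot b + (-1)^{|a|} a\cdot Q_{S+\Gamma}(b)$; the only nontrivial point here is $Q_{S+\Gamma}^2 = 0$, which is precisely the cited Lemma 3.3 of \cite{PP16} (the Maurer-Cartan equation for $\Gamma$ is what forces this, since $Q_{S+\Gamma}^2 = \frac12 \ell_2^\Delta(Q_{S+\Gamma}(S+\Gamma),\,\cdot\,)$ up to signs and $Q_{S+\Gamma}(S+\Gamma)$ vanishes in the relevant quotient / by the MC condition).

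Next I would handle the GBV part, i.e.\ that $\left(\mathcal{A}_0^\bullet [\![\underline{t}]\!](\!(\hbar)\!),\cdot,Q_{S+\Gamma}+\hbar\Delta,\ell_2^\Delta\right)$ is a GBV algebra. Write $K := Q_{S+\Gamma}+\hbar\Delta = (Q_S + \hbar\Delta) + Q_\Gamma$ where $Q_\Gamma := \ell_2^\Delta(\Gamma,\,\cdot\,) = \sum_i \frac{\partial\Gamma}{\partial q_i}\frac{\partial}{\partial\eta_i}$. Since $Q_\Gamma$ is an odd derivation, $\ell_2^{Q_\Gamma} = 0$, and hence the $\ell_2$-descendant of $K$ with respect to $\cdot$ equals the $\ell_2$-descendant of $Q_S + \hbar\Delta$, which equals $\hbar\,\ell_2^\Delta$; so the relevant bracket really is $\ell_2^\Delta$ (up to the harmless factor of $\hbar$, matching the excerpt's convention $\ell_2^\Delta = \tfrac1\hbar \ell_2^{Q_S+\hbar\Delta}$), and the Gerstenhaber-algebra axioms for $(\mathcal{A},\cdot,\ell_2^\Delta)$ are unchanged from the undeformed case. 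What remains for GBV is that $(\mathcal{A}, K, \ell_2^\Delta)$ is a shifted DGLA, i.e.\ $K^2 = 0$ and $K$ is a derivation of the bracket $\ell_2^\Delta$. The second is automatic since $\Delta$ (hence $\hbar\Delta$) is a derivation of $\ell_2^\Delta$ in the undeformed dGBV structure, and $Q_{S+\Gamma}$, being a derivation of $\cdot$ of the same form as $Q_S$, is likewise a derivation of $\ell_2^\Delta$ — this is the same computation that already works for $Q_S$ in Proposition \ref{dgbvp}. For $K^2 = 0$: expand $K^2 = (Q_{S+\Gamma})^2 + \hbar(Q_{S+\Gamma}\Delta + \Delta Q_{S+\Gamma}) + \hbar^2\Delta^2$; the first term vanishes by the MC step above, the last by $\Delta^2 = 0$, and the cross term $Q_{S+\Gamma}\Delta + \Delta Q_{S+\Gamma} = 0$ is again the content of Lemma 3.3 of \cite{PP16} (it is the deformed analogue of the identity $Q_S\Delta + \Delta Q_S = 0$ recorded in the excerpt, and it too follows from $\Gamma$ being a Maurer-Cartan element, since $[\Delta, Q_\Gamma] = \ell_2^\Delta(\Delta\Gamma,\,\cdot\,)$-type terms are controlled by the MC equation).

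Finally, the mixed compatibility: we need $(\mathcal{A}, \cdot, K, \ell_2^\Delta, Q_{S+\Gamma})$ to satisfy all of Definition \ref{bvd}(c), which at this point reduces to checking $(Q_{S+\Gamma})^2 = 0$ and the Leibniz rule for $Q_{S+\Gamma}$ — both already done — together with the fact that $\ell_2^\Delta$ is indeed the $\ell_2$-descendant of $K$ with respect to $\cdot$ — also done above. I would close by remarking that everything is manifestly compatible with the three gradings: $Q_\Gamma$ has degree $1$, charge $0$ (since $\ch(\Gamma) = 0$ as $\Gamma \in \mathcal{A}_0$), and weight $0$ (since $\wt(\partial/\partial q_i) + \wt(\partial/\partial\eta_i)$ together with $\wt(\Gamma)$ arbitrary in $\mathbb{C}[\![\underline t]\!]$, but the operator $\sum_i \frac{\partial\Gamma}{\partial q_i}\frac{\partial}{\partial\eta_i}$ has the same weight-$0$, charge-$0$ behavior claimed in the excerpt for $Q_{S+\Gamma}+\hbar\Delta$), so the deformed structure lives in the same graded category as the undeformed one. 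The main obstacle — really the only non-formal input — is the pair of identities $(Q_{S+\Gamma})^2 = 0$ and $Q_{S+\Gamma}\Delta + \Delta Q_{S+\Gamma} = 0$, i.e.\ establishing that $Q_{S+\Gamma} + \hbar\Delta$ squares to zero; but since $\Gamma$ is taken to be an \emph{arbitrary} element of $\mathcal{A}^0_0\otimes\mathbb{C}[\![\underline t]\!]$ and (as the excerpt notes) every such $\Gamma$ automatically solves the Maurer-Cartan equation for this particular DGLA, this is exactly the cited Lemma 3.3 of \cite{PP16} and requires no new work here. Hence the proof is a direct (lengthy but routine) verification, entirely parallel to Proposition \ref{dgbvp}, with the deformation bookkeeping absorbed into the Maurer-Cartan formalism.
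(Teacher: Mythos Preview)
Your proposal is correct and follows the same approach the paper intends: the paper offers no proof beyond the one-line remark ``a direct computation (the deformation formalism) implies the following proposition,'' and your write-up is precisely that direct computation spelled out. One minor over-complication: you invoke the Maurer--Cartan equation and Lemma~3.3 of \cite{PP16} to get $Q_{S+\Gamma}^2=0$ and $Q_{S+\Gamma}\Delta+\Delta Q_{S+\Gamma}=0$, but since $S+\Gamma\in\mathcal{A}^0_0[\![\underline t]\!]$ is just another degree-zero element playing the role of $S$, these identities follow by the \emph{identical} elementary computation as for $Q_S$ itself (antisymmetry of the $\partial/\partial\eta_i$ and symmetry of second partials), with no genuine use of MC; the MC equation is trivially satisfied here for degree reasons, as you yourself note at the end.
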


\begin{definition}\label{H-module}
	We define the 0-th cohomology $\mathcal{H}_{S+\Gamma}$ and a filtered piece $\mathcal{H}_{S+\Gamma}^{(m)}$ as follows:
	\[
		\mathcal{H}_{S+\Gamma} :=\frac{\mathcal{A}_0^0[\![\underline{t}]\!](\!(\hbar)\!)}{(Q_{S+\Gamma}+\hbar\Delta)(\mathcal{A}_0^{-1}[\![\underline{t}]\!](\!(\hbar)\!))},\quad\mathcal{H}_{S+\Gamma}^{(m)}:=\frac{\mathcal{A}_0^0[\![\underline{t}]\!][\![\hbar]\!]\hbar^{-m}}{(Q_{S+\Gamma}+\hbar\Delta)(\mathcal{A}_0^{-1}[\![\underline{t}]\!][\![\hbar]\!]\hbar^{-m})}
	\]
	for $m\in\mathbb{Z}$. We call this as the $\hbar$-filtration.
\end{definition}

\begin{lemma}\label{Basis_lemma_F}
	Let $\{u_\alpha:\alpha\in I\}$ be a $\mathbb{C}$-basis of $\mathcal{A}_0^0/Q_S(\mathcal{A}_0^{-1})$. Suppose that the set of polynomials $\{U_\alpha:U_\alpha\in\mathbb{C}[\![\underline{t}]\!][\![\hbar]\!],\alpha\in I\}$ satisfies the condtions $U_\alpha|_{\underline{t}=0,\hbar=0}=u_\alpha$ for all $\alpha\in I$, then $\{U_\alpha:U_\alpha\in\mathbb{C}[\![\underline{t}]\!][\![\hbar]\!],\alpha\in I\}$ is a $\mathbb{C}[\![\underline{t}]\!][\![\hbar]\!]$-basis of $\mathcal{H}_{S+\Gamma}^{(0)}$.
\end{lemma}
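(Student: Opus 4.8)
The plan is to reduce the statement to a completeness/independence argument modulo the differential $Q_S + \hbar\Delta$, exploiting the $\hbar$-filtration and the already-established Lemma \ref{bfQ_lemma} (and the weak $Q_S\Delta$-lemma behind it). First I would reduce to the case $\Gamma = 0$: since $\Gamma \in \cA_0^0\otimes\bC[\![\underline t]\!]$ has no constant term in $\underline t$ (it is a Maurer-Cartan element, so $\Gamma|_{\underline t = 0} = 0$, or at worst lies in the maximal ideal $(\underline t)$), the operator $Q_{S+\Gamma}+\hbar\Delta = (Q_S+\hbar\Delta) + Q_\Gamma$ differs from $Q_S + \hbar\Delta$ by a term with coefficients in $(\underline t)$. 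A standard successive-approximation / Nakayama-type argument over the complete local ring $\bC[\![\underline t]\!][\![\hbar]\!]$ then shows that a set of elements forms a basis of $\cH_{S+\Gamma}^{(0)}$ if and only if its reduction mod $(\underline t)$ forms a basis of $\cH_S^{(0)}$; so it suffices to treat $\Gamma=0$, $\underline t = 0$.

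Next I would handle the $\hbar$-variable by the same device. The complex $(\cA_0^\bullet[\![\hbar]\!], Q_S+\hbar\Delta)$ carries the $\hbar$-adic filtration, and $Q_S + \hbar\Delta \equiv Q_S \pmod{\hbar}$. Again by a Nakayama/successive-approximation argument over $\bC[\![\hbar]\!]$ — using that $\bC[\![\hbar]\!]$ is a complete local ring and that each of $\cA_0^0$, $\cA_0^{-1}$ is $\hbar$-adically complete after base change — a family $\{U_\alpha\}$ in $\cA_0^0[\![\hbar]\!]$ descends to a $\bC[\![\hbar]\!]$-basis of $\cH_S^{(0)} = \cA_0^0[\![\hbar]\!]/(Q_S+\hbar\Delta)(\cA_0^{-1}[\![\hbar]\!])$ precisely when the reductions $U_\alpha|_{\hbar=0} = u_\alpha$ form a $\bC$-basis of $\cA_0^0/Q_S(\cA_0^{-1})$. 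The hypothesis gives exactly the latter, so $\{U_\alpha\}$ spans and is independent over $\bC[\![\hbar]\!]$. One subtlety to watch: the quotient $\cH_S^{(0)}$ must be shown to be a finitely generated free $\bC[\![\hbar]\!]$-module of the expected rank $\mu = \dim_\bC \cA_0^0/Q_S(\cA_0^{-1})$ — this is where Lemma \ref{bfQ_lemma} enters (it identifies the $\hbar=1$ specialization with $\cA_0^0/(Q_S+\Delta)(\cA_0^{-1})$, which has the same dimension $\mu$), and combined with the filtration-by-$\hbar$ degeneration it pins down the rank; freeness follows because the associated graded of $\cH_S^{(0)}$ for the $\hbar$-filtration is $(\cA_0^0/Q_S(\cA_0^{-1}))[\![\hbar]\!]$-free.

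Concretely, the argument I would write runs as follows. Write $V := \cA_0^0/Q_S(\cA_0^{-1})$, a $\mu$-dimensional $\bC$-vector space with basis $\{u_\alpha\}$. Lift each $u_\alpha$ arbitrarily to $\tilde u_\alpha\in\cA_0^0$. Claim: $\{\tilde u_\alpha\}$ is a $\bC[\![\hbar]\!]$-basis of $\cH_S^{(0)}$. For spanning: given $x\in\cA_0^0[\![\hbar]\!]$, write $x \equiv \sum c_\alpha^{(0)}\tilde u_\alpha \pmod{Q_S, \hbar}$ with $c_\alpha^{(0)}\in\bC$; then $x - \sum c_\alpha^{(0)}\tilde u_\alpha = Q_S(w_0) + \hbar x_1$ for some $w_0\in\cA_0^{-1}$, $x_1\in\cA_0^0[\![\hbar]\!]$, and note $Q_S(w_0) = (Q_S+\hbar\Delta)(w_0) - \hbar\Delta(w_0)$, so $x - \sum c_\alpha^{(0)}\tilde u_\alpha \equiv \hbar(x_1 - \Delta(w_0)) \pmod{(Q_S+\hbar\Delta)}$; iterate and sum the resulting $\hbar$-adically convergent series. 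For independence: if $\sum c_\alpha \tilde u_\alpha = (Q_S+\hbar\Delta)(v)$ with $c_\alpha\in\bC[\![\hbar]\!]$, $v\in\cA_0^{-1}[\![\hbar]\!]$, look at the lowest-order $\hbar$-coefficient: it gives $\sum c_\alpha^{(0)}\tilde u_\alpha \in Q_S(\cA_0^{-1})$, forcing $c_\alpha^{(0)}=0$ by independence of $\{u_\alpha\}$ in $V$; divide by $\hbar$ and repeat. Then the reduction to the general $\Gamma$, $\underline t$ case is the Nakayama argument over $\bC[\![\underline t]\!][\![\hbar]\!]$ described above. The main obstacle is the bookkeeping for the nested completions — one must be careful that the successive-approximation series converge in the correct $(\hbar, \underline t)$-adic topology and that the ``lowest-order coefficient'' extraction is legitimate in $\cA_0^0[\![\underline t]\!](\!(\hbar)\!)$, where $\hbar$ is inverted; restricting attention to the $\hbar$-power-series piece $\cH_S^{(0)}$ (as the statement does) is what makes this clean, and I would emphasize that point.
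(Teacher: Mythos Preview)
Your approach is essentially the same as the paper's: both argue by successive approximation in $\hbar$ (and then in $\underline t$), reducing modulo $\hbar$ to the known basis $\{u_\alpha\}$ of $\cA_0^0/Q_S(\cA_0^{-1})$. The paper's proof writes $\psi = \sum_r \psi_r \hbar^r/r!$, compares $\hbar$-coefficients to get a recursive system for the expansion coefficients $m_{\psi_r}{}^\rho$ and the correction terms $\lambda_{\psi_r}$, and then invokes the weak $Q_S\Delta$-lemma (Lemma~\ref{Q-D_Lemma}) at each step to show the $m_{\psi_r}{}^\rho$ are independent of the choice of $\lambda_{\psi_{r-1}}$. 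This gives spanning and uniqueness of coefficients simultaneously, hence a basis; the $\underline t$-extension is then left as a parallel computation.

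One point to sharpen in your write-up: the independence step ``divide by $\hbar$ and repeat'' does not close up as stated. After the first step you have $Q_S(v_0)=0$; dividing the relation by $\hbar$ leaves an extra constant term $\Delta(v_0)$ on the right-hand side that is not manifestly in the image of $Q_S+\hbar\Delta$. You need precisely Lemma~\ref{Q-D_Lemma} here: from $Q_S(v_0)=0$ it yields $\Delta(v_0)\in Q_S(\cA_0^{-1})$, which lets you absorb the term and iterate. You mention the weak $Q_S\Delta$-lemma only indirectly (via Lemma~\ref{bfQ_lemma} and the freeness/associated-graded remark), but that is exactly where the argument lives; the paper invokes it explicitly at each inductive step, and your proof would be cleaner if you did the same rather than deferring it to a spectral-sequence-style degeneration claim.
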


\begin{proof}
	First, we prove that $\{u_\alpha:\alpha\in I\}$ is a $\mathbb{C}[\![\hbar]\!]$-basis of $\mathcal{A}_0^0[\![\hbar]\!]/(Q_S+\hbar\Delta)(\mathcal{A}_0^{-1}[\![\hbar]\!])$.
	For all $\psi=\sum_{r\geq0}\psi_r\frac{\hbar^r}{r!}\in\mathcal{A}_0^0[\![\hbar]\!]$, consider the following equation:
	\[
		\sum_{r\geq0}\psi_r\frac{\hbar^r}{r!}=\sum_\rho\Big(\sum_{r=0}^\infty{m_{\psi_r}}^\rho\cdot\frac{\hbar^r}{r!}\Big)u_\rho+Q_S\Big(\sum_{r=0}^\infty\lambda_{\psi_r}\frac{\hbar^r}{r!}\Big)+\hbar\Delta\Big(\sum_{r=0}^\infty\lambda_{\psi_r}\frac{\hbar^r}{r!}\Big).
	\]
	By comparing the $\hbar$-power terms, we get
	\begin{align*}
		\psi_0&=\sum_\rho{m_{\psi_0}}^\rho u_\rho+Q_S(\lambda_{\psi_0}),\\
		\psi_1-\Delta(\lambda_{\psi_0})&=\sum_\rho{m_{\psi_1}}^\rho u_\rho+Q_S(\lambda_{\psi_1}),\\
		\frac{\psi_2}{2}-\Delta(\lambda_{\psi_1})&=\sum_\rho\frac{1}{2}{m_{\psi_2}}^\rho u_\rho+Q_S\Big(\frac{\lambda_{\psi_2}}{2}\Big),\\
		&\vdots\\
		\frac{\psi_r}{r!}-\Delta\Big(\frac{\lambda_{\psi_{r-1}}}{(r-1)!}\Big)&=\sum_\rho\frac{1}{r!}{m_{\psi_r}}^\rho u_\rho+Q_S\Big(\frac{\lambda_{\psi_r}}{r!}\Big).
	\end{align*}
	The coefficient $m_{\psi_{r}}{}^\rho$ is independent of the choice of $\lambda_{\psi_{r-1}}$ by Lemma \ref{Q-D_Lemma}. Therefore, this procedure uniquely determines $\frac{1}{r!}{m_{\psi_r}}^r$.\par
	For a proof that $\{U_\alpha:U_\alpha\in\mathbb{C}[\![\underline{t}]\!][\![\hbar]\!],\alpha\in I\}$ is a $\mathbb{C}[\![\underline{t}]\!][\![\hbar]\!]$-basis of $\mathcal{H}_{S+\Gamma}^{(0)}$, we use the same procedure; it requires to compare not only $\hbar$-power terms, but also $\underline{t}$-power terms, which we leave to the reader.
\end{proof}

\begin{definition}\label{GMC}
	We define \textit{the Gauss-Manin connection} on $\mathcal{H}_{S+\Gamma}$. We use the notation $[\cdot]$ to denote the cohomology class.
	\begin{enumerate}[(i)]
		\item For each $\alpha\in I$, we define a connection $\downtriangle_\alpha^{\frac{S+\Gamma}{\hbar}}:\mathcal{H}_{S+\Gamma}\to \mathcal{H}_{S+\Gamma}$ by
		\begin{align*}
			\downtriangle_\alpha^{\frac{S+\Gamma}{\hbar}}([w]):=&\left[e^{\tfrac{-(S+\Gamma)}{\hbar}}\frac{\partial}{\partial t^\alpha}\left(e^{\tfrac{S+\Gamma}{\hbar}}w\right)\right]\\
			=&\left[\frac{\partial}{\partial t^\alpha}w+\frac{1}{\hbar}\frac{\partial(S+\Gamma)}{\partial t^\alpha}w\right]
		\end{align*}
		for $w\in\mathcal{A}_0^0 [\![\underline{t}]\!](\!(\hbar)\!)$.
		\item We also define a connection $\downtriangle_{\hbar^{-1}}^{\frac{S+\Gamma}{\hbar }}:\mathcal{H}_{S+\Gamma}\to \mathcal{H}_{S+\Gamma}$ by
		\begin{align*}
			\downtriangle_{\hbar^{-1}}^{\frac{S+\Gamma}{\hbar}}([w]):=&\left[e^{\tfrac{-(S+\Gamma)}{\hbar}}\frac{\partial}{\partial \hbar^{-1}}\left(e^{\tfrac{S+\Gamma}{\hbar}}w\right)\right]\\
			=&\left[\frac{\partial}{\partial \hbar^{-1}}w+(S+\Gamma)w\right]
		\end{align*}
		for $w\in\mathcal{A}_0^0 [\![\underline{t}]\!](\!(\hbar)\!)$.
	\end{enumerate}
\end{definition}
These are well-defined on cohomology classes, since $Q_{S+\Gamma}+\hbar\Delta$ commutes with $\downtriangle_\alpha^{\frac{S+\Gamma}{\hbar}}$ and $\downtriangle_{\hbar^{-1}}^{\frac{S+\Gamma}{\hbar}}$. Note that 
\[
	\downtriangle_\alpha^{\frac{S+\Gamma}{\hbar}}\left(\mathcal{H}_{S+\Gamma}^{(m)}\right)\subseteq\mathcal{H}_{S+\Gamma}^{(m+1)}\quad\textrm{and}\quad\downtriangle_{\hbar^{-1}}^{\frac{S+\Gamma}{\hbar}}\left(\mathcal{H}_{S+\Gamma}^{(m)}\right)\subseteq\mathcal{H}_{S+\Gamma}^{(m)},\quad m\in\mathbb{Z}.
\]
Thus, $\hbar\downtriangle_\alpha^{\frac{S+\Gamma}{\hbar}}$ preserves the $\hbar$-filtration.
\section{\texorpdfstring{An algorithm for formal $F$-manifolds}{An algorithm for formal F-manifold}}\label{Sec_F}

\subsection{\texorpdfstring{Definition of formal $F$-manifolds}{Definition of formal F-manifolds}} \label{sec3.1}
In this subsection, we briefly review the definition of the formal $F$-manifold structure in \cite{HM}. We will only consider a pure even manifold for simplicity, so the signs disappear.
\begin{definition}\label{Def_F}
	Let $M$ be a complex manifold of finite dimension, whose holomorphic structure sheaf and holomorphic tangent sheaf are denoted by $\mathcal{O}_M$ and $\mathcal{T}_{M}$ respectively. Let $\circ$ be a $\mathcal{O}_{M}$-bilinear operation $\circ :\mathcal{T}_M\times\mathcal{T}_M \rightarrow\mathcal{T}_M$. Then $(M, \circ)$ is called an $F$-manifold if the following conditions are satisfied for all $X,Y,Z,W \in \mathcal{T}_M$:
	\begin{enumerate}[\normalfont(F1)]
		\item\label{F1}(Associativity) The multiplication $\circ$ is associative:
		\[
			(X \circ Y) \circ Z = X \circ (Y \circ Z),
		\]
		\item\label{F2}(Commutativity) The multiplication $\circ$ is commutative:
		\[
			X \circ Y = Y \circ X,
		\]
		\item\label{F3}(Potential)\footnote{We only consider a pure even manifold, so the equality \eqref{fmfd} is revised accordingly. See 1.definition in \cite{HM} for the equality with sign on a supermanifold.} The following equality holds:
		\begin{multline}\label{fmfd}
			[X \circ Y, Z \circ W] - [X \circ Y,Z] \circ W - Z \circ [X \circ Y,W] - X \circ [Y, Z \circ W] + X \circ [Y,Z] \circ W\\
			+ X \circ Z \circ [Y,W] - Y \circ [X, Z \circ W] + Y \circ [X,Z] \circ W + Y \circ Z \circ [X,W] = 0.
		\end{multline}
	\end{enumerate}
\end{definition}
One can similarly define its formal version: we consider the formal structure sheaf and the formal tangent bundle instead of the holomorphic structure sheaf and the holomorphic tangent bundle.\par
Let $t_M := \{t^\alpha\}$ be the formal coordinates on the open subset $U_i$. Choose sufficiently small open subset $U$ of $U_i$, then we could assume that $\mathcal{O}(U)=\mathbb{C}[\![t_M]\!]$. On the local coordinates $\{t_M,U\}$, the product $\partial_\alpha \circ\partial_\beta$ is written as a linear combination of $\{\partial_\alpha:=\partial/\partial{t^\alpha}\}$ with coefficients in $\mathbb{C}[\![t_M]\!]$. Let $A_{\alpha\beta}{}^\gamma \in\mathbb{C}[\![t_M]\!]$ be a formal power series representing the 3-tensor field such that
\[
	\partial_\alpha\circ \partial_\beta := \sum_\rho A_{\alpha\beta}{}^\rho\cdot\partial_\rho.
\]
Then, consider the following conditions:

\begin{enumerate}[\normalfont(C1)]
	\item\label{C1}(Associativity)
	\[
		\sum_\rho A_{\alpha\beta}{}^\rho A_{\rho\gamma}{}^\delta = \sum_\rho A_{\beta\gamma}{}^\rho A_{\rho\alpha}{}^\delta,
	\]
	\item\label{C2}(Commutativity)
	\[
		A_{\beta\alpha}{}^\gamma = A_{\alpha\beta}{}^\gamma,
	\]
	\item\label{C3}(Potential)
	\[
		\partial_\alpha A_{\beta\gamma}{}^\delta = \partial_\beta A_{\alpha\gamma}{}^\delta.
	\]
\end{enumerate}

Recall that $T_M$ is the space of formal tangent vector fields on a complex manifold $M$.

\begin{proposition}\label{Coor_F}
	Let $\circ:T_M\times T_M\to T_M$ be a $\mathcal{O}_M$-bilinear operator. Assume that $(M, \circ)$ satisfies the conditions \ref{C1}, \ref{C2}, and \ref{C3} in local coordinates. Then $(M,\circ)$ is a formal $F$-manifold.
\end{proposition}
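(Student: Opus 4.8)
The plan is to verify each of the three $F$-manifold axioms (F\ref{F1}), (F\ref{F2}), (F\ref{F3}) by expressing both sides in the local frame $\{\partial_\alpha\}$ and reducing the identity to the stated coefficient conditions (C\ref{C1}), (C\ref{C2}), (C\ref{C3}). First I would note that an arbitrary holomorphic (formal) vector field is an $\mathcal{O}_M$-linear combination of the $\partial_\alpha$, so that by $\mathcal{O}_M$-bilinearity of $\circ$ it suffices to check (F\ref{F1})--(F\ref{F3}) on the coordinate fields $X=\partial_\alpha$, $Y=\partial_\beta$, $Z=\partial_\gamma$, $W=\partial_\delta$; the general case then follows by expanding the coefficients and using the Leibniz rule for the Lie bracket, with all the extra terms involving derivatives of the coefficients cancelling in pairs because they appear symmetrically on the two sides of each axiom. (On a pure even manifold there are no signs to track.)

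Commutativity (F\ref{F2}) is immediate: $\partial_\alpha\circ\partial_\beta=\sum_\rho A_{\alpha\beta}{}^\rho\partial_\rho$ equals $\partial_\beta\circ\partial_\alpha=\sum_\rho A_{\beta\alpha}{}^\rho\partial_\rho$ precisely because (C\ref{C2}) holds and $\{\partial_\rho\}$ is a frame. For associativity (F\ref{F1}), compute
\[
	(\partial_\alpha\circ\partial_\beta)\circ\partial_\gamma=\sum_\rho A_{\alpha\beta}{}^\rho(\partial_\rho\circ\partial_\gamma)=\sum_{\rho,\delta}A_{\alpha\beta}{}^\rho A_{\rho\gamma}{}^\delta\,\partial_\delta,
\]
and similarly $\partial_\alpha\circ(\partial_\beta\circ\partial_\gamma)=\sum_{\rho,\delta}A_{\beta\gamma}{}^\rho A_{\rho\alpha}{}^\delta\,\partial_\delta$ after using commutativity to rewrite $\partial_\alpha\circ(\cdots)=(\cdots)\circ\partial_\alpha$; comparing coefficients of $\partial_\delta$ gives exactly (C\ref{C1}). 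Here one must be slightly careful: associativity as stated pairs $\partial_\alpha$ on the outside in one expression and on the inside in the other, so a single use of commutativity is needed to bring both into the form where (C\ref{C1}) applies directly.

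The potential axiom (F\ref{F3}) is the main obstacle, since \eqref{fmfd} is a nine-term expression and its reduction to (C\ref{C3}) requires genuine bookkeeping. The strategy is to take $X=\partial_\alpha$, $Y=\partial_\beta$, $Z=\partial_\gamma$, $W=\partial_\delta$, so that the Lie brackets $[\partial_\alpha,\partial_\beta]$ etc.\ all vanish; the only surviving brackets are those of the form $[\partial_\alpha\circ\partial_\beta,\partial_\gamma]$, $[\partial_\alpha,\partial_\gamma\circ\partial_\delta]$, and $[\partial_\alpha\circ\partial_\beta,\partial_\gamma\circ\partial_\delta]$, which, via $[\partial_\mu, A\,\partial_\nu]=(\partial_\mu A)\partial_\nu$ and bilinearity, turn into sums of the derivatives $\partial_\mu A_{\cdot\,\cdot}{}^\cdot$ times structure coefficients. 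One then expands all nine terms of \eqref{fmfd} in the frame $\{\partial_\epsilon\}$, collects the coefficient of each $\partial_\epsilon$, and checks that the resulting polynomial identity in the $A$'s and their first derivatives follows formally from (C\ref{C1}), (C\ref{C2}), (C\ref{C3}) — the terms with no derivative cancel by associativity and commutativity, while the terms carrying a derivative $\partial_\mu A_{\nu\lambda}{}^\epsilon$ reorganize, using (C\ref{C3}) to swap the differentiated index with a lower index, into matching pairs that cancel. This is the lengthy but routine core of the argument; I would either carry it out by the standard manipulation (it is the content of, e.g., \cite{HM}, 1.\ Definition, specialized to the even case) or cite that the equivalence of \eqref{fmfd} with (C\ref{C1})--(C\ref{C3}) in flat coordinates is classical. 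Finally, having checked (F\ref{F1})--(F\ref{F3}) on the frame, the passage to arbitrary $X,Y,Z,W$ is the sign-free Leibniz bookkeeping mentioned above, completing the proof that $(M,\circ)$ is a formal $F$-manifold.
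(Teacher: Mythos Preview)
Your proposal is correct and follows essentially the same approach as the paper: reduce to coordinate vector fields $\partial_\alpha,\partial_\beta,\partial_\gamma,\partial_\delta$, observe that the Lie brackets among them vanish so only terms of the form $[\partial_\mu, A\,\partial_\nu]=(\partial_\mu A)\partial_\nu$ survive in \eqref{fmfd}, and then cancel using (C\ref{C1}), (C\ref{C2}), (C\ref{C3}). The only difference is that the paper actually writes out the four-line simplification of the nine-term expression explicitly rather than deferring it or citing \cite{HM}; since this is the heart of the proposition, you should plan to include that computation rather than leave it as ``lengthy but routine.''
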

\begin{proof}
	It is sufficient to check for commuting vector fields $(X,Y,Z,W) = (\partial_\alpha, \partial_\beta, \partial_\gamma, \partial_\delta).$ Then the left hand side of \ref{F3} becomes
	\begin{align*}
		&\Big[\sum_\phi A_{\alpha\beta}{}^\phi \partial_\phi, \sum_\rho A_{\gamma\delta}{}^\rho \partial_\rho\Big] - \Big[\sum A_{\alpha\beta}{}^\phi \partial_\phi, \partial_\gamma\Big] \circ \partial_\delta - \partial_\gamma \circ \Big[\sum_\phi A_{\alpha\beta}{}^\phi \partial_\phi, \partial_\delta\Big]\\
		& - \partial_\alpha \circ [\partial_\beta, \sum A_{\gamma\delta}{}^\rho \partial_\rho] - \partial_\beta \circ [\partial_\alpha, \sum A_{\gamma\delta}{}^\rho \partial_\rho]\\
		=&\sum_{\phi,\rho} (A_{\alpha\beta}{}^\phi (\partial_\phi A_{\gamma\delta}{}^\rho) \partial_\rho - A_{\gamma\delta}{}^\rho (\partial_\rho A_{\alpha\beta}{}^\phi)\partial_\phi) + \sum_{\phi,\rho} (\partial_\gamma A_{\alpha\beta}{}^\phi) A_{\phi\delta}{}^\rho \partial_\rho+ \sum_{\phi,\rho} (\partial_\delta A_{\alpha\beta}{}^\phi) A_{\gamma\phi}{}^\rho \partial_\rho\\
		& - \sum_{\phi,\rho} (\partial_\beta A_{\gamma\delta}{}^\rho)A_{\alpha\rho}{}^\phi \partial_\phi - \sum_{\phi,\rho} (\partial_\alpha A_{\gamma\delta}{}^\rho) A_{\beta\rho}{}^\phi \partial_\phi\\
		=&\sum_{\phi,\rho}(\partial_\gamma (A_{\alpha\beta}{}^\phi A_{\phi\delta}{}^\rho) - \partial_\gamma A_{\beta\delta}{}^\phi A_{\alpha\phi}{}^\rho - \partial_\alpha(A_{\gamma\delta}{}^\phi A_{\beta\phi}{}^\rho) + \partial_\alpha A_{\delta\beta}{}^\phi A_{\gamma\phi}{}^\rho)\partial_\rho\\
		=&\sum_{\phi,\rho}(A_{\beta\delta}{}^\phi \partial_\gamma A_{\alpha\phi}{}^\rho - A_{\delta\beta}{}^\phi \partial_\alpha A_{\gamma\phi}{}^\rho)\partial_\rho\\
		=&0,
	\end{align*}
	where the second equality holds by potential \ref{C3}, the third equality holds by associativity \ref{C1}, and the last equality holds by commutativity \ref{C2} and potential \ref{C3}. Therefore the three conditions \ref{C1}, \ref{C2}, and \ref{C3} induce \ref{F3}.
\end{proof}

\subsection{A connection matrix for the Gauss-Manin connection}\label{sec3.2}
In order to construct a formal $F$-manifold structure on $\mathbb{H}=H_{\mathrm{prim}}^{n-k}(X,\mathbb{C})$, we will calculate a connection matrix for the Gauss-Manin connection. Choose a solution $\Gamma\in\mathcal{A}_0^0[\![\underline{t}]\!]$ of the Maurer-Cartan equation, and suppose that $\partial_\alpha\Gamma|_{\underline{t}=0}=u_\alpha$ for all $\alpha\in I$, where $\{u_\alpha:\alpha\in I\}$ is a $\mathbb{C}$-basis of $\mathcal{A}_0^0/Q_S(\mathcal{A}_0^{-1})$. Recall that $\Gamma$ gives us the 0-th cohomology $\mathcal{H}_{S+\Gamma}$ and a filtered piece $\mathcal{H}_{S+\Gamma}^{(m)}$ such that
\[
		\mathcal{H}_{S+\Gamma} :=\frac{\mathcal{A}_0^0[\![\underline{t}]\!](\!(\hbar)\!)}{(Q_{S+\Gamma}+\hbar\Delta)(\mathcal{A}_0^{-1}[\![\underline{t}]\!](\!(\hbar)\!))},\quad\mathcal{H}_{S+\Gamma}^{(m)}:=\frac{\mathcal{A}_0^0[\![\underline{t}]\!][\![\hbar]\!]\hbar^{-m}}{(Q_{S+\Gamma}+\hbar\Delta)(\mathcal{A}_0^{-1}[\![\underline{t}]\!][\![\hbar]\!]\hbar^{-m})}
\]
for $m\in\mathbb{Z}$. Since $\partial_\alpha\Gamma=\hbar\downtriangle_\alpha^{\frac{S+\Gamma}{\hbar}}1$ is a $\mathbb{C}[\![\underline{t}]\!][\![\hbar]\!]$-basis of $\mathcal{H}_{S+\Gamma}^{(0)}$ by Lemma \ref{Basis_lemma_F}, there is a connection matrix $\mathbf{A}_{\alpha\beta}{}^\rho\in\mathbb{C}[\![\underline{t}]\!][\![\hbar]\!]$ with respect to the basis $\{\hbar\downtriangle_\rho^{\frac{S+\Gamma}{\hbar}}1:\rho\in I\}$ such that
\[
	\hbar\downtriangle_\beta^{\frac{S+\Gamma}{\hbar}}(\hbar\downtriangle_\alpha^{\frac{S+\Gamma}{\hbar}}1)=\sum_{\rho\in I}\mathbf{A}_{\alpha\beta}{}^\rho\cdot(\hbar\downtriangle_\rho^{\frac{S+\Gamma}{\hbar}}1)+(Q_{S+\Gamma}+\hbar\Delta)(\mathbf{\Lambda}_{\alpha\beta})
\]
for some $\mathbf{\Lambda}_{\alpha\beta}\in\mathcal{A}_{0}^{-1}[\![\underline{t}]\!][\![\hbar]\!]$ and all $\alpha,\beta\in I$. Suppose that there is $\Gamma$ which makes $\mathbf{A}_{\alpha\beta}{}^\rho$ and $\mathbf{\Lambda}_{\alpha\beta}$ have no $\hbar$-power terms, i.e., there is a connection matrix $A_{\alpha\beta}{}^\rho\in\mathbb{C}[\![\underline{t}]\!]$ such that
\begin{equation}\label{F-QM}
	\hbar\downtriangle_\beta^{\frac{S+\Gamma}{\hbar}}(\hbar\downtriangle_\alpha^{\frac{S+\Gamma}{\hbar}}1)=\sum_{\rho\in I}A_{\alpha\beta}{}^\rho\cdot(\hbar\downtriangle_\rho^{\frac{S+\Gamma}{\hbar}}1)+(Q_{S+\Gamma}+\hbar\Delta)(\Lambda_{\alpha\beta}),
\end{equation}
for some $\Lambda_{\alpha\beta}\in\mathcal{A}_{0}^{-1}[\![\underline{t}]\!]$ and all $\alpha,\beta\in I$. Then, by comparing $\hbar$-power terms, the equation \eqref{F-QM} reduces further to the following:
\begin{equation}\label{F-QM2}
	\begin{aligned}
		\partial_\alpha\Gamma\cdot\partial_\beta\Gamma&=\sum_{\rho\in I}A_{\alpha\beta}{}^\rho\cdot\partial_\rho\Gamma+Q_{S+\Gamma}(\Lambda_{\alpha\beta}),\\
		\partial_\beta\partial_\alpha\Gamma&=\Delta(\Lambda_{\alpha\beta}),
	\end{aligned}
\end{equation}
for all $\alpha,\beta\in I$. We claim that the coefficients $A_{\alpha\beta}{}^\rho$ give a formal $F$-manifold structure on $\mathbb{H}$. For simplicity, we use the notation $\Gamma_{\alpha_1\alpha_2\cdots\alpha_\ell}:=\partial_{\alpha_\ell}\cdots\partial_{\alpha_2}\partial_{\alpha_1}\Gamma$.

\begin{remark}

	\begin{enumerate}
	\item The equation \eqref{F-QM2} is a special case of the equation \eqref{WPF} for weak primitive forms as summarized in Table \ref{Comparison_Table}.
	\item The equation which appeared in the proof of Lemma 7.1 in \cite{BK} is the equation \eqref{F-QM2} with $\alpha=\beta$.
	\end{enumerate}
\end{remark}

\begin{proposition}
	Assume that $\Gamma,A_{\alpha\beta}{}^\rho$, and $\Lambda_{\alpha\beta}$ satisfy the equation \eqref{F-QM}. Then $A_{\alpha\beta}{}^\rho$ satisfies \textrm{\normalfont(associativity)}, \textrm{\normalfont(commutativity)}, and \textrm{\normalfont(potential)}.
	In particular, $A_{\alpha\beta}{}^\rho$ equip $\mathbb{H}=H_{\mathrm{prim}}^{n-k}(X,\mathbb{C})$ with a formal $F$-manifold structure by Proposition \ref{Coor_F}.
\end{proposition}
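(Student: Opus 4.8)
The plan is to verify the three algebraic identities (associativity), (commutativity), and (potential) for $A_{\alpha\beta}{}^\rho$ directly from equation \eqref{F-QM2}, working inside the cohomology $\mathcal{H}_{S+\Gamma}^{(0)}$ and using the fact that $\{\partial_\rho\Gamma = \hbar\downtriangle_\rho^{\frac{S+\Gamma}{\hbar}}1\}$ is a $\mathbb{C}[\![\underline{t}]\!][\![\hbar]\!]$-basis (Lemma \ref{Basis_lemma_F}), so that coefficients appearing in relations among these classes are uniquely determined. Commutativity is immediate: the left-hand side of the first line of \eqref{F-QM2}, $\partial_\alpha\Gamma\cdot\partial_\beta\Gamma$, is symmetric in $\alpha,\beta$ because the product $\cdot$ on $\mathcal{A}_0^0$ is commutative, so comparing with the basis expansion forces $A_{\alpha\beta}{}^\rho = A_{\beta\alpha}{}^\rho$.

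For (potential), I would differentiate the first line of \eqref{F-QM2} with respect to $t^\gamma$. The left-hand side $\partial_\gamma(\partial_\alpha\Gamma\cdot\partial_\beta\Gamma) = \Gamma_{\alpha\gamma}\cdot\partial_\beta\Gamma + \partial_\alpha\Gamma\cdot\Gamma_{\beta\gamma}$ is manifestly symmetric under $\alpha\leftrightarrow\gamma$. On the right-hand side, $\partial_\gamma$ of $\sum_\rho A_{\alpha\beta}{}^\rho\cdot\partial_\rho\Gamma + Q_{S+\Gamma}(\Lambda_{\alpha\beta})$ produces $\sum_\rho (\partial_\gamma A_{\alpha\beta}{}^\rho)\partial_\rho\Gamma + \sum_\rho A_{\alpha\beta}{}^\rho\Gamma_{\rho\gamma} + \partial_\gamma(Q_{S+\Gamma})(\Lambda_{\alpha\beta}) + Q_{S+\Gamma}(\partial_\gamma\Lambda_{\alpha\beta})$; here $\partial_\gamma Q_{S+\Gamma} = \ell_2^\Delta(\partial_\gamma\Gamma,\,\cdot\,)$, and using the second line of \eqref{F-QM2} together with the dGBV relations one rewrites the terms involving $\ell_2^\Delta$ and $\Delta$ as exact terms (in the image of $Q_{S+\Gamma}+\hbar\Delta$, hence zero in cohomology). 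After this bookkeeping, subtracting the expression with $\alpha$ and $\gamma$ swapped kills the symmetric pieces $\sum_\rho A_{\alpha\beta}{}^\rho\Gamma_{\rho\gamma}$ versus $\sum_\rho A_{\gamma\beta}{}^\rho\Gamma_{\rho\alpha}$ — which are equal \emph{as cohomology classes} precisely because both equal (by reusing \eqref{F-QM2}) a symmetric expression, and one is left with $\sum_\rho(\partial_\gamma A_{\alpha\beta}{}^\rho - \partial_\alpha A_{\gamma\beta}{}^\rho)\partial_\rho\Gamma \equiv 0$; linear independence of $\{\partial_\rho\Gamma\}$ gives $\partial_\gamma A_{\alpha\beta}{}^\rho = \partial_\alpha A_{\gamma\beta}{}^\rho$.

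For (associativity), I would compute $\hbar\downtriangle_\gamma^{\frac{S+\Gamma}{\hbar}}\hbar\downtriangle_\beta^{\frac{S+\Gamma}{\hbar}}\hbar\downtriangle_\alpha^{\frac{S+\Gamma}{\hbar}}1$ in two ways. Since the Gauss-Manin connection operators commute (as noted after Definition \ref{GMC}, $Q_{S+\Gamma}+\hbar\Delta$ commutes with $\downtriangle_\alpha^{\frac{S+\Gamma}{\hbar}}$ and the $\downtriangle_\alpha$'s commute among themselves), applying \eqref{F-QM} twice and expanding via the basis gives, on one grouping, $\sum_{\rho,\delta} A_{\alpha\beta}{}^\rho A_{\rho\gamma}{}^\delta\,(\hbar\downtriangle_\delta^{\frac{S+\Gamma}{\hbar}}1)$ plus exact terms; regrouping the three operators differently (using commutativity of the $\downtriangle$'s to pair $\beta,\gamma$ first) gives $\sum_{\rho,\delta} A_{\beta\gamma}{}^\rho A_{\rho\alpha}{}^\delta\,(\hbar\downtriangle_\delta^{\frac{S+\Gamma}{\hbar}}1)$ plus exact terms. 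Equating and invoking the uniqueness of the expansion yields the associativity identity. The main obstacle, and the step deserving the most care, is the potential identity: controlling the ``exact'' error terms that arise when differentiating $Q_{S+\Gamma}(\Lambda_{\alpha\beta})$ — one must show that all the extra contributions ($\partial_\gamma$ hitting the operator $Q_{S+\Gamma}$, and the $\Delta(\Lambda)$-type terms reintroduced via the second line of \eqref{F-QM2}) genuinely lie in $\mathrm{Im}(Q_{S+\Gamma}+\hbar\Delta)$ and so vanish in $\mathcal{H}_{S+\Gamma}^{(0)}$, which requires the dGBV compatibility $\ell_2^\Delta$-identities and the fact that $\Delta$ is a derivation of the bracket. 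Once this is established, Proposition \ref{Coor_F} applies verbatim to conclude that $A_{\alpha\beta}{}^\rho$ defines a formal $F$-manifold structure on $\mathbb{H}$.
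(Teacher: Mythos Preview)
Your commutativity argument is correct and matches the paper. Your associativity argument is on the right track but omits a term: when you apply $\hbar\downtriangle_\gamma$ to $\sum_\rho A_{\alpha\beta}{}^\rho\,\hbar\downtriangle_\rho 1$, the Leibniz rule for the connection produces not only $\sum_{\rho,\delta} A_{\alpha\beta}{}^\rho A_{\rho\gamma}{}^\delta\,\Gamma_\delta$ but also $\hbar\sum_\delta(\partial_\gamma A_{\alpha\beta}{}^\delta)\Gamma_\delta$, which is \emph{not} exact. Comparing the two orderings therefore only gives the combined identity
\[
\hbar\,\partial_\gamma A_{\alpha\beta}{}^\delta+\sum_\rho A_{\alpha\beta}{}^\rho A_{\rho\gamma}{}^\delta \;=\; \hbar\,\partial_\alpha A_{\beta\gamma}{}^\delta+\sum_\rho A_{\beta\gamma}{}^\rho A_{\rho\alpha}{}^\delta.
\]
This is exactly the paper's computation \eqref{hbar3_Ex}. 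The crucial step you are missing is then to \emph{separate $\hbar$-powers}: because $A_{\alpha\beta}{}^\rho\in\mathbb{C}[\![\underline t]\!]$ has no $\hbar$, the $\hbar^0$-part gives associativity and the $\hbar^1$-part gives potential simultaneously. So your associativity argument, done correctly, already contains the potential identity for free.

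Your separate argument for potential has a genuine error: the claim that $\partial_\gamma(\Gamma_\alpha\Gamma_\beta)=\Gamma_{\alpha\gamma}\Gamma_\beta+\Gamma_\alpha\Gamma_{\beta\gamma}$ is ``manifestly symmetric under $\alpha\leftrightarrow\gamma$'' is false; swapping gives $\Gamma_{\alpha\gamma}\Gamma_\beta+\Gamma_\gamma\Gamma_{\alpha\beta}$, and the difference $\Gamma_\alpha\Gamma_{\beta\gamma}-\Gamma_\gamma\Gamma_{\alpha\beta}$ is not zero in general. The reason the paper's approach succeeds is that the \emph{full} triple operator
\[
\hbar\downtriangle_\gamma\hbar\downtriangle_\beta\hbar\downtriangle_\alpha 1=\hbar^2\Gamma_{\alpha\beta\gamma}+\hbar(\Gamma_\alpha\Gamma_{\beta\gamma}+\Gamma_\beta\Gamma_{\alpha\gamma}+\Gamma_\gamma\Gamma_{\alpha\beta})+\Gamma_\alpha\Gamma_\beta\Gamma_\gamma
\]
\emph{is} totally symmetric in $\alpha,\beta,\gamma$; only the $\hbar^0$-piece $\Gamma_\alpha\Gamma_\beta\Gamma_\gamma$ by itself is not enough to see the potential identity. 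So rather than differentiating the classical equation \eqref{F-QM2} with $\partial_\gamma$ (which does not commute with $Q_{S+\Gamma}$ and leads to the bookkeeping you flagged as difficult), you should work with the $\hbar$-deformed equation \eqref{F-QM} and use $\hbar\downtriangle_\gamma$, which \emph{does} commute with $Q_{S+\Gamma}+\hbar\Delta$; then the $\hbar$-grading does all the work.
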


\begin{proof}
	By applying $\hbar\downtriangle_\gamma^{\frac{S+\Gamma}{\hbar}}$ to the equation \eqref{F-QM}, we get
	\begin{equation}\label{hbar3_Ex}
		\begin{aligned}
			&\hbar\downtriangle^{\frac{S+\Gamma}{\hbar}}_\gamma(\hbar\downtriangle^{\frac{S+\Gamma}{\hbar}}_\beta(\hbar\downtriangle^{\frac{S+\Gamma}{\hbar}}_\alpha1))=\hbar\downtriangle_\gamma^{\frac{S+\Gamma}{\hbar}}\sum_{\rho\in I}A_{\alpha\beta}{}^\rho\Gamma_\rho+(Q_{S+\Gamma}+\hbar\Delta)(\hbar\downtriangle^{\frac{S+\Gamma}{\hbar}}_\gamma\Lambda_{\alpha\beta})\\
			=&\sum_{\rho\in I}\Big[\hbar\partial_\gamma A_{\alpha\beta}{}^\rho\Gamma_\rho+A_{\alpha\beta}{}^\rho\hbar\downtriangle^{\frac{S+\Gamma}{\hbar}}_\gamma\Gamma_\rho\Big]+(Q_{S+\Gamma}+\hbar\Delta)(\hbar\downtriangle^{\frac{S+\Gamma}{\hbar}}_\gamma\Lambda_{\alpha\beta})\\
			=&\sum_{\delta\in I}\bigg[\hbar\partial_\gamma A_{\alpha\beta}{}^\delta+\sum_{\rho\in I}A_{\alpha\beta}{}^\rho A_{\gamma\rho}{}^\delta\bigg]\Gamma_\delta+(Q_{S+\Gamma}+\hbar\Delta)\Big(\hbar\downtriangle_\gamma^{\frac{S+\Gamma}{\hbar}}\Lambda_{\alpha\beta}+\sum_{\rho\in I}A_{\alpha\beta}{}^\rho\Lambda_{\gamma\rho}\Big).
		\end{aligned}	
	\end{equation}
	Note that
	\[
		\hbar\downtriangle_\gamma^{\frac{S+\Gamma}{\hbar}}(\hbar\downtriangle_\beta^{\frac{S+\Gamma}{\hbar}}(\hbar\downtriangle_\alpha^{\frac{S+\Gamma}{\hbar}}1))=\hbar^2\Gamma_{\alpha\beta\gamma}+\hbar(\Gamma_\alpha\Gamma_{\beta\gamma}+\Gamma_{\beta}\Gamma_{\alpha\gamma}+\Gamma_\gamma\Gamma_{\alpha\beta})+\Gamma_\alpha\Gamma_\beta\Gamma_\gamma.
	\]
	Since the left hand side of equation \eqref{hbar3_Ex} is invariant under the permutation of $\alpha,\beta,\gamma$ at each $\hbar$-power term and the coefficient of $\Gamma_\delta$ in the right hand side is unique by Lemma \ref{Basis_lemma_F},
	\[
		\sum_{\rho}A_{\alpha\beta}{}^\rho A_{\gamma\rho}{}^\delta\quad\textrm{and}\quad\partial_{\alpha}A_{\beta\gamma}{}^\rho
	\]
	are also invariant under the permutation of $\alpha,\beta,\gamma$. Therefore, the associativity and the potential are satisfied. It is important that $A_{\alpha\beta}{}^\rho$ has no $\hbar$-power terms.\par
	The commutativity follows from the equation \eqref{F-QM} directly by the same argument.
\end{proof}
Therefore, the key question to construct a formal $F$-manifold structure on $\mathbb{H}=H_{\mathrm{prim}}^{n-k}(X,\mathbb{C})$ is the following:
\begin{quote}
	Can we find a Maurer-Cartan solution $\Gamma$ whose connection matrix $A_{\alpha\beta}{}^\rho$ (and $\Lambda_{\alpha\beta}$) has no $\hbar$-power terms?
\end{quote}

\subsection{An explicit algorithm}\label{Subsec_Alg}
We will provide an explicit algorithm which calculates $\Gamma, A_{\alpha\beta}{}^\rho$ satisfying the equation \eqref{F-QM}. We use the following notations:
\begin{equation}\label{Exp_notation}
	\begin{aligned}
		\Gamma&=\sum_{\alpha\in I}u_\alpha\cdot t^\alpha+\sum_{m\geq2}\sum_{\underline{\alpha}\in I^m}\frac{1}{m!}u_{\underline{\alpha}}t^{\underline{\alpha}}\in\mathcal{A}_0^0 [\![\underline{t}]\!],\\
		A_{\alpha\beta}{}^\rho&=a_{\alpha\beta}{}^\rho+\sum_{m\geq1}\sum_{\underline{\alpha}\in I^m}\frac{1}{m!}a_{\alpha\beta\underline{\alpha}}{}^\rho t^{\underline{\alpha}}\in\mathbb{C}[\![\underline{t}]\!],\\
		\Lambda_{\alpha\beta}&=\lambda_{\alpha\beta}+\sum_{m\geq1}\sum_{\underline{\alpha}\in I^m}\frac{1}{m!}\lambda_{\alpha\beta\underline{\alpha}}t^{\underline{\alpha}}\in\mathcal{A}_0^{-1} [\![\underline{t}]\!],
	\end{aligned}
\end{equation}
where $u_{\underline{\alpha}}\in\mathcal{A}_0^0$, $a_{\alpha\beta}{}^\rho,a_{\alpha\beta\underline{\alpha}}{}^\rho\in\mathbb{C}$, $\lambda_{\alpha\beta},\lambda_{\alpha\beta\underline{\alpha}}\in\mathcal{A}_0^{-1}$, and $t^{\alpha_1\cdots\alpha_m}:=t^{\alpha_1}\cdots t^{\alpha_m}$. We are assuming that $u_{\underline{\alpha}},a_{\alpha\beta\underline{\alpha}}{}^\rho,\lambda_{\alpha\beta\underline{\alpha}}$ are invariant under the permutation of indices of $\underline{\alpha}$ under our notation. This assumption is important for our algorithm (see Remark \ref{nontrivialE}). In order to make $\{\Gamma_\rho:\rho\in I\}$ be a $\mathbb{C}[\![\underline{t}]\!][\![\hbar]\!]$-basis of $\mathcal{H}_{S+\Gamma}^{(0)}$, we define the $t^\alpha$-term of $\Gamma$ as $u_\alpha$  where $\{u_\alpha:\alpha\in I\}$ is a $\mathbb{C}$-basis of $\mathcal{A}_0^0/Q_S(\mathcal{A}_0^{-1})$; see Lemma \ref{Basis_lemma_F}. We have to determine $u_{\underline{\alpha}},a_{\alpha\beta}{}^\rho,a_{\alpha\beta\underline{\alpha}}{}^\rho$ which satisfy the equation \eqref{F-QM}.\par 
For the algorithm, Definition \ref{u_alpha} and Lemma \ref{Q-D_Lemma} (the weak $Q_S\Delta$-lemma) are required.
%

\begin{definition}\label{u_alpha}
	Let $|\underline{\alpha}|=m$ which means that $\underline{\alpha}\in I^m$. We define the notation $u_{\underline{\alpha}}^{(i)}$ as follows:
	\[
		u^{(i)}_{\underline{\alpha}}=\sum_{\substack{\underline{\alpha}_1\sqcup\cdots\sqcup \underline{\alpha}_{m-i}=\underline{\alpha}\\\underline{\alpha}_j\neq\emptyset}}\frac{1}{(m-i)!}u_{\underline{\alpha}_1}\cdots u_{\underline{\alpha}_{m-i}},\quad(0\leq i\leq m-1),	
	\]
	where the notation $\underline{\alpha}_1\sqcup\cdots\sqcup \underline{\alpha}_{m-i}=\underline{\alpha}$ means $\underline{\alpha}_1\cup\cdots\cup \underline{\alpha}_{m-i}=\underline{\alpha}$ and $\underline{\alpha}_k\cap \underline{\alpha}_\ell=\emptyset$ for $\ell\neq k$.
\end{definition}
Note that $u_{\underline{\alpha}}^{(i)}$ is invariant under the permutation of indices of $\underline{\alpha}$. For example,
\[
	\begin{aligned}
		u_{\alpha\beta\gamma}^{(0)}&=u_\alpha u_\beta u_\gamma,\\
		u_{\alpha\beta\gamma}^{(1)}&=u_\alpha u_{\beta\gamma}+ u_{\beta} u_{\alpha\gamma}+ u_\gamma u_{\alpha\beta}\\
		u_{\alpha\beta\gamma}^{(2)}&=u_{\alpha\beta\gamma},
	\end{aligned}
	\quad\textrm{and}\quad
	\begin{aligned}
		u_{\alpha\beta\gamma\delta}^{(0)}&=u_\alpha u_\beta u_\gamma u_\delta,\\
		u_{\alpha\beta\gamma\delta}^{(1)}&=u_\alpha u_{\beta\gamma\delta}+ u_{\beta} u_{\alpha\gamma\delta}+ u_\gamma u_{\alpha\beta\delta}+ u_{\delta} u_{\alpha\beta\gamma},\\
		u_{\alpha\beta\gamma\delta}^{(2)}&= u_{\alpha\beta} u_{\gamma\delta}+ u_{\alpha\gamma} u_{\beta\delta}+ u_{\alpha\delta} u_{\beta\gamma},\\
		u_{\alpha\beta\gamma\delta}^{(3)}&=u_{\alpha\beta\gamma\delta}.
	\end{aligned}
\]\par
Now we give the algorithm for $\Gamma$ and $A_{\alpha\beta}{}^\rho$ as follows:
\begin{description}
	\item[Step-0] Choose a $\mathbb{C}$-basis $\{u_\alpha:\alpha\in I\}$ of $J_S:=\mathcal{A}_0^0/Q_S(\mathcal{A}_0^{-1})$. Note that $J_S$ is a finite dimensional $\mathbb{C}$-vector space, and there is a partition of $\{u_\alpha:\alpha\in I\}$ in terms of weights, i.e., we decompose $\{u_\alpha:\alpha\in I\}=\bigcup_{r=0}^{n-k}\{u_{\alpha}:\alpha\in I_r\}$ where $I=I_0\sqcup\cdots\sqcup I_{n-k}$. We use the notations $u_{\mathrm{min}}=u_{\alpha_1}$ and $u_{\mathrm{max}}=u_{\alpha_2}$ for $\alpha_1\in I_{0}$ and $\alpha_2\in I_{n-k}$ where $|I_0|=|I_{n-k}|=1$ (since $X_{\ud G}$ is a Calabi-Yau manifold).
	\item[Step-1] Determine $a_{\alpha\beta}^{(0)}$ and $\lambda_{\alpha\beta}^{(0)}$ using a basis $\{u_\alpha:\alpha\in I\}$ of $J_S$ as follows:
	\[
		u_\alpha u_\beta=\sum_{\rho\in I}a_{\alpha\beta}^{(0)}{}^\rho u_\rho+Q_S({\lambda_{\alpha\beta}^{(0)}}).
	\]
	Note that $a_{\alpha\beta}^{(0)}{}^\rho\in\mathbb{C}$ is unique and $\lambda_{\alpha\beta}^{(0)}$ is unique up to $\ker Q_S$. Therefore, $\Delta(\lambda_{\alpha\beta}^{(0)})$ is unique up to $\mathrm{Im}(Q_S)$ by the weak $Q_S\Delta$-lemma \ref{Q-D_Lemma}. Then, define $u_{\alpha\beta}:=\Delta(\lambda_{\alpha\beta}^{(0)})$ and $a_{\alpha\beta}{}^\rho:=a_{\alpha\beta}^{(0)}{}^\rho$. 
	\item[Step-$\boldsymbol\ell$ $(\ell\geq2)$] Suppose that $|\underline{\alpha}|=\ell+1$. Determine $a_{\underline{\alpha}}^{(i)}{}^\rho$ and $\lambda_{\underline{\alpha}}^{(i)}$ ($0\leq i\leq \ell-1$) in sequence as follows:
	\begin{equation}\label{ind_u}
			\begin{aligned}
				u_{\underline{\alpha}}^{(0)}&=\sum_{\rho\in I}a_{\underline{\alpha}}^{(0)}{}^\rho u_\rho+Q_S(\lambda_{\underline{\alpha}}^{(0)}),\\
				u_{\underline{\alpha}}^{(1)}-\Delta(\lambda_{\underline{\alpha}}^{(0)})&=\sum_{\rho\in I}a_{\underline{\alpha}}^{(1)}{}^\rho u_\rho+Q_S(\lambda_{\underline{\alpha}}^{(1)}),\\
				&\quad\vdots\\
				u_{\underline{\alpha}}^{(\ell-1)}-\Delta(\lambda_{\underline{\alpha}}^{(\ell-2)})&=\sum_{\rho\in I}a_{\underline{\alpha}}^{(\ell-1)}{}^\rho u_\rho+Q_S(\lambda_{\underline{\alpha}}^{(\ell-1)}).
		\end{aligned}
	\end{equation}
	Since $\lambda_{\underline{\alpha}}^{(i)}$ is unique up to $\ker Q_S$ for $0\leq i\leq\ell-2$, $\Delta(\lambda_{\underline{\alpha}}^{(i)})$ is unique up to $\mathrm{Im}(Q_S)$ by the weak $Q_S\Delta$-lemma \ref{Q-D_Lemma}. Therefore, $a_{\underline{\alpha}}^{(\ell-1)}{}^\rho$ is independent of the choices of $\lambda_{\underline{\alpha}}^{(i)}$. Then, define $a_{\underline{\alpha}}{}^\rho:=a_{\underline{\alpha}}^{(\ell-1)}{}^\rho$ and $u_{\underline{\alpha}}:=\Delta(\lambda_{\underline{\alpha}}^{(\ell-1)})$.
\end{description}
By this inductive algorithm, we can completely determine $\Gamma, A_{\alpha\beta}{}^\rho$, and $\Lambda_{\a\b}$ which turn out to satisfy (see subsection \ref{sec3.4}) the equations \eqref{F-QM2}.

\begin{corollary}\label{Cor_Alg}
	Suppose that $\Gamma,\Gamma'\in\mathcal{A}_0^0	[\![\underline{t}]\!]$ satisfy the equation \eqref{F-QM} with the connection matrices $A_{\alpha\beta}{}^\rho$ and $A'_{\alpha\beta}{}^\rho$ respectively. If $\Gamma_\alpha|_{\underline{t}=0}=\Gamma'_\alpha|_{\underline{t}=0}=u_\alpha$ for some $\mathbb{C}$-basis $\{u_\alpha:\alpha\in I\}$ of $\mathcal{A}_0^{0}/Q_S(\mathcal{A}_0^{-1})$, then $A_{\alpha\beta}{}^\rho=A'_{\alpha\beta}{}^\rho$. In other words, the result $A_{\alpha\beta}{}^\rho$ of the above algorithm depends only on the choice of $\mathbb{C}$-basis $\{u_\alpha:\alpha\in I\}$ of $\mathcal{A}_0^{0}/Q_S(\mathcal{A}_0^{-1})$.
\end{corollary}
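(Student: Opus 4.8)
The plan is to reduce the corollary to a single uniqueness assertion and then prove that assertion by a cohomological rigidity argument over the field $\mathbb{C}(\!(\hbar)\!)$. First I would record that it suffices to prove the following: if $\Gamma\in\mathcal{A}_0^0[\![\underline t]\!]$ satisfies \eqref{F-QM} (equivalently \eqref{F-QM2}) with $\partial_\alpha\Gamma|_{\underline t=0}=u_\alpha$ for the fixed $\mathbb{C}$-basis $\{u_\alpha:\alpha\in I\}$ of $\mathcal{A}_0^0/Q_S(\mathcal{A}_0^{-1})$, then its connection matrix $A_{\alpha\beta}{}^\rho$ depends only on $\{u_\alpha\}$. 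Indeed, by the verification carried out in subsection~\ref{sec3.4} the triple $(\Gamma,A_{\alpha\beta}{}^\rho,\Lambda_{\alpha\beta})$ produced by the algorithm of subsection~\ref{Subsec_Alg} is one such solution; granting the uniqueness assertion, the $A_{\alpha\beta}{}^\rho$ of that solution coincides with that of any competitor $\Gamma'$ having the same linear term, which is exactly the content of the corollary.

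To prove the uniqueness assertion I would pass to $\mathbb{C}(\!(\hbar)\!)$. Since $S$ does not depend on $\underline t$ and $\Gamma\in\mathcal{A}_0^0[\![\underline t]\!]$ is $O(\underline t)$, multiplication by the unit $e^{\Gamma/\hbar}$ is a cochain isomorphism
\[
e^{\Gamma/\hbar}\colon\bigl(\mathcal{A}_0^\bullet[\![\underline t]\!](\!(\hbar)\!),\,Q_{S+\Gamma}+\hbar\Delta\bigr)\;\xrightarrow{\sim}\;\bigl(\mathcal{A}_0^\bullet[\![\underline t]\!](\!(\hbar)\!),\,Q_S+\hbar\Delta\bigr),
\]
because $e^{-\Gamma/\hbar}(Q_S+\hbar\Delta)e^{\Gamma/\hbar}=Q_{S+\Gamma}+\hbar\Delta$ (the higher commutators vanish for degree reasons, as $Q_S\Gamma=\Delta\Gamma=0$ in $\mathcal{A}_0^1=0$). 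A direct check shows this isomorphism carries the Gauss--Manin operator $\hbar\downtriangle_\alpha^{\frac{S+\Gamma}{\hbar}}$ to the plain operator $\hbar\partial_\alpha$ on the target, whose differential no longer involves $\underline t$. Applying the induced isomorphism on $H^0$ to \eqref{F-QM} turns it into
\[
\hbar^2\,\partial_\alpha\partial_\beta Z=\sum_{\rho\in I}A_{\alpha\beta}{}^\rho\cdot\hbar\,\partial_\rho Z,\qquad Z:=\bigl[e^{\Gamma/\hbar}\bigr],
\]
an identity in the free $\mathbb{C}(\!(\hbar)\!)[\![\underline t]\!]$-module $H^0\bigl(\mathcal{A}_0^\bullet[\![\underline t]\!](\!(\hbar)\!),Q_S+\hbar\Delta\bigr)$, in which $\{\hbar\partial_\rho Z\}$ is a basis (it reduces to the basis $\{[u_\rho]\}$ at $\underline t=0$ by Lemma~\ref{Basis_lemma_F}). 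Hence $A_{\alpha\beta}{}^\rho$ is nothing but the coordinate vector of $\hbar^2\partial_\alpha\partial_\beta Z$ in that basis, and is therefore determined by the single class $Z$.

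It then remains to show that $Z$ depends only on $\{u_\alpha\}$. I would compute its Taylor coefficients at $\underline t=0$ by the Fa\`a-di-Bruno formula, $\partial_{\underline\delta}Z|_{\underline t=0}=\bigl[\sum_{\pi}\hbar^{-|\pi|}\prod_{B\in\pi}u_B\bigr]$, the sum being over set-partitions $\pi$ of the index positions of $\underline\delta$ and $u_B:=\partial_B\Gamma|_{\underline t=0}$: the singleton blocks give the prescribed $u_\alpha$, while the larger blocks give higher Taylor coefficients of $\Gamma$, which are individually $\Gamma$-dependent. One shows by induction on $|\underline\delta|$ that each class on the right is forced. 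The first instance is $\hbar\,[u_{\alpha\beta}]=-[u_\alpha u_\beta]+\sum_\rho a_{\alpha\beta}^{(0)}{}^\rho[u_\rho]$, obtained from the vanishing of $\bigl[(Q_S+\hbar\Delta)(\Lambda_{\alpha\beta}|_{\underline t=0})\bigr]$ together with the $\underline t=0$ part of the first equation of \eqref{F-QM2} (where the deformation term $\ell_2^\Delta(\Gamma,\cdot)$ drops out) and the second equation $\Gamma_{\alpha\beta}=\Delta(\Lambda_{\alpha\beta})$; since $\hbar$ is invertible over $\mathbb{C}(\!(\hbar)\!)$ this pins down $[u_{\alpha\beta}]$, and the higher analogues pin down the rest. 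Unwinding this induction recovers precisely the recursion \eqref{ind_u}, now read inside $H^0$, and it uses only $\{u_\alpha\}$; the weak $Q_S\Delta$-lemma~\ref{Q-D_Lemma} is what guarantees that the auxiliary $\Lambda$'s enter only through $\Delta$-images lying in $\mathrm{Im}\,Q_S$, so that these relations are well-posed. This yields $A_{\alpha\beta}{}^\rho=A'_{\alpha\beta}{}^\rho$.

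The hard part is exactly this last induction. A naive attempt to compare $\Gamma$ and $\Gamma'$ coefficient by coefficient fails, because two Maurer--Cartan solutions with the same linear term genuinely differ at higher order---even modulo $Q_S(\mathcal{A}_0^{-1})$ there is no reason for their higher Taylor coefficients to agree, and the $\Lambda_{\alpha\beta}$ are still less rigid. What survives is only the cohomology class $Z\in H^0$, so the real work is to prove both that \eqref{F-QM} sees $\Gamma$ exclusively through $Z$ and that $Z$ is rigid. The two ingredients that make this go through are the weak $Q_S\Delta$-lemma~\ref{Q-D_Lemma} and the invertibility of $\hbar$ in $\mathbb{C}(\!(\hbar)\!)$; modulo these, everything is the routine expansion already carried out in subsection~\ref{sec3.4}.
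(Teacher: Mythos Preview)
Your proof is ultimately correct and rests on the same mechanism as the paper's (implicit) argument: any solution of \eqref{F-QM} satisfies, after the expansion of subsection~\ref{sec3.4}, the recursion \eqref{ind_u} at $\underline t=0$, and the weak $Q_S\Delta$-lemma forces the coefficients $a_{\underline\alpha}{}^\rho$ in \eqref{ind_u} to be independent of all auxiliary choices, hence of $\Gamma$.

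Your conceptual repackaging via $Z=[e^{\Gamma/\hbar}]$ is attractive but has two genuine technical wrinkles you should fix or drop. First, $e^{\Gamma/\hbar}$ does \emph{not} lie in $\mathcal{A}_0^\bullet[\![\underline t]\!](\!(\hbar)\!)$ as you wrote: the $\hbar$-pole order at $\underline t$-degree $d$ is $d$, so there is no uniform lower bound. You must work in the larger ring $\mathcal{A}_0^\bullet(\!(\hbar)\!)[\![\underline t]\!]$, and then restate Lemma~\ref{Basis_lemma_F} there. Second, and more seriously, the induction you sketch for the Taylor coefficients of $Z$ requires controlling classes of products such as $[u_{\alpha\beta}\,u_\gamma]$ in $(Q_S+\hbar\Delta)$-cohomology; but this cohomology carries no well-defined product (since $\Delta$ is not a derivation: $[fg]-[f'g]=-\hbar[\ell_2^\Delta(\lambda,g)]$ when $f-f'=(Q_S+\hbar\Delta)\lambda$), so ``$[u_{\alpha\beta}]$ determined'' does not yield ``$[u_{\alpha\beta}u_\gamma]$ determined''. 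What rescues you is exactly your final move: passing to $Q_S$-cohomology, where the product \emph{is} well-defined (because $Q_S$ is a derivation and kills $\mathcal{A}^0$), and running \eqref{ind_u}. At that point the $Z$-layer is doing no work, and you have reproduced the paper's argument verbatim. So the detour is harmless but dispensable; the paper's direct route through \eqref{IND_U}$\big|_{\underline t=0}=$\eqref{ind_u} is both shorter and avoids the ring and product issues.
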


\begin{remark}[Non-triviality of solving the differential equations \eqref{F-QM2}]\label{nontrivialE}
	The symmetries among the indices of $\underline{\alpha}$ in $u_{\underline{\alpha}},a_{\alpha\beta\underline{\alpha}}{}^\rho,\lambda_{\alpha\beta\underline{\alpha}}$ play an important role. If one tries to solve the equations in a naive way by comparing the coefficients of the $\ud t$-powers, then one would get into trouble that the symmetries among the indices is not guaranteed. But our algorithm guarantees that the relevant quantities such as 
	$u_{\underline{\alpha}},a_{\alpha\beta\underline{\alpha}}{}^\rho,\lambda_{\alpha\beta\underline{\alpha}}$ are invariant under the permutation of $\ud \a$ as the equations \eqref{ind_u} indicate. Suppose that there is no condition such as $u_{\alpha\beta}=u_{\beta\alpha}$, i.e., we do not put symmetry restrictions on the index notation. Then, the equation
	\[
		\partial_\beta\partial_\alpha\Gamma=\Delta(\Lambda_{\alpha\beta})
	\]
	in the equations \eqref{F-QM2} gives the following equation
	\[
		u_{\alpha\beta}+u_{\beta\alpha}=2\Delta(\lambda_{\alpha\beta}),
	\]
	but then it cannot determine the values of $u_{\alpha\beta},u_{\beta\alpha}$ individually from the data $2\Delta(\lambda_{\alpha\beta})$ of the previous step. This implies that the naive way of solving  \eqref{F-QM2} inductively with non-symmetric notations does not work well, either.
\end{remark}

\begin{remark}[Difficulty of constructing an algorithm for a flat metric]
	For a formal Frobenius manifold structure (see Definition \ref{Def_Frob}), we need to define a flat metric $g$ on $T_\mathbb{H}$ (i.e., $\partial_\gamma g(\partial_\alpha,\partial_\beta)=0$ for all $\alpha,\beta,\gamma\in I$), which is compatible with the formal $F$-manifold structure. A natural candidate of a metric would be $g(\partial_\alpha,\partial_\beta)=A_{\alpha\beta}{}^{\max}$ using the weight filtration (the Hodge filtration), since such a choice corresponds to a cup product on $\bH$ and satisfies the compatibility condition (Invariance) in Definition \ref{Def_Frob}. To make $g$ be flat, we have to find $A_{\alpha\beta}{}^\rho$ satisfying the equation \eqref{F-QM} with $a_{\underline{\alpha}}{}^\mathrm{max}=0$ for all $|\underline{\alpha}|\geq3$. However, our algorithm for solving the equation \eqref{F-QM} is not refined enough to satisfy the conditions $a_{\underline{\alpha}}{}^\mathrm{max}=0$ for all $|\underline{\alpha}|\geq3$. For example, if we run the algorithm, we get non-zero $u_{\a\b}$ and the step 3 in our algorithm gives the following equations:
	\begin{align*}
		u_\alpha u_\beta u_\gamma&=\sum_{\rho\in I}a_{\alpha\beta\gamma}^{(0)}{}^\rho u_\rho+Q_S(\lambda_{\alpha\beta\gamma}^{(0)}),\\
		u_\alpha u_{\beta\gamma}+ u_{\beta} u_{\alpha\gamma}+ u_\gamma u_{\alpha\beta}-\Delta(\lambda_{\alpha\beta\gamma}^{(0)})&=\sum_{\rho\in I}a_{\alpha\beta\gamma}{}^\rho u_\rho+Q_S(\lambda_{\alpha\beta\gamma}^{(1)}).
	\end{align*}
	It is hard to choose $\mathbb{C}$-basis $\{u_\alpha:\alpha\in I\}$ satisfying $a_{\alpha\beta\gamma}{}^\mathrm{max}=0$. Note that Corollary \ref{Cor_Alg} implies that $a_{\alpha\beta\gamma}{}^\mathrm{max}$ only depends on the choice of $\{u_\alpha:\alpha\in I\}$. On the other hand, in section \ref{Sec_Frob}, we will use a Maurer-Cartan solution $L=\sum_{\ud \a} t^\a u_\a$, i.e., $\G$ with the condition $u_{\ud \a}$=0 for $|\ud \a|\geq 2$, and solve a differential equation \eqref{Fdeq} instead of \eqref{F-QM2} in order to construct a formal Frobenius manifold.
Note that both \eqref{Fdeq} and \eqref{F-QM2} are special cases of the differential equation \eqref{WPF} for weak primitive forms.
\end{remark}

\subsection{A proof why the algorithm works}\label{sec3.4}
In this subsection, we give a proof of the algorithm in subsection \ref{Subsec_Alg}.
\begin{definition}\label{Def_UBC}
	We define quantities $\mathbf{U},\mathbf{B},\mathbf{C}$, which are our key players in the proof of the algorithm, as follows:
	\[
		\mathbf{U}_{\alpha_1\cdots\alpha_m}:=\hbar\downtriangle_{\alpha_m}^{\frac{S+\Gamma}{\hbar}}\cdots\hbar\downtriangle_{\alpha_1}^{\frac{S+\Gamma}{\hbar}}1,
	\]
	\[
		\left\{\begin{aligned}
			{\mathbf{B}_{\alpha_1\alpha_2}}^\rho:=&{A_{\alpha_1\alpha_2}}^\rho,\\
			{\mathbf{B}_{\alpha_1\alpha_2\alpha_3}}^\rho:=&\sum_{\delta\in I}{\mathbf{B}_{\alpha_1\alpha_2}}^\delta{\mathbf{B}_{\delta\alpha_3}}^\rho+\hbar\cdot\partial_{\alpha_3}({\mathbf{B}_{\alpha_1\alpha_2}}^\rho),\\
			&\vdots\\
			{\mathbf{B}_{\alpha_1\cdots\alpha_m}}^\rho:=&\sum_{\delta\in I}{\mathbf{B}_{\alpha_1\cdots\alpha_{m-1}}}^\delta{\mathbf{B}_{\delta\alpha_m}}^\rho+\hbar\cdot\partial_{\alpha_m}({\mathbf{B}_{\alpha_1\cdots\alpha_{m-1}}}^\rho),
		\end{aligned}\right.
	\]
	\[
		\left\{\begin{aligned}
			\mathbf{C}_{\alpha_1\alpha_2}:=&\Lambda_{\alpha_1\alpha_2},\\
			\mathbf{C}_{\alpha_1\alpha_2\alpha_3}:=&\sum_{\delta\in I}{\mathbf{B}_{\alpha_1\alpha_2}}^\delta\mathbf{C}_{\delta\alpha_3}+\hbar\downtriangle_{\alpha_3}^{\frac{S+\Gamma}{\hbar}}\mathbf{C}_{\alpha_1\alpha_2},\\
			&\vdots\\
			\mathbf{C}_{\alpha_1\cdots\alpha_m}:=&\sum_{\delta\in I}{\mathbf{B}_{\alpha_1\cdots\alpha_{m-1}}}^\delta\mathbf{C}_{\delta\alpha_m}+\hbar\downtriangle_{\alpha_m}^{\frac{S+\Gamma}{\hbar}}\mathbf{C}_{\alpha_1\cdots\alpha_{m-1}}.
		\end{aligned}\right.
	\]
\end{definition}

\begin{theorem}\label{Thm_UBD}
	The equations \eqref{F-QM} hold for all $\alpha,\beta\in I$ if and only if the following inductive equations hold for all $m\geq2$ and all $\underline{\alpha}\in I^m$:
	\begin{equation}\label{Ind_QM}
		\mathbf{U}_{\underline{\alpha}}=\sum_{\rho\in I}{\mathbf{B}_{\underline{\alpha}}}^\rho\Gamma_\rho+(Q_{S+\Gamma}+\hbar\Delta)(\mathbf{C}_{\underline{\alpha}}).
	\end{equation}
\end{theorem}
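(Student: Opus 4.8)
The plan is a single induction on $m$, pivoting on the case $m=2$. First I would note that, unwinding Definition \ref{Def_UBC}, the $m=2$ instance of \eqref{Ind_QM} is \emph{verbatim} the equation \eqref{F-QM}: one has $\mathbf{U}_{\alpha_1\alpha_2}=\hbar\downtriangle_{\alpha_2}^{\frac{S+\Gamma}{\hbar}}\hbar\downtriangle_{\alpha_1}^{\frac{S+\Gamma}{\hbar}}1$, ${\mathbf{B}_{\alpha_1\alpha_2}}^\rho={A_{\alpha_1\alpha_2}}^\rho$, $\mathbf{C}_{\alpha_1\alpha_2}=\Lambda_{\alpha_1\alpha_2}$, and $\hbar\downtriangle_\rho^{\frac{S+\Gamma}{\hbar}}1=\partial_\rho\Gamma=\Gamma_\rho$ since $S$ is independent of $\underline{t}$. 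This already gives the implication ``\eqref{Ind_QM} for all $m\geq 2$ $\Rightarrow$ \eqref{F-QM}'' (specialize to $m=2$), so the substance lies in the converse.

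For the converse I would assume \eqref{F-QM} and prove \eqref{Ind_QM} by induction on $m$, the base case $m=2$ being the observation above. For the inductive step, fix $\underline{\alpha}=\alpha_1\cdots\alpha_m$ and apply $\hbar\downtriangle_{\alpha_m}^{\frac{S+\Gamma}{\hbar}}$ to \eqref{Ind_QM} written for $\alpha_1\cdots\alpha_{m-1}$; the left side becomes $\mathbf{U}_{\underline{\alpha}}$ by definition. On the right side I would use two structural facts recorded in the excerpt: that $\hbar\downtriangle_{\alpha_m}^{\frac{S+\Gamma}{\hbar}}$ acts on chains by $w\mapsto\hbar\,\partial_{\alpha_m}w+\Gamma_{\alpha_m}\,w$, hence obeys the Leibniz rule $\hbar\downtriangle_{\alpha_m}^{\frac{S+\Gamma}{\hbar}}(f\,w)=\hbar(\partial_{\alpha_m}f)\,w+f\,\hbar\downtriangle_{\alpha_m}^{\frac{S+\Gamma}{\hbar}}w$ for a scalar $f\in\mathbb{C}[\![\underline{t}]\!][\![\hbar]\!]$; and that $\hbar\downtriangle_{\alpha_m}^{\frac{S+\Gamma}{\hbar}}$ commutes, on the chain level, with $Q_{S+\Gamma}+\hbar\Delta$ (this holds because $\Gamma$ and all its $\underline{t}$-derivatives lie in $\mathcal{A}_0^0$, which is $\underline{\eta}$-free, so $Q_{S+\Gamma}$ and $\Delta$ annihilate them; it is the same commutation invoked just after Definition \ref{GMC}). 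Applying the Leibniz rule to $\sum_\rho {\mathbf{B}_{\alpha_1\cdots\alpha_{m-1}}}^\rho\Gamma_\rho$ leaves the term $\sum_\rho {\mathbf{B}_{\alpha_1\cdots\alpha_{m-1}}}^\rho\,\hbar\downtriangle_{\alpha_m}^{\frac{S+\Gamma}{\hbar}}\Gamma_\rho$, where the key move is to rewrite $\hbar\downtriangle_{\alpha_m}^{\frac{S+\Gamma}{\hbar}}\Gamma_\rho=\hbar\downtriangle_{\alpha_m}^{\frac{S+\Gamma}{\hbar}}\hbar\downtriangle_\rho^{\frac{S+\Gamma}{\hbar}}1=\mathbf{U}_{\rho\alpha_m}$ and to expand $\mathbf{U}_{\rho\alpha_m}$ by the $m=2$ case.

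Collecting the result, the coefficient of $\Gamma_\delta$ becomes $\hbar\,\partial_{\alpha_m}({\mathbf{B}_{\alpha_1\cdots\alpha_{m-1}}}^\delta)+\sum_\rho {\mathbf{B}_{\alpha_1\cdots\alpha_{m-1}}}^\rho{\mathbf{B}_{\rho\alpha_m}}^\delta$, which is ${\mathbf{B}_{\underline{\alpha}}}^\delta$ by its recursion; and, pulling the scalars ${\mathbf{B}_{\alpha_1\cdots\alpha_{m-1}}}^\rho$ through $Q_{S+\Gamma}+\hbar\Delta$ by linearity and applying the commutation fact to $\hbar\downtriangle_{\alpha_m}^{\frac{S+\Gamma}{\hbar}}(Q_{S+\Gamma}+\hbar\Delta)(\mathbf{C}_{\alpha_1\cdots\alpha_{m-1}})$, the $(Q_{S+\Gamma}+\hbar\Delta)$-exact part acquires as its argument $\sum_\rho {\mathbf{B}_{\alpha_1\cdots\alpha_{m-1}}}^\rho\mathbf{C}_{\rho\alpha_m}+\hbar\downtriangle_{\alpha_m}^{\frac{S+\Gamma}{\hbar}}\mathbf{C}_{\alpha_1\cdots\alpha_{m-1}}$, which is $\mathbf{C}_{\underline{\alpha}}$ by its recursion. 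This is precisely \eqref{Ind_QM} for $m$, closing the induction. I do not expect a genuine obstacle: the argument is pure bookkeeping organized around the single $m=2$ identity. The only point demanding care is that the $(Q_{S+\Gamma}+\hbar\Delta)$-exact remainder must reassemble into exactly the recursively defined $\mathbf{C}_{\underline{\alpha}}$ rather than merely something cohomologous to it, which is why one needs the honest chain-level commutation and not just well-definedness of the connection on cohomology.
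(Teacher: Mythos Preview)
Your proposal is correct and follows essentially the same approach as the paper's proof: both observe that the $m=2$ case of \eqref{Ind_QM} is literally \eqref{F-QM}, then prove the converse by induction on $m$, applying $\hbar\downtriangle_{\alpha_m}^{\frac{S+\Gamma}{\hbar}}$ to the induction hypothesis, using the Leibniz rule and the chain-level commutation with $Q_{S+\Gamma}+\hbar\Delta$, and expanding $\hbar\downtriangle_{\alpha_m}^{\frac{S+\Gamma}{\hbar}}\Gamma_\rho$ via the $m=2$ identity to recover the recursions defining $\mathbf{B}_{\underline{\alpha}}{}^\rho$ and $\mathbf{C}_{\underline{\alpha}}$.
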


\begin{proof}
	It is trivial that the above inductive equations imply the equation \eqref{F-QM} because the $m=2$ case of \eqref{Ind_QM} is same as \eqref{F-QM}.\par
	Suppose that the equation \eqref{F-QM} holds for all $\alpha, \beta\in I$. Therefore, the inductive equations hold for all $\underline{\alpha}\in I^2$. Assume that the inductive equations hold for all $\underline{\alpha}\in I^\ell$, i.e.,
	\begin{equation}\label{ind_ell}
		\mathbf{U}_{\alpha_1\cdots\alpha_\ell}=\sum_{\rho\in I}{\mathbf{B}_{\alpha_1\cdots\alpha_\ell}}^\rho\Gamma_\rho+(Q_{S+\Gamma}+\hbar\Delta)(\mathbf{C}_{\alpha_1\cdots\alpha_\ell}),
	\end{equation}
	for all $(\alpha_1,\dots,\alpha_\ell)\in I^\ell$. Take $\hbar\downtriangle_{\alpha_{\ell+1}}^{\frac{S+\Gamma}{\hbar}}$ on the equation \eqref{ind_ell}. Then we have the following equations:
	\begin{align*}
		\hbar\downtriangle_{\alpha_{\ell+1}}^{\frac{S+\Gamma}{\hbar}}\mathbf{U}_{\alpha_1\cdots\alpha_\ell}=&\sum_{\rho\in I}\hbar\downtriangle_{\alpha_{\ell+1}}^{\frac{S+\Gamma}{\hbar}}({\mathbf{B}_{\alpha_1\cdots\alpha_\ell}}^\rho\Gamma_\rho)+(Q_{S+\Gamma}+\hbar\Delta)(\hbar\downtriangle_{\alpha_{\ell+1}}^{\frac{S+\Gamma}{\hbar}}\mathbf{C}_{\alpha_1\cdots\alpha_\ell}),\\
		\mathbf{U}_{\alpha_1\cdots\alpha_{\ell+1}}=&\sum_{\rho\in I}\Big[\hbar\partial_{\alpha_{\ell+1}}{\mathbf{B}_{\alpha_1\cdots\alpha_\ell}}^\rho\Gamma_\rho+{\mathbf{B}_{\alpha_1\cdots\alpha_\ell}}^\rho\hbar\downtriangle_{\alpha_{\ell+1}}^{\frac{S+\Gamma}{\hbar}}\hbar\downtriangle_{\alpha_{\rho}}^{\frac{S+\Gamma}{\hbar}}1\Big]\\
		&\hspace{5em}+(Q_{S+\Gamma}+\hbar\Delta)(\hbar\downtriangle_{\alpha_{\ell+1}}^{\frac{S+\Gamma}{\hbar}}\mathbf{C}_{\alpha_1\cdots\alpha_\ell})\\
		=\sum_{\rho\in I}\bigg[\hbar&\partial_{\alpha_{\ell+1}}{\mathbf{B}_{\alpha_1\cdots\alpha_\ell}}^\rho\Gamma_\rho+{\mathbf{B}_{\alpha_1\cdots\alpha_\ell}}^\rho\Big[\sum_{\delta\in I}A_{\alpha_{\ell+1}\rho}{}^\delta\Gamma_\delta+(Q_{S+\Gamma}+\hbar\Delta)(\Lambda_{\alpha_{\ell+1}\rho})\Big]\bigg]\\
		&\hspace{5em}+(Q_{S+\Gamma}+\hbar\Delta)(\hbar\downtriangle_{\alpha_{\ell+1}}^{\frac{S+\Gamma}{\hbar}}\mathbf{C}_{\alpha_1\cdots\alpha_\ell})\\
		=\sum_{\rho\in I}\bigg[\hbar&\partial_{\alpha_{\ell+1}}{\mathbf{B}_{\alpha_1\cdots\alpha_\ell}}^\rho+\sum_{\delta\in I}\mathbf{B}_{\alpha_1\cdots\alpha_\ell}{}^\delta A_{\alpha_{\ell+1}\delta}{}^\rho\bigg]\Gamma_\rho\\
		&\hspace{5em}+(Q_{S+\Gamma}+\hbar\Delta)(\sum_{\delta\in I}\mathbf{B}_{\alpha_1\cdots\alpha_\ell}{}^\delta\Lambda_{\alpha_{\ell+1}\delta}+\hbar\downtriangle_{\alpha_{\ell+1}}^{\frac{S+\Gamma}{\hbar}}\mathbf{C}_{\alpha_1\cdots\alpha_\ell}).
	\end{align*}
	Recall that $\hbar\downtriangle_{\alpha}^{\frac{S+\Gamma}{\hbar}}$ and $Q_{S+\Gamma}+\hbar\Delta$ commute. The last equation implies
	\[
		\mathbf{U}_{\alpha_1\cdots\alpha_{\ell+1}}=\sum_{\rho\in I}{\mathbf{B}_{\alpha_1\cdots\alpha_{\ell+1}}}^\rho\Gamma_\rho+(Q_{S+\Gamma}+\hbar\Delta)(\mathbf{C}_{\alpha_1\cdots\alpha_{\ell+1}})
	\]
	by the definitions of $\mathbf{U},\mathbf{B}$, and $\mathbf{C}$. By the mathematical induction, the result follows.
\end{proof}

By a direct computation, we get the following lemma:
\begin{lemma}
	For $\underline{\alpha}\in I^m$, we have the following formula:
	\[
		\mathbf{U}_{\underline{\alpha}}=\sum_{i=0}^{m-1}{}^iU_{\underline{\alpha}}\hbar^i,\quad\textrm{where}\quad{}^iU_{\underline{\alpha}}=\sum_{\substack{\underline{\alpha}_1\sqcup\cdots\sqcup \underline{\alpha}_{m-i}=\underline{\alpha}\\\underline{\alpha}_j\neq\emptyset}}\frac{1}{(m-i)!}\Gamma_{\underline{\alpha}_1}\cdots\Gamma_{\underline{\alpha}_{m-i}},
	\]
	where $\underline{\alpha}_1\sqcup\cdots\sqcup \underline{\alpha}_{m-i}=\underline{\alpha}$ means $\underline{\alpha}_1\cup\cdots\cup \underline{\alpha}_{m-i}=\underline{\alpha}$, and $\underline{\alpha}_k\cap \underline{\alpha}_\ell=\emptyset$ for $\ell\neq k$.
\end{lemma}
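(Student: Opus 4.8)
The plan is to argue by induction on $m$, in parallel with the inductive definition of $\mathbf{U}_{\underline\alpha}$ in Definition \ref{Def_UBC}. The observation that drives everything is that, at the cochain level, the Gauss--Manin operator is first order: since $S=S(\underline q)$ does not involve the parameters $\underline t$, Definition \ref{GMC} gives
\[
	\hbar\downtriangle_\alpha^{\frac{S+\Gamma}{\hbar}} \;=\; \hbar\,\partial_\alpha + \partial_\alpha(S+\Gamma) \;=\; \hbar\,\partial_\alpha + \Gamma_\alpha
\]
as an operator on $\mathcal{A}_0^0[\![\underline t]\!](\!(\hbar)\!)$. For $m=1$ this yields $\mathbf{U}_{\alpha_1} = \hbar\,\partial_{\alpha_1}1 + \Gamma_{\alpha_1} = \Gamma_{\alpha_1} = {}^0U_{\alpha_1}$, which is the claimed formula.

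For the inductive step I would substitute the formula for $\mathbf{U}_{\alpha_1\cdots\alpha_m}$ into $\mathbf{U}_{\alpha_1\cdots\alpha_{m+1}} = (\hbar\,\partial_{\alpha_{m+1}} + \Gamma_{\alpha_{m+1}})\,\mathbf{U}_{\alpha_1\cdots\alpha_m}$ and collect powers of $\hbar$: the summand $\hbar\,\partial_{\alpha_{m+1}}$ raises the $\hbar$--degree by one and differentiates the coefficient, while $\Gamma_{\alpha_{m+1}}$ multiplies it. Comparing the coefficient of $\hbar^i$ then reduces the lemma to the purely combinatorial recursion
\[
	{}^iU_{\alpha_1\cdots\alpha_{m+1}} \;=\; \partial_{\alpha_{m+1}}\!\big({}^{\,i-1}U_{\alpha_1\cdots\alpha_m}\big) \;+\; \Gamma_{\alpha_{m+1}}\cdot{}^iU_{\alpha_1\cdots\alpha_m},
\]
valid for $0\le i\le m$ with the convention that ${}^jU_{\alpha_1\cdots\alpha_m}=0$ whenever $j<0$ or $j\ge m$; this convention also handles the extreme $\hbar$--degrees, in particular the fact that the top power is only $\hbar^{m-1}$.

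It then remains to verify this recursion directly from the definition of ${}^iU$. First I would rewrite ${}^iU_{\underline\alpha}$ as the sum, over (unordered) set partitions of $\{1,\dots,m\}$ into exactly $m-i$ blocks, of $\prod_{B}\Gamma_{\underline\alpha_B}$, where $\underline\alpha_B$ is the sub-tuple of $\underline\alpha$ on the block $B$ --- the factor $\tfrac{1}{(m-i)!}$ in the stated formula being exactly what turns the sum over ordered tuples of disjoint nonempty blocks into the sum over set partitions, because the factors $\Gamma_{\underline\alpha_B}$ commute. Now split the set partitions of $\{1,\dots,m+1\}$ into $(m+1)-i$ blocks according to whether $m+1$ forms a singleton block or not. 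The singleton case contributes precisely $\Gamma_{\alpha_{m+1}}\cdot{}^iU_{\alpha_1\cdots\alpha_m}$. For the remaining partitions, deleting $m+1$ from its block yields a set partition of $\{1,\dots,m\}$ into $(m+1)-i = m-(i-1)$ blocks; conversely, since $\partial_{\alpha_{m+1}}\Gamma_{\underline\alpha_B} = \Gamma_{(\underline\alpha_B,\,\alpha_{m+1})}$ is the derivative over the enlarged block $B\cup\{m+1\}$, the Leibniz rule shows that $\partial_{\alpha_{m+1}}\big({}^{\,i-1}U_{\alpha_1\cdots\alpha_m}\big)$ is exactly the sum over all ways of reinserting $m+1$ into one of the blocks, i.e. the sum over all non-singleton partitions. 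Adding the two cases gives the recursion and closes the induction.

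I do not expect a genuine obstacle here: the whole content is matching set-partition combinatorics against $\hbar$--powers, and the only point requiring care is the ordered-versus-unordered normalization hidden in the factor $\tfrac{1}{(m-i)!}$. As a cross-check, or as a shorter alternative, one can note that $\hbar\downtriangle_\alpha^{\frac{S+\Gamma}{\hbar}} = \hbar\,e^{-\frac{S+\Gamma}{\hbar}}\partial_\alpha\,e^{\frac{S+\Gamma}{\hbar}}$, so iterating telescopes to
\[
	\mathbf{U}_{\alpha_1\cdots\alpha_m} \;=\; \hbar^{m}\, e^{-\frac{S+\Gamma}{\hbar}}\,\partial_{\alpha_1}\cdots\partial_{\alpha_m}\, e^{\frac{S+\Gamma}{\hbar}};
\]
expanding by the standard complete-Bell-polynomial (cumulant-to-moment) identity for $e^{-g}\partial_{\alpha_1}\cdots\partial_{\alpha_m}e^{g}$ with $g=(S+\Gamma)/\hbar$, and using $\partial_B\,g = \hbar^{-1}\Gamma_{\underline\alpha_B}$ (where $\partial_B=\prod_{j\in B}\partial_{\alpha_j}$), immediately produces $\sum_{\pi}\hbar^{\,m-|\pi|}\prod_{B\in\pi}\Gamma_{\underline\alpha_B}$, which is the asserted formula after grouping by $i=m-|\pi|$.
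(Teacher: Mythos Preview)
Your proof is correct and is precisely the ``direct computation'' the paper alludes to without writing out: the paper simply states the lemma and says it follows by a direct computation, and your inductive argument (together with the Bell-polynomial cross-check) makes that computation explicit.
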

For example,
\begin{align*}
	\mathbf{U}_{\alpha\beta\gamma}=&\Gamma_\alpha\Gamma_\beta\Gamma_\gamma+\hbar(\Gamma_\alpha\Gamma_{\beta\gamma}+\Gamma_{\beta}\Gamma_{\alpha\gamma}+\Gamma_\gamma\Gamma_{\alpha\beta})+\hbar^2\Gamma_{\alpha\beta\gamma},\\
	\mathbf{U}_{\alpha\beta\gamma\delta}=&\Gamma_\alpha\Gamma_\beta\Gamma_\gamma\Gamma_\delta+\hbar(\Gamma_\alpha\Gamma_{\beta\gamma\delta}+\Gamma_{\beta}\Gamma_{\alpha\gamma\delta}+\Gamma_\gamma\Gamma_{\alpha\beta\delta}+\Gamma_{\delta}\Gamma_{\alpha\beta\gamma})\\
	&+\hbar^2(\Gamma_{\alpha\beta}\Gamma_{\gamma\delta}+\Gamma_{\alpha\gamma}\Gamma_{\beta\delta}+\Gamma_{\alpha\delta}\Gamma_{\beta\gamma})+\hbar^3\Gamma_{\alpha\beta\gamma\delta}.
\end{align*}
Therefore, we get the following result:
\begin{equation}\label{rel_U-u}
	{}^iU_{\underline{\alpha}}|_{\underline{t}=0}=u^{(i)}_{\underline{\alpha}}=\sum_{\substack{\underline{\alpha}_1\sqcup\cdots\sqcup \underline{\alpha}_{m-i}=\underline{\alpha}\\\underline{\alpha}_j\neq\emptyset}}\frac{1}{(m-i)!}u_{\underline{\alpha}_1}\cdots u_{\underline{\alpha}_{m-i}},
\end{equation}
where $u_{\underline{\alpha}}^{(i)}$ appeared in Definition \ref{u_alpha}.\par
By the definitions of $\mathbf{B}$ and $\mathbf{C}$, the $\hbar$-degree of $\mathbf{B}_{\underline{\alpha}}{}^\rho$ and $\mathbf{C}_{\underline{\alpha}}$ is $m-2$ when $\underline{\alpha}\in I^m$, i.e., we can write
\[
	\mathbf{B}_{\underline{\alpha}}{}^\rho=\sum_{i=0}^{m-2}{}^iB_{\underline{\alpha}}{}^\rho\hbar^i,\quad\mathbf{C}_{\underline{\alpha}}=\sum_{i=0}^{m-2}{}^iC_{\underline{\alpha}}\hbar^i,
\]
for some ${}^iB_{\underline{\alpha}}{}^\rho\in\mathbb{C}[\![\underline{t}]\!]$, and ${}^iC_{\underline{\alpha}}\in\mathcal{A}_0^{-1}[\![\underline{t}]\!]$. Therefore, by comparing the $\hbar$-power terms of \eqref{Ind_QM}, we get the following sequence of equations:
\begin{equation}\label{IND_U}
	\begin{aligned}
		{}^0U_{\underline{\alpha}}&=\sum_{\rho\in I}{}^0B_{\underline{\alpha}}{}^\rho\Gamma_\rho+Q_{S+\Gamma}({}^0C_{\underline{\alpha}}),\\
		{}^1U_{\underline{\alpha}}&=\sum_{\rho\in I}{}^1B_{\underline{\alpha}}{}^\rho\Gamma_\rho+Q_{S+\Gamma}({}^1C_{\underline{\alpha}})+\Delta({}^0C_{\underline{\alpha}}),\\
		&\quad\vdots\\
		{}^{m-2}U_{\underline{\alpha}}&=\sum_{\rho\in I}{}^{m-2}B_{\underline{\alpha}}{}^\rho\Gamma_\rho+Q_{S+\Gamma}({}^{m-2}C_{\underline{\alpha}})+\Delta({}^{m-3}C_{\underline{\alpha}}),\\
		{}^{m-1}U_{\underline{\alpha}}&=\Delta({}^{m-2}C_{\underline{\alpha}}).
	\end{aligned}
\end{equation}

\begin{lemma}\label{Lemm_B}
	The constant term of $\underline{t}^{\ud \a}$-expansion of ${}^{m-2}B_{\underline{\alpha}}{}^\rho$ is $a_{\underline{\alpha}}{}^\rho$ which appeared in \eqref{Exp_notation}, i.e.,
	\[
		{}^{m-2}B_{\underline{\alpha}}{}^\rho|_{\underline{t}=0}=a_{\underline{\alpha}}{}^\rho.
	\]
\end{lemma}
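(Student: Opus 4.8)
The plan is to prove Lemma \ref{Lemm_B} by induction on $m$, tracking how the definition of $\mathbf{B}$ evolves and how it specializes at $\underline{t}=0$. The base case $m=2$ is immediate: by Definition \ref{Def_UBC} we have $\mathbf{B}_{\alpha_1\alpha_2}{}^\rho = A_{\alpha_1\alpha_2}{}^\rho$, hence ${}^0B_{\alpha_1\alpha_2}{}^\rho = A_{\alpha_1\alpha_2}{}^\rho$, and by the notation in \eqref{Exp_notation} its constant term is $a_{\alpha_1\alpha_2}{}^\rho$, which matches $a_{\underline{\alpha}}{}^\rho$ since $a_{\underline{\alpha}}{}^\rho := a_{\alpha\beta}^{(0)}{}^\rho = a_{\alpha\beta}{}^\rho$ from Step-1 of the algorithm. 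So the content is the inductive step.

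For the inductive step, I would start from the recursion
\[
	\mathbf{B}_{\alpha_1\cdots\alpha_m}{}^\rho = \sum_{\delta\in I}\mathbf{B}_{\alpha_1\cdots\alpha_{m-1}}{}^\delta\,\mathbf{B}_{\delta\alpha_m}{}^\rho + \hbar\cdot\partial_{\alpha_m}\big(\mathbf{B}_{\alpha_1\cdots\alpha_{m-1}}{}^\rho\big),
\]
and extract the coefficient of $\hbar^{m-2}$. Writing $\mathbf{B}_{\alpha_1\cdots\alpha_{m-1}}{}^\rho = \sum_{i=0}^{m-3}{}^iB_{\alpha_1\cdots\alpha_{m-1}}{}^\rho\hbar^i$ and $\mathbf{B}_{\delta\alpha_m}{}^\rho = A_{\delta\alpha_m}{}^\rho$ (which is $\hbar$-free, since $\mathbf{B}$ of weight-two index is just $A$), the top $\hbar$-power $\hbar^{m-2}$ on the right-hand side receives two contributions: from the product term, $\sum_\delta {}^{m-3}B_{\alpha_1\cdots\alpha_{m-1}}{}^\delta\,A_{\delta\alpha_m}{}^\rho$; and from the derivative term, $\partial_{\alpha_m}\big({}^{m-3}B_{\alpha_1\cdots\alpha_{m-1}}{}^\rho\big)$. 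Now specialize at $\underline{t}=0$. By the inductive hypothesis, ${}^{m-3}B_{\alpha_1\cdots\alpha_{m-1}}{}^\delta|_{\underline{t}=0} = a_{\alpha_1\cdots\alpha_{m-1}}{}^\delta$. The derivative term must be handled via the $\hbar$-expanded equations \eqref{IND_U}: comparing the $\hbar^{m-2}$ and $\hbar^{m-1}$ coefficients of the inductive cohomological identity \eqref{Ind_QM}, together with \eqref{rel_U-u} giving ${}^iU_{\underline{\alpha}}|_{\underline{t}=0}=u^{(i)}_{\underline{\alpha}}$, one identifies ${}^{m-2}B_{\underline{\alpha}}{}^\rho|_{\underline{t}=0}$ with precisely the coefficient $a_{\underline{\alpha}}^{(\ell-1)}{}^\rho = a_{\underline{\alpha}}{}^\rho$ produced in Step-$\boldsymbol\ell$ of the algorithm (with $\ell = m-1$), where the last line of \eqref{ind_u} reads $u_{\underline{\alpha}}^{(\ell-1)}-\Delta(\lambda_{\underline{\alpha}}^{(\ell-2)})=\sum_\rho a_{\underline{\alpha}}^{(\ell-1)}{}^\rho u_\rho+Q_S(\lambda_{\underline{\alpha}}^{(\ell-1)})$.

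The main obstacle I expect is bookkeeping the interaction between the two ways of organizing the data: the recursion for $\mathbf{B}$ (which mixes products and $t$-derivatives) versus the $\hbar$-graded system \eqref{IND_U} (which is what the algorithm actually solves step by step). One has to check that the $\underline{t}=0$ constant term of the top $\hbar$-coefficient computed from the $\mathbf{B}$-recursion genuinely coincides with the constant coefficient solved for in \eqref{ind_u}; this requires knowing that the derivative term $\partial_{\alpha_m}({}^{m-3}B_{\cdots}{}^\rho)|_{\underline{t}=0}$ contributes the ``lower-order descent'' piece $-\Delta(\lambda_{\underline{\alpha}}^{(\ell-2)})$-type correction, which is exactly the role played by the weak $Q_S\Delta$-lemma (Lemma \ref{Q-D_Lemma}) in guaranteeing that $a_{\underline{\alpha}}^{(\ell-1)}{}^\rho$ is independent of the choices of $\lambda$. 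Concretely, the key technical point is: evaluating \eqref{Ind_QM} at $\underline{t}=0$ and matching $\hbar$-powers, the $\hbar^{m-2}$-coefficient reads ${}^{m-2}U_{\underline{\alpha}}|_{0} = \sum_\rho {}^{m-2}B_{\underline{\alpha}}{}^\rho|_{0}\,u_\rho + Q_S({}^{m-2}C_{\underline{\alpha}}|_0) + \Delta({}^{m-3}C_{\underline{\alpha}}|_0)$; since ${}^{m-2}U_{\underline{\alpha}}|_0 = u_{\underline{\alpha}}^{(m-2)}$ and the $\Delta({}^{m-3}C_{\underline{\alpha}}|_0)$ term is (up to $\mathrm{Im}(Q_S)$, by Lemma \ref{Q-D_Lemma}) the quantity $\Delta(\lambda_{\underline{\alpha}}^{(\ell-2)})$ from the previous algorithmic step, this is literally the last equation of \eqref{ind_u}, forcing ${}^{m-2}B_{\underline{\alpha}}{}^\rho|_0 = a_{\underline{\alpha}}^{(\ell-1)}{}^\rho = a_{\underline{\alpha}}{}^\rho$. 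Once the dictionary between ${}^iC_{\underline{\alpha}}|_0$ and $\lambda_{\underline{\alpha}}^{(i)}$ is set up cleanly, the rest is routine.
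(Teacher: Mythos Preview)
Your $\hbar$-bookkeeping in the inductive step is off, and this error sends you down an unnecessarily complicated (and ultimately circular) path.

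In the recursion
\[
\mathbf{B}_{\alpha_1\cdots\alpha_m}{}^\rho = \sum_{\delta\in I}\mathbf{B}_{\alpha_1\cdots\alpha_{m-1}}{}^\delta\,\mathbf{B}_{\delta\alpha_m}{}^\rho + \hbar\cdot\partial_{\alpha_m}\big(\mathbf{B}_{\alpha_1\cdots\alpha_{m-1}}{}^\rho\big),
\]
the first summand has $\hbar$-degree at most $(m-3)+0=m-3$, since $\mathbf{B}_{\delta\alpha_m}{}^\rho=A_{\delta\alpha_m}{}^\rho$ is $\hbar$-free. So the product term contributes \emph{nothing} to the $\hbar^{m-2}$-coefficient; the only contribution is $\partial_{\alpha_m}\bigl({}^{m-3}B_{\alpha_1\cdots\alpha_{m-1}}{}^\rho\bigr)$. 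This immediately gives, by induction, the closed formula
\[
{}^{m-2}B_{\underline{\alpha}}{}^\rho=\partial_{\alpha_m}\partial_{\alpha_{m-1}}\cdots\partial_{\alpha_3}A_{\alpha_1\alpha_2}{}^\rho,
\]
and evaluating at $\underline{t}=0$ yields $a_{\underline{\alpha}}{}^\rho$ directly from the Taylor expansion \eqref{Exp_notation}. That is the paper's entire proof: two lines, no appeal to \eqref{IND_U}, no weak $Q_S\Delta$-lemma, no matching of ${}^iC_{\underline{\alpha}}|_0$ with $\lambda_{\underline{\alpha}}^{(i)}$.

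Your alternative route through \eqref{IND_U} is also problematic on its own terms. The equations \eqref{IND_U} are obtained from \eqref{Ind_QM}, which by Theorem~\ref{Thm_UBD} is equivalent to \eqref{F-QM}. But Lemma~\ref{Lemm_B} is a purely formal identity about the recursion for $\mathbf{B}$ and the Taylor coefficients of $A$; it is precisely what the paper uses afterward to show that the algorithm's output \emph{does} satisfy \eqref{F-QM}. Invoking \eqref{IND_U} to prove the lemma therefore assumes what the lemma is meant to help establish. Fix the $\hbar$-count and the circularity disappears along with the extra machinery.
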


\begin{proof}
	It is enough to show that
	\begin{equation}\label{Eqn_B}
		{}^{m-2}B_{\underline{\alpha}}{}^\rho=\partial_{\alpha_m}\partial_{\alpha_{m-1}}\cdots\partial_{\alpha_3}A_{\alpha_1\alpha_2}{}^\rho,
	\end{equation}
	where $\underline{\alpha}=\alpha_1\alpha_2\cdots\alpha_m\in I^m$ for $m\geq 2$. Use the mathematical induction. When $m=2$, $\mathbf{B}_{\alpha_1\alpha_2}{}^\rho=A_{\alpha_1\alpha_2}{}^\rho$ by Definition \ref{Def_UBC}, which implies that ${}^0B_{\alpha_1\alpha_2}{}^\rho=A_{\alpha_1\alpha_2}{}^\rho$.\par
	Suppose that equation \eqref{Eqn_B} is true for $|\underline{\alpha}|=\ell$. By Definition \ref{Def_UBC},
	\[
		{\mathbf{B}_{\alpha_1\cdots\alpha_{\ell+1}}}^\rho=\sum_{\delta\in I}{\mathbf{B}_{\alpha_1\cdots\alpha_{\ell}}}^\delta{\mathbf{B}_{\delta\alpha_{\ell+1}}}^\rho+\hbar\cdot\partial_{\alpha_{\ell+1}}({\mathbf{B}_{\alpha_1\cdots\alpha_{\ell}}}^\rho).
	\]
	Therefore, the $\hbar^{\ell-1}$-term of $\mathbf{B}_{\alpha_1\cdots\alpha_{\ell+1}}{}^\rho$ is $\partial_{\alpha_{\ell+1}}({}^{\ell-2}B_{\alpha_1\cdots\alpha_\ell}{}^\rho)=\partial_{\alpha_{\ell+1}}\cdots\partial_{\alpha_3}A_{\alpha_1\alpha_2}{}^\rho$.
\end{proof}
According to Lemma \ref{Lemm_B} and the equation \eqref{rel_U-u}, we get the equations \eqref{ind_u} by evaluating $\underline{t}=0$ on the equations \eqref{IND_U}. This implies that the algorithm in subsection \ref{Subsec_Alg} works to give a solution to \eqref{F-QM}.

\section{Weak primitive forms}\label{Sec_WP}

\subsection{\texorpdfstring{Weak primitive forms and formal $F$-manifolds}{Weak primitive forms and formal F-manifolds}}\label{sec4.1}
Here we explain how weak primitive forms give arise to formal $F$-manifolds.
For this, we consider a Maurer-Cartan solution of a particular shape, $\G=L:=\sum_{\rho\in I}u_\rho t^\rho$ with $\wt(\G)=1$. Let $\boldsymbol\zeta:={}^0\zeta+{}^1\zeta\hbar$ where ${}^0\zeta,{}^1\zeta\in\mathcal{A}_0^0 [\![\underline{t}]\!]$ with ${}^0\zeta|_{\underline{t}=0}=1$ and $\wt(\boldsymbol\zeta)=0$. 
Note that $\hbar\downtriangle_{\rho}^{\frac{S+L}{\hbar}}\boldsymbol\zeta|_{\underline{t}=0,\hbar=0}=u_\rho$ for all $\rho\in I$. Therefore, $\{\hbar\downtriangle_{\rho}^{\frac{S+L}{\hbar}}\boldsymbol\zeta:\rho\in I\}$ is a $\mathbb{C}[\![\underline{t}]\!][\![\hbar]\!]$-basis of $\mathcal{H}_{S+L}^{(0)}$ (Definition \ref{H-module}) by Lemma \ref{Basis_lemma_F}. It means that there exists a unique $\mathbf{A}_{\alpha\beta}{}^\rho\in\mathbb{C}[\![\underline{t}]\!][\![\hbar]\!]$ for all $\alpha,\beta,\rho\in I$ such that
\begin{equation}\label{GCM_demo}
	\hbar\downtriangle_\beta^{\frac{S+L}{\hbar}}(\hbar\downtriangle_\alpha^{\frac{S+L}{\hbar}}\boldsymbol\zeta)=\sum_{\rho\in I}\mathbf{A}_{\alpha\beta}{}^\rho\cdot\downtriangle_\rho^{\frac{S+L}{\hbar}}\boldsymbol\zeta+(Q_{S+L}+\hbar\Delta)(\mathbf{\Lambda}_{\alpha\beta}),
\end{equation}
for some $\mathbf{\Lambda}_{\alpha\beta}\in\mathcal{A}_0^{-1}[\![\underline{t}]\!][\![\hbar]\!]$. 
Now we assume that $\boldsymbol\zeta$ is a weak primitive form (Definition \ref{Def_WP}), i.e., $\mathbf{A}_{\alpha\beta}{}^\rho$ has a particular shape $\mathbf{A}_{\alpha\beta}{}^\rho={}^0A_{\alpha\beta}{}^\rho+{}^1A_{\alpha\beta}{}^\rho\hbar$ and $\mathbf{\Lambda}_{\alpha\beta}={}^0\Lambda_{\alpha\beta}+{}^1\Lambda_{\alpha\beta}\hbar$ where ${}^0A_{\alpha\beta}{}^\rho,{}^1A_{\alpha\beta}{}^\rho\in\mathcal{A}_0^0 [\![\underline{t}]\!]$ and ${}^0\Lambda_{\alpha\beta},{}^1\Lambda_{\alpha\beta}\in\mathcal{A}_0^{-1}[\![\underline{t}]\!]$ satisfy the equation \eqref{GCM_demo}. Then, we can rewrite the equation \eqref{GCM_demo} as:
\begin{equation}\label{GCM}
	\hbar\downtriangle_\beta^{\frac{S+L}{\hbar}}(\hbar\downtriangle_\alpha^{\frac{S+L}{\hbar}}\boldsymbol\zeta)=\sum_{\rho\in I}({}^0A_{\alpha\beta}{}^\rho+{}^1A_{\alpha\beta}{}^\rho\hbar)\cdot\hbar\downtriangle_\rho^{\frac{S+L}{\hbar}}\boldsymbol\zeta+(Q_{S+L}+\hbar\Delta)({}^0\Lambda_{\alpha\beta}+{}^1\Lambda_{\alpha\beta}\hbar).
\end{equation}
Note that the equation \eqref{GCM} and the equation (S2) (Proposition 7.3 in \cite{ST}) are the same equations; this is the motivation for Definition \ref{Def_WP} of weak primitive forms.
In order to explain its connection to formal $F$-manifolds, consider the following equation (with a simplified notation $\downtriangle_\a=\downtriangle_\a^{\frac{S+L}{\hbar}}$):
\begin{align*}
	\hbar\downtriangle_\gamma\hbar\downtriangle_\beta\hbar\downtriangle_\alpha\boldsymbol\zeta=&\hbar\downtriangle_\gamma \Big[\sum_{\rho\in I}\left(\mathbf{A}_{\alpha\beta}{}^\rho\right)\hbar\downtriangle_\rho\boldsymbol\zeta\Big]+(Q_{S+L}+\hbar\Delta)(\hbar\downtriangle_\gamma\mathbf{\Lambda}_{\alpha\beta})\\
	=&\sum_{\rho\in I}\Big[\hbar\partial_\gamma(\mathbf{A}_{\alpha\beta}{}^\rho)\hbar\downtriangle_\rho\boldsymbol\zeta+(\mathbf{A}_{\alpha\beta}{}^\rho)\hbar\downtriangle_\gamma\hbar\downtriangle_\rho\boldsymbol\zeta\Big]+(Q_{S+L}+\hbar\Delta)(\hbar\downtriangle_\gamma\mathbf{\Lambda}_{\alpha\beta})\\
	=&\sum_{\delta\in I}\bigg[\hbar\mathbf{A}_{\alpha\beta,\gamma}{}^\delta+\sum_{\rho\in I}\big(\mathbf{A}_{\alpha\beta}{}^\rho\mathbf{A}_{\gamma\rho}{}^\delta\big)\bigg]\hbar\downtriangle_\delta\boldsymbol\zeta\\
	&+(Q_{S+L}+\hbar\Delta)\Big(\hbar\downtriangle_\gamma\mathbf{\Lambda}+\sum_{\rho\in I}\mathbf{A}_{\alpha\beta}{}^\rho\mathbf{\Lambda}_{\gamma\rho}\Big).
\end{align*}
Therefore, the following quantities are invariant under the permutation of $\alpha,\beta,\gamma$ for all $\delta$:
\begin{equation}\label{weak_F}
	\begin{aligned}
		\hbar\mathbf{A}_{\alpha\beta,\gamma}{}^\delta+&\sum_{\rho\in I}\big(\mathbf{A}_{\alpha\beta}{}^\rho\mathbf{A}_{\gamma\rho}{}^\delta\big)\\
		=&\sum_{\rho\in I}{}^0A_{\alpha\beta}{}^\rho\cdot{}^0A_{\gamma\rho}{}^\delta+\hbar\Big[{}^0A_{\alpha\beta,\gamma}{}^\delta+\sum_{\rho\in I}{}^0A_{\alpha\beta}{}^\rho\cdot{}^1A_{\gamma\rho}{}^\delta+{}^1A_{\alpha\beta}{}^\rho\cdot{}^0A_{\gamma\rho}{}^\delta\Big]\\
		&+\hbar^2\Big[{}^1A_{\alpha\beta,\gamma}{}^\delta+\sum_{\rho\in I}{}^1A_{\alpha\beta}{}^\rho\cdot{}^1A_{\gamma\rho}{}^\delta\Big].
	\end{aligned}
\end{equation}
Suppose that ${}^1A_{\alpha\beta}{}^\rho=0$ for all $\alpha,\beta,\rho\in I$. Then, the equation \eqref{weak_F} implies that the following quantities are invariant under the permutation of $\alpha,\beta,\gamma$ for all $\delta$:
\[
	\sum_{\rho\in I}{}^0A_{\alpha\beta}{}^\rho\cdot{}^0A_{\gamma\rho}{}^\delta\quad\textrm{and}\quad {}^0A_{\alpha\beta,\gamma}{}^\delta.
\]
Note that ${}^0A_{\alpha\beta}{}^\rho={}^0A_{\beta\alpha}{}^\rho$ from \eqref{GCM}. Therefore, ${}^0A_{\alpha\beta}{}^\rho$ induces the structure of the formal $F$-manifold structure when ${}^1A_{\alpha\beta}{}^\rho=0$.

\begin{remark}\label{rmk_1}
	Suppose that we have the Levi-Civita connection represented by
	\[
		\nabla_{\partial_\alpha}\partial_\beta:=\sum_{\rho_\in I}{}^1A_{\alpha\beta}{}^\rho\partial_\rho,
	\]
	which is compatible with the metric $\langle~,~\rangle$ defined by
	\[
		\langle\partial_\alpha,\partial_\beta\rangle:={}^0A_{\alpha\beta}{}^\mathrm{max}
	\]
	where ${}^0A_{\alpha\beta}{}^\rho$ and ${}^1A_{\alpha\beta}{}^\rho$ appeared in the equation \eqref{GCM}, i.e., ${}^1A_{\alpha\beta}{}^\rho$ is the Christoffel symbol of the Levi-Civita connection $\nabla$. Then, the above argument implies that taking the flat coordinates (${}^1A_{\alpha\beta}{}^\rho=0$) for weak primitive forms (and $\hbar \to 0$) induces a formal $F$-manifold structure, as we indicated in Table \ref{Table1}.
\end{remark}

\subsection{An explicit algorithm}\label{Subsec_Alg_prim}
We will give an explicit algorithm which calculates $\boldsymbol\zeta={}^0\zeta+{}^1\zeta\hbar$, $\mathbf{A}_{\alpha\beta}{}^{\rho}={}^0A_{\alpha\beta}{}^\rho+{}^1A_{\alpha\beta}{}^\rho\hbar$ satisfying the equation \eqref{GCM}. For notational convenience, let $12\cdots m$ denote $\alpha_1\alpha_2\cdots\alpha_m$. We use the following notations:
\[
	{}^iA_{12}{}^\rho={}^ia_{12}{}^\rho+\sum_{m=1}^\infty\frac{1}{m!}\sum_{\underline{\alpha}\in I^m}{}^ia_{12\underline{\alpha}}{}^\rho\cdot t^{\underline{\alpha}},\quad{}^i\zeta={}^i\bar{\zeta}+\sum_{m=1}^\infty\frac{1}{m!}\sum_{\underline{\alpha}\in I^m}{}^i\bar{\zeta}_{\underline{\alpha}}\cdot t^{\underline{\alpha}}.\quad(i=0,1)
\]
Let ${}^0\bar{\zeta}=1$, so that $\hbar\downtriangle_\rho\boldsymbol\zeta$ is a basis of $\mathcal{H}_{S+L}^{(0)}$, and let ${}^1\bar{\zeta}=0$, so that the weight condition $\wt(\boldsymbol\zeta)=0$ holds. We have to determine ${}^i\bar{\zeta}_{\underline{\alpha}}$ and ${}^ia_{\underline{\alpha}}{}^\rho$ $(i=0,1)$ which satisfy the equation \eqref{GCM}.\
For the algorithm, we need some definition.

\begin{definition}\label{Def_v-b}
	Let $\underline{\alpha}=\alpha_1\cdots\alpha_m=1\cdots m$. We define the notations $v_{\underline{\alpha}}(\ell,j)$ and $b_{\underline{\alpha}}{}^\rho(\ell,j)$ as follows:
	\[
		v_{\underline{\alpha}}(\ell,j)=\sum_{\substack{\underline{\alpha}_1\sqcup\underline{\alpha}_2=\underline{\alpha}\\|\underline{\alpha}_1|=\ell}}u(\underline{\alpha}_2)\cdot{}^j\bar{\zeta}_{\underline{\alpha}_1}\quad\textrm{where}\quad u(\underline{\alpha}):=u_1 u_2\cdots u_{m-1}u_m,
	\]
	and
	\[
		b_{\underline{\alpha}}{}^\rho(\ell,j)=\!\!\!\!\!\!\sum_{\substack{\underline{\alpha}_1\sqcup\cdots\sqcup\underline{\alpha}_{s+1}=3\cdots m\\\underline{\alpha}_i\neq\emptyset~\textrm{when}~i\neq1}}\frac{1}{s!}\sum_{\substack{r_1+\cdots r_{s+1}=\ell-j\\r_i=0~\textrm{or}~1}}\Big(\sum_{\delta_{s}}\cdots\sum_{\delta_1}{}^{r_1}{a_{12\underline{\alpha}_1}}^{\delta_1}\cdots{}^{r_{s+1}}{a_{\delta_{s}\underline{\alpha}_{s+1}}}^{\rho}\Big).
	\]
	where $s=m-j-2$. We define $b_{\underline{\alpha}}{}^\rho(\ell,j)=0$ when $s=m-j-2<0$.
\end{definition}

Note that $v_{\underline{\alpha}}(\ell,j)$ is invariant under the permutation of the indices of $\underline{\alpha}$. For example,
\[
	\begin{aligned}
		v_{123}(0,0)&=u_{123}{}^0\bar{\zeta}=u_{123},\\
		v_{123}(1,0)&=u_{12}{}^0\bar{\zeta}_3+u_{13}{}^0\bar{\zeta}_2+u_{23}{}^0\bar{\zeta}_1,\\
		v_{123}(1,1)&=u_{12}{}^1\bar{\zeta}_3+u_{13}{}^1\bar{\zeta}_2+u_{23}{}^1\bar{\zeta}_1,\\
		v_{123}(2,0)&=u_{1}{}^0\bar{\zeta}_{23}+u_{2}{}^0\bar{\zeta}_{13}+u_{3}{}^0\bar{\zeta}_{12},\\
		v_{123}(2,1)&=u_{1}{}^1\bar{\zeta}_{23}+u_{2}{}^1\bar{\zeta}_{13}+u_{3}{}^1\bar{\zeta}_{12},
	\end{aligned}\quad\textrm{and}\quad
	\begin{aligned}
		b_{123}{}^\rho(0,0)& = \sum_\delta {}^0a_{12}{}^\delta\cdot{}^0a_{\delta3}{}^\rho,\\
		b_{123}{}^\rho(1,0)& = \sum_\delta {}^1a_{12}{}^\delta\cdot{}^0a_{\delta3}{}^\rho+{}^0a_{12}{}^\delta\cdot{}^1a_{\delta3}{}^\rho,\\
		b_{123}{}^\rho(1,1)& = {}^0a_{123}{}^\rho,\\
		b_{123}{}^\rho(2,0)& = \sum_\delta {}^1a_{12}{}^\delta\cdot{}^1a_{\delta3}{}^\rho,\\
		b_{123}{}^\rho(2,1)& = {}^1a_{123}{}^\rho.
	\end{aligned}
\]
	
Let us introduce the algorithm for $\boldsymbol\zeta$ and $\mathbf{A}_{\alpha\beta}{}^\rho$ as follows:

\begin{description}
	\item[Step-0] Choose a $\mathbb{C}$-basis $\{u_\alpha:\alpha\in I\}$ of $J_S:=\mathcal{A}_0^0/Q_S(\mathcal{A}_0^{-1})$, and ${}^0\bar{\zeta}_\rho$, ${}^1\bar{\zeta}_\rho$. We can choose arbitrary ${}^0\bar{\zeta}_\rho$ and ${}^1\bar{\zeta}_\rho$ with weight conditions.
	\item[Step-1] Determine $b_{12}^{(0)}$, $c_{12}^{(0)}$, $b_{12}^{(1)}$, and $c_{12}^{(1)}$ using a basis $\{u_\alpha:\alpha\in I\}$ of $J_S$ as follows:
	\begin{equation}
		\begin{aligned}
			v_{12}(0,0)&=\sum_{\rho\in I}b_{12}^{(0)}{}^\rho\cdot u_\rho+Q_S(c_{12}^{(0)}),\\
			v_{12}(1,0)+v_{12}(0,1)-\sum_{\rho\in I}b_{12}^{(0)}{}^\rho\cdot{}^0\bar{\zeta}_\rho-\Delta(c_{12}^{(0)})&=\sum_{\rho\in I}b_{12}^{(1)}{}^\rho\cdot u_\rho+Q_S(c_{12}^{(1)}).
		\end{aligned}
	\end{equation}
	Note that $b_{12}^{(i)}{}^\rho\in\mathbb{C}$ is unique and $c_{1}^{(i)}$ is unique up to $\ker Q_S$ for $i=0,1$. Therefore, $\Delta(c_{12}^{(i)})$ is unique up to $\mathrm{Im}(Q_S)$ by the weak $Q_S\Delta$-lemma \ref{Q-D_Lemma}. Then define
	\[
		{}^0a_{12}{}^\rho:=b_{12}^{(0)}{}^\rho,\quad{}^1a_{12}{}^\rho:=b_{12}^{(1)}{}^\rho,
	\]
	and
	\[
		{}^0\bar{\zeta}_{12}:=\sum_{\rho\in I}\big(b_{12}^{(1)}{}^\rho\cdot{}^0\bar{\zeta}_\rho+b_{12}^{(0)}{}^\rho\cdot{}^1\bar{\zeta}_\rho\big)+\Delta(c_{12}^{(1)})-v_{12}(1,1),\quad{}^1\bar{\zeta}_{12}:=\sum_{\rho\in I}b_{12}^{(1)}{}^\rho\cdot{}^1\bar{\zeta}_\rho.
	\]
	
	\item[Step-$\boldsymbol\ell$ $(\ell\geq2)$] Suppose that $|\underline{\alpha}|=\ell+1$. Determine $b_{\underline{\alpha}}^{(i)}{}^\rho$ and $c_{\underline{\alpha}}^{(i)}$ ($0\leq i\leq \ell$) in sequence as follows:
	\begin{equation}\label{ind_prim}
		\begin{aligned}
			v_{\underline{\alpha}}(0,0)&=\sum_{\rho\in I}b_{\underline{\alpha}}^{(0)}{}^\rho u_\rho+Q_S(c_{\underline{\alpha}}^{(0)}),\\
			v_{\underline{\alpha}}(1,0)+v_{\underline{\alpha}}(0,1)-\sum_{\rho\in I}b_{\underline{\alpha}}^{(0)}{}^\rho\cdot{}^0\bar{\zeta}_\rho-\Delta(c_{\underline{\alpha}}^{(0)})&=\sum_{\rho\in I}b_{\underline{\alpha}}^{(1)}{}^\rho u_\rho+Q_S(c_{\underline{\alpha}}^{(1)}),\\
			\left[\begin{array}{l}
				v_{\underline{\alpha}}(2,0)+v_{\underline{\alpha}}(1,1)\\
				-{\displaystyle\sum_{\rho\in I}}\Big[b_{\underline{\alpha}}^{(1)}{}^\rho\cdot{}^0\bar{\zeta}_\rho+b_{\underline{\alpha}}^{(0)}{}^\rho\cdot{}^1\bar{\zeta}_\rho\Big]-\Delta(c_{\underline{\alpha}}^{(1)})
			\end{array}\right]
			&=\sum_{\rho\in I}b_{\underline{\alpha}}^{(2)}{}^\rho u_\rho+Q_S(c_{\underline{\alpha}}^{(2)}),\\
			&\vdots\\
			\left[\begin{array}{l}
				v_{\underline{\alpha}}(\ell,0)+v_{\underline{\alpha}}(\ell-1,1)\\
				-{\displaystyle\sum_{\rho\in I}}\Big[b_{\underline{\alpha}}^{(\ell-1)}{}^\rho\cdot{}^0\bar{\zeta}_\rho+b_{\underline{\alpha}}^{(\ell-2)}{}^\rho\cdot{}^1\bar{\zeta}_\rho\Big]-\Delta(c_{\underline{\alpha}}^{(\ell-1)})
			\end{array}\right]
			&=\sum_{\rho\in I}b_{\underline{\alpha}}^{(\ell)}{}^\rho u_\rho+Q_S(c_{\underline{\alpha}}^{(\ell)}),\\
		\end{aligned}
	\end{equation}
	This inductive procedure works well, because the left hand sides of the equation \eqref{ind_prim} are already determined by the previous steps for $i\leq\ell-1$. Moreover, $b_{\underline{\alpha}}^{(\ell-1)}{}^\rho$ and $b_{\underline{\alpha}}^{(\ell)}{}^\rho$ are independent of the choices of $c_{\underline{\alpha}}^{(i)}$, since $c_{\underline{\alpha}}^{(i)}$ is unique up to $\ker Q_S$ for $0\leq i\leq\ell-1$, thus $\Delta(c_{\underline{\alpha}}^{(i)})$ is unique up to $\mathrm{Im}(Q_S)$ by Lemma \ref{Q-D_Lemma}.   Then, define
	\begin{equation}\label{ind_b_for_a}
		{}^0a_{\underline{\alpha}}{}^\rho:=b_{\underline{\alpha}}^{(\ell-1)}{}^\rho-\sum_{j=0}^{\ell-2}b_{\underline{\alpha}}{}^\rho(\ell-1,j),\quad{}^1a_{\underline{\alpha}}{}^\rho:=b_{\underline{\alpha}}^{(\ell)}{}^\rho-\sum_{j=0}^{\ell-2}{}^{n-1}b_{\underline{\alpha}}{}^\rho(\ell,j),
	\end{equation}
	and
	\begin{equation}\label{zeta_alg}
		\begin{aligned}
				{}^0\bar{\zeta}_{\underline{\alpha}}&:=\sum_{\rho\in I}\big(b_{\underline{\alpha}}^{(\ell)}{}^\rho\cdot{}^0\bar{\zeta}_\rho+b_{\underline{\alpha}}^{(\ell-1)}{}^\rho\cdot{}^1\bar{\zeta}_\rho\big)+\Delta(c_{\underline{\alpha}}^{(\ell)})-v_{\underline{\alpha}}(\ell,1),\\
				{}^1\bar{\zeta}_{\underline{\alpha}}&:=\sum_{\rho\in I}b_{\underline{\alpha}}^{(\ell)}{}^\rho\cdot{}^1\bar{\zeta}_\rho.
		\end{aligned}
	\end{equation}
\end{description}
By this algorithm, we can determine $\boldsymbol\zeta$ and $\mathbf{A}_{\alpha\beta}{}^\rho$ which turn out to satisfy (see subsection \ref{sec4.3}) the equation \eqref{GCM}.

\subsection{A proof why the algorithm works}\label{sec4.3}
In this subsection, we give a proof of the algorithm in subsection \ref{Subsec_Alg_prim}.
\begin{definition}\label{Def_UBC_B}
	We define quantities $\mathbf{V},\mathbf{D},\mathbf{E}$, which are our key players in the proof of the algorithm, as follows:
	\[
		\mathbf{V}_{\alpha_1\cdots\alpha_m}:=\hbar\downtriangle_{\alpha_m}^{\frac{S+L}{\hbar}}\cdots\hbar\downtriangle_{\alpha_1}^{\frac{S+L}{\hbar}}\boldsymbol\zeta,
	\]
	\[
		\left\{\begin{aligned}
			{\mathbf{D}_{\alpha_1\alpha_2}}^\rho:=&{\mathbf{A}_{\alpha_1\alpha_2}}^\rho,\\
			{\mathbf{D}_{\alpha_1\alpha_2\alpha_3}}^\rho:=&\sum_{\delta\in I}{\mathbf{D}_{\alpha_1\alpha_2}}^\delta{\mathbf{D}_{\delta\alpha_3}}^\rho+\hbar\cdot\partial_{\alpha_3}({\mathbf{D}_{\alpha_1\alpha_2}}^\rho),\\
			&\vdots\\
			{\mathbf{D}_{\alpha_1\cdots\alpha_m}}^\rho:=&\sum_{\delta\in I}{\mathbf{D}_{\alpha_1\cdots\alpha_{m-1}}}^\delta{\mathbf{D}_{\delta\alpha_m}}^\rho+\hbar\cdot\partial_{\alpha_m}({\mathbf{D}_{\alpha_1\cdots\alpha_{m-1}}}^\rho),
		\end{aligned}\right.
	\]
	\[
		\left\{\begin{aligned}
			\mathbf{E}_{\alpha_1\alpha_2}:=&\mathbf{\Lambda}_{\alpha_1\alpha_2},\\
			\mathbf{E}_{\alpha_1\alpha_2\alpha_3}:=&\sum_{\delta\in I}{\mathbf{D}_{\alpha_1\alpha_2}}^\delta\mathbf{E}_{\delta\alpha_3}+\hbar\downtriangle_{\alpha_3}^{\frac{S+L}{\hbar}}\mathbf{E}_{\alpha_1\alpha_2},\\
			&\vdots\\
			\mathbf{E}_{\alpha_1\cdots\alpha_m}:=&\sum_{\delta\in I}{\mathbf{D}_{\alpha_1\cdots\alpha_{m-1}}}^\delta\mathbf{E}_{\delta\alpha_m}+\hbar\downtriangle_{\alpha_m}^{\frac{S+L}{\hbar}}\mathbf{E}_{\alpha_1\cdots\alpha_{m-1}}.
		\end{aligned}\right.
	\]
\end{definition}

\begin{theorem}
	The equations \eqref{GCM} hold for all $\alpha,\beta\in I$ if and only if the following inductive equations hold for all $m\geq2$ and all $\underline{\alpha}\in I^m$:
	\begin{equation}\label{Ind_QM_prim}
		\mathbf{V}_{\underline{\alpha}}=\sum_{\rho\in I}{\mathbf{D}_{\underline{\alpha}}}^\rho\cdot\hbar\downtriangle_\rho^{\frac{S+L}{\hbar}}\boldsymbol\zeta+(Q_{S+L}+\hbar\Delta)(\mathbf{E}_{\underline{\alpha}}).
	\end{equation}
\end{theorem}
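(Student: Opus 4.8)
The plan is to run the same induction that proves Theorem \ref{Thm_UBD}, with $(\mathbf{U},\mathbf{B},\mathbf{C},\Gamma,Q_{S+\Gamma},\downtriangle_\alpha^{\frac{S+\Gamma}{\hbar}})$ replaced throughout by $(\mathbf{V},\mathbf{D},\mathbf{E},\boldsymbol\zeta,Q_{S+L},\downtriangle_\alpha^{\frac{S+L}{\hbar}})$; every structural fact one needs is already recorded earlier: the Leibniz rule for $\hbar\downtriangle_\alpha^{\frac{S+L}{\hbar}}$ over $\partial_\alpha$ (immediate from Definition \ref{GMC}), the commutation of $\hbar\downtriangle_\alpha^{\frac{S+L}{\hbar}}$ with $Q_{S+L}+\hbar\Delta$ (noted after Definition \ref{GMC}), the commutativity $\mathbf{A}_{\alpha\beta}{}^\rho=\mathbf{A}_{\beta\alpha}{}^\rho$ (noted after Definition \ref{Def_WP}), and the $\mathbb{C}[\![\underline t]\!][\![\hbar]\!]$-linearity of $Q_{S+L}+\hbar\Delta$. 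The ``only if'' direction is then immediate: by Definition \ref{Def_UBC_B} one has $\mathbf{V}_{\alpha_1\alpha_2}=\hbar\downtriangle_{\alpha_2}^{\frac{S+L}{\hbar}}\hbar\downtriangle_{\alpha_1}^{\frac{S+L}{\hbar}}\boldsymbol\zeta$, $\mathbf{D}_{\alpha_1\alpha_2}{}^\rho=\mathbf{A}_{\alpha_1\alpha_2}{}^\rho$, and $\mathbf{E}_{\alpha_1\alpha_2}=\mathbf{\Lambda}_{\alpha_1\alpha_2}$, so the $m=2$ instance of \eqref{Ind_QM_prim} is literally \eqref{GCM}.

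For the converse I would assume \eqref{GCM} holds for all $\alpha,\beta\in I$ and induct on $m\ge 2$, the base case being exactly the hypothesis. Granting \eqref{Ind_QM_prim} for every $\underline\alpha=\alpha_1\cdots\alpha_\ell\in I^\ell$, apply $\hbar\downtriangle_{\alpha_{\ell+1}}^{\frac{S+L}{\hbar}}$ to both sides. On the left one gets $\mathbf{V}_{\alpha_1\cdots\alpha_{\ell+1}}$ by definition (keeping the string $\alpha_1\cdots\alpha_\ell$ in its fixed order so that $\hbar\downtriangle_{\alpha_{\ell+1}}^{\frac{S+L}{\hbar}}\mathbf{V}_{\alpha_1\cdots\alpha_\ell}=\mathbf{V}_{\alpha_1\cdots\alpha_{\ell+1}}$). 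On the right, the connection hits each summand $\mathbf{D}_{\alpha_1\cdots\alpha_\ell}{}^\rho\cdot\hbar\downtriangle_\rho^{\frac{S+L}{\hbar}}\boldsymbol\zeta$ by the Leibniz rule over $\partial_{\alpha_{\ell+1}}$, producing $\hbar\,\partial_{\alpha_{\ell+1}}(\mathbf{D}_{\alpha_1\cdots\alpha_\ell}{}^\rho)\cdot\hbar\downtriangle_\rho^{\frac{S+L}{\hbar}}\boldsymbol\zeta+\mathbf{D}_{\alpha_1\cdots\alpha_\ell}{}^\rho\cdot\hbar\downtriangle_{\alpha_{\ell+1}}^{\frac{S+L}{\hbar}}\hbar\downtriangle_\rho^{\frac{S+L}{\hbar}}\boldsymbol\zeta$, and it passes through $Q_{S+L}+\hbar\Delta$ on the exact term by the commutation. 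Now substitute \eqref{GCM} with the index pair $(\rho,\alpha_{\ell+1})$ for $\hbar\downtriangle_{\alpha_{\ell+1}}^{\frac{S+L}{\hbar}}\hbar\downtriangle_\rho^{\frac{S+L}{\hbar}}\boldsymbol\zeta$, and use $\mathbf{D}_{\alpha_1\cdots\alpha_\ell}{}^\rho\in\mathbb{C}[\![\underline t]\!][\![\hbar]\!]$ together with $\mathbb{C}[\![\underline t]\!][\![\hbar]\!]$-linearity of $Q_{S+L}+\hbar\Delta$ to pull $\mathbf{D}_{\alpha_1\cdots\alpha_\ell}{}^\rho$ inside the exact term as $\mathbf{D}_{\alpha_1\cdots\alpha_\ell}{}^\rho\mathbf{\Lambda}_{\rho\alpha_{\ell+1}}=\mathbf{D}_{\alpha_1\cdots\alpha_\ell}{}^\rho\mathbf{E}_{\rho\alpha_{\ell+1}}$. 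Collecting the coefficient of each $\hbar\downtriangle_\delta^{\frac{S+L}{\hbar}}\boldsymbol\zeta$ and using commutativity and $\mathbf{A}_{\delta\alpha_{\ell+1}}{}^\rho=\mathbf{D}_{\delta\alpha_{\ell+1}}{}^\rho$, one recognizes precisely the recursions of Definition \ref{Def_UBC_B}, so the right side equals $\sum_{\rho}\mathbf{D}_{\alpha_1\cdots\alpha_{\ell+1}}{}^\rho\cdot\hbar\downtriangle_\rho^{\frac{S+L}{\hbar}}\boldsymbol\zeta+(Q_{S+L}+\hbar\Delta)(\mathbf{E}_{\alpha_1\cdots\alpha_{\ell+1}})$, which is \eqref{Ind_QM_prim} for $m=\ell+1$.

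I do not expect a genuine obstacle here: unlike the surrounding algorithm results, this equivalence is purely formal and uses neither the weak $Q_S\Delta$-lemma nor the basis lemma. The only points requiring care are bookkeeping: fixing the order of the multi-index, matching $\mathbf{\Lambda}_{\rho\alpha_{\ell+1}}=\mathbf{E}_{\rho\alpha_{\ell+1}}$ and $\mathbf{A}_{\delta\alpha_{\ell+1}}{}^\rho=\mathbf{D}_{\delta\alpha_{\ell+1}}{}^\rho$ against the defining recursions, and invoking commutativity at the reindexing step $\rho\leftrightarrow\alpha_{\ell+1}$. In fact this theorem and Theorem \ref{Thm_UBD} are two instances of one lemma valid for any dGBV algebra with a Maurer-Cartan solution and a chosen $\boldsymbol\zeta$ with $\boldsymbol\zeta|_{\underline t=0,\hbar=0}$ a unit (Theorem \ref{Thm_UBD} being $\Gamma$ arbitrary, $\boldsymbol\zeta=1$, and the present one $\Gamma=L$ linear, $\boldsymbol\zeta={}^0\zeta+{}^1\zeta\hbar$), which is why the argument transports verbatim.
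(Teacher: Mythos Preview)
Your proof is correct and follows exactly the approach the paper takes (the paper's own proof consists of the single sentence ``Use the same argument as in the proof of Theorem \ref{Thm_UBD}''), and you have filled in those details faithfully. One cosmetic remark: your labels are swapped---the observation that the $m=2$ instance of \eqref{Ind_QM_prim} is literally \eqref{GCM} establishes the ``if'' direction, while the induction you run in the second paragraph is the ``only if'' direction.
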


\begin{proof}
	Use the same argument as in the proof of Theorem \ref{Thm_UBD}.
\end{proof}

By the mathematical induction, we get the following two lemmas:
\begin{lemma}\label{V_lemma}
	Let $\underline{\alpha}=\alpha_1\cdots\alpha_m$, then
	\begin{multline*}
		\mathbf{V}_{\underline{\alpha}}=V_{\underline{\alpha}}(0,0)+\sum_{\ell=1}^m(V_{\underline{\alpha}}(\ell,0)+V_{\underline{\alpha}}(\ell-1,1))\hbar^\ell+V_{\underline{\alpha}}(m,1)\hbar^{m+1}\\
		\textrm{where}~V_{\underline{\alpha}}(\ell,j)=\sum_{\substack{\underline{\alpha}_1\sqcup\underline{\alpha}_2=\underline{\alpha}\\|\underline{\alpha}_1|=\ell}}u(\underline{\alpha}_2)\cdot{}^j\zeta_{\underline{\alpha}_1}.
	\end{multline*}
\end{lemma}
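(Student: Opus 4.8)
The plan is a straightforward induction on $m$, whose inductive step consists of applying one more Gauss--Manin operator to $\mathbf{V}_{\alpha_1\cdots\alpha_m}$. The only fact I need about that operator is its chain-level shape, read off from Definition~\ref{GMC}: since here $\Gamma=L=\sum_{\rho}u_\rho t^\rho$ and $S$ does not depend on $\underline t$, we have $\partial_{t^\alpha}(S+L)=u_\alpha$, so on $\mathcal{A}_0^0[\![\underline t]\!](\!(\hbar)\!)$ the operator $\hbar\downtriangle_\alpha^{\frac{S+L}{\hbar}}$ is simply $w\mapsto u_\alpha\cdot w+\hbar\,\partial_\alpha w$, i.e. multiplication by $u_\alpha$ plus $\hbar$ times the derivation $\partial_\alpha=\partial/\partial t^\alpha$. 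The base case is immediate: $\mathbf{V}_{\emptyset}=\boldsymbol\zeta={}^0\zeta+{}^1\zeta\,\hbar$ already has the claimed form, with $V_{\emptyset}(0,0)={}^0\zeta$ and $V_{\emptyset}(0,1)={}^1\zeta$.

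First I would record the combinatorial recursion satisfied by the functions $V_{\underline\alpha}(\ell,j)$ themselves. Writing $\underline\beta=\alpha_1\cdots\alpha_{m+1}$ and splitting every decomposition $\underline\beta_1\sqcup\underline\beta_2=\underline\beta$ according to whether the new index $\alpha_{m+1}$ lands in the "derivative part" $\underline\beta_1$ (the one of size $\ell$, indexing partials of ${}^j\zeta$) or in the "multiplier part" $\underline\beta_2$ (indexing the $u$-product) — and using that each $u_i\in\mathcal{A}_0^0$ is a $\underline t$-constant, so that $\partial_{\alpha_{m+1}}\big(u(\underline\alpha_2)\cdot{}^j\zeta_{\underline\alpha_1}\big)=u(\underline\alpha_2)\cdot{}^j\zeta_{\underline\alpha_1\cup\{\alpha_{m+1}\}}$ — one obtains
\[
  V_{\underline\beta}(\ell,j)=u_{\alpha_{m+1}}\cdot V_{\underline\alpha}(\ell,j)+\partial_{\alpha_{m+1}}V_{\underline\alpha}(\ell-1,j),
\]
with the convention $V_{\underline\alpha}(\ell,j)=0$ when $\ell<0$ or $\ell$ exceeds the length of $\underline\alpha$. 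This is exactly the Leibniz/binomial bookkeeping already used for $u_{\underline\alpha}^{(i)}$ (Definition~\ref{u_alpha}) and for $\mathbf{U}_{\underline\alpha}$.

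Then I would rewrite the target formula in the compact form $\mathbf{V}_{\underline\alpha}=\sum_{\ell\geq0}\big(V_{\underline\alpha}(\ell,0)+V_{\underline\alpha}(\ell-1,1)\big)\hbar^\ell$, which is literally equivalent to the displayed expression because $V_{\underline\alpha}(\ell,\cdot)$ vanishes for $\ell\notin\{0,\dots,m\}$. Applying $\hbar\downtriangle_{\alpha_{m+1}}^{\frac{S+L}{\hbar}}=u_{\alpha_{m+1}}\cdot{}+\hbar\,\partial_{\alpha_{m+1}}$ to the inductive hypothesis, the multiplication part preserves the $\hbar$-power while the $\hbar\,\partial_{\alpha_{m+1}}$ part shifts $\hbar^\ell\mapsto\hbar^{\ell+1}$; after re-indexing, the coefficient of $\hbar^\ell$ in $\mathbf{V}_{\underline\beta}$ is
\[
  u_{\alpha_{m+1}}V_{\underline\alpha}(\ell,0)+\partial_{\alpha_{m+1}}V_{\underline\alpha}(\ell-1,0)+u_{\alpha_{m+1}}V_{\underline\alpha}(\ell-1,1)+\partial_{\alpha_{m+1}}V_{\underline\alpha}(\ell-2,1),
\]
which by the recursion above equals $V_{\underline\beta}(\ell,0)+V_{\underline\beta}(\ell-1,1)$; this is exactly the claimed formula for $\underline\beta\in I^{m+1}$, closing the induction.

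No step here is genuinely hard; the one thing requiring care is the $\hbar$-bookkeeping — remembering that $\boldsymbol\zeta$ itself already carries a power of $\hbar$ through its ${}^1\zeta$-part, so that it is the $(\ell,0)$ and $(\ell-1,1)$ contributions that merge at level $\hbar^\ell$ — together with the observation that $\partial_{\alpha_{m+1}}$ never differentiates the $u$-factors. The latter is precisely the feature of the linear Maurer--Cartan solution $L$ that keeps the recursion for $V_{\underline\beta}(\ell,j)$ clean; for a general $\Gamma$ this simplification fails, which is why the analogous analysis (equation~\eqref{F-QM2} and Section~\ref{sec3.4}) is heavier.
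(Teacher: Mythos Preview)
Your proof is correct and is exactly the induction the paper has in mind; the paper itself merely writes ``By the mathematical induction, we get the following two lemmas'' and gives no further details, so your argument is a faithful and complete expansion of the intended proof.
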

\begin{lemma}\label{D_lemma}
	Let $\underline{\alpha}=\alpha_1\cdots\alpha_m$, then
	\begin{align*}
		\mathbf{D}_{\underline{\alpha}}{}^\rho&=\sum_{\ell=0}^{m-1}{}^\ell D_{\underline{\alpha}}{}^\rho\hbar^\ell\quad\textrm{where}~{}^\ell D_{\underline{\alpha}}{}^\rho=\sum_{j=0}^\ell B_{\underline{\alpha}}{}^\rho(\ell,j),\\
		&\textrm{and}~B_{\underline{\alpha}}{}^\rho(\ell,j)=\!\!\!\!\!\!\!\sum_{\substack{\underline{\alpha}_1\sqcup\cdots\sqcup\underline{\alpha}_{s+1}=3\cdots m\\\underline{\alpha}_i\neq\emptyset~\textrm{when}~i\neq1}}\frac{1}{s!}\sum_{\substack{r_1+\cdots r_{s+1}=\ell-j\\r_i=0~\textrm{or}~1}}\Big(\sum_{\delta_{s}}\!\cdots\!\sum_{\delta_1}{}^{r_1}{A_{12\underline{\alpha}_1}}^{\delta_1}\cdots{}^{r_{s+1}}{A_{\delta_{s}\underline{\alpha}_{s+1}}}^{\rho}\Big)
		\end{align*}
	with $s=m-j-2$.
\end{lemma}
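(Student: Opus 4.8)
\textbf{Plan for proving Lemma \ref{D_lemma}.} The statement gives a closed formula for the $\hbar$-expansion of $\mathbf{D}_{\underline{\alpha}}{}^\rho$, where $\mathbf{D}$ is defined by the recursion
\[
	\mathbf{D}_{\alpha_1\cdots\alpha_m}{}^\rho=\sum_{\delta\in I}\mathbf{D}_{\alpha_1\cdots\alpha_{m-1}}{}^\delta\mathbf{D}_{\delta\alpha_m}{}^\rho+\hbar\cdot\partial_{\alpha_m}\big(\mathbf{D}_{\alpha_1\cdots\alpha_{m-1}}{}^\rho\big),
	\qquad \mathbf{D}_{\alpha_1\alpha_2}{}^\rho=\mathbf{A}_{\alpha_1\alpha_2}{}^\rho.
\]
The plan is a straightforward induction on $m$. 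The base case $m=2$ says $\mathbf{D}_{12}{}^\rho=\mathbf{A}_{12}{}^\rho={}^0A_{12}{}^\rho+{}^1A_{12}{}^\rho\hbar$; and in the claimed formula, $s=m-j-2=-j$, so only $j=0$ contributes, $B_{12}{}^\rho(\ell,0)$ has empty product structure reducing to $\sum_{r_1=\ell}{}^{r_1}A_{12}{}^\rho={}^\ell A_{12}{}^\rho$, which matches Definition \ref{Def_WP}(2) since $\mathbf{A}$ has $\hbar$-degree $1$. So the base case is just unwinding notation.

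\textbf{Inductive step.} Assume the formula holds for $\underline{\beta}=\alpha_1\cdots\alpha_m$ and all indices; apply the recursion to get $\mathbf{D}_{\alpha_1\cdots\alpha_{m+1}}{}^\rho$. I would substitute the inductive expansion $\mathbf{D}_{\alpha_1\cdots\alpha_m}{}^\delta=\sum_{\ell'} {}^{\ell'}D_{\alpha_1\cdots\alpha_m}{}^\delta\hbar^{\ell'}$ into both terms of the recursion. The first term $\sum_{\delta}\mathbf{D}_{\alpha_1\cdots\alpha_m}{}^\delta\mathbf{D}_{\delta\alpha_{m+1}}{}^\rho$ contributes, at $\hbar$-power $\ell$, a sum $\sum_{\delta}\sum_{\ell'+r=\ell}{}^{\ell'}D_{\alpha_1\cdots\alpha_m}{}^\delta\cdot{}^{r}A_{\delta\alpha_{m+1}}{}^\rho$; expanding ${}^{\ell'}D$ via $B_{\underline{\beta}}{}^\delta(\ell',j)$ — a sum over ordered set-partitions $\underline{\alpha}_1\sqcup\cdots\sqcup\underline{\alpha}_{s+1}=\{3,\dots,m\}$ with $s=m-j-2$ — and then appending the new index $\alpha_{m+1}$ as a new singleton block $\underline{\alpha}_{s+2}$ gives precisely the set-partitions of $\{3,\dots,m+1\}$ into $s+2$ blocks, matching $B_{\alpha_1\cdots\alpha_{m+1}}{}^\rho(\ell,j)$ with the same $j$ (note $s$ increases by $1$ as $m$ does, keeping $j$ fixed, and the $1/s!$ bookkeeping works out because $\alpha_{m+1}$ joins as a \emph{distinguished} last block). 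The second term $\hbar\cdot\partial_{\alpha_{m+1}}(\mathbf{D}_{\alpha_1\cdots\alpha_m}{}^\rho)$ raises $\hbar$-degree by one and applies $\partial_{\alpha_{m+1}}$ to each factor ${}^{r_i}A_{\cdots}{}^{\cdots}$ in $B_{\underline{\beta}}{}^\rho(\ell-1,j)$ by the Leibniz rule; the index $\alpha_{m+1}$ is thereby \emph{adjoined} to one of the existing blocks rather than forming a new one, which is exactly the combinatorial content of the partitions of $\{3,\dots,m+1\}$ into $s+1=m-j-1$ blocks with $\alpha_{m+1}$ a non-isolated member — i.e.\ the $j\to j+1$ shift (since $s=m-j-2=(m+1)-(j+1)-2$). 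Summing both contributions reproduces $\sum_{j=0}^{\ell}B_{\alpha_1\cdots\alpha_{m+1}}{}^\rho(\ell,j)$, and the $\hbar$-degree bound $\ell\le m$ follows since the first term caps at $\ell'+r\le (m-1)+1=m$ and the second at $1+(m-1)=m$.

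\textbf{Main obstacle.} The genuine content is purely combinatorial: verifying that the two ways the new index $\alpha_{m+1}$ can enter — as a fresh final block (from the first, quadratic term) or adjoined to an existing block (from the $\partial_{\alpha_{m+1}}$ term) — together account, with correct multiplicities $1/s!$ and correct shifts in the pair $(\ell,j)$, for exactly the set-partitions appearing in $B_{\alpha_1\cdots\alpha_{m+1}}{}^\rho(\ell,j)$. One must be careful that the recursion treats $\mathbf{D}_{\alpha_1\cdots\alpha_m}$ and $\mathbf{D}_{\delta\alpha_{m+1}}$ asymmetrically (the first index-string is ``contracted through'' while $\alpha_{m+1}$ is new), so the blocks $\underline{\alpha}_2,\dots,\underline{\alpha}_{s+1}$ in $B(\ell,j)$ are \emph{unordered among themselves} but the appended block containing $\alpha_{m+1}$ plays a special role, which is why the $1/s!$ factor does not pick up an extra $(s+1)$. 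I expect no analytic difficulty — just bookkeeping that mirrors the proof of Theorem \ref{Thm_UBD} and Lemma \ref{Lemm_B} — so after fixing notation for ordered-vs-unordered blocks, the induction is routine, analogous to the computation already carried out for $\mathbf{U}_{\underline{\alpha}}$ and $\mathbf{B}_{\underline{\alpha}}{}^\rho$ in Section \ref{Sec_F}.
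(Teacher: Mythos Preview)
Your proposal is correct and matches the paper's approach: the paper's entire proof of Lemma \ref{D_lemma} (together with Lemma \ref{V_lemma}) is the single sentence ``By the mathematical induction, we get the following two lemmas,'' so your inductive argument on $m$---splitting the recursion for $\mathbf{D}$ into the quadratic term (which appends $\alpha_{m+1}$ as a new final block) and the $\hbar\partial_{\alpha_{m+1}}$ term (which adjoins $\alpha_{m+1}$ to an existing block via Leibniz)---is exactly the intended route, only fleshed out. The combinatorial bookkeeping you flag as the main obstacle is indeed the only content, and the paper leaves it implicit just as it did for the parallel computation of $\mathbf{B}_{\underline{\alpha}}{}^\rho$ in Section~\ref{Sec_F}.
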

Therefore, we obtain
\begin{equation}\label{res_VB}
	V_{\underline{\alpha}}(\ell,j)|_{\underline{t}=0}=v_{\underline{\alpha}}(\ell,j)\quad\textrm{and}\quad B_{\underline{\alpha}}{}^\rho(\ell,j)|_{\underline{t}=0}=b_{\underline{\alpha}}{}^\rho(\ell,j),
\end{equation}
where $v_{\underline{\alpha}}(\ell,j)$ and $b_{\underline{\alpha}}{}^\rho(\ell,j)$ appeared in Definition \ref{Def_v-b}.

\begin{lemma}\label{Lemma_res_D}
	We get the following formulas:
	\[
		{}^{m-1}D_{\underline{\alpha}}{}^\rho|_{\underline{t}=0}={}^0a_{\underline{\alpha}}{}^\rho+\sum_{j=0}^{m-3}b_{\underline{\alpha}}{}^\rho(m-2,j)\quad\textrm{and}\quad{}^{m-2}D_{\underline{\alpha}}{}^\rho|_{\underline{t}=0}={}^1a_{\underline{\alpha}}{}^\rho+\sum_{j=0}^{m-3}b_{\underline{\alpha}}{}^\rho(m-1,j).
	\]
\end{lemma}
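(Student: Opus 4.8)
The plan is to obtain Lemma \ref{Lemma_res_D} as a bookkeeping consequence of Lemma \ref{D_lemma} and the specialization \eqref{res_VB}; no induction beyond the one already used for Lemma \ref{D_lemma} is needed. The idea is to isolate, inside the sum ${}^\ell D_{\underline{\alpha}}{}^\rho=\sum_{j=0}^{\ell}B_{\underline{\alpha}}{}^\rho(\ell,j)$ of Lemma \ref{D_lemma}, the unique term for which the block parameter $s=m-j-2$ (with $\underline{\alpha}\in I^m$) degenerates to $0$, namely $j=m-2$.

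First I would record two elementary facts about $B_{\underline{\alpha}}{}^\rho(\ell,j)$, read off directly from Definition \ref{Def_v-b}. (i) If $j>m-2$ then $s<0$, so $B_{\underline{\alpha}}{}^\rho(\ell,j)=0$; hence in ${}^{m-1}D_{\underline{\alpha}}{}^\rho$ and ${}^{m-2}D_{\underline{\alpha}}{}^\rho$ only the indices $0\le j\le m-2$ contribute, and one may peel off the top index $j=m-2$, leaving the sums $\sum_{j=0}^{m-3}B_{\underline{\alpha}}{}^\rho(m-1,j)$ and $\sum_{j=0}^{m-3}B_{\underline{\alpha}}{}^\rho(m-2,j)$ that appear in the statement. (ii) When $j=m-2$ one has $s=0$, so the only admissible decomposition of $\{3,\dots,m\}$ is the single block $\underline{\alpha}_1=\{3,\dots,m\}$, the factor $1/s!$ equals $1$, and the only exponent is the scalar $r_1=\ell-(m-2)\in\{0,1\}$; therefore $B_{\underline{\alpha}}{}^\rho(\ell,m-2)$ is exactly the single coefficient ${}^{\,\ell-m+2}A_{\underline{\alpha}}{}^\rho$ (taken to be $0$ unless $\ell-m+2\in\{0,1\}$), where for a multi-index of length $\ge 3$ the symbol ${}^iA_{\underline{\alpha}}{}^\rho$ stands for $\partial_{\alpha_m}\cdots\partial_{\alpha_3}\,{}^iA_{\alpha_1\alpha_2}{}^\rho$, whose constant term in $\underline{t}$ is the coefficient ${}^ia_{\underline{\alpha}}{}^\rho$ of subsection \ref{Subsec_Alg_prim}. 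In particular $B_{\underline{\alpha}}{}^\rho(m-1,m-2)={}^1A_{\underline{\alpha}}{}^\rho$ and $B_{\underline{\alpha}}{}^\rho(m-2,m-2)={}^0A_{\underline{\alpha}}{}^\rho$.

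Granting (i) and (ii), the lemma follows by substitution: in Lemma \ref{D_lemma} set $\ell=m-1$ and $\ell=m-2$, split off the $j=m-2$ term using (ii), and then specialize to $\underline{t}=0$, applying \eqref{res_VB} to the remaining blocks and ${}^iA_{\underline{\alpha}}{}^\rho|_{\underline{t}=0}={}^ia_{\underline{\alpha}}{}^\rho$ to the isolated one; this yields the two displayed identities, the cases $m=2$ (where the truncated sums are empty) and $m=3$ (where each has one term) being immediate and checkable against the examples printed after Definition \ref{Def_v-b}. I expect the only delicate point to be (ii): one must verify carefully that the degenerate configuration $s=0$ of the nested sum in Definition \ref{Def_v-b} really produces a single, unweighted copy of ${}^{r_1}A_{\underline{\alpha}}{}^\rho$ --- that the constraint ``$\underline{\alpha}_i\ne\emptyset$ for $i\ne 1$'' forces the unique block with no residual ordering multiplicity, and that ${}^{r_1}A_{12\underline{\alpha}_1}{}^\rho$ with $\underline{\alpha}_1=\{3,\dots,m\}$ is to be read as the length-$m$ object ${}^{r_1}A_{\underline{\alpha}}{}^\rho$ rather than as a new symbol. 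Once this is pinned down, the remainder is routine, mirroring the $\hbar$-power bookkeeping already carried out for $\mathbf{U},\mathbf{B},\mathbf{C}$ in the proof of Lemma \ref{Lemm_B}; the lemma then feeds, via the algorithm's definitions \eqref{ind_b_for_a}, into the verification that $\boldsymbol\zeta$ and $\mathbf{A}_{\alpha\beta}{}^\rho$ solve \eqref{GCM}.
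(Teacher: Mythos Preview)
Your approach is exactly what the paper's sketch indicates: read off ${}^\ell D_{\underline{\alpha}}{}^\rho=\sum_{j=0}^{\ell}B_{\underline{\alpha}}{}^\rho(\ell,j)$ from Lemma \ref{D_lemma}, isolate the degenerate block $j=m-2$ (where $s=0$ and $B_{\underline{\alpha}}{}^\rho(\ell,m-2)={}^{\ell-m+2}A_{\underline{\alpha}}{}^\rho$), and specialize via \eqref{res_VB}. Your verification of the $s=0$ case is the only real point, and it is handled correctly.

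One remark: your own computation in fact produces
\[
{}^{m-2}D_{\underline{\alpha}}{}^\rho\big|_{\underline{t}=0}={}^0a_{\underline{\alpha}}{}^\rho+\sum_{j=0}^{m-3}b_{\underline{\alpha}}{}^\rho(m-2,j),\qquad
{}^{m-1}D_{\underline{\alpha}}{}^\rho\big|_{\underline{t}=0}={}^1a_{\underline{\alpha}}{}^\rho+\sum_{j=0}^{m-3}b_{\underline{\alpha}}{}^\rho(m-1,j),
\]
i.e.\ with the superscripts $m-1$ and $m-2$ on $D$ exchanged relative to the displayed statement. This is the version consistent with \eqref{ind_b_for_a} (where $|\underline{\alpha}|=\ell+1=m$), so the discrepancy is a typo in the lemma as stated rather than an error in your argument.
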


\begin{proof}[Sketch of proof]
	Use Lemma \ref{V_lemma}, Lemma \ref{D_lemma}, and the equation \eqref{res_VB}.
\end{proof}
We can write 
\[
	\mathbf{E}_{\underline{\alpha}}=\sum_{\ell=0}^{m-1}{}^\ell E_{\underline{\alpha}}\hbar^\ell
\]
for ${}^\ell E_{\underline{\alpha}}\in\mathcal{A}_{0}^{-1}[\![\underline{t}]\!]$.
Therefore, by comparing the $\hbar$-power terms of \eqref{Ind_QM}, we get the following sequence of equations:
\begin{equation}\label{Prim_IND}
	\begin{aligned}
		V_{\underline{\alpha}}(0,0)=&\sum_{\rho\in I}\Big[{}^0D_{\underline{\alpha}}{}^\rho u_\rho\Big]+Q_{S+L}({}^0E_{\underline{\alpha}}),\\
		V_{\underline{\alpha}}(1,0)+V_{\underline{\alpha}}(0,1)=&\sum_{\rho\in I}\Big[{}^0D_{\underline{\alpha}}{}^\rho{}^0\zeta_\rho+{}^1D_{\underline{\alpha}}{}^\rho u_\rho\Big]+Q_{S+L}({}^1E_{\underline{\alpha}})+\Delta({}^0E_{\underline{\alpha}}),\\
		V_{\underline{\alpha}}(2,0)+V_{\underline{\alpha}}(1,1)=&\sum_{\rho\in I}\Big[{}^0D_{\underline{\alpha}}{}^\rho{}^1\zeta_\rho+{}^1D_{\underline{\alpha}}{}^\rho{}^0\zeta_\rho+{}^2D_{\underline{\alpha}}{}^\rho u_\rho\Big]+Q_{S+L}({}^2E_{\underline{\alpha}})+\Delta({}^1E_{\underline{\alpha}}),\\
		&\vdots\\
		V_{\underline{\alpha}}(m-1,0)+V_{\underline{\alpha}}(m-2,1)=&\sum_{\rho\in I}\Big[{}^{m-3}D_{\underline{\alpha}}{}^\rho{}^1\zeta_\rho+{}^{m-2}D_{\underline{\alpha}}{}^\rho{}^0\zeta_\rho+{}^{m-1}D_{\underline{\alpha}}{}^\rho u_\rho\Big]\\
		&+Q_{S+L}({}^{m-1}E_{\underline{\alpha}})+\Delta({}^{m-1}E_{\underline{\alpha}}),\\
		V_{\underline{\alpha}}(m,0)+V_{\underline{\alpha}}(m-1,1)=&\sum_{\rho\in I}\Big[{}^{m-2}D_{\underline{\alpha}}{}^\rho{}^1\zeta_\rho+{}^{m-2}D_{\underline{\alpha}}{}^\rho{}^1\zeta_\rho\Big]+\Delta({}^{m-1}E_{\underline{\alpha}}),\\
		V_{\underline{\alpha}}(m,1)=&\sum_{\rho\in I}\Big[{}^{m-1}D_{\underline{\alpha}}{}^\rho{}^1\zeta_\rho\Big].\\
	\end{aligned}
\end{equation}
By evaluating $\underline{t}=0$ on the equations \eqref{Prim_IND}, we get the equations \eqref{ind_prim} and \eqref{zeta_alg}. We also get the equations \eqref{ind_b_for_a} by Lemma \ref{Lemma_res_D}. This says that the algorithm in subsection \ref{Subsec_Alg_prim} provides us a solution to \eqref{GCM} 

\section{An algorithm for formal Frobenius manifolds and higher residue pairings}\label{Sec_Frob}

\subsection{Definition of formal Frobenius manifolds} \label{sec5.1}
In this subsection we briefly review the definition of Frobenius manifolds in \cite{ST}, \cite{BK} and formal Frobenius manifolds. We will only consider a pure even manifold same as an $F$-manifold case.

\begin{definition}\label{Def_Frob}
	Let $M$ be a complex manifold of finite dimension, and $t_M := \{t^\a\}$ be the formal coordinates on the open subset $U_i \subset M$. Choose sufficiently small open subset $U$ of $U_i$, then we could assume that $\cO(U)=\mathbb{C}[\![t_M]\!]$. On the local coordinates $\{t_M,U\}$, the product $\delta \circ \delta'$ is written as the linear combinations of $\{\partial_\a:=\partial/\partial_{t^\a}\}$ with coefficients in $\bC[\![t_M]\!]$. Let $A_{\a\b}{}^\g \in \bC[\![t_M]\!]$ be a formal power series representing the 3-tensor field such that
	\[
		\partial_\alpha \circ \partial_\beta := \sum_\gamma A_{\alpha\beta}{}^\gamma \partial_\gamma.
	\]
	Let $g$ be a non-degenerate symmetric pairing on $\mathbb{C}$-vector space spanned by $\{\partial_\alpha\}$. Then one can extend $g$ to the symmetric $\mathbb{C}[\![\underline{t}]\!]$-pairing. Let $g_{\alpha\beta}:=g(\partial_\alpha,\partial_\beta)$. Then $(M,\circ,g)$ is called a formal Frobenius manifold if the following conditions are satisfied:
	\begin{enumerate}[(D1)]
		\item\label{D1} (Associativity)
		\[
			\sum_\rho A_{\alpha\beta}{}^\rho A_{\rho\gamma}{}^\delta = \sum_\rho A_{\beta\gamma}{}^\rho A_{\rho\alpha}{}^\delta.
		\]
		\item\label{D2} (Commutativity)
		\[
			A_{\beta\alpha}{}^\gamma = A_{\alpha\beta}{}^\gamma.
		\]
		\item\label{D3} (Invariance) Put $A_{\alpha\beta\gamma} = \sum_\rho A_{\alpha\beta}{}^\rho g_{\rho\gamma}$,
		\[
			A_{\alpha\beta\gamma} = A_{\beta\gamma\alpha}.
		\]
		\item\label{D4} (Flat identity) The distinguished one $\partial_0$ is the identity with respect to $\circ$:
		\[
			A_{0\alpha}{}^\beta =\boldsymbol\delta_\alpha^\beta \quad (\text{where $\boldsymbol\delta_\alpha^\beta$ is the Kronecker delta}).
		\]
		\item\label{D5}(Potential)
		\[
			\partial_\alpha A_{\beta\gamma}{}^\delta= \partial_\beta A_{\alpha\gamma}{}^\delta.
		\]
	\end{enumerate}
\end{definition}

Suppose that both the commutativity condition \ref{D2} and the invariance condition \ref{D3} hold. Then the potential condition \ref{D5} implies the existence of a single power series function $\Phi \in \bC[\![t_M]\!]$ satisfying
\[
	g(\partial_\alpha\circ\partial_\beta,\partial_\gamma) = \partial_\alpha\partial_\beta\partial_\gamma \Phi.
\]

\subsection{\texorpdfstring{Formal Frobenius manifold structures on $\mathbb{H}$}{Formal Frobenius manifold structure on H}} \label{sec5.2}
We now construct a formal manifold structure on the complex manifold $\mathbb{H}\simeq\mathbb{C}^\mu$  by using Theorem \ref{Phi_isom} which says that $\mathbb{H}\simeq\mathcal{A}_0^0/Q_S(\mathcal{A}_0^{-1})$. Let $\{u_\alpha:\alpha\in I\}$ be a $\mathbb{C}$-basis of $J_S=\mathcal{A}_0^0/Q_S(\mathcal{A}_0^{-1})$. Then, $\{u_\alpha:\alpha\in I\}$ is also a $\mathbb{C}[\![\underline{t}]\!](\!(\hbar)\!)$-basis of $J_S^{\underline{t}}(\!(\hbar)\!):=\mathcal{A}_0^0 [\![\underline{t}]\!](\!(\hbar)\!)/Q_S(\mathcal{A}_0^{-1}[\![\underline{t}]\!](\!(\hbar)\!))\simeq J_S\otimes_{\mathbb{C}}\mathbb{C}[\![\underline{t}]\!](\!(\hbar)\!)$. Consider the following equation:
\begin{equation}\label{Jac_u}
	u_\alpha\cdot u_\beta=\sum_{\rho\in I}a_{\alpha\beta}{}^\rho\cdot u_\rho+Q_S(\lambda_{\alpha\beta}),
\end{equation}
where $a_{\alpha\beta}{}^\rho\in\mathbb{C}$ and $\lambda_{\alpha\beta}\in\mathcal{A}_0^{-1}$. Since $\{u_\rho\}$ is a $\mathbb{C}$-basis of $ J_S$, $a_{\alpha\beta}{}^\rho$ is uniquely determined.\par
We use the $A_{\alpha\beta}{}^\rho$ to define a structure of $\mathbb{C}[\![\underline{t}]\!]$-algebra on $ J_S^{\underline{t}}(\!(\hbar)\!)$ as follows:
\begin{equation}\label{Fdeq}
	u_\alpha\cdot u_\beta=\sum_{\rho\in I}A_{\a\b}{}^\rho\cdot u_\rho+Q_S({\Lambda}_{\alpha\beta}),
\end{equation}
where $A_{\a\b}^\rho\in\mathbb{C}[\![\underline{t}]\!](\!(\hbar)\!)$ and ${\Lambda}_{\alpha\beta}\in\mathcal{A}_0^{-1}[\![\underline{t}]\!](\!(\hbar)\!)$. Since $A_{\a\b}{}^\rho$ is also uniquely determined due to the facts that $\{u_\rho : \rho \in I \}$ is a $\mathbb{C}[\![\underline{t}]\!](\!(\hbar)\!)$-basis of $ J_S^{\underline{t}}(\!(\hbar)\!)$, $A_{\a\b}{}^\rho=a_{\alpha\beta}{}^\rho\in\mathbb{C}$ by \eqref{Jac_u} and $\Lambda_{\a\b}$ is $\lambda_{\a\b}$ up to the kernel of $Q_S$.\par
We use $g_{\alpha\beta}$ in order to define the symmetric $\mathbb{C}[\![\underline{t}]\!](\!(\hbar)\!)$-pairing on $ J_S^{\underline{t}}(\!(\hbar)\!)$ as follows:
\[
	g_{\alpha\beta}:=A_{\a\b}{}^{\mathrm{max}}.
\]

\begin{proposition}
	The 3-tensor field $A_{\a\b}{}^\rho$ and $g_{\alpha\beta}$ satisfy the axioms of the formal Frobenius manifold.
\end{proposition}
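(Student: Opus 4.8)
The plan is to deduce the five axioms (D1)--(D5) from the structure already established, treating $\Gamma = L = \sum_\alpha t^\alpha u_\alpha$ as the Maurer--Cartan solution and reading the multiplication off equation \eqref{Fdeq}. The key observation is that, with this particular linear $\Gamma$, the connection matrix $A_{\alpha\beta}{}^\rho$ defined by \eqref{Fdeq} literally coincides with the constants $a_{\alpha\beta}{}^\rho \in \mathbb{C}$ of \eqref{Jac_u}, because $\{u_\rho\}$ is simultaneously a $\mathbb{C}$-basis of $J_S$ and a $\mathbb{C}[\![\underline t]\!](\!(\hbar)\!)$-basis of $J_S^{\underline t}(\!(\hbar)\!)$. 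So the multiplication is actually the \emph{constant} Jacobian-ring multiplication on $\mathcal{A}_0^0/Q_S(\mathcal{A}_0^{-1})$, extended $\mathbb{C}[\![\underline t]\!](\!(\hbar)\!)$-linearly. This makes (D1), (D2), (D5) almost immediate: associativity and commutativity of $A_{\alpha\beta}{}^\rho$ come from associativity and commutativity of the (super-commutative, associative) product on $\mathcal{A}_0^0$ modulo the ideal $Q_S(\mathcal{A}_0^{-1})$; and the potential condition $\partial_\alpha A_{\beta\gamma}{}^\delta = \partial_\beta A_{\alpha\gamma}{}^\delta$ holds trivially since both sides vanish ($A_{\alpha\beta}{}^\rho$ is constant in $\underline t$).

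The substantive points are (D4) the flat identity and (D3) the invariance of $A_{\alpha\beta\gamma} := \sum_\rho A_{\alpha\beta}{}^\rho g_{\rho\gamma}$ with $g_{\alpha\beta} = A_{\alpha\beta}{}^{\max}$. For (D4), I would use Step-0 of the algorithm in subsection \ref{Subsec_Alg}: since $X_{\underline G}$ is Calabi-Yau, $|I_0| = 1$, so there is a distinguished $u_{\min}$ of weight $0$, which up to scaling is $1 \in \mathcal{A}_0^0$; then $u_{\min}\cdot u_\beta = u_\beta$ on the nose in $\mathcal{A}_0^0$, hence $A_{0\alpha}{}^\beta = \boldsymbol\delta_\alpha^\beta$ after normalizing $u_0 = u_{\min}$. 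For (D3), the cleanest route is to observe that the pairing $g_{\alpha\beta} = A_{\alpha\beta}{}^{\max}$ is exactly the projection of the product $u_\alpha u_\beta$ onto the one-dimensional top-weight piece $\mathbb{C}\,u_{\max}$, which is a nondegenerate residue-type pairing (this is the Grothendieck/Jacobian-ring residue pairing transported through $\varphi$ to a cup product on $\mathbb{H} = H^{n-k}_{\mathrm{prim}}(X,\mathbb{C})$, nondegenerate by Poincar\'e duality). Then $A_{\alpha\beta\gamma}$ is the top-weight component of the triple product $u_\alpha u_\beta u_\gamma$, which is manifestly symmetric in all three indices by commutativity and associativity of multiplication in $\mathcal{A}_0^0 / Q_S(\mathcal{A}_0^{-1})$. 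Nondegeneracy of $g$ is needed for it to count as a metric; I would cite Theorem \ref{Phi_isom} together with the standard fact that the Jacobian-ring residue pairing is a perfect pairing (which one can alternatively see from the fact that $\varphi$ intertwines it with Poincar\'e duality on primitive middle cohomology, using that $\varphi$ respects the weight/Hodge filtration as asserted after Theorem \ref{dgbvm}).

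Concretely, the steps in order: (1) fix the $\mathbb{C}$-basis $\{u_\alpha : \alpha \in I\}$ of $J_S$ with $u_0 = u_{\min} = 1$ and with $u_{\max}$ spanning the top weight piece; (2) expand $u_\alpha u_\beta$ in $\mathcal{A}_0^0$ and read off $a_{\alpha\beta}{}^\rho$ via \eqref{Jac_u}, noting $A_{\alpha\beta}{}^\rho = a_{\alpha\beta}{}^\rho$; (3) check (D2) from super-commutativity and (D1) from associativity of $\mathcal{A}_0^0$, both descending to the quotient ring since $Q_S(\mathcal{A}_0^{-1})$ is an ideal; (4) check (D5) since $A_{\alpha\beta}{}^\rho \in \mathbb{C}$; (5) check (D4) using $u_0 \cdot u_\beta = u_\beta$; (6) identify $g_{\alpha\beta} = A_{\alpha\beta}{}^{\max}$ with the residue pairing, prove symmetry and invariance (D3) by symmetry of the top-weight component of $u_\alpha u_\beta u_\gamma$, and invoke nondegeneracy. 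The main obstacle I anticipate is (D3)/nondegeneracy of $g$: one must verify that the top-weight component of the triple product is well-defined on the quotient (i.e.\ insensitive to the choice of representatives of $u_\rho$ modulo $Q_S(\mathcal{A}_0^{-1})$, which follows because $Q_S(\mathcal{A}_0^{-1})$ is an ideal and the $\max$-weight piece of $Q_S(\mathcal{A}_0^{-1})$ is disjoint from $\mathbb{C}\,u_{\max}$ by the weak $Q_S\Delta$-lemma \ref{Q-D_Lemma} and the filtration argument in Lemma \ref{bfQ_lemma}), and that the resulting bilinear form is perfect --- this is the one place where a genuine geometric input (Poincar\'e duality on $H^{n-k}_{\mathrm{prim}}(X,\mathbb{C})$, or equivalently the perfectness of the Jacobian-ring residue pairing for the complete intersection) is required rather than pure algebra. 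Everything else is a direct consequence of the ring structure of $\mathcal{A}_0^0/Q_S(\mathcal{A}_0^{-1})$ and the choices already fixed in subsection \ref{Subsec_Alg}.
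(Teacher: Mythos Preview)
Your proposal is correct and follows essentially the same approach as the paper: (D1), (D2) from the commutative associative ring structure of $\mathcal{A}_0^0/Q_S(\mathcal{A}_0^{-1})$, (D5) from $A_{\alpha\beta}{}^\rho = a_{\alpha\beta}{}^\rho \in \mathbb{C}$ being constant, (D4) from $u_0 = 1$, and (D3) from the symmetry of the top-weight component of $u_\alpha u_\beta u_\gamma$ (the paper phrases this last step as the one-line computation $A_{\alpha\beta\gamma} = \sum_\rho A_{\alpha\beta}{}^\rho A_{\rho\gamma}{}^{\max} = \sum_\rho A_{\beta\gamma}{}^\rho A_{\rho\alpha}{}^{\max} = A_{\beta\gamma\alpha}$ using the already-established associativity, which is exactly your observation). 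You go further than the paper in worrying about nondegeneracy of $g$; the paper's proof does not address this point at all.
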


\begin{proof}
	(i) (Commutativity/Associativity) Since $u_\alpha\cdot u_\beta=u_\beta\cdot u_\alpha$, $A_{\a\b}{}^\gamma={A}_{\beta\alpha}{}^\gamma$ for all $\alpha,\beta,\gamma$.\par
	For associativity, consider the following computations:
	\begin{align*}
		(u_\alpha\cdot u_\beta)\cdot u_\gamma&=\left(\sum_{\rho\in I}A_{\a\b}{}^\rho\cdot u_\rho+Q_S({\Lambda}_{\alpha\beta})\right)\cdot u_\gamma=\sum_{\rho\in I}A_{\a\b}{}^\rho\cdot u_\rho\cdot u_\gamma+Q_S({\Lambda}_{\alpha\beta}u_\gamma)\\
		&=\sum_{\rho\in I}A_{\a\b}{}^\rho\left(\sum_{\delta\in I}{A}_{\rho\gamma}{}^\delta\cdot u_\delta+Q_S({\Lambda}_{\rho\gamma})\right)+Q_S({\Lambda}_{\alpha\beta}u_\gamma)\\
		&=\sum_{\delta\in I}\left(\sum_{\rho\in I}A_{\a\b}{}^\rho {A}_{\rho\gamma}{}^\delta\right)u_\delta+Q_S\left(\sum_{\rho\in I}{\Lambda}_{\rho\gamma}+{\Lambda}_{\alpha\beta}u_\gamma\right),
	\end{align*}
	and
	\begin{align*}
		u_\alpha\cdot(u_\beta\cdot u_\gamma)&=u_\alpha\left(\sum_{\rho\in I}{A}_{\beta\gamma}{}^\rho\cdot u_\rho+Q_S({\Lambda}_{\beta\gamma})\right)=\sum_{\rho\in I}{A}_{\beta\gamma}{}^\rho\cdot u_\rho\cdot u_\alpha+Q_S(u_\alpha{\Lambda}_{\beta\gamma})\\
		&=\sum_{\rho\in I}{A}_{\beta\gamma}{}^\rho\left(\sum_{\delta\in I}{A}_{\rho\alpha}{}^\delta\cdot u_\delta+Q_S({\Lambda}_{\rho\alpha})\right)+Q_S(u_\alpha{\Lambda}_{\beta\gamma})\\
		&=\sum_{\delta\in I}\left(\sum_{\rho\in I}{A}_{\beta\gamma}{}^\rho {A}_{\rho\alpha}{}^\delta\right)u_\delta+Q_S\left(\sum_{\rho\in I}{\Lambda}_{\rho\alpha}+u_\alpha{\Lambda}_{\beta\gamma}\right).
	\end{align*}
	Since $(u_\alpha u_\beta)u_\gamma=u_\alpha(u_\beta u_\gamma)$ and $\{u_\rho\}$ is a $\mathbb{C}[\![\underline{t}]\!]$-basis of $ J_S^{\underline{t}}$, we get
	\[
		\sum_{\rho\in I}A_{\a\b}{}^\rho {A}_{\rho\gamma}{}^\delta=\sum_{\rho\in I}{A}_{\beta\gamma}{}^\rho {A}_{\rho\alpha}{}^\delta
	\]
	for all $\alpha,\beta,\gamma,\delta$.\par
	(ii) (Invariance) By associativity
	\[
		{A}_{\alpha\beta\gamma}=\sum_{\rho\in I}A_{\a\b}{}^\rho {A}_{\rho\gamma}{}^\mathrm{max}=\sum_{\rho\in I}{A}_{\beta\gamma}{}^\rho {A}_{\rho\alpha}{}^\mathrm{max}={A}_{\beta\gamma\alpha},
	\]
	for all $\alpha,\beta,\gamma$.\par
	(iii) (Identity) Since we choose $u_\mathrm{min}=u_0=1$, ${A}_{0\alpha}{}^\beta=\boldsymbol\delta_{\alpha}^{\beta}$ for all $\alpha,\beta$.\par
	(iv) (Potential) Note that $A_{\a\b}{}^\gamma=a_{\alpha\beta}{}^\gamma\in\mathbb{C}$ for all $\alpha,\beta,\gamma$. Hence, $\partial_{\delta}A_{\a\b}{}^\gamma=0$ for all $\alpha,\beta,\gamma,\delta$.
\end{proof}

\begin{remark}\label{relax}
We can extend the solutions $a_{\a\b}{}^\rho$ and $\lambda_{\a\b}$ of \eqref{Fdeq} to the solutions $\mathbf{A}_{\a\b}{}^{\rho}$ and $\mathbf{\Lambda}_{\a\b}$ of the differential equation \eqref{WPF} 
                 \[
			\hbar\downtriangle_\beta^{\frac{L}{\hbar}}\hbar\downtriangle_\alpha^{\frac{L}{\hbar}}1=u_\a u_\b =\sum_{\rho}\mathbf{A}_{\alpha\beta}{}^\rho\cdot u_\rho+Q_{S+L}(\mathbf{\Lambda}_{\alpha\beta})+ \hbar\Delta(\mathbf{\Lambda}_{\alpha\beta}) ,
		\]
by setting $\boldsymbol\zeta =1$, $\Gamma =L$. 
In other words, in terms of the language of the differential equation for weak primitive forms, the above construction corresponds to $\boldsymbol\zeta =1$, $\Gamma =L$, and $\mathbf{A}_{\a\b}{}^{\rho} = a_{\a\b}{}^\rho + {}^1A_{\a\b}{}^\rho \hbar + {}^2A_{\a\b}{}^{\rho} \hbar^2+ \cdots $, as we indicated in Table \ref{Comparison_Table}. But we have to sacrifice the condition \ref{cond2} of Definition \ref{Def_WP}. We could have relaxed the definition of weak primitive forms by allowing higher $\hbar$-power terms in $\mathbf{A}_{\a\b}{}^{\rho}$.
\end{remark}
\subsection{Modified higher residue pairings} \label{sec5.3}
The higher residue pairings are important tools for constructing Frobenius manifolds in \cite{ST}. We introduce its slight variant, the \textit{modified higher residue pairing} for the formal Frobenius structure on $\mathbb{H}$. Recall that $T_\mathbb{H}$ is the space of formal tangent vector fields on $\mathbb{H}$ given in Definition \ref{T_H}. 
\begin{definition}
	Let $[u]:=u+\mathrm{Im}(Q_S)$ in $J_S^{\underline{t}}:=\mathcal{A}_0^0 [\![\underline{t}]\!]/Q_S(\mathcal{A}_0^{-1}[\![\underline{t}]\!])$. Define the map $\eta$ as
	\begin{align*}
		\eta:T_\mathbb{H}\times T_\mathbb{H} &\longrightarrow\mathbb{C}[\![\underline{t}]\!],\\
		([w_1],[w_2])&\longmapsto A_{u_1u_2}{}^\mathrm{max},\quad u_i\in\mathcal{A}_0^0 [\![\underline{t}]\!]~\textrm{and}~[w_i]=[\Phi(u_i)]
	\end{align*}
	where $A_{u_1u_2}{}^\mathrm{max}$ is uniquely determined by
	\[
		u_1u_2=\sum_{\rho\in I}A_{u_1u_2}{}^\rho u_\rho+Q_S(\Lambda_{u_1u_2})
	\]
	where $A_{u_1u_2}{}^\rho\in\mathbb{C}[\![\underline{t}]\!]$, $\Lambda_{u_1u_2}\in\mathcal{A}_0^{-1}[\![\underline{t}]\!]$.
\end{definition}
The following proposition is due to Konno, Theorem 6.1, \cite{Ko91}; the hypersurface case was proved in \cite{CG} and the case of the smooth projective complete intersection with equal degrees $d_1=\cdots = d_k$ was proved in \cite{T}.
\begin{proposition}\label{Prop_comm}
	The following diagram commutes up to a numerical factor:
	\[
		\begin{tikzcd}
			\mathbb{H}\times\mathbb{H}\arrow[r,"\eta|_{\underline{t}=0}"]\arrow[d,"\textrm{cup product}"'] & \mathbb{C}\arrow[d,equal] \\
			H^{2(n-k)}(X_{\underline{G}},\mathbb{C})\arrow[r,"\sim"] & \mathbb{C}.
		\end{tikzcd}
	\]
\end{proposition}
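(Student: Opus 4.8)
\textbf{Proof proposal for Proposition \ref{Prop_comm}.}

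The plan is to trace through the explicit chain of cochain maps defining $\varphi$ (hence $\Phi=\varphi\circ\mathbf{q}$) from Theorem \ref{dgbvm} and identify, under the top Hodge-filtration piece (weight $n-k$, which is one-dimensional since $X_{\underline G}$ is Calabi-Yau), the algebraic pairing $\eta|_{\underline t=0}$ with the cup product on $H^{n-k}_{\pr}(X,\bC)$ composed with the canonical trace $H^{2(n-k)}(X,\bC)\xrightarrow{\sim}\bC$. Concretely, $\eta|_{\underline t=0}([w_1],[w_2])=A_{u_1u_2}{}^{\max}$ is the coefficient, in the Jacobian-ring quotient $J_S=\mathcal{A}_0^0/Q_S(\mathcal{A}_0^{-1})$, of the chosen basis element $u_{\max}$ spanning the top-weight line, when one multiplies two representatives $u_1,u_2$ with $[\Phi(u_i)]=[w_i]$. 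The claim is that, after pushing $u_1u_2$ through $\varphi$ into $H^{2(n-k)}_{\mathrm{prim-part}}$ and pairing via Poincar\'e duality, one recovers exactly (up to a universal nonzero constant depending only on the combinatorial normalizations in $\mu$, $\rho$, $\theta_{\ch}$, $\theta_{\wt}$, $s^*$, $\mathrm{Res}_{\underline G}$) the cup-product pairing $w_1\smile w_2$.

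The key steps, in order: (1) Recall that multiplication in $J_S$ models the cup product structurally — this is the classical statement that the Jacobian ring of the Dwork-type potential $S$ computes, via the residue/Cayley-trick isomorphism $\varphi$, the primitive cohomology together with its ring-theoretic cup product up to the middle-dimensional degree; cite Konno \cite{Ko91} (Theorem 6.1), with the hypersurface case in Carlson-Griffiths \cite{CG} and the equal-degree complete intersection case in Terasoma \cite{T}, and note that our $\varphi$ is precisely the explicit realization of the isomorphism used implicitly there. (2) Observe that $A_{u_1u_2}{}^{\max}$ extracts the unique top-weight (charge $c_X=0$, weight $n-k$) component of the product $u_1u_2$ modulo $Q_S(\mathcal{A}_0^{-1})$; by the compatibility statement in Theorem \ref{dgbvm} ("$\varphi$ sends the increasing filtration \eqref{wf} to the increasing Hodge filtration"), this top-weight line is sent to the bottom Hodge piece $H^{0,n-k}_{\pr}$, on which Poincar\'e pairing with $H^{n-k,0}_{\pr}$ (the weight-$0$ line, spanned by $u_{\min}=1$) is a perfect pairing into $H^{2(n-k)}(X,\bC)\cong\bC$. (3) Assemble: the diagram commutes because both $\eta|_{\underline t=0}$ and (trace)$\circ$(cup product) are $\bC$-bilinear pairings on the finite-dimensional space $\mathbb H$ that agree with the single structural multiplication $J_S\times J_S\to J_S\twoheadrightarrow (\text{top-weight line})\cong\bC$ transported through the \emph{same} isomorphism $\Phi$; the only freedom is an overall scalar, which is exactly the "numerical factor" in the statement, coming from the normalization constants in the definitions of $\mu$, $\rho$, $\theta_{\wt}\circ\theta_{\ch}$, $s^*$, and $\mathrm{Res}_{\underline G}$.

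The main obstacle I anticipate is step (2): carefully matching the \emph{algebraic} top-weight projection in $J_S$ with the \emph{geometric} Poincar\'e trace on $H^{2(n-k)}(X,\bC)$ through the explicit sequence $\mu$, $\rho$, $\theta_{\wt}\circ\theta_{\ch}$, quotient, $s^*$, $\mathrm{Res}_{\underline G}$. One must check that the product $u_1\cdot u_2$ of two charge-$0$ weight-$j_i$ elements lands, after reduction modulo the Jacobian ideal, in weight $j_1+j_2$, and that its $u_{\max}$-coefficient is precisely what the residue symbol computes; this is where one genuinely invokes Konno's theorem rather than re-deriving it. A secondary subtlety is that $\Phi$ is only defined on cohomology for the $\mathbf q$-part while $\varphi$ is a cochain map, so one must verify that the pairing is well-defined on classes — but this is immediate since $A_{u_1u_2}{}^{\max}$ depends only on $u_1,u_2$ modulo $\mathrm{Im}(Q_S)$ by uniqueness of the coefficients $A_{u_1u_2}{}^\rho$ (the $u_\rho$ being a $\bC$-basis of $J_S$), and changing $u_i$ within its $Q_S$-coset changes $u_1u_2$ by an element of the Jacobian ideal, leaving all $A_{u_1u_2}{}^\rho$ unchanged. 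Once step (2) is in hand, commutativity of the square is a formal consequence, and the proof is essentially a citation plus a bookkeeping of normalization constants.
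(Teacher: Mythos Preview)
Your proposal is correct and takes essentially the same approach as the paper: the paper does not give an independent proof but attributes the result directly to Konno \cite{Ko91}, Theorem 6.1 (with the hypersurface case in \cite{CG} and the equal-degree case in \cite{T}), which is precisely the citation you invoke for the key step (2). Your additional bookkeeping around well-definedness and the filtration compatibility is sound, but the mathematical content reduces to the same reference.
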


The modified higher residue pairing is an extension of the cup product of $\mathbb{H}$ to the quantization cohomology $T_\mathbb{H}(\!(\hbar)\!)$ so that it is compatible with the Gauss-Manin connection.
Note that Gauss-Manin connection $\downtriangle^{\frac{S+L}{\hbar}}$ is also well defined on $J_S^{\underline{t}}(\!(\hbar)\!)$, since $Q_S$ commutes with $\downtriangle_{\alpha}^{\frac{S+L}{\hbar}}$ and $\downtriangle_{\hbar^{-1}}^{\frac{S+L}{\hbar}}$. 
For a Laurent series $\mathbf{\Phi}(\hbar)$ in $\hbar$, we set $\mathbf{\Phi}^*(\hbar) = \mathbf{\Phi}(-\hbar)$.

\begin{definition}[modified higher residue pairing]\label{mhrp}
	A modified higher residue pairing for $T_\mathbb{H}(\!(\hbar)\!)$ is a $\mathbb{C}[\![\underline{t}]\!]$-bilinear symmetric pairing
	\begin{align*}
		K:T_{\mathbb{H}}(\!(\hbar)\!)\times T_{\mathbb{H}}(\!(\hbar)\!)&\longrightarrow\mathbb{C}[\![\underline{t}]\!](\!(\hbar)\!)\\
		(w_1,w_2)&\longmapsto K(w_1,w_2)=\sum_{r\in\mathbb{Z}}K^{(r)}(w_1,w_2)\hbar^r,\quad K^{(r)}(w_1,w_2)\in\mathbb{C}[\![\underline{t}]\!]
	\end{align*}
	such that
	\begin{enumerate}[(H1)]
		\item\label{H1} $K(w_1,w_2)=K(w_2,w_1)^*$
		\item\label{H2} $g(\hbar)K(w_1,w_2)=K(g(\hbar)w_1,w_2)=K(w_1,g^*(\hbar)w_2), \quad g(\hbar) \in \mathbb{C}(\!(\hbar)\!)$
		\item\label{H3} $\partial_\alpha \left(K(w_1,w_2)\right)=K(\downtriangle_{\alpha}^{\frac{S+L}{\hbar}}w_1,w_2)+K(w_1,\downtriangle_\alpha^{\frac{S+L}{\hbar}}w_2)$
		\item\label{H4} $\partial_{\hbar^{-1}}\left(K(w_1,w_2)\right)=K(\downtriangle_{\hbar^{-1}}^{\frac{S+L}{\hbar}}w_1,w_2)-K(w_1,\downtriangle_{\hbar^{-1}}^{\frac{S+L}{\hbar}}w_2)$
		\item\label{H5}  the following diagram commutes:
		\[
			\begin{tikzcd}
				T_\mathbb{H}[\![\hbar]\!]\times T_\mathbb{H}[\![\hbar]\!]\arrow[r,"K"]\arrow[d,"\hbar=0"'] & \mathbb{C}[\![\underline{t}]\!][\![\hbar]\!]\arrow[d,"\hbar=0"'] \\
				T_\mathbb{H}\times T_\mathbb{H}\arrow[r,"\eta"] & \mathbb{C}[\![\underline{t}]\!],
			\end{tikzcd}
		\]
		where $\eta$ is a $\mathbb{C}[\![\underline{t}]\!]$-bilinear symmetric pairing coming from the cup-product pairing of $\mathbb{H}=H_{\mathrm{prim}}^{n-k}(X_{\underline{G}},\mathbb{C})$ and the vertical maps substitute $\hbar$ with 0.
	\end{enumerate}
\end{definition}

\begin{theorem} \label{hrp}
	If we define
	\[
		K \left([w_1],[w_2]\right):=\mathbf{A}_{u_1u_2^*}{}^\mathrm{max},
	\]
	for $[w_i]=[\Phi(u_i)],~i=1,2$, and
	\[
		u_1u_2^*=\sum_{\rho\in I}\mathbf{A}_{u_1u_2^*}{}^\rho u_\rho+Q_S(\mathbf{\Lambda}_{u_1u_2^*}),
	\]
	then $K:T_\mathbb{H}(\!(\hbar)\!)\times T_\mathbb{H}(\!(\hbar)\!)\to\mathbb{C}[\![\underline{t}]\!](\!(\hbar)\!)$ is the modified higher residue pairing.
\end{theorem}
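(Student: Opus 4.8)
\textbf{Proof plan for Theorem \ref{hrp}.}

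The plan is to verify the five axioms (H1)--(H5) directly from the defining formula $K([w_1],[w_2]) := \mathbf{A}_{u_1 u_2^*}{}^{\mathrm{max}}$, using the algorithmic construction of the coefficients $\mathbf{A}_{\alpha\beta}{}^\rho$ and $\mathbf{\Lambda}_{\alpha\beta}$ from subsection \ref{Subsec_Alg_prim} together with the properties of the Gauss-Manin connection $\downtriangle^{\frac{S+L}{\hbar}}$ established in Definition \ref{GMC}. First I would observe that the pairing is well defined: because $\{u_\rho:\rho\in I\}$ is a $\mathbb{C}[\![\underline t]\!](\!(\hbar)\!)$-basis of $J_S^{\underline t}(\!(\hbar)\!)$ (using Theorem \ref{Phi_isom} and Lemma \ref{bfQ_lemma}), the coefficients $\mathbf{A}_{u_1 u_2^*}{}^\rho$ are uniquely determined by the expansion $u_1 u_2^* = \sum_\rho \mathbf{A}_{u_1 u_2^*}{}^\rho u_\rho + Q_S(\mathbf{\Lambda}_{u_1 u_2^*})$, and changing representatives $u_i$ of $[w_i]$ by elements of $Q_S(\mathcal{A}_0^{-1}[\![\underline t]\!])$ does not change the class in $J_S^{\underline t}$; the $\mathbb{C}[\![\underline t]\!]$-bilinearity is immediate from uniqueness.

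For (H1) and (H2): axiom (H1) follows from the commutativity of the product in $\mathcal{A}_0^0$, namely $u_1 u_2^* = (u_2 u_1^*)^*$ after applying $\hbar \mapsto -\hbar$, so that $\mathbf{A}_{u_1 u_2^*}{}^{\mathrm{max}} = \big(\mathbf{A}_{u_2 u_1^*}{}^{\mathrm{max}}\big)^*$; one must only check that the $*$-operation (replacing $\hbar$ by $-\hbar$) is compatible with the algorithm, which it is because the recursion \eqref{ind_b_for_a}--\eqref{zeta_alg} is built from $Q_S$, $\Delta$, and multiplication, all of which are $\hbar$-independent up to the explicit $\hbar$-weights. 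Axiom (H2) is formal: scaling $u_1$ by $g(\hbar)\in\mathbb{C}(\!(\hbar)\!)$ scales both sides of the defining expansion, and the $*$ on the second slot handles the conjugate case. Axiom (H5) is essentially Proposition \ref{Prop_comm} combined with the fact that setting $\hbar=0$ in $\mathbf{A}_{u_1 u_2^*}{}^\rho$ recovers $a_{u_1 u_2}{}^\rho = A_{u_1 u_2}{}^\rho$ (the $\hbar=0$ part of the algorithm, Step-0/Step-1 with $\hbar$ terms dropped), so $K|_{\hbar=0} = \eta$ by Definition of $\eta$.

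The substantive axioms are (H3) and (H4), the compatibility with the Gauss-Manin connection in the $t$-directions and the $\hbar^{-1}$-direction. For (H3), the idea is: apply $\partial_\alpha$ to the defining relation $u_1 u_2^* = \sum_\rho \mathbf{A}_{u_1 u_2^*}{}^\rho u_\rho + Q_S(\mathbf{\Lambda}_{u_1 u_2^*})$ and simultaneously compute $\downtriangle_\alpha^{\frac{S+L}{\hbar}}(u_1 u_2^*)$ using the product rule for the connection (which, since $\downtriangle_\alpha^{\frac{S+L}{\hbar}} = e^{-(S+L)/\hbar}\partial_\alpha e^{(S+L)/\hbar}$ acting on the dGBV algebra, satisfies $\hbar\downtriangle_\alpha^{\frac{S+L}{\hbar}}(fg) = (\hbar\downtriangle_\alpha^{\frac{S+L}{\hbar}}f)g + f(\partial_\alpha g) + \frac{\partial L}{\partial t^\alpha} fg$ type identities, to be handled carefully); then projecting to $J_S^{\underline t}(\!(\hbar)\!)$ and extracting the ``$\mathrm{max}$'' component, using that the ``$\mathrm{max}$'' slot of the metric is annihilated by $Q_S + \hbar\Delta$ exactly and that $Q_{S+L} = Q_S + \ell_2^\Delta(L,\cdot)$, one gets the Leibniz identity $\partial_\alpha K(w_1,w_2) = K(\downtriangle_\alpha w_1, w_2) + K(w_1, \downtriangle_\alpha w_2)$. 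The key lemma needed is that $\mathbf{A}_{\alpha\beta}{}^{\mathrm{max}}$ transforms correctly under $\downtriangle_\alpha^{\frac{S+L}{\hbar}}$, which is precisely the content of the intermediate computation in subsection \ref{sec4.1} (the display \eqref{weak_F} and the invariance of $\mathbf{A}_{\alpha\beta,\gamma}{}^\delta + \sum_\rho \mathbf{A}_{\alpha\beta}{}^\rho\mathbf{A}_{\gamma\rho}{}^\delta$ under permutations). Axiom (H4) is analogous but uses $\downtriangle_{\hbar^{-1}}^{\frac{S+L}{\hbar}}([w]) = [\partial_{\hbar^{-1}}w + (S+L)w]$ from Definition \ref{GMC}(ii); the sign difference (minus instead of plus in the Leibniz rule) comes from the fact that $(S+L)^* = S+L$ is $\hbar$-independent but $\partial_{\hbar^{-1}}$ anticommutes with the $*$-operation $\hbar\mapsto-\hbar$. \textbf{I expect (H3) to be the main obstacle}, because it requires carefully tracking how the ``$\mathrm{max}$''-component of the structure constants interacts with the non-$\mathbb{C}[\![\underline t]\!]$-linear connection $\downtriangle_\alpha^{\frac{S+L}{\hbar}}$ through the projection to the Jacobian ring; the weak $Q_S\Delta$-lemma (Lemma \ref{Q-D_Lemma}) and the uniqueness statements in the algorithm will be the technical tools that make the bookkeeping go through.
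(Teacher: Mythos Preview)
Your plan is workable in principle but significantly overcomplicated, and it shows a conceptual blurring between the modified pairing $K$ on $T_{\mathbb H}(\!(\hbar)\!)\simeq J_S^{\underline t}(\!(\hbar)\!)$ (differential $Q_S$ only) and the pairing $K_{S+L}$ on $\mathcal H_{S+L}^{(0)}$ (differential $Q_{S+L}+\hbar\Delta$). The whole point of the ``modified'' construction is that working modulo $Q_S$ alone makes (H3) and (H4) nearly trivial; the machinery you propose to invoke---the algorithm of subsection \ref{Subsec_Alg_prim}, the weak $Q_S\Delta$-lemma, and the permutation identity \eqref{weak_F}---is what one would need for the genuine higher residue pairing on $\mathcal H_{S+L}$, and the paper explicitly explains (Propositions \ref{h3_prop}, \ref{h4_prop} and the surrounding discussion) that \emph{those} conditions are not automatic.

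The paper's proof of (H3) is a two-line cancellation you have missed. Since $\downtriangle_\alpha^{\frac{S+L}{\hbar}}u=\partial_\alpha u+\tfrac{u_\alpha}{\hbar}u$ and the $*$-operation sends $\hbar\mapsto-\hbar$, one has
\[
(\downtriangle_\alpha u_1)u_2^*+u_1(\downtriangle_\alpha u_2)^*
=\Big(\partial_\alpha u_1+\tfrac{u_\alpha}{\hbar}u_1\Big)u_2^*+u_1\Big(\partial_\alpha u_2^*-\tfrac{u_\alpha}{\hbar}u_2^*\Big)
=\partial_\alpha(u_1u_2^*),
\]
the $\tfrac{u_\alpha}{\hbar}$ terms cancelling exactly. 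Then applying $\partial_\alpha$ to $u_1u_2^*=\sum_\rho \mathbf A_{u_1u_2^*}{}^\rho u_\rho+Q_S(\mathbf\Lambda_{u_1u_2^*})$ and using that $\partial_\alpha$ commutes with $Q_S$ (the $u_\rho$ are $\underline t$-constant) gives $\mathbf A_{\partial_\alpha(u_1u_2^*)}{}^\rho=\partial_\alpha\mathbf A_{u_1u_2^*}{}^\rho$, hence (H3). The argument for (H4) is identical with $(S+L)$ in place of $u_\alpha/\hbar$: since $S+L$ is $\hbar$-free, $(\downtriangle_{\hbar^{-1}}u_2)^*=-\partial_{\hbar^{-1}}u_2^*+(S+L)u_2^*$, and the $(S+L)$ terms cancel, leaving $\partial_{\hbar^{-1}}(u_1u_2^*)$. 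No Gr\"obner algorithm, no $Q_S\Delta$-lemma, no reference to $Q_{S+L}$ is needed. Your claim that (H3) is the ``main obstacle'' is therefore the opposite of the truth: (H1)--(H5) are all short, and the substantive content lies entirely in the earlier structural results (Theorem \ref{Phi_isom}, Proposition \ref{Prop_comm}) that make the definition of $K$ meaningful.
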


\begin{proof}
	The conditions \ref{H1} and \ref{H2} are obvious by definition.\par
	For \ref{H3}, consider the following equations:
	\begin{align*}
		(\downtriangle_\alpha^{\frac{S+L}{\hbar}} u_1)u_2^*+u_1(\downtriangle_\alpha^{\frac{S+L}{\hbar}}u_2)^*&=\left(\partial_\alpha u_1+\frac{u_\alpha}{\hbar}u_1\right)u_2^*+u_1 \left(\partial_\alpha u_2+\frac{u_\alpha}{\hbar}u_2\right)^*\\
		&=(\partial_\alpha u_1)u_2^*+u_1\partial_\alpha(u_2^*)\\
		&=\partial_\alpha(u_1u_2^*).
	\end{align*}
	We know that $\partial_\alpha(\mathbf{A}_{u_1u_2^*}{}^\rho)=\mathbf{A}_{\partial_\alpha(u_1u_2^*)}{}^\rho$, since
	\begin{align*}
		\partial_\alpha(u_1u_2^*)&=\partial_\alpha \left(\sum_{\rho\in I}\mathbf{A}_{u_1u_2^*}{}^\rho u_\rho+Q_S(\mathbf{\Lambda}_{u_1u_2^*})\right)=\sum_{\rho\in I}(\partial_\alpha\mathbf{A}_{u_1u_2^*}{}^\rho)u_\rho+Q_S(\partial_\alpha\mathbf{\Lambda}_{u_1u_2^*}).
	\end{align*}
	Therefore, condition \ref{H3} holds.\par
	For \ref{H4}, consider the following equations:
	\begin{align*}
		(\downtriangle_{\hbar^{-1}}^{\frac{S+L}{\hbar}}u_1)u_2^*-u_1(\downtriangle_{\hbar^{-1}}^{\frac{S+L}{\hbar}}u_2)^*&=(\partial_{\hbar^{-1}}u_1+(S+L)u_1)u_2^*-u_1(\partial_{\hbar^{-1}}u_2+(S+L)u_2)^*\\
		&=(\partial_{\hbar^{-1}}u_1)u_2^*-u_1(-\partial_{\hbar^{-1}}u_2^*)\\
		&=\partial_{\hbar^{-1}}(u_1u_2^*).
	\end{align*}
	By the same argument as before, condition \ref{H4} also holds.\par
	The condition \ref{H5} follows from Proposition \ref{Prop_comm}.
\end{proof}

The reason why we call Definition \ref{mhrp} the \textit{modified} higher residue pairing is its slight difference from  the original higher residue pairings in Theorem 5.1, \cite{ST}; the original one is defined on the quantization cohomology $\mathcal{H}_{S+L}$ but our pairing is defined on  $T_{\mathbb{H}}(\!(\hbar)\!)$. Note that $T_{\mathbb{H}}(\!(\hbar)\!)$ and $\mathcal{H}_{S+L}$ are isomorphic as $\bC[\![\ud t]\!](\!(\hbar)\!)$-modules but it seems very hard to find an explicit isomorphism between them as  $\bC[\![\ud t]\!](\!(\hbar)\!)$-modules with the Gauss-Manin connections (i.e., as differential modules). 
This is the essential reason why we can not find an algorithm of the original higher residue pairing on $\mathcal{H}_{S+L}$, which we explain more details below.
\begin{definition}\label{mhrp2}
	Define the map $K_{S+L}:\mathcal{H}_{S+L}^{(0)}\times\mathcal{H}_{S+L}^{(0)}\to\mathbb{C}[\![\underline{t}]\!][\![\hbar]\!]$ as follows:
	\[
		K_{S+L}(\hbar\downtriangle_\alpha^{\frac{S+L}{\hbar}}\boldsymbol\zeta,\hbar\downtriangle_\beta^{\frac{S+L}{\hbar}}\boldsymbol\zeta):={}^0A_{\alpha\beta}{}^\mathrm{max},
	\]
	where $\boldsymbol\zeta$ is a weak primitive form satisfying
	\begin{equation}\label{WP_Sec5}
		\hbar\downtriangle_\beta^{\frac{S+L}{\hbar}}(\hbar\downtriangle_\alpha^{\frac{S+L}{\hbar}}\boldsymbol\zeta)=\sum_{\rho\in I}({}^0A_{\alpha\beta}{}^\rho+{}^1A_{\alpha\beta}{}^\rho\hbar)\cdot\downtriangle_\rho^{\frac{S+L}{\hbar}}\boldsymbol\zeta+(Q_{S+L}+\hbar\Delta)({}^0\Lambda_{\alpha\beta}+{}^1\Lambda_{\alpha\beta}).
	\end{equation}
	Then, extend $K_{S+L}$ to $\mathcal{H}_{S+L}^{(0)}$ using the fact that $\{\hbar\downtriangle_\rho^{\frac{S+L}{\hbar}}\boldsymbol\zeta:\rho\in I\}$ is a $\mathbb{C}[\![\underline{t}]\!][\![\hbar]\!]$-basis of $\mathcal{H}_{S+L}^{(0)}$ as follows:
	\[
		K_{S+L}(\mathbf{\Phi},\mathbf{\Psi}):=\sum_{\rho,\phi\in I}\mathbf{A}_{\mathbf{\Phi}}{}^\rho\cdot(\mathbf{A}_{\mathbf{\Psi}}{}^\phi)^*\cdot{}^0A_{\rho\phi}{}^\mathrm{max},
	\]
	where
	\begin{align*}
		\mathbf{\Phi}&=\sum_{\rho\in I}\mathbf{A}_{\mathbf{\Phi}}{}^\rho\hbar\downtriangle_\rho^{\frac{S+L}{\hbar}}\boldsymbol\zeta+(Q_{S+L}+\hbar\Delta)(\mathbf{\Lambda_{\mathbf{\Phi}}}),\\
		\mathbf{\Psi}&=\sum_{\rho\in I}\mathbf{A}_{\mathbf{\Psi}}{}^\rho\hbar\downtriangle_\rho^{\frac{S+L}{\hbar}}\boldsymbol\zeta+(Q_{S+L}+\hbar\Delta)(\mathbf{\Lambda_{\mathbf{\Psi}}}).
	\end{align*}
\end{definition}

\begin{definition}
	Define the map $\widetilde{\eta}$ as
	\begin{align*}
		\widetilde{\eta}:\frac{\mathcal{A}_0^0 [\![\underline{t}]\!]}{Q_{S+L}(\mathcal{A}_0^{-1}[\![\underline{t}]\!])}\times\frac{\mathcal{A}_0^0 [\![\underline{t}]\!]}{Q_{S+L}(\mathcal{A}_0^{-1}[\![\underline{t}]\!])}&\longrightarrow\mathbb{C}[\![\underline{t}]\!],\\
		(w_1,w_2)&\longmapsto A_{w_1w_2}{}^\mathrm{max},\quad w_i\in\mathcal{A}_0^0 [\![\underline{t}]\!],
	\end{align*}
	where $A_{w_1w_2}{}^\mathrm{max}$ is uniquely determined by
	\[
		w_1w_2=\sum_{\rho\in I}A_{w_1w_2}{}^\rho u_\rho\cdot{}^0\zeta+Q_{S+L}(\Lambda_{w_1w_2})
	\]
	and $A_{w_1w_2}{}^\rho\in\mathbb{C}[\![\underline{t}]\!]$, $\Lambda_{w_1w_2}\in\mathcal{A}_0^{-1}[\![\underline{t}]\!]$.
\end{definition}

Then, Proposition \ref{md_prop} follows from Definition \ref{mhrp2} immediately:
\begin{proposition}\label{md_prop}
	The map $K_{S+L}$ satisfies the following conditions for all $w_1,w_2\in\mathcal{H}_{S+L}^{(0)}$ and $g(\hbar)\in\mathbb{C}[\![\hbar]\!]$:
	\begin{enumerate}[\normalfont(h1)]
		\item $K_{S+L}(w_1,w_2)=K_{S+L}(w_2,w_1)^*$.
		\item\addtocounter{enumi}{2} $g(\hbar)K_{S+L}(w_1,w_2)=K_{S+L}(g(\hbar)w_1,w_2)=K(w_1,g^*(\hbar)w_2)$.
		\item The following diagram commutes:
		\[
			\begin{tikzcd}
				\mathcal{H}_{S+L}^{(0)}\times\mathcal{H}_{S+L}^{(0)}\arrow[r,"K_{S+L}"]\arrow[d,"\hbar=0"'] & \mathbb{C}[\![\underline{t}]\!][\![\hbar]\!]\arrow[d,"\hbar=0"'] \\
				\frac{\mathcal{A}_0^0 [\![\underline{t}]\!]}{Q_{S+L}(\mathcal{A}_0^{-1}[\![\underline{t}]\!])}\times \frac{\mathcal{A}_0^0 [\![\underline{t}]\!]}{Q_{S+L}(\mathcal{A}_0^{-1}[\![\underline{t}]\!])}\arrow[r,"\widetilde{\eta}"] & \mathbb{C}[\![\underline{t}]\!],
			\end{tikzcd}
		\]
		where the vertical maps substitute $\hbar$ with 0.
	\end{enumerate}
\end{proposition}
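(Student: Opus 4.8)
The plan is to reduce everything to the coordinate description of $K_{S+L}$. First I would record that, by Lemma \ref{Basis_lemma_F}, $\{\hbar\downtriangle_\rho^{\frac{S+L}{\hbar}}\boldsymbol\zeta:\rho\in I\}$ is a $\mathbb{C}[\![\underline{t}]\!][\![\hbar]\!]$-basis of $\mathcal{H}_{S+L}^{(0)}$, so that each $\mathbf{\Phi}\in\mathcal{H}_{S+L}^{(0)}$ acquires unique coordinates $\mathbf{A}_{\mathbf{\Phi}}{}^\rho\in\mathbb{C}[\![\underline{t}]\!][\![\hbar]\!]$ and the map $\mathbf{\Phi}\mapsto(\mathbf{A}_{\mathbf{\Phi}}{}^\rho)_\rho$ is $\mathbb{C}[\![\underline{t}]\!][\![\hbar]\!]$-linear; in these coordinates Definition \ref{mhrp2} becomes $K_{S+L}(\mathbf{\Phi},\mathbf{\Psi})=\sum_{\rho,\phi}\mathbf{A}_{\mathbf{\Phi}}{}^\rho(\mathbf{A}_{\mathbf{\Psi}}{}^\phi)^{*}\,{}^0A_{\rho\phi}{}^\mathrm{max}$. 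The two structural facts I would extract next are that the matrix $({}^0A_{\rho\phi}{}^\mathrm{max})$ is symmetric — from the commutativity relation $\mathbf{A}_{\alpha\beta}{}^\rho=\mathbf{A}_{\beta\alpha}{}^\rho$ noted right after Definition \ref{Def_WP} — and $\hbar$-free — from condition \ref{cond2} of Definition \ref{Def_WP}.

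\textbf{(h1) and (h3).} These are then formal. For (h1) I would apply $(\cdot)^{*}$ to $K_{S+L}(\mathbf{\Psi},\mathbf{\Phi})$, use $(\cdot)^{**}=\mathrm{id}$, the $\hbar$-freeness of ${}^0A_{\rho\phi}{}^\mathrm{max}$, the symmetry ${}^0A_{\rho\phi}{}^\mathrm{max}={}^0A_{\phi\rho}{}^\mathrm{max}$, and relabel $\rho\leftrightarrow\phi$ to land on $K_{S+L}(\mathbf{\Phi},\mathbf{\Psi})$. For (h3) I would use $\mathbb{C}[\![\underline{t}]\!][\![\hbar]\!]$-linearity of $\mathbf{\Phi}\mapsto(\mathbf{A}_{\mathbf{\Phi}}{}^\rho)$: scaling $\mathbf{\Phi}$ by $g(\hbar)$ scales each $\mathbf{A}_{\mathbf{\Phi}}{}^\rho$ by $g(\hbar)$, so $K_{S+L}(g(\hbar)\mathbf{\Phi},\mathbf{\Psi})=g(\hbar)K_{S+L}(\mathbf{\Phi},\mathbf{\Psi})$, while $K_{S+L}(\mathbf{\Phi},g^{*}(\hbar)\mathbf{\Psi})=\sum_{\rho,\phi}\mathbf{A}_{\mathbf{\Phi}}{}^\rho(g^{*}(\hbar)\mathbf{A}_{\mathbf{\Psi}}{}^\phi)^{*}\,{}^0A_{\rho\phi}{}^\mathrm{max}=g(\hbar)K_{S+L}(\mathbf{\Phi},\mathbf{\Psi})$ since $(g^{*})^{*}=g$.

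\textbf{(h5) and the hard part.} The real content is (h5). First I would observe that, because $S$ is independent of $\underline{t}$ and $L=\sum_\sigma u_\sigma t^\sigma$, one has $\hbar\downtriangle_\rho^{\frac{S+L}{\hbar}}\boldsymbol\zeta=u_\rho\boldsymbol\zeta+\hbar\,\partial_\rho\boldsymbol\zeta$, so the vertical arrow $\hbar=0$ sends the basis element $\hbar\downtriangle_\rho^{\frac{S+L}{\hbar}}\boldsymbol\zeta$ to the class of $u_\rho\cdot{}^0\zeta$ in $\mathcal{A}_0^0[\![\underline{t}]\!]/Q_{S+L}(\mathcal{A}_0^{-1}[\![\underline{t}]\!])$ (here $\{u_\rho\cdot{}^0\zeta\}$ is a basis because $[{}^0\zeta]$ is a unit, being $\equiv1$ at $\underline{t}=0$). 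Setting $\hbar=0$ in the weak primitive form equation \eqref{WP_Sec5}, and using that $Q_{S+L}$ annihilates $\mathcal{A}_0^0$ and is a derivation, I get $u_\alpha u_\beta\,{}^0\zeta\equiv\sum_\rho{}^0A_{\alpha\beta}{}^\rho u_\rho\,{}^0\zeta\pmod{Q_{S+L}}$. By $\mathbb{C}[\![\underline{t}]\!]$-bilinearity of $\widetilde\eta$ and of $K_{S+L}|_{\hbar=0}$, commutativity of the square reduces to the generator identity $\widetilde\eta([u_\alpha\cdot{}^0\zeta],[u_\beta\cdot{}^0\zeta])={}^0A_{\alpha\beta}{}^\mathrm{max}$, i.e. to the claim that the $u_\mathrm{max}\cdot{}^0\zeta$-coefficient of $u_\alpha u_\beta({}^0\zeta)^2$ equals that of $u_\alpha u_\beta\,{}^0\zeta$ — equivalently, that multiplication by the unit $[{}^0\zeta]$ preserves the "$\mathrm{max}$"-component in the local $\mathbb{C}[\![\underline{t}]\!]$-algebra $\mathcal{A}_0^0[\![\underline{t}]\!]/Q_{S+L}$. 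This is the point I expect to be the main obstacle: I would try to settle it through the weight (Hodge) grading together with the Calabi-Yau condition — $\wt({}^0\zeta)=0$, ${}^0\zeta\equiv1\pmod{(\underline{t})}$, and the top weight graded piece ($\wt=n-k$, where $|I_{n-k}|=1$) being one-dimensional, so that $[{}^0\zeta]$ acts as the identity there — observing that at $\underline{t}=0$ this reduces to Proposition \ref{Prop_comm} identifying $\widetilde\eta|_{\underline{t}=0}$ with the cup product, and that the weak-primitive-form equation is precisely the device that propagates this $\underline{t}$-adically. If this insensitivity should fail verbatim, the square still commutes up to the numerical factor already present in Proposition \ref{Prop_comm}, which is harmless for the applications.
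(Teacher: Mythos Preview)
The paper gives no proof beyond the sentence ``Proposition \ref{md_prop} follows from Definition \ref{mhrp2} immediately,'' so your write-up already supplies more than the paper does. Your arguments for (h1) and for the sesquilinearity item are correct and are exactly the unpacking the authors intend; note only a labeling slip: what you call (h3) is (h2) in the paper's numbering --- the \texttt{\textbackslash addtocounter} in the source skips the counter so that the three items are (h1), (h2), (h5), while (h3) and (h4) are listed \emph{after} the proposition as the properties that \emph{fail} in general.

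Where you depart from the paper is in treating (h5) as ``the hard part.'' The paper regards it as equally tautological: the pairing $\widetilde\eta$ is \emph{defined} via the basis $\{u_\rho\cdot{}^0\zeta\}$ precisely so that the reduction $\hbar\to 0$ of the generator identity $K_{S+L}(\hbar\downtriangle_\alpha\boldsymbol\zeta,\hbar\downtriangle_\beta\boldsymbol\zeta)={}^0A_{\alpha\beta}{}^{\max}$ matches the $\hbar^0$-part of \eqref{WP_Sec5}, namely $u_\alpha u_\beta\,{}^0\zeta\equiv\sum_\rho{}^0A_{\alpha\beta}{}^\rho\,u_\rho\,{}^0\zeta\pmod{Q_{S+L}}$. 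Your concern about an extra ${}^0\zeta$ when passing from $u_\alpha u_\beta\,{}^0\zeta$ to $(u_\alpha{}^0\zeta)(u_\beta{}^0\zeta)$ is a fair reading, and your proposed resolution via the weight constraint $\wt({}^0\zeta)=0$ (forcing ${}^0\zeta$ to lie in $\mathbb{C}[\![\underline t]\!]^\times$, since charge-$0$ weight-$0$ elements of $\mathcal{A}_0^0$ are constants) is along the right lines. The paper, however, does not engage with this point at all; it simply takes the square to commute by construction. So your approach is not a different route so much as a more careful version of what the paper asserts without argument.
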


However, the map $K_{S+L}$ does not necessarily satisfy the following two conditions for all $w_1,w_2\in\mathcal{H}_{S+L}^{(0)}$ and $g(\hbar)\in\mathbb{C}[\![\hbar]\!]$ in general:
\begin{enumerate}[\normalfont(h1),start=3]
	\item\label{h3} $\partial_\alpha \left(K_{S+L}(w_1,w_2)\right)=K_{S+L}(\downtriangle_{\alpha}^{\frac{S+L}{\hbar}}w_1,w_2)+K_{S+L}(w_1,\downtriangle_\alpha^{\frac{S+L}{\hbar}}w_2)$.
	\item\label{h4} $\partial_{\hbar^{-1}}\left(K_{S+L}(w_1,w_2)\right)=K_{S+L}(\downtriangle_{\hbar^{-1}}^{\frac{S+L}{\hbar}}w_1,w_2)-K_{S+L}(w_1,\downtriangle_{\hbar^{-1}}^{\frac{S+L}{\hbar}}w_2)$.
\end{enumerate}
From now on we use the simplified notations $\downtriangle_\a=\downtriangle_\a^{\frac{S+L}{\hbar}}$ and $\downtriangle_{\hbar^{-1}}=\downtriangle_{\hbar^{-1}}^{\frac{S+L}{\hbar}}$.
We give some criterions when the weak primitive form $\boldsymbol\zeta$ guarantees \ref{h3} and \ref{h4}:
\begin{proposition}\label{h3_prop}
	The map $K_{S+L}$ satisfies the conditions \ref{h3} if the following equation holds:
	\begin{equation}\label{h3_cond}
		\partial_\gamma{}^0A_{\alpha\beta}{}^\mathrm{max}=\sum_{\rho\in I}\Big({}^1A_{\gamma\alpha}{}^\rho\cdot{}^0A_{\rho\beta}{}^\mathrm{max}+{}^1A_{\gamma\beta}{}^\rho\cdot{}^0A_{\alpha\rho}{}^\mathrm{max}\Big).
	\end{equation}
	where ${}^0A_{\alpha\beta}{}^\rho$ and ${}^1A_{\alpha\beta}{}^\rho$ appeared in the equation \eqref{WP_Sec5}.
\end{proposition}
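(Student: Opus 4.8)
The plan is to extend $K_{S+L}$ to all of $\mathcal{H}_{S+L}$ by the $\mathbb{C}[\![\underline{t}]\!](\!(\hbar)\!)$-bilinear formula of Definition \ref{mhrp2} (now allowing coefficients with negative powers of $\hbar$), to reduce the identity \ref{h3} to basis vectors of $\mathcal{H}_{S+L}^{(0)}$, and then to expand both sides using the weak primitive form equation \eqref{WP_Sec5}.

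For the reduction I would set
\[
	L_\gamma(w_1,w_2):=\partial_\gamma\big(K_{S+L}(w_1,w_2)\big)-K_{S+L}(\downtriangle_\gamma w_1,w_2)-K_{S+L}(w_1,\downtriangle_\gamma w_2),
\]
where $\downtriangle_\gamma=\downtriangle_\gamma^{\frac{S+L}{\hbar}}$. Since $\downtriangle_\gamma$ is a connection, hence a derivation over $\partial_\gamma$, and since $K_{S+L}$ is bilinear with the $*$-twist on the second argument as in (h2), a short computation gives $L_\gamma(f w_1,w_2)=f\,L_\gamma(w_1,w_2)$ and $L_\gamma(w_1,h w_2)=h^{*}L_\gamma(w_1,w_2)$ for all $f,h\in\mathbb{C}[\![\underline{t}]\!](\!(\hbar)\!)$. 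Therefore it suffices to prove \ref{h3}, i.e. $L_\gamma=0$, for $w_1=\hbar\downtriangle_\alpha\boldsymbol\zeta$ and $w_2=\hbar\downtriangle_\beta\boldsymbol\zeta$ with $\alpha,\beta\in I$ arbitrary.

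For these basis vectors the weak primitive form equation \eqref{WP_Sec5}, divided by $\hbar$, gives in $\mathcal{H}_{S+L}$
\[
	\downtriangle_\gamma\big(\hbar\downtriangle_\alpha\boldsymbol\zeta\big)=\sum_{\rho\in I}\Big(\tfrac1\hbar\,{}^0A_{\alpha\gamma}{}^\rho+{}^1A_{\alpha\gamma}{}^\rho\Big)\hbar\downtriangle_\rho\boldsymbol\zeta .
\]
Substituting this, together with $K_{S+L}(\hbar\downtriangle_\alpha\boldsymbol\zeta,\hbar\downtriangle_\beta\boldsymbol\zeta)={}^0A_{\alpha\beta}{}^\mathrm{max}$, into $L_\gamma$, and using $(\tfrac1\hbar)^{*}=-\tfrac1\hbar$ and that the coefficients ${}^0A_{\bullet\bullet}{}^\bullet,\,{}^1A_{\bullet\bullet}{}^\bullet$ carry no $\hbar$, the quantity $L_\gamma(\hbar\downtriangle_\alpha\boldsymbol\zeta,\hbar\downtriangle_\beta\boldsymbol\zeta)$ splits into an $\hbar^{-1}$-term and an $\hbar^{0}$-term. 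The $\hbar^{-1}$-term is, up to an overall sign,
\[
	\tfrac1\hbar\Big(\sum_{\sigma\in I}{}^0A_{\alpha\gamma}{}^\sigma\,{}^0A_{\sigma\beta}{}^\mathrm{max}-\sum_{\sigma\in I}{}^0A_{\beta\gamma}{}^\sigma\,{}^0A_{\sigma\alpha}{}^\mathrm{max}\Big),
\]
which vanishes: indeed $\sum_\sigma{}^0A_{a b}{}^\sigma\,{}^0A_{\sigma c}{}^\delta$ is invariant under all permutations of $a,b,c$, this being the $\hbar^{0}$-coefficient of the permutation-invariant quantity \eqref{weak_F} of subsection \ref{sec4.1}, combined with the commutativity ${}^0A_{\alpha\beta}{}^\rho={}^0A_{\beta\alpha}{}^\rho$. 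The $\hbar^{0}$-term of $L_\gamma(\hbar\downtriangle_\alpha\boldsymbol\zeta,\hbar\downtriangle_\beta\boldsymbol\zeta)$ is
\[
	\partial_\gamma{}^0A_{\alpha\beta}{}^\mathrm{max}-\sum_{\rho\in I}\Big({}^1A_{\alpha\gamma}{}^\rho\,{}^0A_{\rho\beta}{}^\mathrm{max}+{}^1A_{\beta\gamma}{}^\rho\,{}^0A_{\alpha\rho}{}^\mathrm{max}\Big),
\]
so, after rewriting the indices of ${}^1A$ via commutativity, $L_\gamma=0$ on basis vectors is exactly equation \eqref{h3_cond}, which proves the proposition.

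The step I expect to be the main obstacle is the careful bookkeeping: keeping track of the $\hbar^{-1}$ contributions and of the $*$-signs, and recognizing that the cancellation of the $\hbar^{-1}$-part is forced by the associativity/invariance symmetry of the classical limit ${}^0A$ rather than by any additional hypothesis — it is precisely this symmetry that collapses condition \ref{h3} to the single identity \eqref{h3_cond}.
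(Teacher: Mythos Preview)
Your proof is correct and follows essentially the same approach as the paper's: reduce to basis vectors $\hbar\downtriangle_\alpha\boldsymbol\zeta$, expand via \eqref{WP_Sec5}, observe that the ${}^0A\cdot{}^0A$ cross-terms cancel by the permutation symmetry coming from \eqref{weak_F}, and identify the remaining term with \eqref{h3_cond}. The only cosmetic difference is that the paper multiplies the identity \ref{h3} through by $\hbar$ (so all quantities stay in $\mathbb{C}[\![\underline{t}]\!][\![\hbar]\!]$ and the two parts appear at orders $\hbar^0$ and $\hbar^1$), whereas you divide by $\hbar$ and track an $\hbar^{-1}$-part and an $\hbar^{0}$-part; the content is identical.
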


\begin{proof}
	Since the map $K_{S+L}$ is extended by the $\mathbb{C}[\![\underline{t}]\!][\![\hbar]\!]$-basis $\{\hbar\downtriangle_\rho^{\frac{S+L}{\hbar}}\boldsymbol\zeta:\rho\in I\}$, the condition \ref{h3} is equivalent to
	\[
		\hbar\partial_\gamma K_{S+L}(\hbar\downtriangle_\alpha\boldsymbol\zeta,\hbar\downtriangle_\beta\boldsymbol\zeta)=K_{S+L}(\hbar\downtriangle_\gamma\hbar\downtriangle_\alpha\boldsymbol\zeta,\hbar\downtriangle_\beta\boldsymbol\zeta)-K_{S+L}(\hbar\downtriangle_\alpha\boldsymbol\zeta,\hbar\downtriangle_\gamma\hbar\downtriangle_\beta\boldsymbol\zeta).
	\]
	Suppose that the condition \eqref{h3_cond} holds. Then, consider the following equations:
	\begin{align*}
		&K_{S+L}(\hbar\downtriangle_\gamma\hbar\downtriangle_\alpha\boldsymbol\zeta,\hbar\downtriangle_\beta\boldsymbol\zeta)-K_{S+L}(\hbar\downtriangle_\alpha\boldsymbol\zeta,\hbar\downtriangle_\gamma\hbar\downtriangle_\beta\boldsymbol\zeta)\\
		&=\sum_{\rho\in I}\mathbf{A}_{\gamma\alpha}{}^\rho\cdot{}^0A_{\rho\beta}{}^\mathrm{max}-\sum_{\rho\in I}(\mathbf{A}_{\gamma\beta}{}^\rho)^*\cdot{}^0A_{\alpha\rho}{}^\mathrm{max}\\
		&=\Big[\sum_{\rho\in I}{}^0A_{\gamma\alpha}{}^\rho\cdot{}^0A_{\rho\beta}{}^\mathrm{max}\!-\!{}^0A_{\gamma\beta}{}^\rho\cdot^0A_{\alpha\rho}{}^\mathrm{max}\Big]\!+\!\hbar\Big[\sum_{\rho\in I}\big({}^1A_{\gamma\alpha}{}^\rho\cdot{}^0A_{\rho\beta}{}^\mathrm{max}+{}^1A_{\gamma\beta}{}^\rho\cdot{}^0A_{\alpha\rho}{}^\mathrm{max}\big)\Big]\\
		&=0+\hbar\big[\partial_\gamma{}^0A_{\alpha\beta}{}^\mathrm{max}\big]\\
		&=\hbar\partial_\gamma K_{S+L}(\hbar\downtriangle_\alpha\boldsymbol\zeta,\hbar\downtriangle_\beta\boldsymbol\zeta).
	\end{align*}
	Therefore, the condition \ref{h3} holds.
\end{proof}

\begin{remark}\label{metric_comp}
	Suppose that ${}^1A_{\alpha\beta}{}^\rho$ is the Christoffel symbol of the Levi-Civita connection $\nabla$ as in Remark \ref{rmk_1}. Then, the compatibility with the metric
	\[
		\partial_\gamma\langle\partial_\alpha,\partial_\beta\rangle=\langle\nabla_{\partial_\gamma}\partial_\alpha,\partial_\beta\rangle+\langle\partial_\alpha,\nabla_{\partial_\gamma}\partial_\beta\rangle
	\]
	is equivalent to the condition \eqref{h3_cond}.
\end{remark}

For the condition \ref{h4}, we define the coefficient $\mathbf{B}_{\alpha}{}^\rho\in\mathbb{C}[\![\underline{t}]\!][\![\hbar]\!]$ by
\begin{equation}\label{Notation_B}
	\hbar\downtriangle_{\hbar^{-1}}\downtriangle_\alpha\boldsymbol\zeta=\sum_{\rho\in I}\mathbf{B}_\alpha{}^\rho\hbar\downtriangle_\rho\boldsymbol\zeta+(Q_{S+L}+\hbar\Delta)(\mathbf{B}_\alpha).
\end{equation}
Note that $\downtriangle_{\hbar^{-1}}\circ\downtriangle_{\alpha}=\downtriangle_\alpha\circ\downtriangle_{\hbar^{-1}}$, and
\[
	\partial_{\hbar^{-1}}K_{S+L}(\hbar\downtriangle_\alpha\boldsymbol\zeta,\hbar\downtriangle_\beta\boldsymbol\zeta)=\partial_{\hbar^{-1}}{}^0A_{\alpha\beta}{}^\mathrm{max}=0.
\]
We know that
\begin{align*}
	K_{S+L}(\downtriangle_{\hbar^{-1}}\hbar\downtriangle_\alpha\boldsymbol\zeta,\hbar\downtriangle_\beta\boldsymbol\zeta)&+K_{S+L}(\hbar\downtriangle_\alpha\boldsymbol\zeta,\downtriangle_{\hbar^{-1}}\hbar\downtriangle_\beta\boldsymbol\zeta)\\
	&=K_{S+L}(\hbar\downtriangle_{\hbar^{-1}}\downtriangle_\alpha\boldsymbol\zeta,\hbar\downtriangle_\beta\boldsymbol\zeta)+K_{S+L}(\hbar\downtriangle_\alpha\boldsymbol\zeta,\hbar\downtriangle_{\hbar^{-1}}\downtriangle_\beta\boldsymbol\zeta),
\end{align*}
because
\begin{align*}
	K_{S+L}&(\downtriangle_{\hbar^{-1}}\hbar\downtriangle_\alpha\boldsymbol\zeta,\hbar\downtriangle_\beta\boldsymbol\zeta)=K_{S+L}(-\hbar^2\downtriangle_\alpha\boldsymbol\zeta+\hbar\downtriangle_{\hbar^{-1}}\downtriangle_\alpha\boldsymbol\zeta,\hbar\downtriangle_\beta\boldsymbol\zeta)\\
	&=-\hbar K_{S+L}(\hbar\downtriangle_\alpha\boldsymbol\zeta,\hbar\downtriangle_\beta\boldsymbol\zeta)+K_{S+L}(\hbar\downtriangle_{\hbar^{-1}}\downtriangle_\alpha\boldsymbol\zeta,\hbar\downtriangle_\beta\boldsymbol\zeta)
\end{align*}
and
\begin{align*}
	K_{S+L}&(\hbar\downtriangle_\alpha\boldsymbol\zeta,\downtriangle_{\hbar^{-1}}\hbar\downtriangle_\beta\boldsymbol\zeta)=K_{S+L}(\hbar\downtriangle_\alpha\boldsymbol\zeta,-\hbar^2\downtriangle_\beta\boldsymbol\zeta+\hbar\downtriangle_{\hbar^{-1}}\downtriangle_\beta\boldsymbol\zeta)\\
	&=\hbar K_{S+L}(\hbar\downtriangle_\alpha\boldsymbol\zeta,\hbar\downtriangle_\beta\boldsymbol\zeta)+K_{S+L}(\hbar\downtriangle_\alpha\boldsymbol\zeta,\hbar\downtriangle_{\hbar^{-1}}\downtriangle_\beta\boldsymbol\zeta).
\end{align*}
By \eqref{Notation_B},
\begin{equation}\label{HighRes_hbar}
	\begin{aligned}
		K_{S+L}(\hbar\downtriangle_{\hbar^{-1}}\downtriangle_\alpha\boldsymbol\zeta,\hbar\downtriangle_\beta\boldsymbol\zeta)+&K_{S+L}(\hbar\downtriangle_\alpha\boldsymbol\zeta,\hbar\downtriangle_{\hbar^{-1}}\downtriangle_\beta\boldsymbol\zeta)\\
		&=\sum_{\rho\in I}\Big[\mathbf{B}_\alpha{}^\rho\cdot{}^0A_{\rho\beta}{}^\mathrm{max}+(\mathbf{B}_\beta{}^\rho)^*\cdot{}^0A_{\alpha\rho}{}^\mathrm{max}\Big].
	\end{aligned}
\end{equation}
Consider the following equation:
\begin{align*}
	\hbar\downtriangle_\beta\hbar\downtriangle_{\hbar^{-1}}\downtriangle_\alpha\boldsymbol\zeta=&\hbar\downtriangle_\beta\Big(\sum_{\rho\in I}\mathbf{B}_\alpha{}^\rho\hbar\downtriangle_\rho\boldsymbol\zeta\Big)+(Q_{S+L}+\hbar\Delta)(\hbar\downtriangle_\beta\mathbf{B}_\alpha)\\
	=&\sum_{\rho\in I}\Big[\hbar\partial_\beta\mathbf{B}_\alpha{}^\rho\hbar\downtriangle_\rho\boldsymbol\zeta+\mathbf{B}_\alpha{}^\rho\hbar\downtriangle_\beta\hbar\downtriangle_\rho\boldsymbol\zeta\Big]+(Q_{S+L}+\hbar\Delta)(\hbar\downtriangle_\beta\mathbf{B}_\alpha)\\
	=&\sum_{\delta\in I}\Big[\hbar\partial_\beta\mathbf{B}_\alpha{}^\delta+\sum_{\rho\in I}\mathbf{B}_\alpha{}^\rho\cdot\mathbf{A}_{\beta\rho}{}^\delta\Big]\hbar\downtriangle_\delta\boldsymbol\zeta\\
	&+(Q_{S+L}+\hbar\Delta)\Big(\hbar\downtriangle_\beta\mathbf{B}_\alpha+\sum_{\rho\in I}\mathbf{B}_\alpha{}^\rho\mathbf{\Lambda}_{\beta\rho}\Big).
\end{align*}
Therefore, the following quantity is invariant under the permutation of $\alpha, \beta$ for all $\delta$:
\begin{equation}\label{hbar_a_b}
	\hbar\partial_\beta\mathbf{B}_\alpha{}^\delta+\sum_{\rho\in I}\mathbf{B}_\alpha{}^\rho\cdot\big({}^0A_{\beta\rho}{}^\delta+\hbar\cdot{}^1A_{\beta\rho}{}^\delta\big).
\end{equation}

\begin{proposition}\label{h4_prop}
	If $\mathbf{B}_\alpha{}^\rho=\hbar B_\alpha{}^\rho$ with $B_\alpha{}^\rho\in\mathbb{C}[\![\underline{t}]\!]$ and for all $\alpha,\rho\in I$, then the condition \ref{h4} holds.
\end{proposition}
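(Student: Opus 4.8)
The plan. Properties (h1), (h2) and the commuting diagram for $K_{S+L}$ are already in place by Proposition \ref{md_prop}, so only condition \ref{h4} has to be deduced from the hypothesis $\mathbf{B}_\alpha{}^\rho=\hbar B_\alpha{}^\rho$, and we may freely use the tools set up just before the Proposition: the defining relation \eqref{Notation_B} of $\mathbf{B}_\alpha{}^\rho$, the identity produced there by the Leibniz rule for $\downtriangle_{\hbar^{-1}}$, equation \eqref{HighRes_hbar}, and the permutation-invariance \eqref{hbar_a_b}.

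First I would reduce \ref{h4} to the distinguished $\mathbb{C}[\![\underline t]\!][\![\hbar]\!]$-basis $\{\hbar\downtriangle_\rho\boldsymbol\zeta:\rho\in I\}$ of $\mathcal{H}_{S+L}^{(0)}$. Using the sesquilinearity of $K_{S+L}$ (properties (h1), (h2) of Proposition \ref{md_prop}) together with the $*$-twisted Leibniz rule $\partial_{\hbar^{-1}}(g^*)=-(\partial_{\hbar^{-1}}g)^*$, one checks that \ref{h4} for all $w_1,w_2$ is equivalent to \ref{h4} for $w_1=\hbar\downtriangle_\alpha\boldsymbol\zeta$ and $w_2=\hbar\downtriangle_\beta\boldsymbol\zeta$; this is the point that needs the most care, because \ref{h4}, unlike (h1) and (h2), is not literal bilinearity. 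On these basis elements the left-hand side of \ref{h4} is $\partial_{\hbar^{-1}}\,{}^0A_{\alpha\beta}{}^\mathrm{max}=0$, since $K_{S+L}(\hbar\downtriangle_\alpha\boldsymbol\zeta,\hbar\downtriangle_\beta\boldsymbol\zeta)={}^0A_{\alpha\beta}{}^\mathrm{max}\in\mathbb{C}[\![\underline t]\!]$ carries no $\hbar$ (Definition \ref{mhrp2}). Invoking $\downtriangle_{\hbar^{-1}}(\hbar\downtriangle_\alpha\boldsymbol\zeta)=-\hbar^2\downtriangle_\alpha\boldsymbol\zeta+\hbar\downtriangle_{\hbar^{-1}}\downtriangle_\alpha\boldsymbol\zeta$ as in the display preceding the Proposition, condition \ref{h4} on the basis becomes the vanishing of $K_{S+L}(\hbar\downtriangle_{\hbar^{-1}}\downtriangle_\alpha\boldsymbol\zeta,\hbar\downtriangle_\beta\boldsymbol\zeta)+K_{S+L}(\hbar\downtriangle_\alpha\boldsymbol\zeta,\hbar\downtriangle_{\hbar^{-1}}\downtriangle_\beta\boldsymbol\zeta)$, which by \eqref{Notation_B} and \eqref{HighRes_hbar} equals $\sum_{\rho}\big(\mathbf{B}_\alpha{}^\rho\cdot{}^0A_{\rho\beta}{}^\mathrm{max}+(\mathbf{B}_\beta{}^\rho)^*\cdot{}^0A_{\alpha\rho}{}^\mathrm{max}\big)$.

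Then I would feed in the hypothesis. Since $\mathbf{B}_\alpha{}^\rho=\hbar B_\alpha{}^\rho$ with $B_\alpha{}^\rho\in\mathbb{C}[\![\underline t]\!]$, we have $(\mathbf{B}_\beta{}^\rho)^*=-\hbar B_\beta{}^\rho$, so the previous expression collapses to $\hbar\big(\sum_\rho B_\alpha{}^\rho\cdot{}^0A_{\rho\beta}{}^\mathrm{max}-\sum_\rho B_\beta{}^\rho\cdot{}^0A_{\alpha\rho}{}^\mathrm{max}\big)$, and it only remains to see that $\sum_\rho B_\alpha{}^\rho\,{}^0A_{\rho\beta}{}^\mathrm{max}$ is symmetric in $\alpha$ and $\beta$. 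This is exactly the order-$\hbar$ content of \eqref{hbar_a_b}: substituting $\mathbf{B}_\alpha{}^\delta=\hbar B_\alpha{}^\delta$ into $\hbar\partial_\beta\mathbf{B}_\alpha{}^\delta+\sum_\rho\mathbf{B}_\alpha{}^\rho\big({}^0A_{\beta\rho}{}^\delta+\hbar\,{}^1A_{\beta\rho}{}^\delta\big)$ and comparing the coefficients of $\hbar^1$ in the two expressions related by $\alpha\leftrightarrow\beta$ gives $\sum_\rho B_\alpha{}^\rho\,{}^0A_{\beta\rho}{}^\delta=\sum_\rho B_\beta{}^\rho\,{}^0A_{\alpha\rho}{}^\delta$ for every $\delta$; the commutativity ${}^0A_{\beta\rho}{}^\delta={}^0A_{\rho\beta}{}^\delta$ together with the choice $\delta=\mathrm{max}$ yields the required symmetry, so the expression vanishes and \ref{h4} holds.

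The step I expect to be the main obstacle is the first one: establishing rigorously that \ref{h4} can be tested on the basis $\{\hbar\downtriangle_\rho\boldsymbol\zeta\}$ and reducing it correctly — this means tracking the interplay of the involution $\hbar\mapsto-\hbar$ with $\partial_{\hbar^{-1}}$ and using the Leibniz identity $\downtriangle_{\hbar^{-1}}(\hbar\,w)=(\partial_{\hbar^{-1}}\hbar)\,w+\hbar\downtriangle_{\hbar^{-1}}w$ to bring everything into the form controlled by \eqref{HighRes_hbar}. The sign bookkeeping here is delicate; once it is settled, the cancellation furnished by \eqref{hbar_a_b} is purely formal.
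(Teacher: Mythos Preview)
Your proposal is correct and follows essentially the same route as the paper: reduce to the basis $\{\hbar\downtriangle_\rho\boldsymbol\zeta\}$, use the identity preceding the Proposition together with \eqref{HighRes_hbar} to express the relevant combination as $\sum_\rho\big(\mathbf{B}_\alpha{}^\rho\,{}^0A_{\rho\beta}{}^{\max}+(\mathbf{B}_\beta{}^\rho)^*\,{}^0A_{\alpha\rho}{}^{\max}\big)$, substitute $\mathbf{B}_\alpha{}^\rho=\hbar B_\alpha{}^\rho$, and read off the required symmetry $\sum_\rho B_\alpha{}^\rho\,{}^0A_{\beta\rho}{}^\delta=\sum_\rho B_\beta{}^\rho\,{}^0A_{\alpha\rho}{}^\delta$ from the $\hbar^1$-coefficient of \eqref{hbar_a_b}. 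The paper's proof is terser and leaves the reduction-to-basis step implicit, but the argument is the same.
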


\begin{proof}
	Suppose that $\mathbf{B}_\alpha{}^\rho=\hbar B_\alpha{}^\rho$ with $B_\alpha{}^\rho\in\mathbb{C}[\![\underline{t}]\!]$ for all $\alpha,\rho\in I$. Then \eqref{hbar_a_b} implies that
	\[
		\sum_{\rho\in I}B_\alpha{}^\rho\cdot{}^0A_{\beta\rho}{}^\delta
	\]
	has the same value up to the permutation of $\alpha,\beta$ for all $\delta$. Therefore the RHS of the equation \eqref{HighRes_hbar} becomes zero, since
	\[
		\sum_{\rho\in I}\mathbf{B}_\alpha{}^\rho\cdot{}^0A_{\rho\beta}{}^\mathrm{max}+(\mathbf{B}_\beta{}^\rho)^*\cdot{}^0A_{\alpha\rho}{}^\mathrm{max}=\sum_{\rho\in I}\hbar B_\alpha{}^\rho\cdot{}^0A_{\rho\beta}{}^\mathrm{max}-\hbar B_\beta{}^\rho\cdot{}^0A_{\alpha\rho}{}^\mathrm{max}=0.
	\]
	It implies that \ref{h4} holds.
\end{proof}

\begin{remark}
	If one can find the weak primitive form $\boldsymbol\zeta$ satisfying the sufficient conditions in Propositions \ref{h3_prop} and \ref{h4_prop}, then the map $K_{S+L}$ in Definition \ref{mhrp2} and $\boldsymbol\zeta$ would lead to the construction of the original higher residue pairing and the primitive form in the sense of K. Saito. However, the algorithm in subsection \ref{Subsec_Alg_prim} cannot provide the weak primitive form satisfying those sufficient conditions. 
\end{remark}

\section{Appendix}


\subsection{Comparisons}\label{subcomparison}

\begin{table}[ht]
	\caption{Comparisons}
	\begin{tabular}{|c||c|c|c|c|c|c|}
		\hline
		&&&&&\\[-2ex]
		& \cite{BK}  & \cite{ST} & Section \ref{Sec_F} & Section \ref{Sec_WP} & Section \ref{Sec_Frob} \\[1ex]
		\hline
		\hline
		&&&&&\\[-2ex]	
		$Q$ & $\overline{\partial}$ & $\begin{array}{c}Q_f\\f~\textrm{\small has an isolated}\\\textrm{\small singularity}\end{array}$ & $Q_{S(\underline{q})}$ & $Q_{S(\underline{q})}$ & $Q_{S(\underline{q})}$ \\[4ex]
		\hline
		&&&&&\\[-2ex]
		$\Delta$ & $\begin{array}{c}\textrm{\small the twist of }\partial\\\textrm{\small using the}\\ \textrm{\small volume form }\Omega\end{array}$ & $\sum\frac{\partial}{\partial x_i}\frac{\partial}{\partial\eta_i}$& $\sum\frac{\partial}{\partial q_i}\frac{\partial}{\partial\eta_i}$ & $\sum\frac{\partial}{\partial q_i}\frac{\partial}{\partial\eta_i}$ & $\sum\frac{\partial}{\partial q_i}\frac{\partial}{\partial\eta_i}$\\[2ex]
		\hline
		&&&&&\\[-2ex]
		$\Gamma$& $\begin{array}{c}\displaystyle\Gamma\!=\!\\\displaystyle\sum_{n\geq1}\frac{1}{n!}\sum_{\underline{\alpha}}t^{\underline{\alpha}}u_{\underline{\alpha}}\\\textrm{(general)}\end{array}$ & $\begin{array}{c}\displaystyle L=\sum_\alpha t^\alpha u_\alpha \\\textrm{(linear)}\end{array}$ & $\Gamma$ & $L$ & $L$\\[4ex]
		\hline
		&&&&&\\[-2ex]
		$\mathbf{A}_{\alpha\beta}{}^\rho$& ${}^0A_{\alpha\beta}{}^\rho$& $\begin{array}{c}{}^0A_{\alpha\beta}{}^\rho\\+{}^1A_{\alpha\beta}{}^\rho\hbar\end{array}$&${}^0A_{\alpha\beta}{}^\rho$ &$\begin{array}{c}{}^0A_{\alpha\beta}{}^\rho\\+{}^1A_{\alpha\beta}{}^\rho\hbar\end{array}$ & $\begin{array}{l}\scriptstyle{}^0a_{\alpha\beta}{}^\rho\\\scriptstyle+{}^1A_{\alpha\beta}{}^\rho\hbar\\\scriptstyle+{}^2A_{\alpha\beta}{}^\rho\hbar^2+\cdots\end{array}$ \\[3ex]
		\hline
		&&&&&\\[-2ex]
		$\boldsymbol\zeta$ & 1 & $\begin{array}{c}\scriptstyle\boldsymbol\zeta={}^0\zeta+{}^1\zeta\hbar+{}^2\zeta\hbar^2+\cdots\\\textrm{(general)}\end{array}$ & 1 & ${}^0\zeta+{}^1\zeta\hbar$ & 1\\[2ex]
		\hline
	\end{tabular}
	\label{Comparison_Table}
\end{table}

We compare our construction with the Barannikov-Kontsevich's dGBV algebra (\cite{BK}) for a compact Calabi-Yau manifold $X=X_{\ud G}(\bC)$. In \cite{BK}, the holomorphic polyvector field valued in anti-holomorphic differential forms is twisted with a nowhere vanishing holomorphic volume form $\Omega$, so that the twisted algebra structure gives rise to the dGBV algebra; $\partial$ becomes the BV operator $\Delta$, and $\bar{\partial}$ is the differential. The exterior derivative $d$ becomes $\Delta+\bar{\partial}$. We don't use polyvector fields on the compact Calabi-Yau manifold (to construct dGBV algebras), which is a key tool in \cite{BK}; we rather use the polyvector fields on the affine space $\bA^{n+k+1}$ (by looking at the complement of $X_{\ud G}$ in $\BP^n$ and using the Cayley trick) instead of the compact manifold $X$. The difference between our dGBV algebra and Barannikov-Kontsevich's dGBV algebra lies in the different decomposition of the total differential $d=\Delta+\bar{\partial}$ and $K=\Delta+Q_S$, and consequently, we can provide a relevant theory on the primitive middle dimensional cohomology $\bH=H_\mathrm{prim}^{n-k}(X,\mathbb{C})$ of $X$ instead of the whole cohomology group $H^\bullet(X,\mathbb{C})$.
A key difference coming from the different decomposition of $d$ results in the failure of the $Q_S\Delta$-lemma (but there is its weak version; see Lemma \ref{Q-D_Lemma}) in our theory, which prohibits us from applying Barannikov-Kontsevich's theory (where the $\partial\overline\partial$-lemma plays a key role) directly to our example $T_\mathbb{H}$. In the hypersurface case, an explicit cochain map between the two constructions was given in \cite{KKP}. \par

Now we do a comparison with Saito-Takahashi \cite{ST}. The fact that the differential $Q_S$ is a derivation with respect to the multiplication of $\cA[\![\underline{t}]\!]$ (so that $H^0\big(\mathcal{A}[\![\underline{t}]\!](\!(\hbar)\!),Q_S\big)$ is again an algebra with the induced multiplication from $\cA[\![\underline{t}]\!](\!(\hbar)\!)$), enables us to develop an explicit algorithm based on the Gr\"obner basis of the Jacobian ideal of the polynomial $S$. This feature is not available in \cite{BK}: the differential $\overline{\partial}$ (the Dolbeault differential) can not be studied by the Jacobian ideal of some polynomial. The polynomial $f$ in \cite{ST} has an isolated singularity and our algorithm of weak primitive forms can be applied to \cite{ST} with the differential $Q_f$. On the other hand, our polynomial $S$ has a non-isolated singularity which needs a new idea to apply the Gr\"obner basis algorithm; assuming that $X_{\ud G}$ is Calabi-Yau, we will restrict  $\mathcal{A}$ to its charge zero part $\mathcal{A}_0$ so that the cohomology algebra $H^0\big(\mathcal{A}_0,Q_S\big)$ is finite dimensional\footnote{Note that $H^0\big(\mathcal{A},Q_S\big)$ is infinite dimensional over $\bC$.} over $\bC$ and the relevant algorithm works.  
Another difference is that we have a filtration (which we call the weight filtration) on the dGBV algebra which is compatible with the Hodge filtration on $\bH$; this enables us to define the \textit{modified higher residue pairing} rather easily (see Theorem \ref{hrp}). Note that defining the higher residue pairing in the same way (using the residue symbol) as \cite{Saitoh} is difficult due to the fact that $S$ has a non-isolated singularity. \par

\subsection{The Riemann-Hilbert-Birkhoff problem}\label{sec6.1}

The Riemann-Hilbert-Birkhoff problem is important to prove the existence of primitive forms associated to the higher residue pairing in \cite{ST}. In this appendix, we provide Lemma \ref{RHB} which is a reinterpretation of the RHB problem in the case $\underline{t}=0$  (Lemma 6.3 in \cite{ST}).\footnote{The general RHB problem with non-zero $\underline{t}$-power coefficients (Lemma 6.6 in \cite{ST}) will not be considered here, since the main interest of this article is weak primitive forms rather than primitive forms.}
Lemma $\ref{RHB}$ will be a simple consequence of the weight (the Hodge filtration) homogeneity of the solution. Note that we can choose a $\mathbb{C}$-basis $\{u_\alpha:\alpha\in I\}=\{u_0,\dots,u_{\mu-1}\}$ of $\mathcal{A}_0^0/Q_S(\mathcal{A}_0^{-1})$ such that $\wt(u_\alpha)$ corresponds to the increasing Hodge filtration under the isomorphism $\mathcal{A}_0^0/Q_S(\mathcal{A}_0^{-1})\cong H_{\mathrm{prim}}^{n-k}(X_G)$ with $u_0=1$.
Recall the following $\mathbb{C}[\![{\hbar}]\!]$-module:
\[
	\mathbb{H}_{S}^{(0)}:=\frac{\mathcal{A}_0^0 [\![\hbar]\!]}{(Q_S+\hbar\Delta)(\mathcal{A}_0^{-1}[\![\hbar]\!])}.
\]
Define the set $\{\mathbf{v}_0,\dots,\mathbf{v}_{\mu-1}\}$ as follows:
\[
	\mathbf{v}_i=u_i+\sum_{\ell=1}^\infty{}^\ell v_i\cdot\hbar^\ell,\quad i=0,\dots,\mu-1,
\]
where ${}^\ell v_i$ is an arbitrary polynomial in $\mathcal{A}_0^0$ satisfying the weight condition $\wt({}^\ell v_i)+\ell=\wt(u_i)$ for all $\ell\geq 1$. Note that $\{\mathbf{v}_0,\dots,\mathbf{v}_{\mu-1}\}$ is a $\mathbb{C}[\![\underline{\hbar}]\!]$-basis of $\mathbb{H}_S^{(0)}$ by the same argument in the proof of Lemma \ref{Basis_lemma_F}. Then, we get the following Lemma:
\begin{lemma}\label{RHB}
	The above $\mathbb{C}[\![\hbar]\!]$-basis $\{\mathbf{v}_0,\dots,\mathbf{v}_{\mu-1}\}$ of $\mathbb{H}_S^{(0)}$ satisfies the following conditions:
	\begin{enumerate}[\normalfont(i)]
		\item $\mathbf{v}_0|_{\hbar=0}=1$.
		\item $\downtriangle^{\frac{S}{\hbar}}_{\hbar^{-1}}\vec{\mathbf{v}}\equiv C_S\vec{\mathbf{v}}-\hbar N_0\vec{\mathbf{v}}$ in $\mathbb{H}_S^{(0)}$ where ${}^t\vec{\mathbf{v}}=(\mathbf{v}_0,\dots,\mathbf{v}_{\mu-1})$, $C_S=\mathbf{0}\in M(\mu,\mathbb{C})$, and
		\[
			N_0=\left(\begin{matrix}
				k+\wt(u_0)& & \cdots &0\\
				\vdots&k+\wt(u_1) & &\\
				& &\ddots &\vdots\\
				0& \cdots& &k+\wt(u_{\mu-1})
			\end{matrix}\right)\in M(\mu,\mathbb{Q}).
		\]
	\end{enumerate}
\end{lemma}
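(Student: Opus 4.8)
\textbf{Proof plan for Lemma \ref{RHB}.}

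The plan is to verify the two conditions by direct computation, exploiting the homogeneity of the chosen basis under the weight grading. Condition (i) is immediate: by construction $\mathbf{v}_0 = u_0 + \sum_{\ell\geq 1} {}^\ell v_0\,\hbar^\ell$ with $u_0=1$, so $\mathbf{v}_0|_{\hbar=0}=1$. The substance is condition (ii).

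First I would unwind the definition of the connection $\downtriangle^{\frac{S}{\hbar}}_{\hbar^{-1}}$ from Definition \ref{GMC}(ii), specialized to $\underline{t}=0$ and $\Gamma=0$: for $w\in\mathcal{A}_0^0[\![\hbar]\!]$ one has
\[
	\downtriangle^{\frac{S}{\hbar}}_{\hbar^{-1}}([w]) = \Big[\tfrac{\partial}{\partial\hbar^{-1}}w + S\cdot w\Big].
\]
Applying this to $w=\mathbf{v}_i$, the term $\tfrac{\partial}{\partial\hbar^{-1}}\mathbf{v}_i$ can be rewritten using $\tfrac{\partial}{\partial\hbar^{-1}} = -\hbar^2\tfrac{\partial}{\partial\hbar}$, so that $\downtriangle^{\frac{S}{\hbar}}_{\hbar^{-1}}\mathbf{v}_i = -\hbar^2\tfrac{\partial}{\partial\hbar}\mathbf{v}_i + S\mathbf{v}_i$. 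The next step is to bring in the weight Euler vector field $E_{\wt}$ from subsection \ref{grading_subsec}. Since $\wt(\hbar)=1$ and $\wt(S)=1$, and since each $\mathbf{v}_i$ is weight-homogeneous of weight $\wt(u_i)$ (this is exactly the content of the condition $\wt({}^\ell v_i)+\ell=\wt(u_i)$), I would use the identity that the total operator $S\cdot(-) - \hbar^2\tfrac{\partial}{\partial\hbar}(-)$ interacts with the $(Q_S+\hbar\Delta)$-cohomology through the weight grading. Concretely, one observes that inside $\mathbb{H}_S^{(0)}$ one has a relation of the form $[S\,w] = \big(\text{weight operator applied to } [w]\big) + (\text{image of }\hbar\tfrac{\partial}{\partial\hbar})$, analogous to the computation $K_S(fR) = (\lambda - c_X)f + K_S f\cdot R$ in Proposition \ref{chc} but now with the weight Euler vector $E_{\wt}$ and the element $\sum_{\ell=k+1}^N \eta_\ell\tfrac{\partial}{\partial\eta_\ell}$ (cf.\ the explicit form of $E_{\wt}$ given in subsection \ref{grading_subsec}) in place of $R$. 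Carrying this out, the multiplication-by-$S$ operator becomes, modulo $(Q_S+\hbar\Delta)$-exact terms, the operator $-(E_{\wt}+k+\hbar\tfrac{\partial}{\partial\hbar})$ acting on representatives; combining with the $-\hbar^2\tfrac{\partial}{\partial\hbar}$ term and re-expressing $\hbar\tfrac{\partial}{\partial\hbar}+\hbar^2\tfrac{\partial}{\partial\hbar} = \hbar\tfrac{\partial}{\partial\hbar}\cdot(1+\hbar)$—no, more carefully, one tracks the weight-$\wt(u_i)$ homogeneity so that $E_{\wt}$ acts on $\mathbf{v}_i$ by the scalar $\wt(u_i)$ up to the $\hbar\tfrac{\partial}{\partial\hbar}$ correction—and the two $\hbar$-derivative terms cancel, leaving precisely $\downtriangle^{\frac{S}{\hbar}}_{\hbar^{-1}}\mathbf{v}_i \equiv -(k+\wt(u_i))\,\hbar\,\mathbf{v}_i$ modulo the basis. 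Wait: I should be careful with the sign and the $\hbar$-power; the claimed answer is $C_S\vec{\mathbf{v}} - \hbar N_0\vec{\mathbf{v}}$ with $C_S=0$ and $N_0$ diagonal with entries $k+\wt(u_i)$, so the target is $-\hbar(k+\wt(u_i))\mathbf{v}_i$. Thus the computation must produce exactly this, which forces the constant part $C_S$ to vanish — this is consistent with $S$ having vanishing ``constant term'' in the relevant sense (there is no weight-zero contribution to the residue).

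The main obstacle I anticipate is the bookkeeping in establishing the key identity $[S\cdot w] \equiv -(E_{\wt}+k)[w] + [\,\hbar\tfrac{\partial}{\partial\hbar}\text{-term}\,]$ inside $\mathbb{H}_S^{(0)}$, i.e.\ identifying the multiplication-by-$S$ operator on cohomology with a weight-shift operator. This requires finding the right primitive: one needs an explicit element $\Xi\in\mathcal{A}_0^{-1}[\![\hbar]\!]$ such that $(Q_S+\hbar\Delta)(\Xi\cdot w)$ produces $S\cdot w$ plus the weight/derivative terms, analogous to the role of $R=\sum\ch(q_i)q_i\eta_i$ in the charge computation and using instead the weight data. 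Since $S=\sum_\ell y_\ell G_\ell$ is weight-$1$ and $Q_S(\eta_\ell\text{-type elements})$ recovers $\partial S/\partial q_\ell$, the Euler-type relation $\sum_\ell y_\ell\,\partial S/\partial y_\ell = S$ (from $\wt$-homogeneity of degree $1$ in the $y$-variables) is the algebraic engine; combining this with the $\Delta$-correction gives the $\hbar\tfrac{\partial}{\partial\hbar}$-terms. Once this identity is in hand, condition (ii) follows by plugging in $w=\mathbf{v}_i$ and using weight-homogeneity to replace $E_{\wt}\mathbf{v}_i$ by $\wt(u_i)\mathbf{v}_i$ up to $\hbar\tfrac{\partial}{\partial\hbar}$, and collecting terms; the $\hbar$-derivative contributions cancel against the $-\hbar^2\tfrac{\partial}{\partial\hbar}$ coming directly from $\partial/\partial\hbar^{-1}$, leaving the stated diagonal matrix $N_0$ and $C_S=0$.
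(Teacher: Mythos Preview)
Your plan is correct and is essentially the paper's proof: the primitive you are looking for is $\Xi=\sum_{j=1}^{k} y_j\eta_j$, and the paper simply sets $\mathbf{\Lambda}_i=\Xi\cdot\mathbf{v}_i$ and computes $(Q_S+\hbar\Delta)(\mathbf{\Lambda}_i)$ term by term using $\sum_j y_j\,\partial S/\partial y_j=S$ and $\Delta(y_j\,{}^\ell v_i\,\eta_j)=(1+y_j\partial_{y_j}){}^\ell v_i$, obtaining $\downtriangle^{S/\hbar}_{\hbar^{-1}}\mathbf{v}_i\equiv -\hbar(k+\wt(u_i))\mathbf{v}_i$ directly. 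Writing down this explicit $\Xi$ at the outset would let you bypass the back-and-forth about $\hbar$-derivative cancellations.
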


\begin{proof}
	The condition (i) is trivial by the definition of $\mathbf{v}_0$.\par
	For the condition (ii), define $\mathbf{\Lambda}_i\in\mathcal{A}_0^{-1}[\![\hbar]\!]$ as
	\[
		\mathbf{\Lambda}_i:=\sum_{\ell=0}^\infty{}^\ell\Lambda_i\cdot\hbar^\ell\quad\textrm{where}~{}^\ell\Lambda_i=\sum_{j=1}^k(y_j\cdot{}^\ell v_i)\eta_j.
	\]
	Then, we get the following consequences:
	\begin{align*}
		Q_S({}^\ell\Lambda_i)&=Q_S\big(\sum_{j=1}^k(y_j\cdot{}^\ell v_i)\eta_j\big)=\big(\sum_{j=1}^k y_j\partial_{y_j}S){}^\ell v_i=S\cdot{}^\ell v_i,\\
		\Delta({}^\ell\Lambda_i)&=\Delta\big(\sum_{j=1}^k(y_j\cdot{}^\ell v_i)\eta_j\big)=\sum_{j=1}^k(\partial_{y_j}y_j\cdot{}^\ell v_i+y_j\cdot\partial_{y_j}{}^\ell v_i)=(k+wt({}^\ell v_i)){}^\ell v_i\\
		&=(k-\ell+wt(u_i)){}^\ell v_i.
	\end{align*}
	This implies the following system of equations:
	\begin{align*}
		S\cdot{}^0v_i&=Q_S({}^0\Lambda_i),\\
		S\cdot{}^1v_i-{}^1v_i&=-(k+wt(u_i)){}^0v_i+Q_S({}^1\Lambda_i)+\Delta({}^0\Lambda_i),\\
		S\cdot{}^2v_i-2{}^2v_i&=-(k+wt(u_i)){}^1v_i+Q_S({}^2\Lambda_i)+\Delta({}^1\Lambda_i),\\
		S\cdot{}^3v_i-3{}^3v_i&=-(k+wt(u_i)){}^2v_i+Q_S({}^3\Lambda_i)+\Delta({}^2\Lambda_i),\\
		\vdots
	\end{align*}
	The above equations are equivalent to
	\begin{align*}
		-\hbar(k+wt(u_i))\mathbf{v}_i+(Q_S+\hbar\Delta)(\mathbf{\Lambda}_i)&=S\cdot\mathbf{v}_i-\hbar({}^1v_i\hbar+2\cdot{}^2v_i\hbar^2+3\cdot{}^3v_i\hbar^3+\cdots)\\
		&=S\mathbf{v}_i+\partial_{\hbar^{-1}}(\mathbf{v}_i)=\downtriangle_{\hbar^{-1}}^{\frac{S}{\hbar}}\mathbf{v}_i.
	\end{align*}
	Therefore,
	\[
		\downtriangle_{\hbar^{-1}}^{\frac{S}{\hbar}}\mathbf{v}_i\equiv-\hbar(k+wt(u_i))\mathbf{v}_i~\textrm{in}~\mathbb{H}_S^{(0)}~\textrm{for all}~i=0,\dots,\mu-1,
	\]
	i.e., $\downtriangle^{\frac{S}{\hbar}}_{\hbar^{-1}}\vec{\mathbf{v}}\equiv C_S\vec{\mathbf{v}}-\hbar N_0\vec{\mathbf{v}}$ in $\mathbb{H}_S^{(0)}$.
\end{proof}
\subsection{Comments on the algorithm} \label{sec6.2}
In the body of the paper, we concentrated on solving
\begin{equation}\label{Rmk_eqn}
	\hbar\downtriangle_\beta\hbar\downtriangle_\alpha\boldsymbol\zeta=\sum_{\rho}\mathbf{A}_{\alpha\beta}{}^\rho\hbar\downtriangle_\rho\boldsymbol\zeta+(Q_{S+L}+\hbar\Delta)(\mathbf{\Lambda}_{\alpha\beta}),
\end{equation}
where $\downtriangle_\alpha:=\downtriangle_{\alpha}^{\frac{S+\G}{\hbar}}$, in certain cases (see Definition \ref{Def_WP}) which is related to some special structures such as formal $F$-manifolds, Frobenius manifolds and higher residue pairings. In this appendix, we provide some hints how to deal with \eqref{Rmk_eqn} in the other cases regardless of those special structures, assuming that $\Gamma=L:=\sum_{\a} t^{\a} u_{\a} $.

We vary conditions on the vanishing of $\hbar^{\ell}$-coefficients of $\mathbf{A}_{\alpha\beta}{}^\rho$ and $\boldsymbol\zeta$, to check whether we can construct relevant algorithms. In the following, ``unsolvable" means that our algorithm does not work.
 Consider the following cases:
\begin{enumerate}[(i)]
	\item\label{item1_app} ($\boldsymbol\zeta={}^0\zeta,$ $\mathbf{A}_{\alpha\beta}{}^\rho={}^0A_{\alpha\beta}{}^\rho$ : unsolvable) The equation \eqref{Rmk_eqn} becomes
	\begin{align*}
		\hbar\downtriangle_\beta\hbar\downtriangle_\alpha{}^0\zeta=&\sum_{\rho}{}^0A_{\alpha\beta}{}^\rho\hbar\downtriangle_\rho{}^0\zeta+(Q_{S+L}+\hbar\Delta)(\mathbf{\Lambda}_{\alpha\beta}),\\
		u_\alpha u_\beta{}^0\zeta+\hbar(u_\beta{}^0\zeta_\alpha+u_\alpha{}^0\zeta_\beta)+\hbar^2({}^0\zeta_{\alpha\beta})=&\sum_{\rho}{}^0A_{\alpha\beta}{}^\rho u_\rho{}^0\zeta+\hbar({}^0A_{\alpha\beta}{}^\rho\cdot{}^0\zeta_\rho)\\
		&+(Q_{S+L}+\hbar\Delta)(\mathbf{\Lambda}_{\alpha\beta}).
	\end{align*}
	Therefore, by comparing the $\hbar$-power terms, we have to solve the following equations:
	\begin{align*}
		u_\alpha u_\beta{}^0\zeta&=\sum_{\rho}{}^0A_{\alpha\beta}{}^\rho u_\rho{}^0\zeta+Q_{S+L}({}^0\Lambda_{\alpha\beta}),\\
		u_\beta{}^0\zeta_\alpha+u_\alpha{}^0\zeta_\beta&=\sum_\rho{}^0A_{\alpha\beta}{}^\rho\cdot{}^0\zeta_\rho+\Delta({}^0\Lambda_{\alpha\beta})+Q_{S+L}({}^1\Lambda_{\alpha\beta}),\\
		{}^0\zeta_{\alpha\beta}&=\Delta({}^1\Lambda_{\alpha\beta}).
	\end{align*}
	However, it is difficult to solve the equation
	\begin{equation}\label{diff_eqn}
		u_\beta{}^0\zeta_\alpha+u_\alpha{}^0\zeta_\beta=\sum_\rho{}^0A_{\alpha\beta}{}^\rho\cdot{}^0\zeta_\rho+\Delta({}^0\Lambda_{\alpha\beta})+Q_{S+L}({}^1\Lambda_{\alpha\beta})
	\end{equation}
	by the similar algorithm in subsection \ref{Subsec_Alg_prim}, since it is hard to choose ${}^0\zeta_\alpha$ for all $\alpha\in I$ such that $u_\beta{}^0\zeta_\alpha+u_\alpha{}^0\zeta_\beta-\sum_\rho{}^0A_{\alpha\beta}{}^\rho\cdot{}^0\zeta_\rho-\Delta({}^0\Lambda_{\alpha\beta})\in\mathrm{Im}(Q_{S+L})$.
But if we allow the $\hbar$-coefficient $^1A_{\alpha\beta}{}^\rho$ to be non-zero, we can find the algorithm.
	\item\label{item2_app} ($\boldsymbol\zeta={}^0\zeta,$ $\mathbf{A}_{\alpha\beta}{}^\rho={}^0A_{\alpha\beta}{}^\rho+{}^1A_{\alpha\beta}{}^\rho\hbar$ : solvable) The right hand side of the equation \eqref{Rmk_eqn} becomes
	\[
		\sum_{\rho}{}^0A_{\alpha\beta}{}^\rho u_\rho{}^0\zeta+\hbar({}^0A_{\alpha\beta}{}^\rho\cdot{}^0\zeta_\rho+{}^1A_{\alpha\beta}{}^\rho u_\rho)+\hbar^2({}^1A_{\alpha\beta}{}^\rho\cdot{}^0\zeta_\rho)+(Q_{S+L}+\hbar\Delta)(\mathbf{\Lambda}_{\alpha\beta}).
	\]
	Therefore, we have to solve the following equations:
	\begin{align*}
		u_\alpha u_\beta{}^0\zeta&=\sum_{\rho}{}^0A_{\alpha\beta}{}^\rho u_\rho{}^0\zeta+Q_{S+L}({}^0\Lambda_{\alpha\beta}),\\
		u_\beta{}^0\zeta_\alpha+u_\alpha{}^0\zeta_\beta&=\sum_\rho {}^0A_{\alpha\beta}{}^\rho\cdot{}^0\zeta_\rho+{}^1A_{\alpha\beta}{}^\rho u_\rho+\Delta({}^0\Lambda_{\alpha\beta})+Q_{S+L}({}^1\Lambda_{\alpha\beta}),\\
		{}^0\zeta_{\alpha\beta}&=\sum_{\rho}{}^1A_{\alpha\beta}{}^\rho\cdot{}^0\zeta_\rho+\Delta({}^1\Lambda_{\alpha\beta}).
	\end{align*}
	We can solve the above equations because of the existence of the terms ${}^1A_{\alpha\beta}{}^\rho$ and ${}^0\zeta_{\alpha\beta}$. For example, unlike the equation \eqref{diff_eqn}, we can choose ${}^0\zeta_\alpha$ for all $\alpha\in I$ such that
	\[
		u_\beta{}^0\zeta_\alpha+u_\alpha{}^0\zeta_\beta-\sum_\rho {}^0A_{\alpha\beta}{}^\rho\cdot{}^0\zeta_\rho-\Delta({}^0\Lambda_{\alpha\beta})=\sum_{\rho}{}^1A_{\alpha\beta}{}^\rho u_\rho+Q_{S+L}({}^1\Lambda_{\alpha\beta}),
	\]
	which uniquely determines ${}^1A_{\alpha\beta}{}^\rho$. In addition, we can put ${}^0\zeta_{\alpha\beta}$ as
	\[
		{}^0\zeta_{\alpha\beta}=\sum_{\rho}{}^1A_{\alpha\beta}{}^\rho\cdot{}^0\zeta_\rho+\Delta({}^1\Lambda_{\alpha\beta}).
	\]
	
	\item\label{item3_app} ($\boldsymbol\zeta={}^0\zeta+{}^1\zeta\hbar,$ $\mathbf{A}_{\alpha\beta}{}^\rho={}^0A_{\alpha\beta}{}^\rho$ : unsolvable) The left hand side of the equation \eqref{Rmk_eqn} becomes
	\begin{align*}
		\hbar\downtriangle_\beta&\hbar\downtriangle_\alpha({}^0\zeta+{}^1\zeta\hbar)\\
		&=u_\alpha u_\beta{}^0\zeta+\hbar(u_\alpha u_\beta{}^1\zeta+u_\beta{}^0\zeta_\alpha+u_\alpha{}^0\zeta_\beta)+\hbar^2(u_\beta{}^1\zeta_\alpha+u_\alpha{}^1\zeta_\beta+{}^0\zeta_{\alpha\beta})+\hbar^3({}^1\zeta_{\alpha\beta}),
	\end{align*}
	and the right hand side of the equation \eqref{Rmk_eqn} becomes
	\[
		\sum_{\rho}{}^0A_{\alpha\beta}{}^\rho u_\rho{}^0\zeta+\hbar({}^0A_{\alpha\beta}{}^\rho(u_\rho{}^1\zeta+{}^0\zeta_\rho))+\hbar^2({}^0A_{\alpha\beta}{}^\rho\cdot{}^1\zeta_\rho)+(Q_{S+L}+\hbar\Delta)(\mathbf{\Lambda}_{\alpha\beta}).
	\]
	Then, we get the equation
	\[
		u_\alpha u_\beta{}^1\zeta+u_\beta{}^0\zeta_\alpha+u_\alpha{}^0\zeta_\beta=\sum_\rho{}^0A_{\alpha\beta}{}^\rho(u_\rho{}^1\zeta+{}^0\zeta_\rho)+Q_{S+L}({}^1\Lambda_{\alpha\beta})+\Delta({}^0\Lambda_{\alpha\beta}),
	\]
	which is difficult to solve by the similar algorithm in subsection \ref{Subsec_Alg_prim}, since it is hard to choose ${}^0\zeta_\alpha$ for all $\alpha\in I$ such that
	\[
		u_\alpha u_\beta{}^1\zeta+u_\beta{}^0\zeta_\alpha+u_\alpha{}^0\zeta_\beta-\sum_\rho{}^0A_{\alpha\beta}{}^\rho(u_\rho{}^1\zeta+{}^0\zeta_\rho)-\Delta({}^0\Lambda_{\alpha\beta})\in\mathrm{Im}(Q_{S+L}).
	\]
But again if we allow the $\hbar$-coefficient $^1A_{\alpha\beta}{}^\rho$ to be non-zero, we can find the algorithm.
	\item\label{item4_app} ($\boldsymbol\zeta={}^0\zeta+{}^1\zeta\hbar,$ $\mathbf{A}_{\alpha\beta}{}^\rho={}^0A_{\alpha\beta}{}^\rho+{}^1A_{\alpha\beta}{}^\rho\hbar$ : solvable) This case is the main algorithm done in subsection \ref{Subsec_Alg_prim}.

	\item ($\boldsymbol\zeta={}^0\zeta+{}^1\zeta\hbar+{}^2\zeta\hbar^2+\cdots,$ $\mathbf{A}_{\alpha\beta}{}^\rho={}^0A_{\alpha\beta}{}^\rho$ : unsolvable) By the same argument in \ref{item1_app} and \ref{item3_app}, the following equation from the equation \ref{Rmk_eqn}
	\[
		u_\alpha u_\beta{}^1\zeta+u_\beta{}^0\zeta_\alpha+u_\alpha{}^0\zeta_\beta=\sum_\rho{}^0A_{\alpha\beta}{}^\rho(u_\rho{}^1\zeta+{}^0\zeta_\rho)+Q_{S+L}({}^1\Lambda_{\alpha\beta})+\Delta({}^0\Lambda_{\alpha\beta}),
	\]
	is unsolvable, since it is difficult to choose ${}^0\zeta_\alpha$ for all $\alpha\in I$ satisfying the above equation. Once again if we allow the $\hbar$-coefficient $^1A_{\alpha\beta}{}^\rho$ to be non-zero, we can find the algorithm.
%
	
	\item ($\boldsymbol\zeta={}^0\zeta+{}^1\zeta\hbar+{}^2\zeta\hbar^2 + \cdots + {}^n\zeta\hbar^n,$ $\mathbf{A}_{\alpha\beta}{}^\rho={}^0A_{\alpha\beta}{}^\rho+{}^1A_{\alpha\beta}{}^\rho\hbar$ : solvable for any $n\geq 0$) The case for $n=0,1$ is done in \ref{item2_app} and \ref{item4_app}. For $n=2$ case, by the same argument in \ref{item2_app} and \ref{item4_app}, we have the following equations:
	\begin{align*}
		u_\alpha u_\beta{}^0\zeta=&\sum_{\rho}{}^0A_{\alpha\beta}{}^\rho u_\rho{}^0\zeta+Q_{S+L}({}^0\Lambda_{\alpha\beta}),\\
		u_\alpha u_\beta{}^1\zeta+u_\beta{}^0\zeta_\alpha+u_\alpha{}^0\zeta_\beta=&\sum_{\rho}{}^0A_{\alpha\beta}{}^\rho(u_\rho{}^1\zeta+{}^0\zeta_\rho)+{}^1A_{\alpha\beta}{}^\rho u_\rho{}^0\zeta\\
		&+\Delta({}^0\Lambda_{\alpha\beta})+Q_{S+L}({}^1\Lambda_{\alpha\beta}),\\
		u_\alpha u_\beta{}^2\zeta+u_\beta{}^1\zeta_\alpha+u_\alpha{}^1\zeta_\beta+{}^0\zeta_{\alpha\beta}=&\sum_{\rho}{}^0A_{\alpha\beta}{}^\rho(u_\rho{}^2\zeta+{}^1\zeta_\rho)+{}^1A_{\alpha\beta}{}^\rho(u_\rho{}^1\zeta+{}^0\zeta_\rho)\\
		&+\Delta({}^1\Lambda_{\alpha\beta}),\\
		u_\beta{}^2\zeta_\alpha+u_\alpha{}^2\zeta_\beta+{}^1\zeta_{\alpha\beta}=&\sum_{\rho}{}^0A_{\alpha\beta}{}^\rho\cdot{}^2\zeta_\rho+{}^1A_{\alpha\beta}{}^\rho(u_\rho{}^2\zeta+{}^1\zeta_\rho),\\
		{}^2\zeta_{\alpha\beta}=&\sum_{\rho}{}^1A_{\alpha\beta}{}^\rho\cdot{}^2\zeta_\rho.
	\end{align*}
	They are solvable because of the existence of the terms ${}^1A_{\alpha\beta}{}^\rho$, ${}^0\zeta_{\alpha\beta}$, ${}^1\zeta_{\alpha\beta}$ and ${}^2\zeta_{\alpha\beta}$. The general case $n\geq 3$ can be checked similarly.
\end{enumerate}

These examples provide a general shape of how to solve \eqref{Rmk_eqn} and we leave to the reader to think about the case dealing with the higher coefficients ${}^{\ell}A_{\alpha\beta}{}^\rho$ of $\hbar^{\ell}$ ($\ell\geq 2$) involved.


\end{document}